\newtheorem{theorem}{Theorem}[section]
\newtheorem{lemma}[theorem]{Lemma}
\newtheorem{corollary}[theorem]{Corollary}
\newtheorem{proposition}[theorem]{Proposition}
\theoremstyle{definition}
\newtheorem{remark}[theorem]{Remark}
\newtheorem{definition}[theorem]{Definition}
\numberwithin{equation}{section}
\begin{document}

\title{\vspace{-1.68cm}\bf\Large Matrix-Weighted Besov-Type and Triebel--Lizorkin-Type Spaces III:
Characterizations of Molecules and Wavelets, Trace Theorems,
and Boundedness of Pseudo-Differential Operators and Calder\'on--Zygmund Operators
\footnotetext{\hspace{-0.35cm} 2020 {\it Mathematics Subject Classification}.
Primary 46E35; Secondary 47A56, 42B25, 42B20, 42C40, 35S05, 42B35.\endgraf
{\it Key words and phrases.}
matrix weight,
Besov-type space,
Triebel--Lizorkin-type space,
$A_p$-dimension,
molecule,
wavelet,
trace,
pseudo-differential operator,
Calder\'on--Zygmund operator.\endgraf
This project is supported by
the National Key Research and Development Program of China
(Grant No.\ 2020YFA0712900),
the National Natural Science Foundation of China
(Grant Nos.\ 12371093, 12071197, and 12122102),
the Fundamental Research Funds
for the Central Universities (Grant No. 2233300008),
and the Academy of Finland (Grant Nos.\ 314829 and 346314).}}
\date{}
\author{Fan Bu, Tuomas Hyt\"onen, Dachun Yang\footnote{Corresponding author, E-mail:
\texttt{dcyang@bnu.edu.cn}/{\color{red} December 27, 2023}/Final version.}\ \ and Wen Yuan}

\maketitle

\vspace{-0.8cm}

\begin{center}
\begin{minipage}{13cm}
{\small {\bf Abstract}\quad
This is the last one of three successive
articles by the authors on matrix-weighted Besov-type and Triebel--Lizorkin-type spaces
$\dot B^{s,\tau}_{p,q}(W)$ and $\dot F^{s,\tau}_{p,q}(W)$. In this article,
the authors establish the molecular and the wavelet characterizations of these spaces.
Furthermore, as applications, the authors obtain the optimal boundedness of
trace operators, pseudo-differential operators, and Calder\'on--Zygmund operators
on these spaces. Due to the sharp boundedness of almost diagonal operators
on their related sequence spaces obtained in the second article of this series,
all results presented in this article
improve their counterparts on matrix-weighted Besov and Triebel--Lizorkin spaces
$\dot B^{s}_{p,q}(W)$ and $\dot F^{s}_{p,q}(W)$.
In particular, even when reverting to the boundedness of Calder\'on--Zygmund operator
on unweighted Triebel--Lizorkin spaces $\dot F^{s}_{p,q}$, these results are still better.
}
\end{minipage}
\end{center}


\vspace{0.1cm}

\section{Introduction}

This is the last one of our three successive articles on
matrix-weighted Besov-type and Triebel--Lizorkin-type spaces of
$\mathbb C^m$-valued distributions on $\mathbb R^n$.
We consistently denote by $m$ the dimension of the target space of our distributions and
hence our matrix-weights take values in the space of $m\times m$ complex matrices.

Note that both matrix weights and Besov-type and Triebel--Lizorkin-type spaces
have broad applications, which motivates us to develop a complete real-variable theory of
matrix-weighted Besov-type and Triebel--Lizorkin-type spaces.
Indeed, in our first article \cite{bhyy1} of this series, we introduced
matrix-weighted Besov-type and Triebel--Lizorkin-type spaces
$$\dot A^{s,\tau}_{p,q}(W)\in\left\{\dot B^{s,\tau}_{p,q}(W),\dot F^{s,\tau}_{p,q}(W)\right\}$$
and established their $\varphi$-transform characterizations
which establish a bridge connecting $\dot A^{s,\tau}_{p,q}(W)$ and
their corresponding sequence spaces
$\dot a^{s,\tau}_{p,q}(W)\in\{\dot b^{s,\tau}_{p,q}(W),\dot f^{s,\tau}_{p,q}(W)\}$.
The $\varphi$-transform characterization was originally
proposed by Frazier and Jawerth in \cite{fj90},
and it has been widely used in the study of various real-variable
characterizations and the boundedness of operators on
Besov-type and Triebel--Lizorkin-type spaces.
We also introduced the $A_p$-dimension for matrix weights in \cite{bhyy1},
which plays an irreplaceable role in both establishing
the aforementioned $\varphi$-transform characterization in \cite{bhyy1}
and obtaining the sharp boundedness of almost diagonal operators
on $\dot a^{s,\tau}_{p,q}(W)$ in the second article \cite{bhyy2}.
We refer to \cite{bhyy1,bhyy2} for the details and, especially,
for their histories of these subjects and more related references.

Recall that the classical Besov  and Triebel--Lizorkin spaces and many of their variants
have wide applications in analysis and such applications usually
rely on various real-variable characterizations of these spaces, especially on their decomposition
characterizations, respectively, in terms of atoms, molecules,
or wavelets. Thus, the study on decomposing and reconstructing spaces
via certain building blocks as well as their applications
is one key topic of various Besov and Triebel--Lizorkin type spaces,
and there exist a large amount of literatures working on this.
Here we refer, for instance, to \cite{fj85,fj90,fjw91,gt99,Sa18,t83,t10}
for classical Besov and Triebel--Lizorkin spaces,
to \cite{HP08,is09,is12} for weighted Besov and Triebel--Lizorkin spaces,
to \cite{b05,b07,b08,bh06,lbyy12} for anisotropic Besov and Triebel--Lizorkin spaces,
to \cite{cgn17,cgn19,cgn19-2,gjn17,gn16} for mixed-norm Besov and Triebel--Lizorkin spaces,
to \cite{gkp21} for product Besov and Triebel--Lizorkin spaces,
to \cite{bbd20,bd15,bd17,bd21c,bd23,gkkp19} for Besov and Triebel--Lizorkin spaces associated with operators,
to \cite{lsuyy,S08,S10b,ST07,yy10,ysy10-2,ysy10} for Besov-type and Triebel--Lizorkin-type spaces,
and to \cite{HL23,HLMS23,HLMS23-2,HMS16,HMS20,HMS22,HT23,lyysu}
for some generalized Besov-type and Triebel--Lizorkin-type spaces.
In particular, the decomposition characterizations of Besov and Triebel--Lizorkin type spaces
are known to be useful in obtaining the trace theorem
(see, for instance, \cite{fj90,fr08,syy10,ysy10-2,ysy10})
and the boundedness of pseudo-differential operators and Calder\'on--Zygmund operators
on them (see, for instance, \cite{ftw88,gt99,syy10,tor,ysy10-2,ysy10}).

In the matrix-weighted setting, Roudenko \cite{ro03} and Frazier and Roudenko \cite{fr04} established
 the molecular decomposition   of
matrix-weighted Besov spaces $\dot B^s_{p,q}(W)$. Based on this molecular characterization,
Frazier and Roudenko \cite{fr08} established the trace theorems of
matrix-weighted Besov spaces,
which is of crucial interest for boundary value problems of elliptic differential operators.
Additionally, Frazier and Roudenko \cite{fr04,fr21,ro03}
also used the molecular characterizations of
matrix-weighted Besov and Triebel--Lizorkin spaces
$\dot A^s_{p,q}(W)\in\{\dot B^s_{p,q}(W),\dot F^s_{p,q}(W)\}$
to obtain the boundedness of Calder\'on--Zygmund operators on these spaces.

In this article, using the discrete Calder\'on reproducing formula,
the $\varphi$-transform characterization of
$\dot A^{s,\tau}_{p,q}(W)$ in \cite{bhyy1},
and the boundedness of almost diagonal operators
on $\dot a^{s,\tau}_{p,q}(W)$ in \cite{bhyy2},
we first establish the molecular characterization of
$\dot A^{s,\tau}_{p,q}(W)$.
To simplify some conditions, our molecules differ slightly from classical ones.
As an application of both the $\varphi$-transform and the molecular
characterizations, we obtain the optimal boundedness of pseudo-differential operators
on $\dot A^{s,\tau}_{p,q}(W)$. Recall that the significance of pseudo-differential operators
is partly attributed to their role in the para-differential calculus of Bony \cite{b81}.
Additionally, using the $\varphi$-transform
and the molecular characterizations and the boundedness of almost diagonal operators,
we establish the Meyer and the Daubechies wavelet characterizations of $\dot A^{s,\tau}_{p,q}(W)$.
Applying them, we obtain the atomic characterization of $\dot A^{s,\tau}_{p,q}(W)$,
which further induces out the trace theorems of $\dot A^{s,\tau}_{p,q}(W)$.
Also, by the $\varphi$-transform characterization and
the boundedness of almost diagonal operators,
we establish the optimal boundedness of
Calder\'on--Zygmund operators on $\dot a^{s,\tau}_{p,q}(W)$.
Due to the sharp boundedness of almost diagonal operators
obtained in \cite{bhyy2}, all results presented in this article
improve their counterparts on
matrix-weighted Besov and Triebel--Lizorkin spaces $\dot A^{s}_{p,q}(W)$.
In particular, even comparing with the boundedness of Calder\'on--Zygmund operator
on unweighted Triebel--Lizorkin spaces $\dot F^{s}_{p,q}$, our results are still better.

To be precise, the organization of the remainder of this article is as follows.

In Section \ref{preliminaries}, we recall the definitions of $A_p$-matrix weights
and matrix-weighted Besov-type and Triebel--Lizorkin-type (sequence) spaces.

In Section \ref{molecular characterization}, using the discrete
Calder\'on reproducing formula, the $\varphi$-transform
characterization of $\dot A^{s,\tau}_{p,q}(W)$ and the boundedness
of almost diagonal operators on $\dot a^{s,\tau}_{p,q}(W)$,
we establish the molecular characterization of $\dot A^{s,\tau}_{p,q}(W)$.
To make some conditions take a simpler form,
we choose to slightly depart from the classic definition by first introducing molecules in
their own smoothness, cancellation, and decay,
and only link them to function spaces after proving some results that
motivate the choice of the particular parameters in making this link.
When $\tau=0$, $\dot A^{s,\tau}_{p,q}(W)$ reduces to $\dot A^s_{p,q}(W)$ and, in this case,
to obtain the molecular characterization of $\dot A^s_{p,q}(W)$,
we only need weaker conditions about molecules than the corresponding ones
in \cite[Theorem 5.2]{ro03}, \cite[Theorem 3.1]{fr04}, and \cite[Theorem 2.9]{fr21}
because the ranges of indices of a part of the bounded almost diagonal operators
used in its proof and obtained in \cite{bhyy2} are sharp.
When the dimension of the target space is $m=1$ and we have the trivial weight $W\equiv 1$,
our result in this case coincides with the molecular characterization of
Besov-type and Triebel--Lizorkin-type spaces
$\dot A^{s,\tau}_{p,q}\in\{\dot B^{s,\tau}_{p,q},\dot F^{s,\tau}_{p,q}\}$
in \cite[Theorem 4.2]{yy10}. In the last of this section,
we prove the boundedness of pseudo-differential operators
on $\dot A^{s,\tau}_{p,q}(W)$ by using both the $\varphi$-transform
and the molecular characterizations of $\dot A^{s,\tau}_{p,q}(W)$.
When $\tau=0$, we obtain the boundedness of pseudo-differential operators
on $\dot A^s_{p,q}(W)$, which is also new.
When $m=1$ and $W\equiv 1$, the space $\dot A^{s,\tau}_{p,q}(W)$
reduces to $\dot A^{s,\tau}_{p,q}$ and, in this case,
our result contains \cite[Theorem 1.5]{syy10} in which $\tau$
has an extra upper bound.

In Section \ref{wavelet characterization},
we first recall both Meyer and Daubechies wavelets
and then obtain the corresponding wavelet characterizations of $\dot A^{s,\tau}_{p,q}(W)$
by using the boundedness of almost diagonal operators on $\dot a^{s,\tau}_{p,q}(W)$
and both the $\varphi$-transform and the molecular characterizations of $\dot A^{s,\tau}_{p,q}(W)$.
As a simple application of both the wavelet and the molecular characterizations,
we establish the atomic characterization of $\dot A^{s,\tau}_{p,q}(W)$.
When $\tau=0$, $\dot A^{s,\tau}_{p,q}(W)$ reduces to $\dot A^s_{p,q}(W)$.
To obtain the corresponding wavelet characterization of $\dot A^{s,\tau}_{p,q}(W)$,
due to the sharpness of the ranges of indices of a part of the bounded almost diagonal operators used in its proof and obtained in \cite{bhyy2},
we only need a weaker smoothness assumption on Daubechies wavelets than the corresponding one
in \cite[Theorem 10.2 and Corollary 10.3]{ro03}, \cite[Theorem 4.4]{fr04}, and \cite[Theorem 1.2]{fr04}).
When $m=1$ and $W\equiv 1$,
our wavelet characterization of $\dot A^{s,\tau}_{p,q}$
in this case contains \cite[Theorem 8.3]{ysy10} in which $s\in(0,\infty)$,
but it is covered by \cite[Theorem 6.3]{lsuyy}
in which Liang et al. considered the biorthogonal wavelet system
and established the wavelet characterization of $\dot A^{s,\tau}_{p,q}$.

In Section \ref{trace theorems},
we establish the trace theorem of $\dot A^{s,\tau}_{p,q}(W)$
by using the boundedness of almost diagonal operators on $\dot a^{s,\tau}_{p,q}(W)$
and both the wavelet and the molecular characterizations of $\dot A^{s,\tau}_{p,q}(W)$.
When $\tau=0$, we obtain the trace theorem of $\dot A^s_{p,q}(W)$,
which is even new for $\dot F^s_{p,q}(W)$ and improves the known one for $\dot B^s_{p,q}(W)$
because the ranges of indices of a part of the bounded almost diagonal operators
used in its proof and obtained in \cite{bhyy2} are sharp.
When $m=1$ and $W\equiv 1$, $\dot A^{s,\tau}_{p,q}(W)$
reduces to $\dot A^{s,\tau}_{p,q}$ and, in this case,
our result contains \cite[Theorem 1.4]{syy10}
in which $\tau\in[0,\frac{1}{p}+\frac{s+n-J}{n})$.

In the last Section \ref{C-Z operators},
as an application, we obtain the boundedness of Calder\'on--Zygmund operators on
$\dot A^{s,\tau}_{p,q}(W)$.
Recall that, in \cite{ftw88,tor}, Frazier et al.
established the boundedness of Calder\'on--Zygmund operators on $\dot F^s_{p,q}$.
Compared with their result, our result only needs a weaker assumption and hence is better,
that is, there exist some Calder\'on--Zygmund operators that satisfy our assumptions but not theirs,
and Calder\'on--Zygmund operators that satisfy their assumptions always satisfy ours.
Besides being a stronger (and more general) result,
a further advantage of our result is that our Calder\'on--Zygmund smoothness conditions
take a simpler form without reference to the integer and the fractional parts.
For $m=1$ and $W\equiv 1$, $\dot A^{s,\tau}_{p,q}(W)$ reduces to $\dot A^{s,\tau}_{p,q}$ and,
in this case, our result is still new.
When $\tau=0$, we obtain the boundedness of Calder\'on--Zygmund operators on
$\dot A^s_{p,q}(W)$,
which is even new for $\dot F^s_{p,q}(W)$
and improves both \cite[Theorem 9.4]{ro03} and \cite[Theorem 4.2(i)]{fr04} for $\dot B^s_{p,q}(W)$
because they only contain some special cases and need stronger assumptions.
Such improvement is again due to the ranges of indices of
a part of the bounded almost diagonal operators
used in its proof and obtained in \cite{bhyy2} are sharp.

In the end, we make some conventions on notation.
A \emph{cube} $Q$ of $\mathbb{R}^n$ always has finite edge length
and edges of cubes are always assumed to be parallel to coordinate axes,
but $Q$ is not necessary to be open or closed.
For any cube $Q$ of $\mathbb{R}^n$,
let $c_Q$ be its center and $\ell(Q)$ its edge length.
For any $\lambda\in(0,\infty)$ and any cube $Q$ of $\mathbb{R}^n$,
let $\lambda Q$ be the cube with the same center of $Q$ and the edge length $\lambda\ell(Q)$.
The \emph{ball} $B$ of $\mathbb{R}^n$,
centered at $x\in\mathbb{R}^n$ with radius $r\in(0,\infty)$,
is defined by setting
$$
B:=\{y\in\mathbb{R}^n:\ |x-y|<r\}=:B(x,r);
$$
moreover, for any $\lambda\in(0,\infty)$, $\lambda B:=B(x,\lambda r)$.
For any $r\in\mathbb{R}$, $r_+$ is defined as $r_+:=\max\{0,r\}$
and $r_-$ is defined as $r_-:=\max\{0,-r\}$.
For any $a,b\in\mathbb{R}$, $a\wedge b:=\min\{a,b\}$ and $a\vee b:=\max\{a,b\}$.
The symbol $C$ denotes a positive constant which is independent
of the main parameters involved, but may vary from line to line.
The symbol $A\lesssim B$ means that $A\leq CB$ for some positive constant $C$,
while $A\sim B$ means $A\lesssim B\lesssim A$.
Let $\mathbb N:=\{1,2,\ldots\}$, $\mathbb Z_+:=\mathbb N\cup\{0\}$, and $\mathbb Z_+^n:=(\mathbb Z_+)^n$.
For any multi-index $\gamma:=(\gamma_1,\ldots,\gamma_n)\in\mathbb Z_+^n$
and any $x:=(x_1,\ldots,x_n)\in\mathbb R^n$,
let $|\gamma|:=\gamma_1+\ldots+\gamma_n$,
$x^\gamma:=x_1^{\gamma_1}\cdots x_n^{\gamma_n}$,
and $\partial^\gamma:=(\frac{\partial}{\partial x_1})^{\gamma_1}
\cdots(\frac{\partial}{\partial x_n})^{\gamma_n}$.
We use $\mathbf{0}$ to denote the \emph{origin} of $\mathbb{R}^n$.
For any set $E\subset\mathbb{R}^n$,
we use $\mathbf 1_E$ to denote its \emph{characteristic function}.
The \emph{Lebesgue space} $L^p(\mathbb{R}^n)$
is defined to be the set of all measurable functions
$f$ on $\mathbb{R}^n$ such that $\|f\|_{L^p(\mathbb{R}^n)}<\infty$, where
$$
\|f\|_{L^p(\mathbb{R}^n)}:=\left\{\begin{aligned}
&\left[\int_{\mathbb{R}^n}|f(x)|^p\,dx\right]^{\frac{1}{p}}
&&\text{if }p\in(0,\infty),\\
&\mathop{\mathrm{\,ess\,sup\,}}_{x\in\mathbb{R}^n}|f(x)|
&&\text{if }p=\infty.
\end{aligned}\right.
$$
The \emph{locally integrable Lebesgue space}
$L^p_{\mathrm{loc}}(\mathbb{R}^n)$ is defined to be the set of
all measurable functions $f$ on $\mathbb{R}^n$ such that,
for any bounded measurable set $E$,
$\|f\|_{L^p(E)}:=\|f\mathbf{1}_E\|_{L^p(\mathbb{R}^n)}<\infty.$
In what follows, we denote $L^p(\mathbb{R}^n)$ and $L^p_{\mathrm{loc}}(\mathbb{R}^n)$
simply, respectively, by $L^p$ and $L^p_{\mathrm{loc}}$.
For any measurable function $w$ on $\mathbb{R}^n$
and any measurable set $E\subset\mathbb{R}^n$, let
$w(E):=\int_Ew(x)\,dx.$
For any measurable function $f$ on $\mathbb{R}^n$
and any measurable set $E\subset\mathbb{R}^n$ with $|E|\in(0,\infty)$, let
$$
\fint_Ef(x)\,dx:=\frac{1}{|E|}\int_Ef(x)\,dx.
$$
For any space $X$, the product space $X^m$ with $m\in\mathbb{N}$
is defined by setting
\begin{equation*}
X^m:=\left\{\vec f:=(f_1,\ldots,f_m)^{\mathrm{T}}:\
\text{for any}\ i\in\{1,\ldots,m\},\ f_i\in X\right\}.
\end{equation*}
Also, when we prove a theorem
(and the like), in its proof we always use the same
symbols as those appearing in
the statement itself of the theorem (and the like).

\section{Matrix-Weighted Besov-Type and Triebel--Lizorkin-Type Spaces}\label{preliminaries}

In this section, we recall the definitions of $A_p$-matrix weights and
matrix-weighted Besov-type and Triebel--Lizorkin-type (sequence) spaces.
Let us begin with some basic concepts of matrices.

For any $m,n\in\mathbb{N}$,
the set of all $m\times n$ complex-valued matrices is denoted by $M_{m,n}(\mathbb{C})$
and $M_{m,m}(\mathbb{C})$ is simply denoted by $M_{m}(\mathbb{C})$.
For any $A\in M_{m,n}(\mathbb{C})$,
the \emph{conjugate transpose} of $A$ is denoted by $A^*$.

For any $A\in M_m(\mathbb{C})$, let
$$
\|A\|:=\sup_{\vec z\in\mathbb{C}^m,\,|\vec z|=1}|A\vec z|.
$$

In what follows, we regard $\mathbb{C}^m$ as $M_{m,1}(\mathbb{C})$
and let $\vec{\mathbf{0}}:=(0,\ldots,0)^\mathrm{T}\in\mathbb{C}^m$.
Moreover, for any $\vec z:=(z_1,\ldots,z_m)^\mathrm{T}\in\mathbb{C}^m$,
let $|\vec z|:=(\sum_{i=1}^m|z_i|^2)^{\frac12}$.

A matrix $A\in M_m(\mathbb{C})$ is said to be \emph{positive definite}
if, for any $\vec z\in\mathbb{C}^m\setminus\{\vec{\mathbf{0}}\}$, $\vec z^*A\vec z>0$,
and $A$ is said to be \emph{nonnegative definite} if,
for any $\vec z\in\mathbb{C}^m$, $\vec z^*A\vec z\geq0$.
The matrix $A$ is said to be \emph{invertible} if
there exists a matrix $A^{-1}\in M_m(\mathbb{C})$ such that $A^{-1}A=I_m$,
where $I_m$ is identity matrix.

Now, we recall the concept of matrix weights (see, for instance, \cite{nt96,tv97,v97}).

\begin{definition}
A matrix-valued function $W:\ \mathbb{R}^n\to M_m(\mathbb{C})$ is called
a \emph{matrix weight} if $W$ satisfies that
\begin{enumerate}[\rm(i)]
\item for any $x\in\mathbb{R}^n$, $W(x)$ is nonnegative definite;
\item for almost every $x\in\mathbb{R}^n$, $W(x)$ is invertible;
\item the entries of $W$ are all locally integrable.
\end{enumerate}
\end{definition}

Now, we recall the concept of $A_p$-matrix weights
(see, for instance, \cite[p.\,490]{fr21}).

\begin{definition}\label{def ap}
Let $p\in(0,\infty)$. A matrix weight $W$ on $\mathbb{R}^n$
is called an $A_p(\mathbb{R}^n,\mathbb{C}^m)$-\emph{matrix weight}
if $W$ satisfies that, when $p\in(0,1]$,
$$
[W]_{A_p(\mathbb{R}^n,\mathbb{C}^m)}
:=\sup_{\mathrm{cube}\,Q}\mathop{\mathrm{\,ess\,sup\,}}_{y\in Q}
\fint_Q\left\|W^{\frac{1}{p}}(x)W^{-\frac{1}{p}}(y)\right\|^p\,dx
<\infty
$$
or that, when $p\in(1,\infty)$,
$$
[W]_{A_p(\mathbb{R}^n,\mathbb{C}^m)}
:=\sup_{\mathrm{cube}\,Q}
\fint_Q\left[\fint_Q\left\|W^{\frac{1}{p}}(x)W^{-\frac{1}{p}}(y)\right\|^{p'}
\,dy\right]^{\frac{p}{p'}}\,dx
<\infty,
$$
where $\frac{1}{p}+\frac{1}{p'}=1$.
\end{definition}

In what follows, if there exists no confusion,
we denote $A_p(\mathbb{R}^n,\mathbb{C}^m)$ simply by $A_p$.
Next, we recall the concept of reducing operators (see, for instance, \cite[(3.1)]{v97}).

\begin{definition}\label{reduce}
Let $p\in(0,\infty)$, $W$ be a matrix weight,
and $E\subset\mathbb{R}^n$ a bounded measurable set satisfying $|E|\in(0,\infty)$.
The matrix $A_E\in M_m(\mathbb{C})$ is called a \emph{reducing operator} of order $p$ for $W$
if $A_E$ is positive definite and,
for any $\vec z\in\mathbb{C}^m$,
\begin{equation}\label{equ_reduce}
\left|A_E\vec z\right|
\sim\left[\fint_E\left|W^{\frac{1}{p}}(x)\vec z\right|^p\,dx\right]^{\frac{1}{p}},
\end{equation}
where the positive equivalence constants depend only on $m$ and $p$.
\end{definition}

\begin{remark}
In Definition \ref{reduce}, the existence of $A_E$ is guaranteed by
\cite[Proposition 1.2]{g03} and \cite[p.\,1237]{fr04}; we omit the details.
\end{remark}

To obtain the sharp estimate of $\|A_QA_R^{-1}\|$,
we need the concept of $A_p$-dimensions.

\begin{definition}
Let $p\in(0,\infty)$, $d\in\mathbb{R}$, and $W$ be a matrix weight.
Then $W$ is said to have the \emph{$A_p$-dimension $d$}
if there exists a positive constant $C$ such that,
for any cube $Q\subset\mathbb{R}^n$ and any $i\in\mathbb{Z}_+$,
when $p\in(0,1]$,
\begin{equation*}
\mathop{\mathrm{\,ess\,sup\,}}_{y\in2^iQ}\fint_Q \left\|W^{\frac{1}{p}}(x)W^{-\frac{1}{p}}(y)\right\|^p\,dx
\leq C2^{id}
\end{equation*}
or, when $p\in(1,\infty)$,
\begin{equation*}
\fint_Q\left[\fint_{2^iQ}\left\|W^{\frac{1}{p}}(x)W^{-\frac{1}{p}}(y)
\right\|^{p'}\,dy\right]^{\frac{p}{p'}}\,dx
\leq C2^{id},
\end{equation*}
where $\frac{1}{p}+\frac{1}{p'}=1$.
\end{definition}

The following conclusion is just \cite[Corollary 2.37]{bhyy1}.

\begin{lemma}\label{22 precise}
Let $p\in(0,\infty)$, let $W\in A_p$ have the $A_p$-dimension $d\in[0,n)$,
and let $\{A_Q\}_{\mathrm{cube}\,Q}$ be a family of
reducing operators of order $p$ for $W$. If $p\in(1,\infty)$,
let further $\widetilde W:=W^{-\frac 1{p-1}}$ (which belongs to $A_{p'}$)
have the $A_{p'}$-dimension $\widetilde d$,
while, if $p\in(0,1]$, let $\widetilde d:=0$.
Let
\begin{equation}\label{Delta}
\Delta:=\frac{d}{p}+\frac{\widetilde d}{p'}.
\end{equation}
Then there exists a positive constant $C$ such that,
for any cubes $Q$ and $R$ of $\mathbb{R}^n$,
$$
\left\|A_QA_R^{-1}\right\|
\leq C\max\left\{\left[\frac{\ell(R)}{\ell(Q)}\right]^{\frac{d}{p}},
\left[\frac{\ell(Q)}{\ell(R)}\right]^{\frac{\widetilde d}{p'}}\right\}
\left[1+\frac{|c_Q-c_R|}{\ell(Q)\vee\ell(R)}\right]^\Delta.
$$
\end{lemma}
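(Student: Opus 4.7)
The plan is to exploit the factorization
$$A_Q A_R^{-1} = [A_Q W^{-\frac{1}{p}}(y)] \cdot [W^{\frac{1}{p}}(y) A_R^{-1}],$$
valid for almost every $y \in \mathbb{R}^n$, together with an auxiliary cube $P$ that contains both $Q$ and $R$ and has side length $\ell(P) \sim [\ell(Q) \vee \ell(R)] + |c_Q - c_R|$. With this choice $P \subset 2^{i_Q} Q$ with $2^{i_Q} \sim \ell(P)/\ell(Q)$ and $P \subset 2^{i_R} R$ with $2^{i_R} \sim \ell(P)/\ell(R)$, so the $A_p$- and (when $p>1$) $A_{p'}$-dimension hypotheses become applicable with explicit scale factors.

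First I would prove the pointwise bound
$$\|A_Q W^{-\frac{1}{p}}(y)\|^p \lesssim \fint_Q \|W^{\frac{1}{p}}(x) W^{-\frac{1}{p}}(y)\|^p\,dx$$
by applying \eqref{equ_reduce} to the vector $\vec u := W^{-\frac{1}{p}}(y)\vec z$ and taking the supremum over $|\vec z|=1$. For $p \in (1,\infty)$ an analogous argument for the reducing operators of $\widetilde W$ with exponent $p'$ (using $\widetilde W^{1/p'} = W^{-1/p}$ and $\widetilde W^{-1/p'} = W^{1/p}$) yields the dual companion
$$\|W^{\frac{1}{p}}(y) A_R^{-1}\|^{p'} \lesssim \fint_R \|W^{-\frac{1}{p}}(x) W^{\frac{1}{p}}(y)\|^{p'}\,dx.$$
Averaging the factorization over $y\in P$ and applying H\"older's inequality with exponents $(p,p')$ then gives
$$\|A_Q A_R^{-1}\| \lesssim \left(\fint_P \|A_Q W^{-\frac{1}{p}}(y)\|^p\,dy\right)^{\frac{1}{p}} \left(\fint_P \|W^{\frac{1}{p}}(y) A_R^{-1}\|^{p'}\,dy\right)^{\frac{1}{p'}},$$
and, after swapping orders of integration, each factor is now in the form required to invoke an $A_p$- or $A_{p'}$-dimension bound on $2^{i_Q}Q$ and $2^{i_R}R$ respectively, producing $(\ell(P)/\ell(Q))^{d/p}$ and $(\ell(P)/\ell(R))^{\widetilde d/p'}$. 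For $p \in (0,1]$ I would simplify: pick $y_0 \in R$ with $\|W^{\frac{1}{p}}(y_0) A_R^{-1}\| \lesssim 1$ (such a point exists because \eqref{equ_reduce} forces $\fint_R \|W^{\frac{1}{p}}(y) A_R^{-1}\|^p\,dy \sim 1$), evaluate the pointwise factorization at $y=y_0$, and apply the $A_p$-dimension bound at scale $i_Q$; here $\widetilde d = 0$ by convention.

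To finish, I would recombine the two factors. Writing $\ell(P) \sim [\ell(Q) \vee \ell(R)]\cdot [1+|c_Q-c_R|/(\ell(Q)\vee\ell(R))]$ and noting that exactly one of $(\ell(Q)\vee\ell(R))/\ell(Q)$ and $(\ell(Q)\vee\ell(R))/\ell(R)$ exceeds $1$, the product $(\ell(P)/\ell(Q))^{d/p}(\ell(P)/\ell(R))^{\widetilde d/p'}$ separates exactly into the $\max$ term times $[1+|c_Q-c_R|/(\ell(Q)\vee\ell(R))]^{\Delta}$ claimed. The main obstacle is the inner estimate $\fint_P\fint_Q\|W^{\frac{1}{p}}(x)W^{-\frac{1}{p}}(y)\|^p\,dx\,dy \lesssim 2^{i_Q d}$ when $p>1$: the $A_p$-dimension is stated with an $L^{p'}$-average in $y$ and an $L^{p/p'}$-average in $x$, so passing to the $L^p$-average in $y$ needed here requires a Jensen step whose direction depends on whether $p\le 2$ or $p>2$. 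In the latter regime one must exploit a reverse H\"older-type property of the $A_p$-matrix weight (or decompose $P$ into dyadic shells around $Q$ and sum geometrically) in order to boost the $L^{p'}$-average to the $L^p$-average required by our application.
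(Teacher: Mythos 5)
The paper itself does not prove this lemma: it simply quotes \cite[Corollary 2.37]{bhyy1}, so there is no internal proof to compare against. Evaluating your proposal on its own terms, the $p\in(0,1]$ branch (picking a good point $y_0\in R$ by Chebyshev and applying the $A_p$-dimension bound with $y_0\in 2^{i_Q}Q$) is sound, and your pointwise reduction $\|A_Q W^{-1/p}(y)\|^p\lesssim\fint_Q\|W^{1/p}(x)W^{-1/p}(y)\|^p\,dx$ is correct, modulo the (true but unstated) fact that $A_R^{-1}$ is comparable to a reducing operator of order $p'$ for $\widetilde W$, which your ``dual companion'' tacitly relies on.

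The $p\in(1,\infty)$ branch, however, has a genuine gap, and it is larger than the one you flag. After your H\"older step over $P$ with exponents $(p,p')$, you are holding the quantities $\fint_P\fint_Q\|W^{1/p}(x)W^{-1/p}(y)\|^p\,dx\,dy$ and $\fint_P\fint_R\|W^{-1/p}(x)W^{1/p}(y)\|^{p'}\,dx\,dy$, that is, unmixed $L^p_{x,y}$ and $L^{p'}_{x,y}$ averages. But the $A_p$-dimension hypothesis provides a \emph{mixed} norm bound $\fint_Q[\fint_{2^iQ}\|W^{1/p}(x)W^{-1/p}(y)\|^{p'}\,dy]^{p/p'}\,dx\lesssim 2^{id}$, i.e.\ an $L^p_x L^{p'}_y$ bound, and likewise the $A_{p'}$-dimension of $\widetilde W$ gives an $L^{p'}_x L^p_y$ bound. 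Jensen converts your first quantity to this mixed form only when $p\le 2$ (so that $p\le p'$), and converts your second quantity only when $p\ge 2$. Thus the two factors cannot both be salvaged by Jensen except at $p=2$ — you identify the failure of the first factor for $p>2$ but overlook the symmetric failure of the second factor for $p<2$. The proposed remedies are not adequate: reverse H\"older for a matrix $A_p$ weight yields a small self-improvement $p'\to p'+\varepsilon$, not the full jump from $p'$ to $p$, and would in any case inject extra constants that spoil the sharp $2^{id}$ growth; decomposing $P$ into dyadic shells around $Q$ merely reproduces the same exponent mismatch on each shell.

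The underlying issue is that averaging over the auxiliary cube $P$ in the $y$-variable \emph{destroys} the mixed-norm structure that is baked into the $A_p$-dimension definition. The natural route is to never leave that structure: starting from $|A_Q A_R^{-1}\vec z|\sim(\fint_Q|W^{1/p}(x)A_R^{-1}\vec z|^p\,dx)^{1/p}$ and then, for each fixed $x$, bounding $\|W^{1/p}(x)A_R^{-1}\|\sim\|A_R^{-1}W^{1/p}(x)\|\lesssim(\fint_R\|W^{-1/p}(y)W^{1/p}(x)\|^{p'}\,dy)^{1/p'}$ (using that $A_R^{-1}$ is a reducing operator of order $p'$ for $\widetilde W$, plus the finite-dimensional equivalence $\|M\|\sim\max_i|M\vec e_i|$), one arrives directly at $\|A_Q A_R^{-1}\|^p\lesssim\fint_Q[\fint_R\|W^{1/p}(x)W^{-1/p}(y)\|^{p'}\,dy]^{p/p'}\,dx$, and symmetrically $\|A_Q A_R^{-1}\|^{p'}\lesssim\fint_R[\fint_Q\|W^{1/p}(x)W^{-1/p}(y)\|^{p}\,dx]^{p'/p}\,dy$. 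These \emph{are} in the exact form of the $A_p$- and $A_{p'}$-dimension hypotheses, no Jensen needed. The auxiliary cube $P\supset Q\cup R$ with $\ell(P)\sim(\ell(Q)\vee\ell(R))+|c_Q-c_R|$ should then be used not as an averaging domain but via submultiplicativity, $\|A_QA_R^{-1}\|\le\|A_QA_P^{-1}\|\cdot\|A_PA_R^{-1}\|$, applying the first mixed-norm bound to $\|A_QA_P^{-1}\|$ (yielding $(\ell(P)/\ell(Q))^{d/p}$) and the second to $\|A_PA_R^{-1}\|$ (yielding $(\ell(P)/\ell(R))^{\widetilde d/p'}$); your final recombination step then goes through unchanged.
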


\begin{definition}
Let $p\in(0,\infty)$ and $W\in A_p$ be a matrix weight. We say that $W$ has $A_p$-dimensions $(d,\widetilde d,\Delta)$ if
\begin{enumerate}[{\rm(i)}]
\item $W$ has the $A_p$-dimension $d$,
\item $p\in(0,1]$ and $\widetilde d=0$, or $p\in(1,\infty)$
and $W^{-\frac 1{p-1}}$ (which belongs to $A_{p'}$)
has the $A_{p'}$-dimension $\widetilde d$, and
\item $\Delta$ is the same as in \eqref{Delta}.
\end{enumerate}
\end{definition}

Let $s\in\mathbb{R}$, $\tau\in[0,\infty)$, and $p,q\in(0,\infty]$.
For any sequence $\{f_j\}_{j\in\mathbb Z}$ of measurable functions on $\mathbb{R}^n$,
any subset $J\subset\mathbb Z$, and any measurable set $E\subset\mathbb{R}^n$, let
\begin{align*}
\|\{f_j\}_{j\in\mathbb Z}\|_{L\dot B_{pq}(E\times J)}
:=\|\{f_j\}_{j\in\mathbb Z}\|_{\ell^qL^p(E\times J)}
:=\|\{f_j\}_{j\in\mathbb Z}\|_{\ell^q(J;L^p(E))}
:=\left[\sum_{j\in J}\|f_j\|_{L^p(E)}^q\right]^{\frac{1}{q}}
\end{align*}
and
\begin{align*}
\|\{f_j\}_{j\in\mathbb Z}\|_{L\dot F_{pq}(E\times J)}
:=\|\{f_j\}_{j\in\mathbb Z}\|_{L^p\ell^q(E\times J)}
:=\|\{f_j\}_{j\in\mathbb Z}\|_{L^p(E;\ell^q(J))}
:=\left\|\left(\sum_{j\in J}|f_j|^q\right)^{\frac{1}{q}}\right\|_{L^p(E)}
\end{align*}
with the usual modification made when $q=\infty$.
For simplicity of the presentation, in what follows, we may drop the domain $E\times J$ from these symbols, when it is the full space $E\times J=\mathbb R^n\times\mathbb Z$.
We use $L\dot A_{pq}\in\{L\dot B_{pq},L\dot F_{pq}\}$ as a generic notation in statements that apply to both types of spaces.

In particular, for any $P\in\mathscr{Q}$, we abbreviate $\widehat{P}:=P\times\{j_P,j_P+1,\ldots\}$ so that
$$
\|\{f_j\}_{j\in\mathbb Z}\|_{L\dot B_{pq}(\widehat{P})}
=\|\{f_j\}_{j\in\mathbb Z}\|_{\ell^qL^p(\widehat{P})}
=\left[\sum_{j=j_P}^\infty\|f_j\|_{L^p(P)}^q\right]^{\frac{1}{q}}
$$
and
$$
\|\{f_j\}_{j\in\mathbb Z}\|_{L\dot F_{pq}(\widehat{P})}
=\|\{f_j\}_{j\in\mathbb Z}\|_{L^p\ell^q(\widehat{P})}
=\left\|\left(\sum_{j=j_P}^\infty|f_j|^q\right)^{\frac{1}{q}}\right\|_{L^p(P)}.
$$
Let us further define
\begin{equation}\label{LApq}
\|\{f_j\}_{j\in\mathbb Z}\|_{L\dot A_{p,q}^\tau}
:=\sup_{P\in\mathscr{Q}}|P|^{-\tau}\|\{f_j\}_{j\in\mathbb Z}\|_{L\dot A_{pq}(\widehat{P})}
\end{equation}
for both choices of $L\dot A_{p,q}^\tau\in\{L\dot B_{p,q}^\tau,L\dot F_{p,q}^\tau\}$.

Moreover, for any $k\in\mathbb{Z}$, let $\{f_j\}_{j\geq k}
:=\{f_j\mathbf{1}_{[k,\infty)}(j)\}_{j\in\mathbb{Z}}$.

Let $\mathcal{S}$ be the space of all Schwartz functions on $\mathbb{R}^n$,
equipped with the well-known topology determined by a countable family of norms,
and let $\mathcal{S}'$ be the set of all continuous linear functionals on $\mathcal{S}$,
equipped with the weak-$*$ topology.
For any $f\in L^1$ and $\xi\in\mathbb{R}^n$, let
$$
\widehat{f}(\xi):=\int_{\mathbb{R}^n}f(x)e^{-ix\cdot\xi}\,dx
$$
to denote the \emph{Fourier transform} of $f$.
This agrees with the normalisation of the Fourier transform used,
for instance, in \cite[p.\,4]{fjw91} and \cite[p.\,452]{yy10},
and allows us to quote some lemmas from these works directly,
whereas using any other normalisation (such as with $2\pi$ in the exponent)
would also necessitate slight adjustments here and there in several other formulas.
For any $f\in\mathcal S$ and $x\in\mathbb{R}^n$, let
$f^\vee(x):=\widehat{f}(-x)$
to denote the \emph{inverse Fourier transform} of $f$.
It is well known that, for any $f\in\mathcal S$,
$(\widehat{f})^\vee=(f^\vee)^\wedge=f.$
We can also define the \emph{Fourier transform $\widehat{f}$}
and the \emph{inverse Fourier transform $f^\vee$} of any Schwartz distribution $f$ as follows.
For any $f\in\mathcal S'$ and $\varphi\in\mathcal S$, let
$\langle\widehat{f},\varphi\rangle
:=\langle f,\widehat{\varphi}\rangle$
and
$\langle f^\vee,\varphi\rangle
:=\langle f,\varphi^\vee\rangle.$

Let $\varphi,\psi\in\mathcal{S}$ satisfy
\begin{equation}\label{19}
\operatorname{supp}\widehat{\varphi},\operatorname{supp}\widehat{\psi}
\subset\left\{\xi\in\mathbb{R}^n:\ \frac12\leq|\xi|\leq2\right\}
\end{equation}
and
\begin{equation}\label{20}
\left|\widehat\varphi(\xi)\right|,\left|\widehat\psi(\xi)\right|\geq C>0
\text{ if }\xi\in\mathbb{R}^n\text{ with }\frac35\leq|\xi|\leq\frac53,
\end{equation}
where $C$ is a positive constant independent of $\xi$ and
\begin{equation}\label{21}
\sum_{j\in\mathbb{Z}}\overline{\widehat{\varphi}\left(2^j\xi\right)}
\widehat{\psi}\left(2^j\xi\right)=1
\text{ if }\xi\in\mathbb{R}^n\setminus\{\mathbf{0}\}.
\end{equation}

For any complex-valued function $g$ on $\mathbb{R}^n$, let
$\operatorname{supp}g:=\{x\in\mathbb{R}^n:\ g(x)\neq0\}.$
For any $f\in\mathcal{S}'$, let
$$
\operatorname{supp}f:=\bigcap\left\{\text{closed set }K\subset\mathbb{R}^n:\
\langle f,\varphi\rangle=0\text{ for any }\varphi\in\mathcal{S}
\text{ with}\operatorname{supp}\varphi\subset\mathbb{R}^n\setminus K\right\},
$$
which can be found in \cite[Definition 2.3.16]{g14c}.

Let $\varphi$ be a complex-valued function on $\mathbb{R}^n$.
For any $j\in\mathbb{Z}$ and $x\in\mathbb{R}^n$, let
$\varphi_j(x):=2^{jn}\varphi(2^jx)$.
For any $Q:=Q_{j,k}\in\mathscr{Q}$ and $x\in\mathbb{R}^n$, let
$$
\varphi_Q(x)
:=|Q|^{-\frac12}\varphi\left(2^jx-k\right)
=|Q|^{\frac12}\varphi_j(x-x_Q).
$$

As in \cite{yy10}, let
\begin{equation*}
\mathcal{S}_\infty:=\left\{\varphi\in\mathcal{S}:\
\int_{\mathbb R^n}x^\gamma\varphi(x)\,dx=0
\text{ for any }\gamma\in\mathbb{Z}_+^n\right\},
\end{equation*}
regarded as a subspace of $\mathcal{S}$ with the same topology.
We denote by $\mathcal{S}_\infty'$ the space of
all continuous linear functionals on $\mathcal{S}_\infty$, equipped with the weak-$*$ topology.
It is well known that $\mathcal{S}_\infty'$ coincides with
the quotient space $\mathcal{S}'/\mathcal{P}$ as topological spaces,
where $\mathcal{P}$ denotes the set of all polynomials on $\mathbb{R}^n$;
see, for instance, \cite[Chapter 5]{t83}, \cite[Proposition 8.1]{ysy10}, or \cite{Sa17}.

We first recall the following Calder\'on reproducing formulae
which are \cite[Lemma 2.1]{yy10}.

\begin{lemma}\label{7}
Let $\varphi,\psi\in\mathcal{S}$ satisfy \eqref{21} and
both $\overline{\operatorname{supp}\widehat{\varphi}}$ and $\overline{\operatorname{supp}\widehat{\psi}}$
are compact and bounded away from the origin.
Then, for any $f\in\mathcal{S}_\infty$,
\begin{equation}\label{7x}
f=\sum_{j\in\mathbb{Z}}2^{-jn}\sum_{k\in\mathbb{Z}^n}
\left(\widetilde{\varphi}_j*f\right)\left(2^{-j}k\right)
\psi_j\left(\cdot-2^{-j}k\right)
=\sum_{Q\in\mathscr{Q}}\left\langle f,\varphi_Q\right\rangle\psi_Q
\end{equation}
in $\mathcal{S}_\infty $, where $\widetilde{\varphi}(x):=\overline{\varphi(-x)}$
for any $x\in\mathbb{R}^n$.
Moreover, for any $f\in\mathcal{S}_\infty'$,
\eqref{7x} also converges in $\mathcal{S}_\infty'$.
\end{lemma}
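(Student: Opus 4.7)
The plan is to proceed in three stages: first establish a continuous Calder\'on formula $f=\sum_{j\in\mathbb Z}\widetilde\varphi_j*\psi_j*f$ in $\mathcal{S}_\infty$, then discretize each summand via Poisson summation to reach the sampling formula, and finally transfer the identity to $\mathcal{S}_\infty'$ by duality. Throughout I would rely on the support and normalization hypotheses \eqref{19}--\eqref{21} and on the fact that $f\in\mathcal{S}_\infty$ is equivalent to $\widehat f\in\mathcal{S}$ vanishing to infinite order at the origin.

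For the continuous step, I would compute on the Fourier side. Since $\widehat{\widetilde\varphi_j}(\xi)=\overline{\widehat\varphi(2^{-j}\xi)}$, the partial sums satisfy
$$\biggl[\sum_{|j|\le N}\widetilde\varphi_j*\psi_j*f\biggr]^{\wedge}(\xi)=\biggl[\sum_{|j|\le N}\overline{\widehat\varphi(2^{-j}\xi)}\,\widehat\psi(2^{-j}\xi)\biggr]\widehat f(\xi),$$
and by \eqref{21} (reindexed $j\mapsto -j$) the bracket tends to $\mathbf{1}_{\mathbb R^n\setminus\{\mathbf 0\}}$. The annular support condition \eqref{19} makes the $j$-sum locally finite away from the origin; combined with the flatness of $\widehat f$ at $\mathbf 0$, this upgrades the pointwise convergence to convergence in $\mathcal{S}$, hence, after inverting the Fourier transform, to convergence in $\mathcal{S}_\infty$.

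For the discretization step, fix $j\in\mathbb Z$, $x\in\mathbb R^n$, and set $H(y):=(\widetilde\varphi_j*f)(y)\,\psi_j(x-y)$. The Fourier transform of $\widetilde\varphi_j*f$ is supported in $\{|\xi|\le 2^{j+1}\}$ by \eqref{19}, and that of $y\mapsto\psi_j(x-y)$ equals $e^{-ix\cdot\xi}\widehat\psi_j(-\xi)$, supported in the same annulus; consequently $\widehat H$, being their convolution, is supported in $\{|\xi|\le 2^{j+2}\}$. I would apply Poisson summation to the lattice $2^{-j}\mathbb Z^n$, whose dual lattice is $2\pi 2^j\mathbb Z^n$: since $2\pi>4$, every nonzero dual lattice point lies outside $\operatorname{supp}\widehat H$, so
$$(\widetilde\varphi_j*\psi_j*f)(x)=\widehat H(\mathbf 0)=\int_{\mathbb R^n}H(y)\,dy=2^{-jn}\sum_{k\in\mathbb Z^n}(\widetilde\varphi_j*f)(2^{-j}k)\,\psi_j(x-2^{-j}k).$$
Summing in $j$ and using the estimates of the previous step yields the first equality of \eqref{7x}; the second follows from the identifications $(\widetilde\varphi_j*f)(2^{-j}k)=2^{jn/2}\langle f,\varphi_{Q_{j,k}}\rangle$ and $\psi_j(\cdot-2^{-j}k)=2^{jn/2}\psi_{Q_{j,k}}$, which absorb the $2^{-jn}$.

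For the $\mathcal{S}_\infty'$ version, I would apply the $\mathcal{S}_\infty$ identity already proved, with the roles of $\varphi$ and $\psi$ swapped, to an arbitrary test function $g\in\mathcal{S}_\infty$, and then pair with $f\in\mathcal{S}_\infty'$; the continuity of the partial-sum operators on $\mathcal{S}_\infty$ transfers, by taking transposes, to weak-$*$ convergence of $\sum_Q\langle f,\varphi_Q\rangle\psi_Q$ to $f$ in $\mathcal{S}_\infty'$. The main obstacle I anticipate is the genuinely topological (not merely pointwise) convergence in $\mathcal{S}_\infty$: every Schwartz seminorm of the remainder must be controlled by a summable tail in both $j$ (using the cancellation $\widehat f(\mathbf 0)=0$ to infinite order against the annular supports of $\widehat\varphi,\widehat\psi$ for $|j|$ large) and $k$ (using that $\widetilde\varphi_j*f$ and $\psi_j$ are Schwartz, so their product has uniform rapid decay along the lattice $2^{-j}\mathbb Z^n$ once $j$ is fixed).
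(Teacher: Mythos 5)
The paper does not supply its own proof of this lemma; it quotes it as \cite[Lemma 2.1]{yy10}, which rests on the classical Frazier--Jawerth scheme. Your proposal reproduces exactly that scheme: first $f=\sum_j\widetilde\varphi_j*\psi_j*f$ in $\mathcal S_\infty$ via \eqref{21} on the Fourier side, then Poisson summation on the lattice $2^{-j}\mathbb Z^n$ to discretize each $j$-term, then duality (with the roles of $\varphi$ and $\psi$ exchanged) to pass to $\mathcal S_\infty'$. The Fourier-side computations, the band-limitedness of $\widehat H$, the dual-lattice count, and the identifications $(\widetilde\varphi_j*f)(2^{-j}k)=2^{jn/2}\langle f,\varphi_{Q_{j,k}}\rangle$ and $\psi_j(\cdot-2^{-j}k)=2^{jn/2}\psi_{Q_{j,k}}$ are all correct.

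One small but genuine technical point worth flagging. The lemma as stated assumes only that $\overline{\operatorname{supp}\widehat\varphi}$ and $\overline{\operatorname{supp}\widehat\psi}$ are compact and bounded away from the origin, not the explicit annulus condition \eqref{19}. Your Poisson-summation step, however, needs $\operatorname{supp}\widehat H\subset\{|\xi|<2\pi\cdot 2^j\}$, which with your support estimate amounts to $\operatorname{supp}\widehat\varphi,\operatorname{supp}\widehat\psi\subset\{|\xi|<\pi\}$; under \eqref{19} this holds because $2<\pi$, but it is not implied by mere compactness. So your argument (like the original Frazier--Jawerth one) implicitly uses the quantitative part of \eqref{19} and proves the lemma in that form, which is what is used in the paper. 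Finally, you are right to identify the topological (rather than pointwise) convergence of the double sum in $\mathcal S_\infty$ as the labour-intensive part; a complete write-up would have to estimate Schwartz seminorms of the $j$-tails (using the flatness of $\widehat f$ at $\mathbf 0$ and its rapid decay) and the $k$-tails (rapid decay of $(\widetilde\varphi_j*f)\cdot\psi_j(x-\cdot)$) uniformly, and transfer the same uniformity to the transposed operators for the $\mathcal S_\infty'$ statement.
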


Next, we recall the concept of matrix-weighted Besov-type
and Triebel--Lizorkin-type (sequence) spaces (see \cite[Definitions 3.5 and 3.24]{bhyy1}).

\begin{definition}
Let $s\in\mathbb{R}$, $\tau\in[0,\infty)$, $p\in(0,\infty)$, and $q\in(0,\infty]$.
Let $\varphi\in\mathcal{S}$ satisfy \eqref{19} and \eqref{20}, and let $W\in A_p$ be a matrix weight.
The \emph{homogeneous matrix-weighted Besov-type space}
$\dot B^{s,\tau}_{p,q}(W,\varphi)$
and the \emph{homogeneous matrix-weighted Triebel--Lizorkin-type space}
$\dot F^{s,\tau}_{p,q}(W,\varphi)$
are defined by setting
$$
\dot A^{s,\tau}_{p,q}(W,\varphi)
:=\left\{\vec{f}\in(\mathcal{S}_\infty')^m:\
\left\|\vec{f}\right\|_{\dot A^{s,\tau}_{p,q}(W,\varphi)}<\infty\right\},
$$
where, for any $\vec{f}\in(\mathcal{S}_\infty')^m$,
$$
\left\|\vec{f}\right\|_{\dot A^{s,\tau}_{p,q}(W,\varphi)}
:=\left\|\left\{2^{js}\left|W^{\frac{1}{p}}\left(\varphi_j*\vec f\right)
\right|\right\}_{j\in\mathbb Z}\right\|_{L\dot A_{p,q}^\tau}
$$
with $\|\cdot\|_{L\dot A_{p,q}^\tau}$ the same as in \eqref{LApq}.
\end{definition}

\begin{remark}
Indeed, in \cite[Proposition 3.35]{bhyy1}, it was showed that
$\dot A^{s,\tau}_{p,q}(W,\varphi)$ is independent of the choice of $\varphi$.
Based on this, in what follows, we denote $\dot A^{s,\tau}_{p,q}(W,\varphi)$
simply by $\dot A^{s,\tau}_{p,q}(W)$.
\end{remark}

In what follows, for any $Q\in\mathscr{Q}$,
let $\widetilde{\mathbf{1}}_Q:=|Q|^{-\frac12}\mathbf{1}_Q$.

\begin{definition}
Let $s\in\mathbb{R}$, $\tau\in[0,\infty)$, $p\in(0,\infty)$, $q\in(0,\infty]$, and $W\in A_p$.
The \emph{homogeneous matrix-weighted Besov-type sequence space} $\dot b^{s,\tau}_{p,q}(W)$
and the \emph{homogeneous matrix-weighted Triebel--Lizorkin-type sequence space}
$\dot f^{s,\tau}_{p,q}(W)$
are defined to be the sets of all sequences
$\vec t:=\{\vec t_Q\}_{Q\in\mathscr{Q}}\subset\mathbb{C}^m$ such that
$$
\left\|\vec{t}\right\|_{\dot a^{s,\tau}_{p,q}(W)}
:=\left\|\left\{2^{js}\left|W^{\frac{1}{p}}\sum_{Q\in\mathscr{Q}_j}\vec{t}_Q\widetilde{\mathbf{1}}_Q
\right|\right\}_{j\in\mathbb Z}\right\|_{L\dot A_{p,q}^\tau}<\infty,
$$
where $\|\cdot\|_{L\dot A_{p,q}^\tau}$ is the same as in \eqref{LApq}.
\end{definition}

\section{Molecules and Their Applications}
\label{molecular characterization}

In this section, we first establish the molecular characterization of $\dot A^{s,\tau}_{p,q}(W)$
in Section \ref{mc} and then, as an application, we obtain the boundedness of pseudo-differential operators
on $\dot A^{s,\tau}_{p,q}(W)$ in Section \ref{pdo}.

\subsection{Molecular Characterization\label{mc}}

To establish the smooth molecular decomposition of these spaces,
we need the sharp boundedness of almost diagonal operators for $\dot A^{s,\tau}_{p,q}(W)$.
Let us begin with some concepts.

Let $B:=\{b_{Q, P}\}_{Q, P\in\mathscr{Q}}\subset\mathbb{C}$.
For any sequence $\vec t:=\{\vec t_R\}_{R\in\mathscr{Q}}\subset\mathbb{C}^m$,
we define $B\vec t:=\{(B\vec t)_Q\}_{Q\in\mathscr{Q}}$ by setting,
for any $Q\in\mathscr{Q}$,
$(B\vec t)_Q:=\sum_{R\in\mathscr{Q}}b_{Q,R}\vec t_R$
if the above summation is absolutely convergent.

Now, we recall the concept of almost diagonal operators.

\begin{definition}
Let $D,E,F\in\mathbb{R}$. We define the special infinite matrix
$B^{DEF}:=\{b_{Q,R}^{DEF}\}_{Q,R\in\mathscr{Q}}\subset\mathbb{C}$
by setting, for any $Q,R\in\mathscr{Q}$,
\begin{equation}\label{bDEF}
b_{Q,R}^{DEF}
:=\left[1+\frac{|x_Q-x_R|}{\ell(Q)\vee\ell(R)}\right]^{-D}
\left\{\begin{aligned}
&\left[\frac{\ell(Q)}{\ell(R)}\right]^E&&\text{if }\ell(Q)\leq\ell(R),\\
&\left[\frac{\ell(R)}{\ell(Q)}\right]^F&&\text{if }\ell(R)<\ell(Q).
\end{aligned}\right.
\end{equation}

An infinite matrix $B:=\{b_{Q,R}\}_{Q,R\in\mathscr{Q}}\subset\mathbb{C}$
is said to be \emph{$(D,E,F)$-almost diagonal}
if there exists a positive constant $C$ such that
$|b_{Q,R}|\leq Cb_{Q,R}^{DEF}.$
\end{definition}

Let
\begin{equation}\label{J}
J:=\left\{\begin{aligned}
&\frac{n}{\min\{1,p\}}&&\text{if we are dealing with a Besov-type space},\\
&\frac{n}{\min\{1,p,q\}}&&\text{if we are dealing with a Triebel--Lizorkin-type space}
\end{aligned}\right.
\end{equation}
and
\begin{equation}\label{tildeJ}
J_\tau:=\begin{cases}
n&\text{if }\tau>\frac{1}{p}\text{ or }(\tau,q)=(\frac{1}{p},\infty)\quad\textup{(}\text{``supercritical case''}\textup{)},\\
\displaystyle\frac{n}{\min\{1,q\}}&\text{if }\dot a^{s,\tau}_{p,q}
=\dot f^{s,\frac{1}{p}}_{p,q}\text{ and }q<\infty\quad\textup{(}\text{``critical case''}\textup{)},\\
J&\text{if }\tau<\frac1p\text{, or }\dot a^{s,\tau}_{p,q}=\dot b^{s,\frac1p}_{p,q}\text{ and }q<\infty\quad\textup{(}\text{``subcritical case''}\textup{)}.
\end{cases}
\end{equation}

The following lemma is just \cite[Theorem 9.1]{bhyy1}.

\begin{lemma}\label{ad BF2}
Let $s\in\mathbb R$, $\tau\in[0,\infty)$, $p\in(0,\infty)$, $q\in(0,\infty]$, and $d\in[0,n)$.
Let $J$ and $J_\tau$ be the same as, respectively, in \eqref{J} and \eqref{tildeJ}, and let
\begin{equation}\label{tauJ2}
\widehat\tau:=\left[\left(\tau-\frac{1}{p}\right)+\frac{d}{np}\right]_+,\
\widetilde{J}:=J_{\tau}+\left[\left(n\widehat\tau\right)\wedge\frac{d}{p}\right],\text{ and }
\widetilde{s}:=s+n\widehat\tau.
\end{equation}
If $B$ is $(D,E,F)$-almost diagonal with
\begin{equation}\label{ad new2}
D>\widetilde{J},\
E>\frac{n}{2}+\widetilde{s},\text{ and }
F>\widetilde{J}-\frac{n}{2}-\widetilde{s},
\end{equation}
then $B$ is bounded on $\dot a^{s,\tau}_{p,q}(W)$
whenever $W\in A_p$ has the $A_p$-dimension $d$.
\end{lemma}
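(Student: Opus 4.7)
The plan is to reduce the matrix-weighted almost diagonal estimate to the scalar, unweighted version by means of a family $\{A_Q\}_{Q\in\mathscr{Q}}$ of reducing operators of order $p$ for $W$. By \eqref{equ_reduce}, on each cube $Q\in\mathscr{Q}_j$ the integral $\int_Q|W^{\frac1p}(x)\vec t_Q|^p\,dx$ is comparable to $|Q|\cdot|A_Q\vec t_Q|^p$; hence, summing against the dyadic partition of $\mathbb R^n$, the norm $\|\vec t\|_{\dot a^{s,\tau}_{p,q}(W)}$ is equivalent to the scalar, unweighted quantity
\begin{equation*}
\Bigl\|\Bigl\{2^{js}\sum_{Q\in\mathscr{Q}_j}|A_Q\vec t_Q|\,\widetilde{\mathbf{1}}_Q\Bigr\}_{j\in\mathbb Z}\Bigr\|_{L\dot A_{p,q}^\tau}.
\end{equation*}
It therefore suffices to dominate $\{|A_Q(B\vec t)_Q|\}_Q$ by $\{|A_R\vec t_R|\}_R$ in this scalar norm.

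For this, I would apply the triangle inequality
\begin{equation*}
\bigl|A_Q(B\vec t)_Q\bigr|\leq\sum_{R\in\mathscr{Q}}|b_{Q,R}|\,\|A_QA_R^{-1}\|\cdot|A_R\vec t_R|
\end{equation*}
and combine the hypothesis $|b_{Q,R}|\lesssim b_{Q,R}^{DEF}$ with Lemma \ref{22 precise} to observe that the composite kernel $|b_{Q,R}|\,\|A_QA_R^{-1}\|$ is itself almost diagonal, but with exponents degraded to $(D-\Delta,\,E-d/p,\,F-\widetilde d/p')$: the factor $[1+|c_Q-c_R|/(\ell(Q)\vee\ell(R))]^{\Delta}$ eats $\Delta$ from $D$, while the scale factor $\max\{[\ell(R)/\ell(Q)]^{d/p},[\ell(Q)/\ell(R)]^{\widetilde d/p'}\}$ eats $d/p$ from $E$ when $\ell(Q)\leq\ell(R)$ and $\widetilde d/p'$ from $F$ when $\ell(R)<\ell(Q)$.

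At this point the problem reduces to the scalar, unweighted almost diagonal boundedness on $\dot a^{s,\tau}_{p,q}$ for the degraded kernel, whose thresholds on $(D,E,F)$ depend on the position of $(\tau,q)$ within the supercritical, critical, and subcritical regimes distinguished in \eqref{tildeJ} through $J_\tau$. Matching these scalar thresholds after absorbing the losses $\Delta$, $d/p$, and $\widetilde d/p'$ yields precisely the conditions \eqref{ad new2}; the shift by $n\widehat\tau$ in $\widetilde s$, together with the appearance of $(n\widehat\tau)\wedge(d/p)$ in $\widetilde J$, reflects a Sobolev-type exchange that becomes available exactly when $\tau-1/p+d/(np)>0$, at which point one may trade either the $\tau$-excess or the dimensional excess $d/(np)$, whichever is cheaper.

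The main obstacle is the underlying scalar step, particularly at the critical level $\dot f^{s,1/p}_{p,q}$ with $q<\infty$, where the Triebel--Lizorkin--Morrey norm degenerates and the sums over $\ell(R)\leq\ell(Q)$ and $\ell(R)>\ell(Q)$ must be treated separately: the first by a Fefferman--Stein vector-valued maximal inequality combined with Peetre-type pointwise control (the $B$-case being handled by Minkowski's inequality), and the second by distributing the supremum from \eqref{LApq} across the dyadic ancestors of $Q$ without losing summability. It is this ancestor analysis that forces the index $n/\min\{1,q\}$ in the critical branch of $J_\tau$ and, ultimately, produces the sharpness which drives all subsequent applications in the paper.
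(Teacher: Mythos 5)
The paper does not prove this lemma; it quotes it verbatim as \cite[Theorem 9.1]{bhyy1}, so you are reconstructing an argument that appears elsewhere, and the reconstruction has a concrete gap.

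Your structure is the natural first attack, and the bookkeeping you perform is locally correct: after passing to the averaged norm $\dot a^{s,\tau}_{p,q}(\mathbb A)$ and applying Lemma~\ref{22 precise}, the composite kernel $|b_{Q,R}|\,\|A_QA_R^{-1}\|$ is indeed dominated by $b_{Q,R}^{D-\Delta,\,E-d/p,\,F-\widetilde d/p'}$. But feeding this degraded kernel into the unweighted scalar almost-diagonal result does \emph{not} recover~\eqref{ad new2}. It yields $D>J_\tau+\Delta$, $E>\frac n2+s+n(\tau-\frac1p)_++\frac dp$, and $F>J_\tau-\frac n2-s-n(\tau-\frac1p)_++\frac{\widetilde d}{p'}$, which are strictly stronger in general. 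Indeed $\widetilde J-J_\tau=(n\widehat\tau)\wedge\frac dp\le\frac dp\le\Delta$, with strict inequality whenever $\widetilde d>0$, and $n\widehat\tau=n[(\tau-\frac1p)+\frac d{np}]_+$ is strictly smaller than $n(\tau-\frac1p)_++\frac dp$ whenever $\tau<\frac1p$. Concretely: take $p=1$ (so $\widetilde d=0$, $\Delta=d$), $\tau=0$, and $d\in(0,n)$. Then $\widehat\tau=0$, so $\widetilde J=J_\tau$ and $\widetilde s=s$, and~\eqref{ad new2} asks only for $D>J_\tau$ and $E>\frac n2+s$; your degraded kernel forces $D>J_\tau+d$ and $E>\frac n2+s+d$.

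The final paragraph of your proposal acknowledges a ``Sobolev-type exchange'' behind $\widehat\tau$ and $(n\widehat\tau)\wedge\frac dp$, but the plan treats this as commentary on the shape of the constants rather than as a step that must be carried out \emph{inside} the estimate. The decay coming from $\|A_QA_R^{-1}\|$ and the decay demanded by the unweighted almost-diagonal bound cannot be split multiplicatively and then composed, because that composition over-counts: the $\tau$-excess inside the $L\dot A^\tau_{p,q}$ supremum in~\eqref{LApq} already compensates part of the scale loss $\frac dp$, so that only the \emph{combined} quantity $\widehat\tau$ and the minimum $(n\widehat\tau)\wedge\frac dp$ ultimately matter. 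Executing this interleaving --- rather than degrade-then-apply --- is precisely the content of \cite[Theorem 9.1]{bhyy1}, and it is the piece missing from your sketch.
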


\begin{definition}\label{def ad tau}
Let $s\in\mathbb R$, $\tau\in[0,\infty)$, $p\in(0,\infty)$, $q\in(0,\infty]$, and $d\in[0,n)$.
An infinite matrix $B:=\{b_{Q,R}\}_{Q,R\in\mathscr{Q}}\subset\mathbb{C}$
is said to be \emph{$\dot a^{s,\tau}_{p,q}(d)$-almost diagonal}
if it is $(D,E,F)$-almost diagonal with $D,E,F$ the same as in \eqref{ad new2}.
\end{definition}

For any $r\in\mathbb R$, let
\begin{equation}\label{ceil}
\begin{cases}
\lfloor r\rfloor:=\max\{k\in\mathbb Z:\ k\leq r\},\\
\lfloor\!\lfloor r\rfloor\!\rfloor:=\max\{k\in\mathbb Z:\ k< r\},
\end{cases}
\begin{cases}
\lceil r\rceil:=\min\{k\in\mathbb Z:\ k\geq r\},\\
\lceil\!\lceil r\rceil\!\rceil:=\min\{k\in\mathbb Z:\ k>r\},
\end{cases}
\end{equation}
and
\begin{equation}\label{r**}
\begin{cases}
r^*:=r-\lfloor r\rfloor\in[0,1),\\
r^{**}:=r-\lfloor\!\lfloor r\rfloor\!\rfloor\in(0,1].
\end{cases}
\end{equation}
Thus, by their definitions, we obviously have, for any $r\in\mathbb R\setminus\mathbb Z$
$$
\lfloor r\rfloor
=\lfloor\!\lfloor r\rfloor\!\rfloor
<r
<\lceil r\rceil
=\lceil\!\lceil r\rceil\!\rceil
\text{ and }
r^{*}=r^{**}\in(0,1),
$$
for any $r\in\mathbb Z$
$$
\lfloor r\rfloor=\lceil r\rceil=r,\
\lfloor\!\lfloor r\rfloor\!\rfloor=r-1,\
\lceil\!\lceil r\rceil\!\rceil=r+1,\
r^*=0,\text{ and }
r^{**}=1,
$$
and always $\lceil r\rceil=\lfloor\!\lfloor r\rfloor\!\rfloor+1$
and $\lceil\!\lceil r\rceil\!\rceil=\lfloor r\rfloor+1$.

In the literature on spaces of Besov and Triebel--Lizorkin type,
there exists a tradition of defining a concept of molecules
(often two types of them, respectively called ``analysis'' and ``synthesis'')
for a space $\dot A^s_{p,q}$.
The molecules are functions essentially localized in the neighbourhood of a cube $Q$,
with appropriate smoothness, cancellation,
and decay determined by the parameters of the space in question.
Here, we choose to slightly depart from this tradition
by first introducing molecules in their own right
and only link them to function spaces after proving some results that
motivate the choice of the particular parameters in making this link.

\begin{definition}\label{moleKLMN}
Let $K,M\in[0,\infty)$ and $L,N\in\mathbb{R}$.
For any $K\in[0,\infty)$, $Q\in\mathscr{Q}$, and $x\in\mathbb{R}^n$, let
\begin{equation*}
u_K(x):=(1+|x|)^{-K}\text{ and }
(u_K)_Q(x):=|Q|^{-\frac12}u_K\left(\frac{x-x_Q}{\ell(Q)}\right).
\end{equation*}
A function $m_Q$ is called a \emph{(smooth) $(K,L,M,N)$-molecule on a cube $Q$}
if, for any $x,y\in\mathbb R^n$ and any multi-index $\gamma\in\mathbb{Z}_+^n$
in the specified ranges below, it satisfies
$$
|m_Q(x)|\leq(u_{K})_Q(x), \
\int_{\mathbb R^n}x^\gamma m_Q(x)\,dx=0\text{ if }|\gamma|\leq L,
$$
$$
|\partial^\gamma m_Q(x)|\leq[\ell(Q)]^{-|\gamma|}(u_{M})_Q(x)\text{ if }|\gamma|<N,
$$
and
$$
|\partial^\gamma m_Q(x)-\partial^\gamma m_Q(y)|
\leq[\ell(Q)]^{-|\gamma|}\left[\frac{|x-y|}{\ell(Q)}\right]^{N^{**}}
\sup_{|z|\leq|x-y|}(u_{M})_Q(x+z)
$$
if $|\gamma|=\lfloor\!\lfloor N\rfloor\!\rfloor$,
where $N^{**}$ and $\lfloor\!\lfloor N\rfloor\!\rfloor$ are the same as, respectively, in \eqref{r**} and \eqref{ceil}.
\end{definition}

Therefore, the parameters $(K,L,M,N)$ describe the decay $(K)$
and the cancellation $(L)$ of $m_Q$,
and the decay $(M)$ of its derivatives up to a specific,
possibly fractional, order of the smoothness $(N)$.
We will only consider smooth molecules and hence drop the word ``smooth'' for brevity.

Notice that the length $|\gamma|$ of a multi-index is a non-negative integer and hence
\begin{itemize}
\item any condition for multi-indices $\gamma$ of length
(less than or) equal to a negative number is void;
\item the condition ``$|\gamma|\leq L$'' (resp. ``$|\gamma|<N$'')
is equivalent to ``$|\gamma|\leq\lfloor L\rfloor$'' (resp. ``$|\gamma|\leq\lfloor\!\lfloor N\rfloor\!\rfloor$'').
\end{itemize}
The single real parameter $N$ contains the information about
both the order $\lfloor\!\lfloor N\rfloor\!\rfloor$ of the bounded derivatives
and the H\"older continuity index $N^{**}$,
and leads to cleaner formulations than
stating separate conditions for $\lfloor\!\lfloor N\rfloor\!\rfloor$ and $N^{**}$.
On the other hand, only the integer part of $L$ actually matters in these conditions.
Later on we will encounter situations,
where we have a lower bound for $L$ in terms of some real number.
By what we have observed, conditions like ``$L\geq J-n-s$''
and ``$L\geq\lfloor J-n-s\rfloor$'' are equivalent,
so we can make the formulas slightly simpler by dropping the redundant integer part operation.

The following two results are respectively \cite[Lemmas B.1 and B.2]{fj90},
where we correct a typo in the relation between $j$ and $k$
in the first lemma and also change some symbols to avoid conflicting notation.

\begin{lemma}\label{fj B1}
Let $D>n$, $E\in\mathbb Z_+$, $\theta\in(0,1]$, and $F>n+E+\theta$.
Suppose that $g,h\in L^1$ satisfy,
for some $j,k\in\mathbb Z$ with $j\leq k$ and
for any $x,y\in\mathbb{R}^n$,
\begin{equation*}
\left|\partial^\gamma g(x)\right|
\leq2^{j(\frac{n}{2}+|\gamma|)}
\left(1+2^j|x|\right)^{-D}\text{ if }|\gamma|\leq E,
\end{equation*}
\begin{equation*}
\left|\partial^\gamma g(x)-\partial^\gamma g(y)\right|
\leq2^{j(\frac{n}{2}+E+\theta)} |x-y|^{\theta}
\sup_{z\leq |x-y|}\left(1+2^j|z-x|\right)^{-D}\text{ if }|\gamma|=E,
\end{equation*}
\begin{equation*}
|h(x)|\leq2^{k\frac{n}{2}}\left(1+2^k|x|\right)^{-\max(D,F)},\
\mathrm{and}\
\int_{\mathbb R^n}x^\gamma h(x)\,dx=0\text{ if }|\gamma|\leq E.
\end{equation*}
Then there exists a positive constant $C$,
independent of $j$, $k$, and $x$, such that, for any $x\in\mathbb{R}^n$,
\begin{equation*}
|g*h(x)|\leq C2^{-(k-j)(E+\theta+\frac{n}{2})}\left(1+2^j|x|\right)^{-D}.
\end{equation*}
\end{lemma}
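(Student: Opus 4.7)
The plan is to exploit the vanishing moments of $h$ via Taylor expansion of $g$, combined with a dyadic decomposition of the integration domain. Since $\int_{\mathbb R^n}y^\gamma h(y)\,dy=0$ for all $|\gamma|\leq E$, the Taylor polynomial
\[
T_E^g(x;-y):=\sum_{|\gamma|\leq E}\frac{(-y)^\gamma}{\gamma!}\partial^\gamma g(x),
\]
which is polynomial in $y$ of degree at most $E$, integrates against $h$ to give zero. Hence
\[
g*h(x)=\int_{\mathbb R^n}\left[g(x-y)-T_E^g(x;-y)\right]h(y)\,dy,
\]
and the entire proof reduces to showing that this integral admits the claimed bound.

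Next, I would estimate the bracketed difference in two complementary ways. The first, obtained via a one-dimensional Taylor expansion of $t\mapsto g(x-ty)$ on $[0,1]$ followed by the H\"older hypothesis on $\partial^\gamma g$ for $|\gamma|=E$, gives the sharp remainder
\[
|g(x-y)-T_E^g(x;-y)|\leq C|y|^{E+\theta}2^{j(n/2+E+\theta)}\sup_{|z|\leq|y|}(1+2^j|x-z|)^{-D}.
\]
The second is the crude triangle-inequality bound
\[
|g(x-y)-T_E^g(x;-y)|\leq 2^{jn/2}(1+2^j|x-y|)^{-D}+C\,2^{jn/2}(1+2^j|x|)^{-D}(1+2^j|y|)^E,
\]
using only the pointwise estimates on $g$ and its derivatives directly.

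Then I would split the $y$-integration based on scale, using the Taylor remainder in the regime $|y|\lesssim 2^{-j}$, where the sup is controlled by $C(1+2^j|x|)^{-D}$, and the crude bound in the complementary regime. In each piece, the core computation reduces to evaluating integrals of the form $\int|y|^\alpha(1+2^k|y|)^{-\max(D,F)}\,dy$ via the change of variable $u=2^ky$; the hypothesis $F>n+E+\theta$ is precisely what is needed for convergence of the relevant tail integrals, producing a factor of $2^{-k(n+E+\theta)}$. Combined with the prefactors $2^{j(n/2+E+\theta)}$ and $2^{kn/2}$ carried over from the pointwise bounds on $g$ and $h$, one obtains the claimed decay $2^{-(k-j)(E+\theta+n/2)}$. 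To preserve the spatial factor $(1+2^j|x|)^{-D}$ when the crude bound is used, I would invoke the Peetre-type inequality $(1+2^j|x|)\leq(1+2^j|x-y|)(1+2^j|y|)\leq(1+2^j|x-y|)(1+2^k|y|)$ to redistribute decay factors between the two integrands.

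The main technical obstacle is the careful bookkeeping of exponents across the several regimes of $|y|$ relative to $2^{-j}$ and $|x|$, and in particular the sub-case where $|y|$ is comparable to $|x|$ and both are large. There, the sup in the Taylor remainder must be tracked more precisely as $(1+2^j(|x|-|y|)_+)^{-D}$ rather than merely $\leq 1$, and the choice of the exponent $\max(D,F)$ on the decay of $h$ is exactly what allows the $(1+2^j|x|)^{-D}$ factor to be reconstituted from the $h$-side when the $g$-side loses $x$-dependence.
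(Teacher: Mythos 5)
The paper does not itself prove this lemma: it is imported verbatim from Frazier--Jawerth \cite[Lemma B.1]{fj90} (correcting a typo in the relation $j\leq k$ and renaming symbols), so there is no in-paper proof to compare against. Your proposal reconstructs exactly the classical Frazier--Jawerth argument and is correct in substance: subtract the degree-$E$ Taylor polynomial of $g$ at $x$ via the moments of $h$, bound the remainder by the H\"older hypothesis in the near zone $|y|\lesssim2^{-j}$ and by the triangle inequality plus the pointwise derivative bounds in the far zone, and use Peetre's inequality together with the $\max(D,F)$-decay of $h$ to recover $(1+2^j|x|)^{-D}$. The role you assign to $F>n+E+\theta$ is the right one: after the change of variable $u=2^ky$ and using that $|y|\gtrsim2^{-j}$ forces $|u|\gtrsim2^{k-j}$, the tail integral produces the extra gain $2^{-(k-j)(E+\theta)}$ beyond the trivial $2^{-(k-j)n/2}$.

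One caution on the sub-case you flag ($|y|\sim|x|\gtrsim 2^{-j}$). Your suggestion to refine the sup to $(1+2^j(|x|-|y|)_+)^{-D}$ does not help there, since that quantity is still $\sim1$ when $|y|\geq|x|$; moreover, if $F\leq D$, the $h$-decay cannot simultaneously supply $(1+2^j|x|)^{-D}$ and a convergent $\int|y|^{E+\theta}\,dy$. In that regime one genuinely needs the crude bound. The Taylor-polynomial piece works as you describe, but for the $|g(x-y)|$ piece restricted to $|x-y|<|x|/2$ the integral $\int(1+2^j|x-y|)^{-D}\,dy\sim2^{-jn}$ introduces a factor $2^{(k-j)n/2}$ of the wrong sign; this is absorbed only because on this set $|y|\sim|x|\gtrsim2^{-j}$ forces $2^k|x|\gtrsim2^{k-j}$, so the full $(1+2^k|x|)^{-\max(D,F)}$ decay of $h$, together with $\max(D,F)>n+E+\theta$, supplies both the spatial factor $(1+2^j|x|)^{-D}$ and a residual $2^{-(k-j)(n+E+\theta)}$. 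You flag this as the hard case, so you are aware of it; just make sure the eventual write-up does not lean on the refined sup there.
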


\begin{lemma}\label{fj B2}
Let $D\in(n,\infty)$ and suppose that $g,h\in L^1$ satisfy,
for some $j,k\in\mathbb Z$ with $j\leq k$  and for any $x\in\mathbb{R}^n$,
\begin{equation*}
|g(x)|\leq2^{j\frac{n}{2}}\left(1+2^j|x|\right)^{-D}
\text{ and }
|h(x)|\leq2^{k\frac{n}{2}}\left(1+2^k|x|\right)^{-D}.
\end{equation*}
Then there exists a positive constant $C$,
independent of $j$, $k$, and $x$, such that, for any $x\in\mathbb{R}^n$,
\begin{equation*}
|g*h(x)|\leq C2^{-(k-j)\frac{n}{2}}\left(1+2^j|x|\right)^{-D}.
\end{equation*}
\end{lemma}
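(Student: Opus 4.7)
My plan is a case analysis based on the location of $x$ relative to the coarser scale $2^{-j}$, and, within the far field, on the relative size of $|y|$ and $|x|$ in the convolution integral. A preliminary rescaling $g\mapsto 2^{-jn/2}g(2^{-j}\cdot)$ and $h\mapsto 2^{-jn/2}h(2^{-j}\cdot)$ reduces the problem to the case $j=0$ and $k\ge 0$: it suffices to show $|g*h(x)|\lesssim 2^{-kn/2}(1+|x|)^{-D}$ whenever $|g(y)|\le(1+|y|)^{-D}$ and $|h(y)|\le 2^{kn/2}(1+2^k|y|)^{-D}$. A routine substitution $u=2^k y$, together with $D>n$, yields the three basic ingredients $\|g\|_\infty\le 1$, $\|g\|_1\lesssim 1$, and $\|h\|_1\lesssim 2^{-kn/2}$.

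In the \emph{near field} $|x|\le 1$, one has $(1+|x|)^{-D}\sim 1$, and Young's inequality $|g*h(x)|\le\|g\|_\infty\|h\|_1\lesssim 2^{-kn/2}$ already gives the target bound. For the \emph{far field} $|x|>1$, I would split the integral at $|y|=|x|/2$. On the inner region $\{|y|\le|x|/2\}$ one has $|x-y|\ge|x|/2$, hence $|g(x-y)|\le(1+|x-y|)^{-D}\lesssim(1+|x|)^{-D}$; pulling this outside and using $\|h\|_1\lesssim 2^{-kn/2}$ produces the target. On the outer region $\{|y|>|x|/2\}$ one has $2^k|y|\ge 2^{k-1}|x|\ge 1/2$, hence $(1+2^k|y|)^{-D}\lesssim(2^k|x|)^{-D}$; pulling the resulting bound $|h(y)|\lesssim 2^{kn/2}(2^k|x|)^{-D}=2^{k(n/2-D)}|x|^{-D}$ outside the integral and bounding the remainder by $\|g\|_1\lesssim 1$ yields a contribution of order $2^{k(n/2-D)}|x|^{-D}$. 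To match the target $2^{-kn/2}|x|^{-D}$ one needs $2^{k(n-D)}\lesssim 1$, which holds because $D>n$ and $k\ge 0$.

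The main obstacle lies in the outer far-field subcase: the naive Young-type bound loses a factor $2^{kn/2}$ with the \emph{wrong} sign, and it is precisely the strict hypothesis $D>n$ (together with $k-j\ge 0$ in the original variables) that converts $2^{k(n-D)}$ into a bounded quantity and produces the correct decay $2^{-(k-j)n/2}$. Note also that no smoothness, moment, or cancellation condition is used on either function, so this is the baseline convolution decay extractable from pure scale-localization; the stronger Lemma \ref{fj B1} improves the exponent to $-(k-j)(n/2+E+\theta)$ by additionally exploiting smoothness of $g$ and vanishing moments of $h$.
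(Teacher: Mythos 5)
Your proof is correct. After rescaling to $j=0$, $\widetilde k:=k-j\ge 0$, the three reductions $\|g\|_{L^\infty}\le 1$, $\|g\|_{L^1}\lesssim 1$, $\|h\|_{L^1}\lesssim 2^{-\widetilde kn/2}$ follow from $D>n$, the near-field case is Young's inequality, and the far-field split at $|y|=|x|/2$ correctly hands the decay to whichever factor is ``far'' from the origin; the only place the strict inequality $D>n$ together with $\widetilde k\ge 0$ enters beyond integrability is to absorb $2^{\widetilde k(n-D)}\le 1$ in the outer region, exactly as you note. The paper itself does not prove this lemma but quotes it as \cite[Lemma B.2]{fj90}; the Frazier--Jawerth argument there is essentially the same dyadic split of the convolution integral, so your proposal matches the intended approach rather than offering a genuinely different route.
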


The following lemma is analogous to \cite[Corollary B.3]{fj90},
but stated in terms of generic $(K,L,M,N)$-molecules,
instead of molecules of $\dot F^s_{p,q}$ in \cite{fj90}.
This will allow us to identify and
motivate a reasonable concept of molecules of $\dot A^{s,\tau}_{p,q}(W)$.

\begin{lemma}\label{like B3}
Let $m_Q$ be a $(K_m,L_m,M_m,N_m)$-molecule on a cube $Q$
and let $b_P$ be a $(K_b,L_b,M_b,N_b)$-molecule on a cube $P$,
where $K_m,M_m,K_b,M_b\in(n,\infty)$ and $L_m,N_m,L_b,N_b$ are real numbers.
Then, for any $\alpha\in(0,\infty)$,
there exists a positive constant $C$ such that
$|\langle m_Q,b_P\rangle|
\leq C b_{Q,P}^{MGH},$
where $b_{Q,R}^{MGH}$ is the same as in \eqref{bDEF},
$M:=K_m\wedge M_m\wedge K_b\wedge M_b\in(n,\infty),$
and, with $\lceil\!\lceil\cdot\rceil\!\rceil$ being the same as in \eqref{ceil},
\begin{equation*}
G:=\frac{n}{2}+[N_b\wedge\lceil\!\lceil L_m\rceil\!\rceil\wedge(K_m-n-\alpha)]_+,\
H:=\frac{n}{2}+[N_m\wedge\lceil\!\lceil L_b\rceil\!\rceil\wedge(K_b-n-\alpha)]_+.
\end{equation*}
\end{lemma}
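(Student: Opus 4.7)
The plan is to reduce the estimate to the Frazier--Jawerth convolution lemmas (Lemmas \ref{fj B1} and \ref{fj B2}) and then to optimize the choice of their internal parameters $E$ and $\theta$ according to the smoothness, cancellation, and decay available in the two molecules. Since the conclusion is symmetric under the simultaneous swap $(m_Q,Q,K_m,L_m,M_m,N_m)\leftrightarrow(b_P,P,K_b,L_b,M_b,N_b)$ (which also exchanges $G$ with $H$ and the two branches of \eqref{bDEF}), it suffices to prove the bound assuming $\ell(Q)\leq\ell(P)$, in which case only the exponent $G$ appears in $b_{Q,P}^{MGH}$.

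Write $\ell(Q)=2^{-j_Q}$ and $\ell(P)=2^{-j_P}$, so that $j_P\leq j_Q$. After translating both molecules so that they are centered at the origin and reflecting one of them, the inner product $\langle m_Q,b_P\rangle$ takes the form $(g*h)(y)$ at a point $y$ with $|y|\sim|x_Q-x_P|$, where $g$ is a reflected translate of $b_P$ at scale $j_P$ and $h$ is a translate of $m_Q$ at scale $j_Q$. The hypotheses of Lemmas \ref{fj B1} and \ref{fj B2} are then satisfied with $D:=M=K_m\wedge M_m\wedge K_b\wedge M_b\in(n,\infty)$: the decay of $g$ is at least $K_b$, the decay of the derivatives of $g$ of any order up to $\lfloor\!\lfloor N_b\rfloor\!\rfloor$ is at least $M_b$, the H\"older condition of order $N_b^{**}$ on the top-order derivative of $g$ is built into the molecule definition, and the decay $K_m$ of $h$ may be taken for $\max(D,F)$.

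Lemma \ref{fj B2} immediately yields the bound with exponent $n/2$ of $\ell(Q)/\ell(P)$, which is already enough whenever $[N_b\wedge\lceil\!\lceil L_m\rceil\!\rceil\wedge(K_m-n-\alpha)]_+=0$. Otherwise I would apply Lemma \ref{fj B1} with $E\in\mathbb{Z}_+$ and $\theta\in(0,1]$ chosen to maximize $E+\theta$ under three constraints: the cancellation of $h=m_Q$ forces $E\leq\lfloor L_m\rfloor$ and hence $E+\theta\leq\lceil\!\lceil L_m\rceil\!\rceil$; the smoothness of $g=b_P$ forces $E+\theta\leq N_b$, with $\theta=N_b^{**}$ when $E=\lfloor\!\lfloor N_b\rfloor\!\rfloor$ and $\theta=1$ arising from the mean value theorem applied to the $(E+1)$st derivative of $g$ when $E<\lfloor\!\lfloor N_b\rfloor\!\rfloor$; and the hypothesis $F>n+E+\theta$, together with the constraint $F\leq K_m$, forces $E+\theta\leq K_m-n-\alpha$. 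A brief case analysis separating $\lfloor L_m\rfloor\geq\lfloor\!\lfloor N_b\rfloor\!\rfloor$ from $\lfloor L_m\rfloor<\lfloor\!\lfloor N_b\rfloor\!\rfloor$ shows that an admissible pair $(E,\theta)$ with $E+\theta=N_b\wedge\lceil\!\lceil L_m\rceil\!\rceil\wedge(K_m-n-\alpha)$ can always be produced, and Lemma \ref{fj B1} then delivers the scale exponent $G=n/2+[N_b\wedge\lceil\!\lceil L_m\rceil\!\rceil\wedge(K_m-n-\alpha)]_+$. The main obstacle is precisely this bookkeeping of the three constraints together with the subtle interplay between $\lfloor\!\lfloor\cdot\rfloor\!\rfloor$, $\lceil\!\lceil\cdot\rceil\!\rceil$, and $(\cdot)^{**}$ hidden in the final formula.
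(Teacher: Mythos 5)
Your proposal follows essentially the same path as the paper's own proof: reduce the pairing to a convolution by translating and reflecting the molecules, apply Lemma \ref{fj B2} for the coarse $n/2$ exponent, then apply Lemma \ref{fj B1} with $E+\theta$ chosen as a single real parameter constrained simultaneously by the smoothness of the coarser molecule, the cancellation of the finer one, and the $F>n+E+\theta$, $\max(D,F)\leq K$ decay constraint (which yields the $K-n-\alpha$ cutoff). The only cosmetic differences are that you start from the case $\ell(Q)\leq\ell(P)$ (the paper starts from $\ell(Q)\geq\ell(P)$ and symmetrizes) and that you pick $D=M$ directly rather than choosing a larger $D$ and relaxing at the end; both choices are sound.
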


\begin{proof}
Suppose first that $\ell(Q)=2^{-j}\geq2^{-k}=\ell(P)$, and consider the functions
\begin{equation*}
g(\cdot):=m_Q(x_Q-\cdot)
\text{ and }
h(\cdot):=b_P(x_P+\cdot)
\end{equation*}
which are molecules on cubes of respective lengths $\ell(Q)$ and $\ell(P)$,
but both close to the origin. Moreover, by a change of variables, we have
\begin{align}\label{182}
\langle m_Q,m_P\rangle
&=\int_{\mathbb R^n}g(x_Q-y)h(y-x_P)\,dy\\
&=\int_{\mathbb R^n}g(x)h(x_Q-x_P-x)\,dx
=(g*h)(x_Q-x_P).\notag
\end{align}
Let us immediately observe that both $g$ and $h$ satisfy
the assumptions of Lemma \ref{fj B2} with $D:=K_Q\wedge K_P$.
Thus, \eqref{182} and Lemma \ref{fj B2} prove that
\begin{align}\label{from B2}
|\langle m_Q,b_P\rangle|
&\lesssim2^{-(k-j)\frac{n}{2}}\left(1+2^j|x_Q-x_P|\right)^{-D}\\
&\leq\left[\frac{\ell(P)}{\ell(Q)}\right]^{\frac{n}{2}}
\left[1+\frac{|x_Q-x_P|}{\ell(Q)}\right]^{-(K_m\wedge M_m\wedge K_b\wedge M_b)},\notag
\end{align}
where replacing $D$ by the smaller quantity in the last step only increases the bound.

We then assume that $N_m>0$ and $L_b\geq 0$ to derive an alternative bound.
Then, under these assumptions, $g$ satisfies the assumptions of Lemma \ref{fj B1},
provided that the parameters are chosen so that
$n<D\leq M_m$ and $E+\theta\leq N_m,$
whereas $h$ satisfies the assumptions of Lemma \ref{fj B1}, provided that
$\max(D,F)\leq K_b,$ $E\leq\lfloor L_b\rfloor,$ and $E+\theta<F-n.$
Let us begin by choosing
$D=M_m\wedge K_b$ and $F=K_b.$
Turning to $E$ and $\theta$, let us observe that
these are uniquely determined by the sum $E+\theta$
because $E=\lfloor\!\lfloor E+\theta\rfloor\!\rfloor$ and $\theta=(E+\theta)^{**}$,
where $\lfloor\!\lfloor E+\theta\rfloor\!\rfloor $ and $ (E+\theta)^{**}$ are the same as,
respectively, in \eqref{ceil} and \eqref{r**},
so it suffices to consider $E+\theta$ as a whole.
Since $\theta\in(0,1]$ and $E\in\mathbb{Z}_+$, the condition $E\leq\lfloor L_b\rfloor$
is equivalent to $E+\theta\leq\lfloor L_b\rfloor+1=\lceil\!\lceil L_b\rceil\!\rceil$
and hence the totality of restrictions imposed on $E+\theta$ is
$E+\theta\leq N_m\wedge\lceil\!\lceil L_b\rceil\!\rceil$ and
$E+\theta<K_b-n.$
Thus, with $\alpha>0$, we can choose
\begin{equation*}
E+\theta=[N_m\wedge\lceil\!\lceil L_b\rceil\!\rceil\wedge (K_b-n-\alpha)]_+,
\end{equation*}
where we are free to take the positive part
because, under these assumptions, both $N_m>0$ and $\lceil\!\lceil L_b\rceil\!\rceil>0$
and also because $K_b-n-\alpha>0$ for small enough $\alpha>0$ (because $K_b>n$).

With these choices, using \eqref{182} and Lemma \ref{fj B1}, we obtain
\begin{align}\label{from B1}
|\langle m_Q,b_P\rangle|
&\lesssim2^{-(k-j)(E+\theta+\frac{n}{2})}\left(1+2^j|x_Q-x_P|\right) ^{-D}\\
&\leq\left[\frac{\ell(P)}{\ell(Q)}\right]^{\frac{n}{2}
+[N_m\wedge\lceil\!\lceil L_b\rceil\!\rceil\wedge(K_b-n-\alpha)]_+}
\left[ 1+\frac{|x_Q-x_P|}{\ell(Q)}\right]^{-(K_m\wedge M_m\wedge K_b\wedge M_b)},\notag
\end{align}
where again we symmetrised the last exponent which only makes the bound bigger.

Bound \eqref{from B1} was achieved under the assumption that $N_m>0$ and $L_b\geq0$.
But, if either of these conditions fails,
then $[N_m\wedge\lceil\!\lceil L_b\rceil\!\rceil\wedge(K_b-n-\alpha)]_+=0$
and \eqref{from B1} reduces to \eqref{from B2}, and hence \eqref{from B1} is always valid.
Thus, in fact, \eqref{from B1} is a valid bound under no other restrictions on
the molecular parameters than those in the statement of the present lemma.
This bound agrees with that stated in the present lemma under the assumption that $\ell(Q)\geq\ell(P)$.
For $\ell(Q)\leq\ell(P)$, we simply exchange the roles of $P$ and $Q$ to arrive at the other bound.
This finishes the proof of Lemma \ref{like B3}.
\end{proof}

We can now identify minimal smoothness,
decay, and cancellation conditions on families of molecules
to produce $\dot a^{s,\tau}_{p,q}(d)$-almost diagonal operators.

\begin{theorem}\label{adMol}
Let $s\in\mathbb R$, $\tau\in[0,\infty)$, $p\in(0,\infty)$, $q\in(0,\infty]$, and $d\in[0,n)$.
Let $\{m_Q\}_{Q\in\mathscr Q}$ be a family of $(K_m,L_m,M_m,N_m)$-molecules
and $\{b_P\}_{P\in\mathscr Q}$ another family of $(K_b,L_b,M_b,N_b)$-molecules,
each on the cube indicated by its subscript.
Then the infinite matrix $\{\langle b_Q,m_P\rangle\}_{Q,P\in\mathscr Q}$
is $\dot a^{s,\tau}_{p,q}(d)$-almost diagonal provided that
\begin{equation}\label{synMol}
K_b>\widetilde J+(\widetilde{s})_-,\
L_b\geq\widetilde J-n-\widetilde s,\
M_b>\widetilde J,\
N_b>\widetilde s,
\end{equation}
and
\begin{equation}\label{anaMol}
K_m>\widetilde J+\left(\widetilde J-n-\widetilde s\right)_-,\
L_m\geq\widetilde{s},\
M_m>\widetilde{J},\
N_m>\widetilde{J}-n-\widetilde{s},
\end{equation}
where $\widetilde J$ and $\widetilde s$ are the same as in \eqref{tauJ2}.
\end{theorem}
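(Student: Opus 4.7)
The plan is to reduce the theorem to an application of the bound from Lemma \ref{like B3} and then verify that the resulting exponents satisfy the thresholds in Definition \ref{def ad tau}. First I would observe that the statement amounts to producing $(D,E,F)$ with $D>\widetilde J$, $E>\frac{n}{2}+\widetilde s$, and $F>\widetilde J-\frac{n}{2}-\widetilde s$, together with a constant $C>0$, such that $|\langle b_Q,m_P\rangle|\leq C\,b_{Q,P}^{DEF}$ holds uniformly in $Q,P\in\mathscr Q$.

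Next I would invoke Lemma \ref{like B3} with $b_Q$ and $m_P$ playing the roles of the two molecules. For any small $\alpha>0$ this produces a bound of the form $|\langle b_Q,m_P\rangle|\lesssim b_{Q,P}^{MGH}$ with $M=K_b\wedge M_b\wedge K_m\wedge M_m$, and with $G$ and $H$ (the exponents for the two branches $\ell(Q)\leq\ell(P)$ and $\ell(P)<\ell(Q)$) each equal to $\frac{n}{2}$ plus the positive part of a three-way minimum involving a smoothness $N$, a $\lceil\!\lceil\cdot\rceil\!\rceil$-lifted cancellation $L$, and a $(K-n-\alpha)$ decay margin; the precise matching of $(D,E,F)$ with $(M,G,H)$ is determined by which of $b_Q$ or $m_P$ sits at the smaller cube in each branch.

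The remaining and principal task is then to check that, for a sufficiently small $\alpha>0$, the three inequalities $M>\widetilde J$, $G>\frac{n}{2}+\widetilde s$, and $H>\widetilde J-\frac{n}{2}-\widetilde s$ all hold under the hypotheses \eqref{synMol} and \eqref{anaMol}. The bound on $M$ is immediate, since each of $K_b,M_b,K_m,M_m$ strictly exceeds $\widetilde J$ (noting that $(\cdot)_-\geq 0$ so the correction terms in the $K$-conditions only strengthen the hypothesis). The verifications of the $G$- and $H$-bounds are the core of the argument and proceed by case analysis on the signs of the thresholds $\widetilde s$ and $\widetilde J-n-\widetilde s$: whenever the relevant threshold is negative, the positive-part operation together with the outer $\frac{n}{2}$ renders the inequality automatic; when it is non-negative, each of the three ingredients of the minimum must exceed the threshold, which follows from the strict hypotheses $N_b>\widetilde s$ and $N_m>\widetilde J-n-\widetilde s$, from the observation $\lceil\!\lceil L\rceil\!\rceil>L$ (which converts the non-strict cancellation hypotheses $L_b\geq\widetilde J-n-\widetilde s$ and $L_m\geq\widetilde s$ into strict inequalities after lifting), and from choosing $\alpha$ small enough to absorb the slack in the $K$-hypotheses, the $(\cdot)_-$ correction terms in $K_b$ and $K_m$ being tuned precisely so that this slack exists.

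The main obstacle will be handling the case analysis uniformly, so that a single choice of $\alpha>0$ works across all sign configurations of $\widetilde s$ and $\widetilde J-n-\widetilde s$, and so that the ingredient of each three-way minimum which dominates in each case is the one supplied by the hypotheses; the careful placement of $(\widetilde s)_-$ in the $K_b$-condition and of $(\widetilde J-n-\widetilde s)_-$ in the $K_m$-condition is designed precisely to make the verification go through in every case, without any further structural assumption beyond \eqref{synMol}, \eqref{anaMol}, and Lemma \ref{like B3}.
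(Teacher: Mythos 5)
Your proposal is correct and follows essentially the same route the paper takes: apply Lemma \ref{like B3} to obtain a bound by $b_{Q,P}^{MGH}$, then compare $(M,G,H)$ against the thresholds $\widetilde J$, $\frac{n}{2}+\widetilde s$, $\widetilde J-\frac{n}{2}-\widetilde s$ from Definition \ref{def ad tau}, reducing the theorem to the three inequalities the paper records as \eqref{3x >}. Your treatment of the positive parts by splitting on the signs of $\widetilde s$ and $\widetilde J-n-\widetilde s$ is slightly more direct than the paper's argument (the paper instead argues that $(\cdot)_+$ may be dropped ``without loss of generality'' by analysing what negative values of $N$, $L$ can mean for molecular conditions, and that ``$K-n-\alpha>y$ for some $\alpha>0$'' is simply ``$K-n>y$''), but both routes are valid and yield the same conclusion.

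One thing you should nail down when writing this out: the assignment of the two scaling exponents of $b_{Q,P}^{MGH}$ (the branch with $\ell(Q)\le\ell(P)$ versus $\ell(P)<\ell(Q)$) to the concrete combinations of $(N,L,K)$ from the two families. You leave this deliberately vague (``the precise matching is determined by which of $b_Q$ or $m_P$ sits at the smaller cube''), but then in your case analysis you pair $N_b$ with the threshold $\widetilde s$ and $N_m$ with the threshold $\widetilde J-n-\widetilde s$; you should verify explicitly that this is the pairing Lemma \ref{like B3} actually produces, since \eqref{synMol} and \eqref{anaMol} are not symmetric in $b$ and $m$ and the wrong orientation would not match the hypotheses. (As a side remark on the source: if you apply Lemma \ref{like B3} verbatim to $\langle b_Q,m_P\rangle$, the roles of the first and second molecule are swapped relative to the lemma's statement, which swaps $G$ and $H$; the paper's display \eqref{3x >}, and hence \eqref{synMol}--\eqref{anaMol}, are what one gets for $\{\langle m_Q,b_P\rangle\}_{Q,P}$, and this is also what Corollary \ref{83} and Theorem \ref{89} actually invoke. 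So your pairing is the one the paper intends; just be sure to trace the indices carefully so the justification is airtight.)
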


\begin{proof}
For $\widetilde J$ in \eqref{tauJ2}, we notice that $\widetilde J\geq n$, and hence $K_b,K_m,M_b,M_m>n$ under the assumptions of the theorem.
Thus, Lemma \ref{like B3} guarantees that
$|\langle b_Q,m_P\rangle|\lesssim b_{Q,P}^{MGH}$ for any $Q,P\in\mathscr Q$
with $M,G,H$ the same as in Lemma \ref{like B3}.

Let us compare this conclusion with Definition \ref{def ad tau} of the $\dot a^{s,\tau}_{p,q}(d)$-almost diagonal matrix
which is $(D,E,F)$-almost diagonal with
$D>\widetilde{J},$ $E>\frac{n}{2}+\widetilde{s},$ and
$F>\widetilde J-\frac{n}{2}-\widetilde s.$
Thus, we find that, to show the present theorem,
it suffices to prove that, for some $\alpha>0$,
\begin{equation}\label{3x >}
\begin{aligned}
&K_b\wedge M_b\wedge K_m\wedge M_m>\widetilde J,\\
&[N_b\wedge\lceil\!\lceil L_m\rceil\!\rceil\wedge(K_m-n-\alpha)]_+>\widetilde s,\text{ and }\\
&[N_m\wedge\lceil\!\lceil L_b\rceil\!\rceil\wedge(K_b-n-\alpha)]_+>\widetilde J-n-\widetilde s.
\end{aligned}
\end{equation}

To simplify these conditions, we claim that, without loss of generality,
we may drop the positive part operator $(\cdot)_+$ from the above conditions.
In fact, first, if $x_+>y\geq 0$, then indeed $x>0$ (because otherwise $x_+=0$),
and hence $x=x_+>y$. Suppose then that $y<0$. Since $x_+\geq 0>y$,
in general we could not make any conclusions about $x$ in this case.
However, let us check that, in the particular case of \eqref{3x >},
both quantities $x$ inside $(\cdot)_+$ may also be assumed to satisfy $x>y$
for any negative $y$ without loss of generality.
This depends on the fact that the numbers here are molecular parameters.
If a $(K,L,M,N)$-molecule has $N<0$ (resp. $L<0$),
then there are no derivative (resp. no cancellation) conditions assumed,
and the precise value of $N$ (resp. $L$) is irrelevant,
as long as it remains negative.
If $y<0$, then a condition of the type $N>y$ still allows $N\in(y,0)$,
which is a void condition. Similarly, $\lceil\!\lceil L\rceil\!\rceil>y$ allows $\lceil\!\lceil L\rceil\!\rceil=0$,
and hence $L\in[-1,0)$, which is again a void condition.
Finally, the first condition in \eqref{3x >} already implies that
both $K\in\{K_b,K_m\}$ satisfy $K>\widetilde J\geq n$,
and hence $K-n-\alpha>y$ for any given negative $y$ when $\alpha>0$ is small enough.
Thus, if $y\in\{\widetilde s,\widetilde J-n-\widetilde s\}$ is negative
and the corresponding bound in \eqref{3x >} holds as stated,
it also holds without the $(\cdot)_+$.

Also note that ``$K-n-\alpha>y$ for some $\alpha>0$'' is simply equivalent to ``$K-n>y$''.
Finally, if $\lceil\!\lceil L\rceil\!\rceil>y$, it means that
$\lfloor L\rfloor+1
=\lceil\!\lceil L\rceil\!\rceil\geq\lceil\!\lceil y\rceil\!\rceil
=\lfloor y\rfloor+1$,
and hence $L\geq\lfloor L\rfloor\geq\lfloor y\rfloor$.
On the other hand, the molecular condition involving $L$ only depends on the integer part,
so, if $L\geq\lfloor y\rfloor$,
we may as well take $L\geq y$, without changing the condition.

With these simplifications observed, we can readily write down
the conditions for each of the eight molecular parameters from \eqref{3x >}.
For $K_m$, we have two different lower bounds that combine to give
\begin{equation*}
K_m
>\widetilde J\vee(n+\widetilde s)
=\widetilde J\vee\left[\widetilde J-\left(\widetilde J-n-\widetilde s\right)\right]
=\widetilde J+\left(\widetilde J-n-\widetilde s\right)_-.
\end{equation*}
Similarly, for $K_m$, we have
$K_b>\widetilde{J}\vee\left(\widetilde{J}-\widetilde{s}\right)
=\widetilde J+(\widetilde s)_-.$
For all other six parameters, there exists only one constraint for each in \eqref{3x >},
and these are seen to be consistent with the assertions of the present lemma
by the observations that we made above.
This finishes the proof of Theorem \ref{adMol}.
\end{proof}

To facilitate talking about molecular conditions as in \eqref{anaMol} and \eqref{synMol},
we give the following definition.

\begin{definition}\label{def Js mol}
A \emph{$(J,s)$-molecule} is any $(K,L,M,N)$-molecule for parameters that satisfy
$K>J+s_-,$ $L\geq J-n-s,$ $M>J,$ and $N>s.$
Whenever we talk about a family of $(J,s)$-molecules,
we understand that they should be the $(K,L,M,N)$-molecules
for some fixed quadruple $(K,L,M,N)$ that satisfies the conditions above.
\end{definition}

With this terminology, we further define the analysis and the synthesis molecules as follows.

\begin{definition}\label{def A mol}
Let $p\in(0,\infty)$, $q\in(0,\infty]$, $s\in\mathbb R$, $\tau\in[0,\infty)$, and $d\in[0,n)$.
Let $J$ be the same as in \eqref{J}, and
$\widetilde J$ and $\widetilde s$ the same as in \eqref{tauJ2}. Then
\begin{enumerate}[\rm(i)]
\item A $(\widetilde{J},\widetilde{s})$-molecule on a cube $Q$ is called
an \emph{$\dot A^{s,\tau}_{p,q}(d)$-synthesis molecule} on $Q$;
\item A $(\widetilde{J},\widetilde{J}-n-\widetilde{s})$-molecule on a cube $Q$ is called
an \emph{$\dot A^{s,\tau}_{p,q}(d)$-analysis molecule} on $Q$;
\item A $(J,s)$-molecule on a cube $Q$ is called
an \emph{$\dot A^{s}_{p,q}$-synthesis molecule} on $Q$;
\item A $(J,J-n-s)$-molecule on a cube $Q$ is called
an \emph{$\dot A^{s}_{p,q}$-analysis molecule} on $Q$.
\end{enumerate}
We apply the same convention to families of such molecules
as in Definition \ref{def Js mol}.
Whenever we talk about a family of molecules $\{m_Q\}_{Q\in\mathscr Q}$
indexed by the family $\mathscr Q$ of dyadic cubes, we understand that each $m_Q$
is a molecule on the respective cube $Q\in\mathscr Q$.
\end{definition}

\begin{remark}
Both (iii) and (iv) of Definition \ref{def A mol}
are equivalent to the classical definition (see \cite[p.\,56 and p.\,57]{fj90}).
\end{remark}

While both $\widetilde J$ and $\widetilde s$, as defined in \eqref{tauJ2},
depend on the number $\widehat\tau$, and thus on the $A_p$-dimension $d$,
it is interesting to observe that their difference does not.
In fact, we have the following conclusion.

\begin{lemma}\label{J minus s}
$\widetilde s-\widetilde J
=s-J_\tau+n(\tau-\frac{1}{p})_+.$
\end{lemma}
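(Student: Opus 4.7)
The plan is a direct calculation from the definitions, followed by a short case analysis. Substituting the definitions $\widetilde s = s + n\widehat\tau$ and $\widetilde J = J_\tau + [(n\widehat\tau)\wedge\frac{d}{p}]$ into $\widetilde s - \widetilde J$, the $s$ and $J_\tau$ terms appear on both sides of the desired identity, so it suffices to establish the purely numerical identity
\begin{equation*}
n\widehat\tau - \left[(n\widehat\tau)\wedge\frac{d}{p}\right]
= n\left(\tau-\frac{1}{p}\right)_+,
\end{equation*}
with $\widehat\tau = [(\tau-\frac{1}{p})+\frac{d}{np}]_+$.

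The left-hand side equals $[n\widehat\tau - \frac{d}{p}]_+$, since $x - (x\wedge y) = (x-y)_+$ for any real $x,y$. So the target becomes
\begin{equation*}
\left[n\widehat\tau - \frac{d}{p}\right]_+ = n\left(\tau-\frac{1}{p}\right)_+.
\end{equation*}
I would then split into the two cases $\tau \leq \frac{1}{p}$ and $\tau > \frac{1}{p}$. When $\tau > \frac{1}{p}$, the quantity $(\tau-\frac{1}{p}) + \frac{d}{np}$ is strictly positive, so $\widehat\tau = (\tau-\frac{1}{p}) + \frac{d}{np}$ and hence $n\widehat\tau - \frac{d}{p} = n(\tau-\frac{1}{p}) > 0$; both sides equal $n(\tau-\frac{1}{p})$. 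When $\tau \leq \frac{1}{p}$, the right-hand side is $0$, and we have $\widehat\tau = [(\tau-\frac{1}{p})+\frac{d}{np}]_+ \leq \frac{d}{np}$, so $n\widehat\tau \leq \frac{d}{p}$ and the left-hand side is also $0$.

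No step looks like a genuine obstacle; the identity reduces to elementary algebra with the positive-part function, and the only thing to be careful about is checking that the minimum in the definition of $\widetilde J$ is attained by the correct branch in each regime, which is exactly what the case split $\tau \lessgtr \frac{1}{p}$ handles cleanly.
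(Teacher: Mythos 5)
Your proof is correct and takes essentially the same route as the paper: substitute the definitions from \eqref{tauJ2}, use the identity $x-(x\wedge y)=(x-y)_+$ to reduce to $(n\widehat\tau-\frac{d}{p})_+=n(\tau-\frac{1}{p})_+$, and resolve that by splitting on the sign of $\tau-\frac{1}{p}$. Note, incidentally, that the paper's displayed intermediate step contains a spurious ``$-n$'' which does not appear in the final conclusion; your computation is the correct one.
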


\begin{proof}
This is a direct computation using the definitions of the various constants in \eqref{tauJ2}:
$$
\widetilde s-\widetilde J
=\left(s+n\widehat\tau\right)-\left[J_\tau+\left(n\widehat\tau\wedge\frac{d}{p}\right)\right]
=s-J_\tau-n+\left(n\widehat\tau-\frac{d}{p}\right)_+
$$
and
$$
\left(n\widehat\tau-\frac{d}{p}\right)_+
=\left(\left[n\tau-\frac{n}{p}+\frac{d}{p}\right]_+-\frac{d}{p}\right)_+
=n\left(\tau-\frac{1}{p}\right)_+,
$$
which complete the proof of Lemma \ref{J minus s}.
\end{proof}

\begin{lemma}\label{mol no canc}
A $(\widetilde J,\widetilde s)$-molecule has no cancellation requirements if and only if
\begin{equation*}
s>
\begin{cases}
\displaystyle-n\left(\tau-\frac{1}{p}\right)&\text{if }\tau>\frac{1}{p}\text{ or }(\tau,q)=(\frac{1}{p},\infty)\quad\textup{(}\text{``supercritical case''}\textup{)},\\
\displaystyle n\left(\frac{1}{q}-1\right)_+ &\text{if }\dot a^{s,\tau}_{p,q}=\dot f^{s,\frac{1}{p}}_{p,q}\text{ and }q<\infty\quad\textup{(}\text{``critical case''}\textup{)},\\
J-n&\text{if }\tau<\frac1p\text{, or }\dot a^{s,\tau}_{p,q}=\dot b^{s,\frac1p}_{p,q}\text{ and }q<\infty\quad\textup{(}\text{``subcritical case''}\textup{)}.
\end{cases}
\end{equation*}
\end{lemma}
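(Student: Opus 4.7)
The plan is to translate the statement ``a $(\widetilde J, \widetilde s)$-molecule has no cancellation requirements'' into a single inequality on $\widetilde s - \widetilde J$, and then to use Lemma \ref{J minus s} together with the three-case definition \eqref{tildeJ} of $J_\tau$ to read off the claimed thresholds on $s$.

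First, I will unpack Definition \ref{moleKLMN}: the cancellation condition $\int_{\mathbb R^n} x^\gamma m_Q(x)\,dx = 0$ is imposed only on multi-indices $\gamma$ with $|\gamma|\leq L$, and because $|\gamma|$ is a non-negative integer this is a vacuous requirement precisely when $L<0$, as already observed in the bullet points after Definition \ref{moleKLMN}. Since Definition \ref{def Js mol} (applied with $(J,s)=(\widetilde J,\widetilde s)$) only requires $L\geq\widetilde J-n-\widetilde s$, one can choose $L<0$---equivalently, produce a $(\widetilde J,\widetilde s)$-molecule with no cancellation requirement---if and only if $\widetilde J-n-\widetilde s<0$, i.e., $\widetilde s-\widetilde J>-n$.

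The next step is to invoke Lemma \ref{J minus s} to rewrite $\widetilde s-\widetilde J>-n$ as
\[
s>J_\tau-n-n\left(\tau-\frac{1}{p}\right)_+.
\]
All that then remains is to substitute each of the three defining cases of $J_\tau$ in \eqref{tildeJ}. In the supercritical case $J_\tau=n$ and $\tau-1/p\geq 0$ (with equality precisely when $(\tau,q)=(1/p,\infty)$), so both sub-cases collapse to $s>-n(\tau-1/p)$. In the critical case $\tau=1/p$ and $J_\tau=n/\min\{1,q\}$, so $(\tau-1/p)_+=0$ and one gets $s>n/\min\{1,q\}-n=n(1/q-1)_+$. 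In the subcritical case, $(\tau-1/p)_+=0$ and $J_\tau=J$, yielding $s>J-n$.

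I do not expect any genuine obstacle: the argument is an arithmetic unpacking once the equivalence ``no cancellation $\iff$ $L$ may be chosen negative $\iff$ $\widetilde J-n-\widetilde s<0$'' has been noted, and Lemma \ref{J minus s} absorbs all the dependence on $d$ and $\widehat\tau$ in one stroke. The only mild care needed is verifying, in the supercritical case, that the two sub-cases of \eqref{tildeJ} produce the same inequality $s>-n(\tau-1/p)$.
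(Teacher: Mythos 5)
Your argument is correct and follows essentially the same route as the paper's own proof: both reduce the absence of cancellation conditions to the admissibility of some $L<0$, i.e., to $\widetilde J-n-\widetilde s<0$, then invoke Lemma \ref{J minus s} and substitute the three cases of $J_\tau$ from \eqref{tildeJ}. The only difference is cosmetic: you carry out the final arithmetic substitution explicitly, whereas the paper simply displays the values of $J_\tau-n$ and leaves the combination to the reader.
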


\begin{proof}
From Definitions \ref{def Js mol} and \ref{def A mol},
we deduce that the degree of cancellation of a $(\widetilde J,\widetilde s)$-molecule
is $L>\widetilde J-n-\widetilde s$.
In order to have no cancellation conditions,
it is necessary and sufficient that $L<0$,
which is possible if and only if $\widetilde J-n-\widetilde s<0$.
By Lemma \ref{J minus s}, we find that
\begin{equation*}
\widetilde s+n-\widetilde J
=s-(J_\tau-n)+n\left(\tau-\frac{1}{p}\right)_+,
\end{equation*}
where, by \eqref{tildeJ},
\begin{equation*}
J_\tau-n
=\begin{cases}
0&\text{if }\tau>\frac{1}{p}\text{ or }(\tau,q)=(\frac{1}{p},\infty)\quad\textup{(}\text{``supercritical case''}\textup{)},\\
\displaystyle n\left(\frac{1}{q}-1\right)_+ &\text{if }\dot a^{s,\tau}_{p,q}=\dot f^{s,\frac{1}{p}}_{p,q}\text{ and }q<\infty\quad\textup{(}\text{``critical case''}\textup{)},\\
J-n&\text{if }\tau<\frac1p\text{, or }\dot a^{s,\tau}_{p,q}=\dot b^{s,\frac1p}_{p,q}\text{ and }q<\infty\quad\textup{(}\text{``subcritical case''}\textup{)}.
\end{cases}
\end{equation*}
The claim of the present lemma is immediate from a combination of these identities.
This finishes the proof of Lemma \ref{mol no canc}.
\end{proof}

The following lemma connects
$\dot A^{s,\tau}_{p,q}(d)$-analysis molecules (resp. synthesis molecules)
and $\dot A^{s}_{p,q}$-analysis molecules (resp. synthesis molecules),
and its proof is essentially the same as the proof of \cite[Lemma 9.5]{bhyy2};
we omit the details here.

\begin{lemma}\label{mol vs old}
Let $s\in\mathbb R$, $\tau\in[0,\infty)$, $p\in(0,\infty)$, $q\in(0,\infty]$, $d\in[0,n)$,
and both $\widetilde J$ and $\widetilde s$ be the same as in \eqref{tauJ2}, and let
$\widetilde r:=\frac{n}{\widetilde{J}}\in(0,1].$
Let $\widetilde p\in(0,\infty)$ and $\widetilde q\in(0,\infty]$ be any numbers such that
$$
\begin{cases}
\widetilde p=\widetilde r&\text{if }\dot A=\dot B,\\
\widetilde p\wedge\widetilde q=\widetilde r&\text{if }\dot A=\dot F,
\end{cases}
$$
and let $Q\in\mathscr{Q}$.
Then the following conditions are equivalent for a function $f$:
\begin{enumerate}[\rm(i)]
\item $f$ is an $\dot A^{s,\tau}_{p,q}(d)$-analysis molecule on $Q$;
\item $f$ is an $\dot A^{\widetilde s}_{\widetilde p,\widetilde q}$-analysis molecule on $Q$;
\item $f$ is an $\dot A^{\widetilde s}_{\widetilde r,\widetilde r}$-analysis molecule on $Q$.
\end{enumerate}
The same is true with ``analysis'' replaced by ``synthesis'' throughout.
\end{lemma}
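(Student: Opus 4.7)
The plan is to reduce all three conditions to the same $(K,L,M,N)$-molecule requirement from Definition \ref{moleKLMN}, and then check that the derived parameters coincide across the three formulations. Since a $(J,s)$-molecule is nothing but a $(K,L,M,N)$-molecule with the four lower bounds $K > J + s_-$, $L \geq J - n - s$, $M > J$, $N > s$, the entire argument boils down to verifying that the $J$-value produced by the prescription \eqref{J} applied to the unweighted reference spaces $\dot A^{\widetilde s}_{\widetilde p, \widetilde q}$ and $\dot A^{\widetilde s}_{\widetilde r, \widetilde r}$ agrees with the $\widetilde J$ attached to $\dot A^{s,\tau}_{p,q}(d)$ in \eqref{tauJ2}.

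First I would unwind the three conditions. By Definition \ref{def A mol}(ii), an $\dot A^{s,\tau}_{p,q}(d)$-analysis molecule on $Q$ is by definition a $(\widetilde J, \widetilde J - n - \widetilde s)$-molecule. By Definition \ref{def A mol}(iv), an $\dot A^{\widetilde s}_{\widetilde p, \widetilde q}$-analysis molecule (respectively, an $\dot A^{\widetilde s}_{\widetilde r, \widetilde r}$-analysis molecule) is a $(J', J' - n - \widetilde s)$-molecule, where $J'$ is computed from \eqref{J} using the integrability exponents of the relevant unweighted space. Thus (i) $\Leftrightarrow$ (ii) $\Leftrightarrow$ (iii) reduces to the equality $J' = \widetilde J$ in both formulations.

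Next I would verify this equality by elementary bookkeeping. Note $\widetilde r = n/\widetilde J \in (0,1]$, since $\widetilde J \geq n$ from \eqref{tauJ2}. In the Besov branch of (ii), $\widetilde p = \widetilde r$ and $\widetilde q$ does not enter \eqref{J}, so $\min\{1, \widetilde p\} = \widetilde r$, whence $J' = n/\widetilde r = \widetilde J$; the irrelevance of $\widetilde q$ here is precisely why the freedom in choosing it is harmless. In the Triebel--Lizorkin branch of (ii), $\widetilde p \wedge \widetilde q = \widetilde r \leq 1$, so $\min\{1, \widetilde p, \widetilde q\} = \widetilde r$, again giving $J' = \widetilde J$. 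For (iii), the specific choice $\widetilde p = \widetilde q = \widetilde r$ satisfies the constraints of (ii) in both branches, so the same computation applies. The synthesis case is handled identically, simply replacing $\widetilde J - n - \widetilde s$ by $\widetilde s$ in the cancellation parameter throughout.

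I do not expect a serious obstacle: the result is a pure parameter-matching verification whose success is baked into the defining relation $\widetilde r = n/\widetilde J$. The only detail that requires any care is the case split in \eqref{J} between Besov and Triebel--Lizorkin, which must be threaded through (ii) and (iii) separately to confirm that the freedom in choosing $\widetilde q$ in the Besov case and the joint choice of $(\widetilde p, \widetilde q)$ in the Triebel--Lizorkin case both land on the same $\widetilde J$.
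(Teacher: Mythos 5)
Your proposal is correct and is exactly the bookkeeping argument that the paper intends: the paper omits the proof with a pointer to an analogous lemma in its companion paper, but the only content is the parameter-matching you carry out. The key facts are precisely the ones you isolate, namely that $\widetilde J\geq n$ so $\widetilde r=n/\widetilde J\in(0,1]$, that under either branch of the hypothesis $\min\{1,\widetilde p\}=\widetilde r$ (Besov) or $\min\{1,\widetilde p,\widetilde q\}=\widetilde r$ (Triebel--Lizorkin), hence the $J$ computed from \eqref{J} for the reference spaces equals $\widetilde J$, and that the smoothness parameter of the reference space is $\widetilde s$ so the $(J,s)$-labels in Definitions \ref{def Js mol} and \ref{def A mol} coincide across (i)--(iii); the synthesis case swaps the cancellation parameter $\widetilde J-n-\widetilde s$ for $\widetilde s$. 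No gap.
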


As a consequence of Theorem \ref{adMol} and Definition \ref{def A mol},
we have the following corollary;
see also \cite[Lemma 4.1]{yy10} or \cite[Corollaries 5.2 and 5.3]{bh06}.

\begin{corollary}\label{83}
Let $s\in\mathbb R$, $\tau\in[0,\infty)$, $p\in(0,\infty)$, $q\in(0,\infty]$, and $d\in[0,n)$.
Let $\varphi,\psi\in\mathcal{S}_\infty$ satisfy \eqref{19} and \eqref{20}.
Suppose, for both $i=1,2$, that $\{m_Q^{(i)}\}_{Q\in\mathscr{Q}}$
are families of $\dot A^{s,\tau}_{p,q}(d)$-analysis molecules and
$\{b_Q^{(i)}\}_{Q\in\mathscr{Q}}$
families of $\dot A^{s,\tau}_{p,q}(d)$-synthesis molecules.
Then
\begin{enumerate}[\rm(i)]
\item\label{831}
each of the matrices
\begin{equation*}
\left\{\left\langle m_P^{(1)},b_Q^{(1)}\right\rangle\right\}_{P,Q\in\mathscr{Q}},\
\left\{\left\langle m_P^{(1)},\psi_Q\right\rangle\right\}_{P,Q\in\mathscr{Q}},\text{ and }
\left\{\left\langle \varphi_P, b_Q^{(1)}\right\rangle\right\}_{P,Q\in\mathscr{Q}}
\end{equation*}
is $\dot a^{s,\tau}_{p,q}(d)$-almost diagonal;
\item\label{832} if $\vec t=\{\vec t_R\}_{R\in\mathscr Q}\in\dot a^{s,\tau}_{p,q}(W)$, where $W\in A_p$ has the $A_p$-dimension $d$, then
\begin{equation*}
\vec s_P:=\sum_{Q,R\in\mathscr Q}
\left\langle m_P^{(1)},b_Q^{(1)}\right\rangle
\left\langle m_Q^{(2)},b_R^{(2)}\right\rangle\vec t_R
\end{equation*}
converges unconditionally for each $P\in\mathscr Q$,
and $\vec s:=\{\vec s_P\}_{P\in\mathscr Q}$ satisfies
\begin{equation*}
\left\|\vec s\right\|_{\dot a^{s,\tau}_{p,q}(W)}
\leq C\left\|\vec t\right\|_{\dot a^{s,\tau}_{p,q}(W)},
\end{equation*}
where the positive constant $C$ is independent of $\vec t$,
$\{m_Q\}_{Q\in\mathscr Q}$, and $\{b_Q\}_{Q\in\mathscr Q}$.
\end{enumerate}
\end{corollary}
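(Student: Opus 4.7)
The plan is to derive both conclusions from Theorem \ref{adMol}, combined with Lemma \ref{ad BF2} for the boundedness of almost-diagonal operators on $\dot a^{s,\tau}_{p,q}(W)$. The proof is essentially a verification that the definitions of synthesis/analysis molecules match exactly the hypotheses of Theorem \ref{adMol}, followed by a composition-of-bounded-operators argument for part (ii).

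For part (i), note that, by Definition \ref{def A mol}, any $\dot A^{s,\tau}_{p,q}(d)$-synthesis molecule is a $(\widetilde J,\widetilde s)$-molecule, whose parameters $(K,L,M,N)$ obey $K>\widetilde J+(\widetilde s)_-$, $L\geq\widetilde J-n-\widetilde s$, $M>\widetilde J$, and $N>\widetilde s$; these are precisely the conditions \eqref{synMol}. Similarly, any analysis molecule is a $(\widetilde J,\widetilde J-n-\widetilde s)$-molecule, and, by using the identity $\widetilde J-n-(\widetilde J-n-\widetilde s)=\widetilde s$ in the $L$-condition of Definition \ref{def Js mol}, one matches \eqref{anaMol} term by term. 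Theorem \ref{adMol} therefore yields the $\dot a^{s,\tau}_{p,q}(d)$-almost-diagonal estimate for $\{\langle m_P^{(1)},b_Q^{(1)}\rangle\}_{P,Q\in\mathscr Q}$. The remaining two matrices are handled by the same argument once one observes that, thanks to $\varphi,\psi\in\mathcal{S}_\infty$ together with \eqref{19} and \eqref{20}, the functions $\varphi_P$ and $\psi_Q$ qualify as $(K,L,M,N)$-molecules with every parameter taken arbitrarily large, and in particular exceed the thresholds imposed on analysis and synthesis molecules, respectively.

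For part (ii), let $B^{(1)}:=\{\langle m_P^{(1)},b_Q^{(1)}\rangle\}_{P,Q\in\mathscr Q}$ and $B^{(2)}:=\{\langle m_Q^{(2)},b_R^{(2)}\rangle\}_{Q,R\in\mathscr Q}$; by part (i), both are $\dot a^{s,\tau}_{p,q}(d)$-almost diagonal, hence bounded on $\dot a^{s,\tau}_{p,q}(W)$ by Lemma \ref{ad BF2}. Applying $B^{(2)}$ to $\vec t$ yields an intermediate sequence $\vec u=\{\vec u_Q\}_{Q\in\mathscr Q}\in\dot a^{s,\tau}_{p,q}(W)$ with $\|\vec u\|_{\dot a^{s,\tau}_{p,q}(W)}\lesssim\|\vec t\|_{\dot a^{s,\tau}_{p,q}(W)}$, and applying $B^{(1)}$ to $\vec u$ then produces $\vec s\in\dot a^{s,\tau}_{p,q}(W)$ with the desired norm bound. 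To upgrade this to unconditional convergence of the double sum $\sum_{Q,R\in\mathscr Q}\langle m_P^{(1)},b_Q^{(1)}\rangle\langle m_Q^{(2)},b_R^{(2)}\rangle\vec t_R$ for each fixed $P$, I would combine the pointwise almost-diagonal bounds with the summability of $\sum_R b_{Q,R}^{DEF}|\vec t_R|$ inherited from $\vec t\in\dot a^{s,\tau}_{p,q}(W)$, then invoke Fubini--Tonelli.

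The main subtlety is confirming that every parameter inequality survives the matching between Definitions \ref{def Js mol}--\ref{def A mol} and the conditions \eqref{synMol}--\eqref{anaMol}; the key hidden step is the identity $\widetilde J-n-(\widetilde J-n-\widetilde s)=\widetilde s$, which causes the analysis-molecule $L$-bound to align with the synthesis-molecule $N$-bound after the role swap. Beyond this, the argument is a routine composition of two bounded almost-diagonal matrices together with a Fubini application, neither of which needs to be written out in detail.
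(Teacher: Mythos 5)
Your proposal for part (i) matches the paper exactly: identify synthesis molecules with the conditions \eqref{synMol}, analysis molecules (via the symmetry $\widetilde J-n-(\widetilde J-n-\widetilde s)=\widetilde s$) with \eqref{anaMol}, note that $\varphi_P,\psi_Q$ have molecular parameters of every order, and invoke Theorem \ref{adMol}. This is correct and complete.

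For part (ii) your two-step ``apply $B^{(2)}$, then apply $B^{(1)}$, then Fubini'' plan reaches the same destination as the paper, but the Fubini/Tonelli step as you describe it is not quite closed. Knowing that $\sum_R b^{DEF}_{Q,R}|\vec t_R|<\infty$ for each fixed $Q$ is not enough to conclude that $\sum_Q b^{DEF}_{P,Q}\bigl(\sum_R b^{DEF}_{Q,R}|\vec t_R|\bigr)<\infty$; you still have to sum over $Q$, and that is precisely where you need to know that the \emph{product} of the two almost-diagonal matrices is again bounded on the sequence space. The paper handles this by first forming the composite scalar matrix $b_{P,R}:=\sum_{Q}|\langle m_P^{(1)},b_Q^{(1)}\rangle|\,|\langle m_Q^{(2)},b_R^{(2)}\rangle|$, invoking the fact that compositions of $\dot a^{s,\tau}_{p,q}(d)$-almost-diagonal matrices are again $\dot a^{s,\tau}_{p,q}(d)$-almost diagonal (cited as Corollary 9.6 of \cite{bhyy1}), and then applying Lemma \ref{ad BF2} once to $\{b_{P,R}\}$ acting on $\{|\vec t_R|\}$; this simultaneously delivers the finiteness of the full double absolute sum (hence unconditional convergence) and the norm estimate. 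So your argument is correct in outline, but you should explicitly invoke the composition result for almost-diagonal matrices rather than leave the summability over $Q$ to a generic Fubini appeal.
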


\begin{proof}
Part \eqref{831} is immediate from Theorem \ref{adMol},
observing that both $\varphi_Q$ and $\psi_Q$ are (harmless constant multiples of)
both an $\dot A^{s,\tau}_{p,q}(d)$-analysis molecule
and an $\dot A^{s,\tau}_{p,q}(d)$-synthesis molecule on $Q\in\mathscr{Q}$.

Using part \eqref{831} and the trivial observation that almost diagonality
only depends on the absolute values, we find that
$\{|\langle m_P^{(1)}, b_Q^{(1)}\rangle|\}_{P,Q\in\mathscr{Q}}$
and $\{|\langle m_Q^{(2)}, b_R^{(2)}\rangle|\}_{Q,R\in\mathscr{Q}}$
are $\dot a^{s,\tau}_{p,q}(d)$-almost diagonal.
Then \cite[Corollary 9.6]{bhyy1} guarantees that the composition
$b_{P,R}:=\sum_{Q\in\mathscr Q}
|\langle m_P^{(1)},b_Q^{(1)}\rangle|
|\langle m_Q^{(2)}, b_R^{(2)}\rangle|$
also defines an $\dot a^{s,\tau}_{p,q}(d)$-almost diagonal
$B=\{b_{P,R}\}_{P,R\in\mathscr Q}$.
Now Lemma \ref{ad BF2} implies the absolute convergence
\begin{equation*}
\sum_{Q,R\in\mathscr Q}
\left|\left\langle m_P^{(1)},b_Q^{(1)}\right\rangle\right|
\left|\left\langle m_Q^{(2)}, b_R^{(2)}\right\rangle\right|
\left|\vec t_R\right|
=\sum_{R\in\mathscr Q}b_{P,R}\left|\vec t_R\right|<\infty
\end{equation*}
and the claimed norm estimate for $\vec s$.
This finishes the proof of part \eqref{832} and hence Corollary \ref{83}.
\end{proof}

If $\vec f\in\dot A^{s,\tau}_{p,q}(W)$ and $\Phi$ is an analysis molecule,
then $\vec f\in(\mathcal{S}_\infty')^m$, but $\Phi$ might not be in $\mathcal{S}_\infty$,
so we need to justify the meaningfulness of the pairing $\langle\vec f,\Phi\rangle$;
see \cite[p.\,155]{fj90} or \cite[Lemma 5.7]{bh06} or \cite[Lemma 4.2]{yy10}.

\begin{lemma}\label{88}
Let $s\in\mathbb R$, $\tau\in[0,\infty)$, $p\in(0,\infty)$, $q\in(0,\infty]$,
and $W\in A_p$ have the $A_p$-dimension $d\in[0,n)$.
Let $\vec f\in\dot A^{s,\tau}_{p,q}(W)$ and $\Phi$ be an
$\dot A^{s,\tau}_{p,q}(d)$-analysis molecule on a cube $P\in\mathscr{Q}$.
Then, for any $\varphi,\psi\in\mathcal{S}$
satisfying \eqref{19}, \eqref{20}, and \eqref{21}, the pairing
\begin{equation}\label{881}
\left\langle\vec f,\Phi\right\rangle
:=\sum_{R\in\mathscr{Q}}\left\langle\vec f,\varphi_R\right\rangle\langle\psi_R,\Phi\rangle
\end{equation}
is well defined: the series above converges absolutely and its value is independent of the choices of $\varphi$ and $\psi$.
\end{lemma}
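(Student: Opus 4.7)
The plan is to derive both the absolute convergence in \eqref{881} and its independence of the pair $(\varphi,\psi)$ from the almost diagonal machinery of Theorem \ref{adMol}, Corollary \ref{83}, and Lemma \ref{ad BF2}, combined with the $\varphi$-transform characterization of $\dot A^{s,\tau}_{p,q}(W)$ from \cite{bhyy1}. The latter asserts that $\vec t:=\{\langle\vec f,\varphi_R\rangle\}_{R\in\mathscr Q}\in\dot a^{s,\tau}_{p,q}(W)$ with norm comparable to $\|\vec f\|_{\dot A^{s,\tau}_{p,q}(W)}$, and likewise with $(\varphi',\psi')$ replacing $(\varphi,\psi)$.

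For the absolute convergence, I would extend the single molecule $\Phi$ to a family by setting $\Phi_P:=\Phi$ and letting $\Phi_Q$ be any fixed $\dot A^{s,\tau}_{p,q}(d)$-analysis molecule on $Q$ for $Q\neq P$. Corollary \ref{83}\eqref{831} then gives that $B:=\{|\langle\psi_R,\Phi_Q\rangle|\}_{Q,R\in\mathscr Q}$ is $\dot a^{s,\tau}_{p,q}(d)$-almost diagonal, so Lemma \ref{ad BF2} ensures that $B$ is bounded on $\dot a^{s,\tau}_{p,q}(W)$. Reading off the $P$-th component of $B|\vec t|$, whose finiteness is forced by $B|\vec t|\in\dot a^{s,\tau}_{p,q}(W)$, yields exactly $\sum_{R\in\mathscr Q}|\langle\vec f,\varphi_R\rangle|\,|\langle\psi_R,\Phi\rangle|<\infty$, as required.

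For the independence of the series on $(\varphi,\psi)$, let $(\varphi',\psi')$ be another admissible pair and consider the doubly-indexed expression
\begin{equation*}
\sum_{R\in\mathscr Q}\sum_{S\in\mathscr Q}\langle\vec f,\varphi_R\rangle\,\langle\psi_R,\varphi'_S\rangle\,\langle\psi'_S,\Phi\rangle.
\end{equation*}
The composition $C(Q,R):=\sum_S|\langle\psi_R,\varphi'_S\rangle|\cdot|\langle\psi'_S,\Phi_Q\rangle|$ of the two $\dot a^{s,\tau}_{p,q}(d)$-almost diagonal matrices produced by Corollary \ref{83}\eqref{831} is itself almost diagonal by \cite[Corollary 9.6]{bhyy1}, and hence $(C|\vec t|)_P<\infty$ via Lemma \ref{ad BF2} dominates the absolute value of the displayed double sum and legitimises Fubini's theorem. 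Summing over $S$ first and invoking Lemma \ref{7} to write $\psi_R=\sum_S\langle\psi_R,\varphi'_S\rangle\psi'_S$ in $\mathcal S_\infty$ (since $\psi_R\in\mathcal S_\infty$) collapses the inner sum to $\langle\psi_R,\Phi\rangle$ and the double sum to $\sum_R\langle\vec f,\varphi_R\rangle\langle\psi_R,\Phi\rangle$. Summing over $R$ first and invoking Lemma \ref{7} to write $\vec f=\sum_R\langle\vec f,\varphi_R\rangle\psi_R$ in $(\mathcal S'_\infty)^m$ tested against $\varphi'_S\in\mathcal S_\infty$ collapses the inner sum to $\langle\vec f,\varphi'_S\rangle$ and the double sum to $\sum_S\langle\vec f,\varphi'_S\rangle\langle\psi'_S,\Phi\rangle$, which is the analogue of \eqref{881} with $(\varphi',\psi')$ in place of $(\varphi,\psi)$.

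The main technical subtlety I anticipate is the interchange of sum and pairing in the identity $\sum_S\langle\psi_R,\varphi'_S\rangle\langle\psi'_S,\Phi\rangle=\langle\psi_R,\Phi\rangle$, where $\Phi$ is only a molecule rather than a Schwartz function. This rests on the decay $|\Phi(x)|\le(u_K)_P(x)$ with $K>n$ built into Definition \ref{moleKLMN}, which makes $\Phi$ a tempered distribution and, modulo polynomials, a continuous functional on $\mathcal S_\infty$; against such a functional, the $\mathcal S_\infty$-convergence in Lemma \ref{7} can be paired termwise. Once this point and the Fubini justification are in hand, every other step is a routine consequence of the almost diagonal estimates already established.
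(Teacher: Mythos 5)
Your proposal is correct and takes essentially the same approach as the paper: establish absolute convergence of the double sum $\sum_{R,S}\langle\vec f,\varphi_R\rangle\langle\psi_R,\varphi'_S\rangle\langle\psi'_S,\Phi\rangle$ via the almost-diagonal machinery (Corollary \ref{83} plus Lemma \ref{ad BF2}), then apply Fubini and Lemma \ref{7} to collapse the double sum in either order, yielding both the absolute convergence of \eqref{881} and its independence of the pair. Your separate opening step establishing single-sum absolute convergence is a mild redundancy (it already follows from the double-sum estimate, which the paper obtains in one stroke from Corollary \ref{83}\eqref{832}), and your remark about $\Phi$ acting as a continuous functional on $\mathcal S_\infty$ correctly identifies the justification for the termwise pairing that the paper leaves implicit.
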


\begin{proof}
Suppose that $\varphi^{(i)},\psi^{(i)}$, for $i=1,2$,
are two pairs of functions that satisfy  \eqref{19}, \eqref{20}, and \eqref{21}.
By Corollary \ref{83}\eqref{832} applied to
$\vec t_R:=\langle\vec f,\varphi_R^{(1)}\rangle$,
$b_R:=\psi_R^{(1)}$, and $m_P:=\Phi$, we find that
\begin{equation}\label{880}
\sum_{Q,R\in\mathscr Q}\left\langle\vec f,\varphi_R^{(1)}\right\rangle
\left\langle\psi_R^{(1)},\varphi_Q^{(2)}\right\rangle
\left\langle\psi_Q^{(2)},\Phi\right\rangle
\end{equation}
converges absolutely. By Lemma \ref{7},
the claim \eqref{881} holds if either $\Phi$ or $\vec f$
is replaced by a test function $\Psi\in\mathcal S_\infty$.
Thus, if we sum up the double series \eqref{880} in one or the other order,
as justified by the absolute convergence that we just showed,
we then conclude that
\begin{align*}
\sum_{R\in\mathscr Q}\left\langle\vec f,\varphi_R^{(1)}\right\rangle
\left\langle\psi_R^{(1)},\Phi\right\rangle
&=\sum_{R\in\mathscr Q}\left\langle\vec f,\varphi_R^{(1)}\right\rangle
\left(\sum_{Q\in\mathscr Q}
\left\langle\psi_R^{(1)},\varphi_Q^{(2)}\right\rangle
\left\langle\psi_Q^{(2)},\Phi\right\rangle\right) \\
&=\sum_{Q\in\mathscr Q}\left(\sum_{R\in\mathscr Q}
\left\langle\vec f,\varphi_R^{(1)}\right\rangle
\left\langle\psi_R^{(1)},\varphi_Q^{(2)}\right\rangle\right)
\left\langle\psi_Q^{(2)},\Phi\right\rangle
=\sum_{Q\in\mathscr Q}\left\langle\vec f,\varphi_Q^{(2)}\right\rangle
\left\langle\psi_Q^{(2)},\Phi\right\rangle.
\end{align*}
This proves both the absolute convergence of the right-hand side of \eqref{881}
and its independence of $\varphi$ and $\psi$,
which completes the proof of Lemma \ref{88}.
\end{proof}

Applying Corollary \ref{83}, Lemma \ref{88}, and a method pioneered by
Frazier and Jawerth used in the proofs of \cite[Theorems 3.5 and 3.7]{fj90},
we obtain the following conclusion.

\begin{theorem}\label{89}
Let $s\in\mathbb R$, $\tau\in[0,\infty)$, $p\in(0,\infty)$, $q\in(0,\infty]$,
and $W\in A_p$ have the $A_p$-dimension $d\in[0,n)$.
\begin{enumerate}[{\rm (i)}]
\item\label{890} If $\{m_Q\}_{Q\in\mathscr{Q}}$
is a family of $\dot A^{s,\tau}_{p,q}(d)$-analysis molecules, each on the cube indicated by its subscript,
then there exists a positive constant $C$ such that,
for any $\vec{f}\in\dot A^{s,\tau}_{p,q}(W)$,
$$
\left\|\left\{\left\langle\vec f,
m_Q\right\rangle\right\}_{Q\in\mathscr{Q}}\right\|_{\dot a^{s,\tau}_{p,q}(W)}
\leq C\left\|\vec{f}\right\|_{\dot A^{s,\tau}_{p,q}(W)}.
$$
\item\label{892} If $\{b_Q\}_{Q\in\mathscr{Q}}$
is a family of $\dot A^{s,\tau}_{p,q}(d)$-synthesis molecules,
each on the cube indicated by its subscript,
then, for any $\vec t:=\{\vec t_Q\}_{Q\in\mathscr{Q}}
\in\dot a^{s,\tau}_{p,q}(W)$, there exist $\vec f\in\dot A^{s,\tau}_{p,q}(W)$ such that
$\vec f=\sum_{Q\in\mathscr{Q}}\vec t_Qb_Q$
in $(\mathcal{S}_\infty')^m$ and a positive constant $C$,
independent of both $\{\vec t_Q\}_{Q\in\mathscr{Q}}$
and $\{b_Q\}_{Q\in\mathscr{Q}}$, such that
$\|\vec f\|_{\dot A^{s,\tau}_{p,q}(W)}
\leq C\|\vec t\|_{\dot a^{s,\tau}_{p,q}(W)}.$
\end{enumerate}
\end{theorem}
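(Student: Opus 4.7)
The plan is to treat both parts as consequences of the $\varphi$-transform characterization of $\dot A^{s,\tau}_{p,q}(W)$ combined with the almost diagonal boundedness on $\dot a^{s,\tau}_{p,q}(W)$ packaged in Corollary \ref{83} and Lemma \ref{ad BF2}. The key observation, already used in Corollary \ref{83}\eqref{831}, is that, for $\varphi,\psi\in\mathcal{S}_\infty$ satisfying \eqref{19} and \eqref{20}, each $\varphi_R$ and $\psi_R$ is (up to a harmless constant) simultaneously an $\dot A^{s,\tau}_{p,q}(d)$-analysis molecule and an $\dot A^{s,\tau}_{p,q}(d)$-synthesis molecule on $R$.

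For part \eqref{890}, fix $\vec f\in\dot A^{s,\tau}_{p,q}(W)$ and set $\vec t_R:=\langle\vec f,\varphi_R\rangle$, so that by the $\varphi$-transform characterization from \cite{bhyy1} one has $\|\vec t\|_{\dot a^{s,\tau}_{p,q}(W)}\sim\|\vec f\|_{\dot A^{s,\tau}_{p,q}(W)}$. Lemma \ref{88} then gives, for each $Q\in\mathscr Q$,
$$
\langle\vec f,m_Q\rangle
=\sum_{R\in\mathscr Q}\vec t_R\,\langle\psi_R,m_Q\rangle,
$$
with absolute convergence. By Corollary \ref{83}\eqref{831} applied with $m_Q$ as analysis molecule and $\psi_R$ playing the role of synthesis molecule, the infinite matrix $B:=\{\langle\psi_R,m_Q\rangle\}_{Q,R\in\mathscr Q}$ is $\dot a^{s,\tau}_{p,q}(d)$-almost diagonal. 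Lemma \ref{ad BF2} then yields the bound
$$
\left\|\left\{\langle\vec f,m_Q\rangle\right\}_{Q\in\mathscr Q}\right\|_{\dot a^{s,\tau}_{p,q}(W)}
=\|B\vec t\|_{\dot a^{s,\tau}_{p,q}(W)}
\lesssim\|\vec t\|_{\dot a^{s,\tau}_{p,q}(W)}
\sim\|\vec f\|_{\dot A^{s,\tau}_{p,q}(W)}.
$$

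For part \eqref{892}, the dual move is to define $\vec f:=\sum_{Q\in\mathscr Q}\vec t_Q b_Q$ and to check: (a) the series converges in $(\mathcal{S}_\infty')^m$, and (b) $\vec f$ has the required norm bound. For (a), note that any $\Phi\in\mathcal{S}_\infty$ is (a harmless constant multiple of) an $\dot A^{s,\tau}_{p,q}(d)$-analysis molecule on any cube, so Lemma \ref{like B3}, together with the almost diagonal estimate of Lemma \ref{ad BF2} applied to $\{|\langle b_Q,\Phi\rangle|\}_Q$, shows that $\sum_Q|\vec t_Q||\langle b_Q,\Phi\rangle|<\infty$; hence the pairing $\langle\vec f,\Phi\rangle:=\sum_Q\vec t_Q\langle b_Q,\Phi\rangle$ is well defined and defines an element of $(\mathcal{S}_\infty')^m$. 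For (b), using this absolute convergence together with Lemma \ref{88} (or its argument), I can interchange summations to compute, for each $R\in\mathscr Q$,
$$
\langle\vec f,\varphi_R\rangle
=\sum_{Q\in\mathscr Q}\vec t_Q\,\langle b_Q,\varphi_R\rangle,
$$
and Corollary \ref{83}\eqref{831} again ensures that $\widetilde B:=\{\langle b_Q,\varphi_R\rangle\}_{R,Q}$ is $\dot a^{s,\tau}_{p,q}(d)$-almost diagonal. Lemma \ref{ad BF2} and the $\varphi$-transform characterization then give
$$
\|\vec f\|_{\dot A^{s,\tau}_{p,q}(W)}
\sim\left\|\{\langle\vec f,\varphi_R\rangle\}_R\right\|_{\dot a^{s,\tau}_{p,q}(W)}
=\|\widetilde B\vec t\|_{\dot a^{s,\tau}_{p,q}(W)}
\lesssim\|\vec t\|_{\dot a^{s,\tau}_{p,q}(W)}.
$$

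The main obstacle I anticipate is the bookkeeping behind the convergence of $\sum_Q\vec t_Q b_Q$ in $(\mathcal{S}_\infty')^m$ and the subsequent Fubini interchange: although every step is controlled by an almost diagonal matrix, one has to justify that the pairing with a test function in $\mathcal{S}_\infty$ does fall under the hypothesis of Lemma \ref{ad BF2} (which requires a molecular family indexed by $\mathscr Q$, not a single test function). The cleanest way is to produce the single test function as a constant multiple of a molecule on an arbitrarily chosen reference cube and to estimate directly, as in Frazier--Jawerth \cite[Theorems 3.5 and 3.7]{fj90}, thereby reducing the convergence and the norm bound to two applications of Corollary \ref{83}\eqref{831} on opposite sides of the duality between coefficients and distributions.
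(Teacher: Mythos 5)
Your proof takes essentially the same route as the paper. Part (i) is a match: identical choice of $\vec t_R:=\langle\vec f,\varphi_R\rangle$, use of Lemma \ref{88} for the expansion, Corollary \ref{83}\eqref{831} for almost diagonality of $\{\langle\psi_R,m_Q\rangle\}$, and Lemma \ref{ad BF2} plus the $\varphi$-transform characterization for the bound. Part (ii) is also the same in spirit, though the paper packages the convergence argument more cleanly than you do. You write ``Lemma \ref{ad BF2} applied to $\{|\langle b_Q,\Phi\rangle|\}_Q$,'' which, as you yourself flag in the final paragraph, is not literally licensed: that object is a single column, not an almost diagonal matrix, and Lemma \ref{ad BF2} is a boundedness statement on the sequence space. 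The paper sidesteps this by invoking Corollary \ref{83}\eqref{832} with the single test function $\phi$ playing the role of $m_P$ for one fixed cube $P$; the double sum $\sum_{Q,R}\vec t_R\langle b_R,\varphi_Q\rangle\langle\psi_Q,\phi\rangle$ then converges absolutely by the corollary, equals $\sum_R\vec t_R\langle b_R,\phi\rangle$ by Lemma \ref{7} after summing first in $Q$, and yields both the well-definedness of $\vec f\in(\mathcal S_\infty')^m$ and the coefficient norm bound in one stroke. The fix you anticipate (treating $\Phi$ as a constant multiple of an analysis molecule on a reference cube and reducing to the almost-diagonal machinery) is precisely what Corollary \ref{83}\eqref{832} already encapsulates, so the gap is one of presentation rather than of substance; once you replace the single-column heuristic with the corollary, the two proofs coincide.
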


\begin{proof}
We first show (i).
Let $\varphi,\psi\in\mathcal{S}$ satisfy \eqref{19}, \eqref{20}, and \eqref{21}.
By Lemma \ref{88}, we find that, for any $Q\in\mathscr{Q}$,
\begin{equation}\label{91}
\left\langle\vec f,m_Q\right\rangle
=\sum_{R\in\mathscr{Q}}
\left\langle\vec f,\varphi_R\right\rangle
\left\langle\psi_R,m_Q\right\rangle
=\sum_{R\in\mathscr{Q}}
\left\langle\psi_R,m_Q\right\rangle
\left(S_\varphi\vec f\right)_R.
\end{equation}
Using Corollary \ref{83}(i), we conclude that
$B:=\{\left\langle\psi_R,m_Q\right\rangle\}_{Q,R\in\mathscr{Q}}$
is an $\dot a^{s,\tau}_{p,q}(d)$-almost diagonal operator.
From this, \eqref{91}, Lemma \ref{ad BF2}, and \cite[Theorem 3.29]{bhyy1}, we infer that
$$
\left\|\left\{\left\langle\vec f,
m_Q\right\rangle\right\}_{Q\in\mathscr{Q}}\right\|_{\dot a^{s,\tau}_{p,q}(W)}
=\left\|B\left(S_\varphi\vec f\right)\right\|_{\dot a^{s,\tau}_{p,q}(W)}
\lesssim\left\|S_\varphi\vec f\right\|_{\dot a^{s,\tau}_{p,q}(W)}
\lesssim\left\|\vec f\right\|_{\dot A^{s,\tau}_{p,q}(W)}.
$$
This finishes the proof of (i).

Now, we prove (ii). By Corollary \ref{83}\eqref{832}, with any $\phi\in\mathcal S_\infty$ in place of the molecule $m_P$, we find the absolute convergence of
\begin{equation*}
\sum_{Q,R\in\mathscr Q}\vec{t}_R\langle b_R,\varphi_Q\rangle\langle\psi_Q,\phi\rangle
=\sum_{R\in\mathscr Q}\vec{t}_R\langle b_R,\phi\rangle
=:\left\langle \vec f,\phi\right\rangle,
\end{equation*}
where the first identity follows from summing first over $Q\in\mathscr Q$
(as we can, by absolute convergence), and applying Lemma \ref{7}.
In particular, the sum of the series is a well-defined element
$\vec f\in\mathcal S_\infty'$. Taking $\phi=\varphi_P$,
we have $\langle \vec f,\varphi_P\rangle=(S_\varphi f)_P$, and we deduce
\begin{equation*}
\left\|\vec f\right\|_{\dot A^{s,\tau}_{p,q}(W)}
\sim\left\|S_\varphi\vec f\right\|_{\dot a^{s,\tau}_{p,q}(W)}
\lesssim\left\|\vec t\right\|_{\dot a^{s,\tau}_{p,q}(W)}
\end{equation*}
from the $\varphi$-transform characterization of $\dot A^{s,\tau}_{p,q}(W)$
(see \cite[Theorem 3.29]{bhyy1}) in the first step
and the norm estimate of Corollary \ref{83}\eqref{832} in the second step.
This finishes the proof of Theorem \ref{89}.
\end{proof}

For any $\phi\in\mathcal{S}$ and $M\in\mathbb Z_+$,
$$
\|\phi\|_{S_M}
:=\sup_{\gamma\in\mathbb{Z}_+^n,\,|\gamma|\leq M}
\sup_{x\in\mathbb{R}^n}|\partial^\gamma\phi(x)|(1+|x|)^{n+M+|\gamma|}.
$$
As an application of Theorem \ref{89}, we obtain the following conclusion.

\begin{proposition}\label{6.18}
Let $s\in\mathbb R$, $\tau\in[0,\infty)$, $p\in(0,\infty)$, $q\in(0,\infty]$, and $W\in A_p$.
Then $(\mathcal{S}_\infty)^m\subset\dot A^{s,\tau}_{p,q}(W)$.
Moreover, there exist $M\in\mathbb{Z}_+$
and a positive constant $C$ such that,
for any $\vec f\in(\mathcal{S}_\infty)^m$,
$$
\left\|\vec f\right\|_{\dot A^{s,\tau}_{p,q}(W)}
\leq C\left\|\vec f\right\|_{\vec{S}_M}
:=C\sup_{\gamma\in\mathbb{Z}_+^n,\,|\gamma|\leq M}
\sup_{x\in\mathbb{R}^n}\left|\left(\partial^\gamma\vec f\right)(x)\right|(1+|x|)^{n+M+|\gamma|}.
$$
\end{proposition}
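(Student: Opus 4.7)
The plan is to reduce the statement to a single application of Theorem \ref{89}(ii) on a sequence concentrated at one cube. Fix the reference cube $Q^* := Q_{0,\mathbf{0}}=[0,1)^n$, so that $x_{Q^*}=\mathbf{0}$, $\ell(Q^*)=1$, and $|Q^*|=1$. Let $d\in[0,n)$ be the $A_p$-dimension of $W$ and fix a quadruple $(K,L,M_{\mathrm{mol}},N)$ satisfying the conditions in Definitions \ref{def Js mol} and \ref{def A mol}(i) for an $\dot A^{s,\tau}_{p,q}(d)$-synthesis molecule. Writing $\vec f=\sum_{i=1}^m f_i\vec e_i$ with $\vec e_i$ the standard basis vectors of $\mathbb C^m$, the quasi-triangle inequality in $\dot A^{s,\tau}_{p,q}(W)$ reduces the problem to bounding each $\|f_i\vec e_i\|_{\dot A^{s,\tau}_{p,q}(W)}$ in terms of $\|f_i\|_{S_M}\leq\|\vec f\|_{\vec S_M}$ for a sufficiently large $M\in\mathbb Z_+$.

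Next, choose $M$ large enough that $M\geq K-n$, $M\geq M_{\mathrm{mol}}-n$, and $M\geq\lfloor\!\lfloor N\rfloor\!\rfloor+1$. With this choice, the Schwartz seminorm bound $|\partial^\gamma f_i(x)|\leq\|f_i\|_{S_M}(1+|x|)^{-n-M-|\gamma|}$ for $|\gamma|\leq M$, together with the vanishing moment property intrinsic to $\mathcal S_\infty$, verifies the decay, cancellation, and derivative-decay conditions of Definition \ref{moleKLMN} on $Q^*$; the H\"older-type condition at $|\gamma|=\lfloor\!\lfloor N\rfloor\!\rfloor$ follows from the mean-value theorem applied to derivatives of order $\lfloor\!\lfloor N\rfloor\!\rfloor+1\leq M$ (since $N^{**}\in(0,1]$) combined with the same seminorm bound. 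Consequently, there is an absolute constant $c$ depending only on $(K,L,M_{\mathrm{mol}},N)$ such that $b_{Q^*}:=c^{-1}\|f_i\|_{S_M}^{-1}f_i$ is a $(K,L,M_{\mathrm{mol}},N)$-molecule on $Q^*$. Extend $\{b_Q\}_{Q\in\mathscr Q}$ arbitrarily to a full family of $\dot A^{s,\tau}_{p,q}(d)$-synthesis molecules (for example, by choosing any fixed admissible molecule on each other cube).

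Define the sequence $\vec t^{(i)}:=\{\vec t^{(i)}_Q\}_{Q\in\mathscr Q}$ by $\vec t^{(i)}_{Q^*}:=c\|f_i\|_{S_M}\vec e_i$ and $\vec t^{(i)}_Q:=\vec{\mathbf 0}$ for $Q\neq Q^*$. Because $\vec t^{(i)}$ is supported at a single cube at scale $j_{Q^*}=0$, only $j=0$ contributes in \eqref{LApq}, and only dyadic $P\in\mathscr Q$ with $\ell(P)\geq 1$ and $P\supseteq Q^*$ give a nonzero term; the supremum is attained at $P=Q^*$, where $|P|^{-\tau}$ is largest. One therefore obtains
$$
\|\vec t^{(i)}\|_{\dot a^{s,\tau}_{p,q}(W)}\lesssim\|f_i\|_{S_M}\left\|\,|W^{\frac1p}\vec e_i|\,\right\|_{L^p(Q^*)}\lesssim\|f_i\|_{S_M},
$$
where the last factor is finite because $|W^{1/p}(x)\vec e_i|^p\leq\|W(x)\|$ and the entries of $W$ are locally integrable. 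Theorem \ref{89}(ii) then produces $f_i\vec e_i=\sum_{Q\in\mathscr Q}\vec t^{(i)}_Q b_Q\in\dot A^{s,\tau}_{p,q}(W)$ with $\|f_i\vec e_i\|_{\dot A^{s,\tau}_{p,q}(W)}\lesssim\|f_i\|_{S_M}\leq\|\vec f\|_{\vec S_M}$, and summing over $i$ yields the claim. The main technical point is the precise verification of the H\"older-type molecular condition via the mean-value theorem at derivative order $\lfloor\!\lfloor N\rfloor\!\rfloor+1$, which dictates how large $M$ must be taken; all other steps are direct computations from the definitions.
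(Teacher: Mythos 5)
Your proof is correct and takes essentially the same route as the paper's: decompose $\vec f=\sum_{i=1}^m f_i\vec e_i$, observe that $\|f_i\|_{S_M}^{-1}f_i$ (suitably normalized) is an $\dot A^{s,\tau}_{p,q}(d)$-synthesis molecule on $Q_{0,\mathbf 0}$ for $M$ large, and invoke Theorem \ref{89}(ii). The paper states this tersely, whereas you spell out two steps the paper leaves implicit: that the singleton coefficient sequence $\vec t^{(i)}$ has $\|\vec t^{(i)}\|_{\dot a^{s,\tau}_{p,q}(W)}\lesssim\|f_i\|_{S_M}$ (which reduces to local integrability of $\|W\|$ and the observation that the supremum in \eqref{LApq} is attained at $P=Q^*$), and that the H\"older-type molecular condition is obtained from the mean-value theorem at derivative order $\lfloor\!\lfloor N\rfloor\!\rfloor+1$. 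Your explicit parameter requirement $M\geq(K-n)\vee(M_{\mathrm{mol}}-n)\vee(\lfloor\!\lfloor N\rfloor\!\rfloor+1)$ is also a cleaner and more transparent choice than the paper's stated threshold $M>\widetilde J-n+(\widetilde s)_+$, which in fact appears to undershoot the decay requirement $M>\widetilde J-n+(\widetilde s)_-$ coming from $K>\widetilde J+(\widetilde s)_-$ when $\widetilde s<0$ (harmless since only existence of \emph{some} $M$ is claimed, but your version avoids the gap).
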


\begin{proof}
Let $\vec f:=(f_1,\ldots,f_m)\in(\mathcal{S}_\infty)^m$,
$Q\in\mathscr{Q}$, and,
for any $i\in\{1,\ldots,m\}$,
$$
\vec e_i:=(0,\ldots,0,1,0,\ldots,0)^{\mathrm{T}}\in\mathbb{C}^m,
$$
where only the $i$-th component is $1$.
Then $\vec f=\sum_{i=1}^m\vec e_if_i$.

Let $W\in A_p$ have the $A_p$-dimension $d\in[0,n)$
and $\widetilde J$ and $\widetilde s$ be the same as in \eqref{tauJ2}.
Notice that, for any $i\in\{1,\ldots,m\}$, $f_i\in\mathcal{S}_\infty$.
By this and the definitions of synthesis molecules
and $\|\cdot\|_{S_M}$, we conclude that,
for any $M>\widetilde J-n+(\widetilde s)_+$,
the scaled function $[c_n\|f_i\|_{S_M}]^{-1}f_i$
is an $\dot A^{s,\tau}_{p,q}(d)$-synthesis molecule on $Q_{0,\mathbf{0}}$,
where $c_n:=2\wedge\sqrt{n}$.
From this and Theorem \ref{89}(ii), we infer that,
for any $i\in\{1,\ldots,m\}$,
$\|\vec e_if_i\|_{\dot A^{s,\tau}_{p,q}(W)}
\lesssim\|f_i\|_{S_M}$
and hence
$$
\left\|\vec f\right\|_{\dot A^{s,\tau}_{p,q}(W)}
\sim\sum_{i=1}^m\left\|\vec e_if_i\right\|_{\dot A^{s,\tau}_{p,q}(W)}
\lesssim\sum_{i=1}^m\|f_i\|_{S_M}
\sim\left\|\vec f\right\|_{\vec{S}_M},
$$
which completes the proof of Proposition \ref{6.18}.
\end{proof}

\begin{remark}
Notice that $(\mathcal{S}_\infty)^m$ need not be dense in $\dot A^{s,\tau}_{p,q}(W)$ when $\tau\in(0,\infty)$.
Indeed, by \cite[Proposition 3.2(vii)]{yy08}, we have $\dot A_{2,2}^{0,\frac12}=\mathrm{BMO}$,
but it is well known that $\mathcal{S}_\infty$ is not dense in $\mathrm{BMO}$.
\end{remark}

\subsection{Pseudo-Differential Operators\label{pdo}}

Now, we recall the concepts of the class $\dot{S}_{1,1}^u$
and its related pseudo-differential operators
(see, for instance, \cite[pp.\,261-263]{gt99}).

\begin{definition}
Let $u\in\mathbb{Z}_+$.
The class $\dot{S}_{1,1}^u$ is defined to be the set of all functions
$a\in C^\infty(\mathbb{R}^n\times(\mathbb{R}^n\setminus\{\mathbf{0}\}))$
such that, for any $\alpha,\beta\in\mathbb{Z}_+^n$,
$$
\sup_{x\in\mathbb{R}^n,\,\xi\in\mathbb{R}^n\setminus\{\mathbf{0}\}}
|\xi|^{-u-|\alpha|+|\beta|}
\left|\partial_x^\alpha\partial_\xi^\beta a(x,\xi)\right|
<\infty.
$$
\end{definition}

\begin{definition}
Let $u\in\mathbb{Z}_+$ and $a\in\dot{S}_{1,1}^u$.
Define the pseudo-differential operator $a(x,D)$ with symbol $a$ by setting,
for any $f\in\mathcal{S}_\infty$ and $x\in\mathbb{R}^n$,
$$
a(x,D)(f):=\int_{\mathbb{R}^n}a(x,\xi)\widehat{f}(\xi)e^{ix\cdot\xi}\,d\xi.
$$
\end{definition}

The following lemma is just \cite[Lemma 2.1]{gt99}.

\begin{lemma}
Let $u\in\mathbb{Z}_+$ and $a\in\dot{S}_{1,1}^u$.
Then $a(x,D)$ is a continuous linear operator
from $\mathcal{S}_\infty$ to $\mathcal{S}$.
In particular, its formal adjoint $a(x,D)^{\#}$, defined by setting,
for any $f\in\mathcal{S}'$ and $\phi\in\mathcal{S}_\infty$,
$$
\left\langle a(x,D)^{\#}f,\phi\right\rangle
:=\langle f,a(x,D)\phi\rangle,
$$
is a continuous linear operator from $\mathcal{S}'$ to $\mathcal{S}_\infty'$.
\end{lemma}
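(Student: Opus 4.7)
The plan is to establish the two assertions separately: the first is the substantive one, and the second follows from the first by a general duality argument.

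For the mapping $a(x,D)\colon\mathcal S_\infty\to\mathcal S$, I would exploit the defining feature of $\mathcal S_\infty$: the cancellation conditions $\int_{\mathbb R^n} x^\gamma f(x)\,dx=0$ for every $\gamma\in\mathbb Z_+^n$ translate via the Fourier transform into $\partial^\gamma\widehat f(\mathbf 0)=0$ for every $\gamma$, so that $\widehat f$ and each of its derivatives vanish to infinite order at $\mathbf 0$. Combined with the symbol estimate $|\partial_x^\alpha\partial_\xi^\beta a(x,\xi)|\lesssim |\xi|^{u+|\alpha|-|\beta|}$, this makes the defining integral absolutely convergent: splitting it into $|\xi|\le 1$ and $|\xi|>1$, the possible singularity of $a(x,\xi)$ near the origin is absorbed by the fact that $\widehat f(\xi)=O(|\xi|^K)$ for every $K$, while on the outer region the polynomial growth of $a$ is defeated by the Schwartz decay of $\widehat f$.

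To prove $a(x,D)f\in\mathcal S$ together with the continuity of the map, I would estimate the standard Schwartz seminorms $\|\cdot\|_{S_N}$ of $a(x,D)f$. Differentiation under the integral sign and the Leibniz rule give
\[
\partial_x^\alpha[a(x,D)f](x)=\sum_{\beta\le\alpha}\binom{\alpha}{\beta}\int_{\mathbb R^n}\partial_x^\beta a(x,\xi)\,(i\xi)^{\alpha-\beta}\,\widehat f(\xi)\,e^{ix\cdot\xi}\,d\xi.
\]
To produce decay in $x$, I would integrate by parts in $\xi$ using $x^\gamma e^{ix\cdot\xi}=(-i\partial_\xi)^\gamma e^{ix\cdot\xi}$, transferring $\partial_\xi$ derivatives onto the product $\partial_x^\beta a(x,\xi)(i\xi)^{\alpha-\beta}\widehat f(\xi)$. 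A further Leibniz expansion then reduces the pointwise bound on $x^\gamma\partial_x^\alpha[a(x,D)f](x)$ to a finite linear combination of integrals of the type
\[
\int_{\mathbb R^n}|\xi|^{u+|\alpha|-|\gamma_1|}\,\bigl|\partial_\xi^{\gamma_2}\widehat f(\xi)\bigr|\,d\xi,
\]
each of which is finite by the same splitting as before and is controlled by finitely many Schwartz seminorms of $\widehat f$, hence of $f$. Taking suprema yields, for every $N\in\mathbb Z_+$, an estimate $\|a(x,D)f\|_{S_N}\lesssim \|f\|_{S_M}$ with $M=M(N,u,n)$, which proves simultaneously that $a(x,D)f\in\mathcal S$ and that the map is continuous.

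With part (i) in hand, the continuity of $a(x,D)^{\#}\colon\mathcal S'\to\mathcal S_\infty'$ is a purely formal consequence in the weak-$*$ topologies of the duals: for any net $f_\lambda\to f$ in $\mathcal S'$ and any $\phi\in\mathcal S_\infty$, part (i) gives $a(x,D)\phi\in\mathcal S$ and hence $\langle a(x,D)^{\#}f_\lambda,\phi\rangle=\langle f_\lambda,a(x,D)\phi\rangle\to\langle f,a(x,D)\phi\rangle=\langle a(x,D)^{\#}f,\phi\rangle$. The one genuinely delicate point in the whole argument is the handling of the singularity of $a(x,\xi)$ at $\xi=\mathbf 0$, which is precisely the obstruction that forces the operator to be defined only on $\mathcal S_\infty$ rather than on all of $\mathcal S$; the remedy in each estimate is to use that, no matter how many times $a$ is differentiated in $\xi$ and hence no matter how negative the exponent $u-|\gamma_1|$ becomes, the infinite-order vanishing of $\widehat f$ at $\mathbf 0$ always supplies enough $|\xi|^K$ factors to restore integrability near the origin.
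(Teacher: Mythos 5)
The paper does not supply its own proof of this lemma; it is cited verbatim as \cite[Lemma 2.1]{gt99}, so there is no in-text argument to compare against. Your reconstruction is correct and follows the standard line of reasoning that one would find in Grafakos--Torres: you correctly identify the central mechanism, namely that membership in $\mathcal{S}_\infty$ forces $\widehat f$ and all of its $\xi$-derivatives to vanish to infinite order at $\xi=\mathbf 0$, and this is exactly what compensates for the negative powers $|\xi|^{u+|\alpha|-|\beta|}$ that the symbol estimates produce after differentiation in $\xi$. The splitting into $|\xi|\le 1$ and $|\xi|>1$, the Leibniz expansion, the integration by parts in $\xi$ to manufacture decay in $x$, and the reduction of every Schwartz seminorm of $a(x,D)f$ to finitely many Schwartz seminorms of $\widehat f$ (hence of $f$) are all the right moves and are carried out in a sound order. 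The duality step for $a(x,D)^{\#}$ is formal and you handle it correctly in the weak-$*$ topologies. Two minor points you glossed over but which do hold: the integration by parts produces no boundary terms, both at $\infty$ (Schwartz decay) and at the origin (infinite-order vanishing of the product), and differentiation under the integral sign is justified by dominated convergence using the same two ingredients; making these explicit would tighten the argument but neither constitutes a gap.
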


Applying an argument similar to that used in the proof of
\cite[Theorem 1.5]{syy10} (see also the proofs of \cite[Theorems 1.1 and 1.2]{gt99}),
we obtain the following theorem.
For the convenience of the reader, we give the details of its proof here.

\begin{theorem}\label{pseudo}
Let $s\in\mathbb R$, $\tau\in[0,\infty)$, $p\in(0,\infty)$, $q\in(0,\infty]$,
and $W\in A_p$ have the $A_p$-dimension $d\in[0,n)$.
Let $\widetilde J$ and $\widetilde s$ be the same as in \eqref{tauJ2}.
Let $u\in\mathbb{Z}_+$, $a\in\dot{S}_{1,1}^u $,
and $a(x,D)$ be a pseudo-differential operator with symbol $a$.
Assume that its formal adjoint $a(x,D)^{\#}$ satisfies,
for each $\beta\in\mathbb{Z}_+^n$ with $|\beta|\leq\widetilde J-n-\widetilde s$,
\begin{equation}\label{227}
a(x,D)^{\#}\left(x^\beta\right)=0\in\mathcal{S}_\infty',
\end{equation}
where \eqref{227} is void when $\widetilde J-n-\widetilde s<0$.
Then $a(x,D)$ can be extended to a continuous linear mapping
from $\dot A_{p,q}^{s+u,\tau}(W)$ to $\dot A^{s,\tau}_{p,q}(W)$.
\end{theorem}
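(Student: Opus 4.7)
The plan is to combine the $\varphi$-transform characterization of $\dot A^{s+u,\tau}_{p,q}(W)$ (\cite[Theorem 3.29]{bhyy1}) with the molecular synthesis result in Theorem \ref{89}(ii). Fix $\varphi,\psi\in\mathcal S$ satisfying \eqref{19}, \eqref{20}, and \eqref{21}. For any $\vec f\in\dot A^{s+u,\tau}_{p,q}(W)$, the discrete Calder\'on formula of Lemma \ref{7} yields $\vec f=\sum_{Q\in\mathscr Q}\langle\vec f,\varphi_Q\rangle\psi_Q$ in $(\mathcal S_\infty')^m$, and I would \emph{define}
$$
a(x,D)\vec f:=\sum_{Q\in\mathscr Q}\langle\vec f,\varphi_Q\rangle\,a(x,D)\psi_Q=\sum_{Q\in\mathscr Q}\vec t_Q\,b_Q,
$$
where $\vec t_Q:=c^{-1}[\ell(Q)]^{-u}\langle\vec f,\varphi_Q\rangle$ and $b_Q:=c\,[\ell(Q)]^u\,a(x,D)\psi_Q$ for a small uniform normalization constant $c>0$ to be fixed. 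The factor $[\ell(Q)]^{-u}=2^{ju}$ for $Q\in\mathscr Q_j$ is exactly what shifts the regularity index in the sequence norms, so
$$
\|\vec t\|_{\dot a^{s,\tau}_{p,q}(W)}=c^{-1}\left\|\{\langle\vec f,\varphi_Q\rangle\}_{Q\in\mathscr Q}\right\|_{\dot a^{s+u,\tau}_{p,q}(W)}\sim\|\vec f\|_{\dot A^{s+u,\tau}_{p,q}(W)}.
$$

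The heart of the argument is to verify that, for a suitable uniform $c>0$, every $b_Q$ is an $\dot A^{s,\tau}_{p,q}(d)$-synthesis molecule on $Q$, namely a $(\widetilde J,\widetilde s)$-molecule in the sense of Definitions \ref{def Js mol} and \ref{def A mol}. Starting from the representation
$$
[a(x,D)\psi_Q](x)=2^{jn/2}\int_{\mathbb R^n}a(x,2^j\eta)\,\widehat\psi(\eta)\,e^{i2^j(x-x_Q)\cdot\eta}\,d\eta
$$
for $Q\in\mathscr Q_j$, I would use the symbol estimate $|\partial_x^\alpha\partial_\xi^\beta a(x,\xi)|\lesssim|\xi|^{u+|\alpha|-|\beta|}$ together with the fact that $|\eta|\sim1$ on $\mathrm{supp}\,\widehat\psi$ to establish the size, derivative, and H\"older bounds required by Definition \ref{moleKLMN}. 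Polynomial decay of arbitrarily large order $K$ (hence also $M$) in $2^j|x-x_Q|$ is produced by repeatedly integrating by parts in $\eta$ against the oscillating factor, while the scale-matching weight $c\,[\ell(Q)]^u=c\,2^{-ju}$ absorbs the $2^{ju}$ coming from $a(x,2^j\eta)$, leaving the prefactor $2^{jn/2}=|Q|^{-1/2}$ demanded by $(u_K)_Q$. The derivative bounds $|\partial^\gamma b_Q(x)|\leq[\ell(Q)]^{-|\gamma|}(u_M)_Q(x)$ follow by differentiating under the integral, observing that each $\partial_x$ either lands on $a$ (costing one power of $|\xi|\sim 2^j$) or brings down a factor of $i\xi$ (again of order $2^j=[\ell(Q)]^{-1}$). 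The H\"older estimate of exponent $N^{**}$ on $\partial^\gamma b_Q$ for $|\gamma|=\lfloor\!\lfloor N\rfloor\!\rfloor$ is then obtained by interpolating between the bounds for orders $|\gamma|$ and $|\gamma|+1$, using $\lfloor\!\lfloor N\rfloor\!\rfloor+N^{**}=N$. Since $a$ is smooth and satisfies symbol estimates of all orders, $K$, $M$, and $N$ can all be taken arbitrarily large, in particular above the required thresholds $\widetilde J+(\widetilde s)_-$, $\widetilde J$, and $\widetilde s$.

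The cancellation condition $L\geq\widetilde J-n-\widetilde s$, which is nontrivial only when $\widetilde J-n-\widetilde s\geq0$, is precisely where the hypothesis \eqref{227} enters: for each multi-index $\beta$ with $|\beta|\leq\widetilde J-n-\widetilde s$, the decay of $b_Q$ and the fact that $\psi_Q\in\mathcal S_\infty$ legitimate
$$
\int_{\mathbb R^n}x^\beta\,b_Q(x)\,dx=c\,[\ell(Q)]^u\langle a(x,D)\psi_Q,x^\beta\rangle=c\,[\ell(Q)]^u\langle\psi_Q,a(x,D)^{\#}(x^\beta)\rangle=0.
$$
With every molecular property verified uniformly in $Q$, Theorem \ref{89}(ii) produces an element $\sum_Q\vec t_Q b_Q\in\dot A^{s,\tau}_{p,q}(W)$ with $\|a(x,D)\vec f\|_{\dot A^{s,\tau}_{p,q}(W)}\lesssim\|\vec t\|_{\dot a^{s,\tau}_{p,q}(W)}\lesssim\|\vec f\|_{\dot A^{s+u,\tau}_{p,q}(W)}$. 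This extension agrees with the original action of $a(x,D)$ on $(\mathcal S_\infty)^m$ via Lemma \ref{7} and is independent of the choice of $(\varphi,\psi)$ by an argument parallel to the one in Lemma \ref{88}. The main obstacle I anticipate is the careful bookkeeping linking the hypothesis \eqref{227} to the cancellation threshold $\widetilde J-n-\widetilde s$ of Definition \ref{def A mol}: showing that this threshold—and not a looser one—is indeed the right one is exactly what makes the result \emph{optimal}, and it ultimately rests on the sharpness of the almost-diagonal range in Lemma \ref{ad BF2}.
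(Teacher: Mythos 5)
Your proof is correct and takes essentially the same route as the paper: decompose $\vec f$ via the discrete Calder\'on reproducing formula, rescale the $\varphi$-transform coefficients by $|Q|^{-u/n}$ (equivalently $[\ell(Q)]^{-u}$) to shift the smoothness index from $s+u$ to $s$, verify that $[\ell(Q)]^{u}\,a(x,D)\psi_Q$ is a constant multiple of an $\dot A^{s,\tau}_{p,q}(d)$-synthesis molecule on $Q$ (decay, derivative, and H\"older bounds from the symbol estimates, cancellation from \eqref{227}), and conclude via Theorem \ref{89}(ii). The only difference is cosmetic: you sketch the integration-by-parts argument verifying the molecular conditions directly, whereas the paper delegates this step by citing the proof of \cite[Theorem 1.5]{syy10}, and the paper uses a single $\varphi$ with $\sum_j|\widehat\varphi(2^{-j}\xi)|^2=1$ where you use a general pair $(\varphi,\psi)$.
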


\begin{proof}
To simplify the presentation of the present proof,
in this proof, we denote $a(x,D)$ simply by $T$.
Let $\vec f\in\dot A_{p,q}^{s+u, \tau}(W)$
and $\varphi\in\mathcal{S}$
satisfy that $\overline{\operatorname{supp}\widehat{\varphi}}$ is compact and bounded away from the origin
and, for any $\xi\in\mathbb{R}^n\setminus\{\mathbf{0}\}$,
$\sum_{j\in\mathbb{Z}}|\widehat{\varphi}(2^{-j}\xi)|^2
=1.$
By this and Lemma \ref{7}, we find that
$$
\vec f=\sum_{Q\in\mathscr{Q}}
\left\langle\vec f,\varphi_Q\right\rangle\varphi_Q
$$
in $(\mathcal{S}_\infty')^m$, and hence it is natural to define
$$
T\left(\vec f\right)
:=\sum_{Q\in\mathscr{Q}}
\left\langle\vec f,\varphi_Q\right\rangle T\left(\varphi_Q\right)
=\sum_{Q\in\mathscr{Q}}
\left[|Q|^{-\frac{u}{n}}\left(S_\varphi\vec f\right)_Q\right]
\left[|Q|^{\frac{u}{n}}T\left(\varphi_Q\right)\right]
$$
in $(\mathcal{S}_\infty')^m$.
Now, we show that $T(\vec f)$ is well defined.
From \cite[Theorem 3.29]{bhyy1}, we deduce that
\begin{equation}\label{102}
\left\|\left\{|Q|^{-\frac{u}{n}}\left(S_\varphi\vec f\right)_Q
\right\}_{Q\in\mathscr{Q}}\right\|_{\dot a^{s,\tau}_{p,q}(W)}
=\left\|S_\varphi\vec f\right\|_{\dot a_{p,q}^{s+u,\tau}(W)}
\lesssim\left\|\vec f\right\|_{\dot A_{p,q}^{s+u,\tau}(W)}.
\end{equation}
By \eqref{227} and the proof of \cite[Theorem 1.5]{syy10},
we conclude that, for any $Q\in\mathscr{Q}$, the function
$|Q|^{\frac{u}{n}}T(\varphi_Q)$ is a harmless constant multiple of
an $\dot A^{s,\tau}_{p,q}(d)$-synthesis molecule on $Q$.
Together with \eqref{102} and Theorem \ref{89}(ii), this further implies that
$T(\vec f)$ is well defined and
$$
\left\|T\left(\vec f\right)\right\|_{\dot A^{s,\tau}_{p,q}(W)}
\lesssim\left\|\left\{|Q|^{-\frac{u}{n}}\left(S_\varphi\vec f\right)_Q
\right\}_{Q\in\mathscr{Q}}\right\|_{\dot a_{p,q}^{s,\tau}(W)}
\lesssim\left\|\vec f\right\|_{\dot A_{p,q}^{s+u,\tau}(W)}.
$$
This finishes the proof of Theorem \ref{pseudo}.
\end{proof}

\begin{remark}
When $\tau=0$, we obtain the boundedness of pseudo-differential operators
on $\dot A^s_{p,q}(W)$, which is also new.
When $m=1$ and $W\equiv 1$,
Theorem \ref{pseudo} in this case contains \cite[Theorem 1.5]{syy10}
in which $\tau$ has an upper bound.
\end{remark}

\section{Wavelet Characterizations}
\label{wavelet characterization}

In this section, we characterize the
matrix-weighted Besov-type and Triebel--Lizorkin-type spaces, respectively,
via the Meyer wavelet and the Daubechies wavelet.
Let us first recall the wavelet constructed by Lemari\'e and Meyer
(see, for instance, \cite[Theorem 2]{lm86}).

\begin{lemma}\label{wavelet basis}
There exists a sequence $\{\theta^{(i)}\}_{i=1}^{2^n-1}\subset\mathcal{S}_\infty$
of real-valued basic wavelets such that
$$
\left\{\theta^{(i)}_Q:\ i\in\{1,\ldots,2^n-1\},\ Q\in\mathscr{Q}\right\}
$$
form an orthonormal basis of $L^2$.
\end{lemma}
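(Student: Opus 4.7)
The plan is to invoke the classical Lemari\'e--Meyer multiresolution analysis, whose construction I would sketch as follows. First I would produce a one-dimensional scaling function $\varphi\in\mathcal{S}(\mathbb R)$ by prescribing its Fourier transform $\widehat{\varphi}$ to be a smooth, nonnegative, real-valued, even bump supported in a symmetric interval about the origin, identically equal to a constant on a smaller interval, with the transition region shaped so that $\sum_{k\in\mathbb Z}|\widehat\varphi(\xi+2\pi k)|^2\equiv 1$ for almost every $\xi\in\mathbb R$. A standard smoothing of a suitable cutoff supplies such a $\widehat\varphi$; the compact support and $C^\infty$ regularity of $\widehat\varphi$ place $\varphi$ in $\mathcal S(\mathbb R)$.

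Next I would define the corresponding one-dimensional mother wavelet $\psi$ through an explicit formula of the type $\widehat{\psi}(\xi)=e^{-i\xi/2}\,\overline{\widehat\varphi(\xi/2+\pi)}\,\widehat\varphi(\xi/2)$, with constants adjusted to match the Fourier normalization fixed in the paper. Since $\widehat\psi$ is smooth, compactly supported, and vanishes in a neighborhood of the origin, every polynomial moment of $\psi$ vanishes, giving $\psi\in\mathcal S_\infty(\mathbb R)$. The family $\{\psi_{j,k}\}_{j,k\in\mathbb Z}$ is an orthonormal basis of $L^2(\mathbb R)$ by the standard MRA argument, in which the identity above is the crucial input for orthonormality across scales.

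To pass to $\mathbb R^n$, I would apply the tensor product construction. For each $\varepsilon=(\varepsilon_1,\ldots,\varepsilon_n)\in\{0,1\}^n\setminus\{(0,\ldots,0)\}$, set
$$
\theta^{(\varepsilon)}(x):=\prod_{\ell=1}^n\eta^{(\varepsilon_\ell)}(x_\ell),\qquad
\eta^{(0)}:=\varphi,\ \eta^{(1)}:=\psi,
$$
and, after any reindexing, designate the resulting $2^n-1$ functions as $\theta^{(1)},\ldots,\theta^{(2^n-1)}$. Each contains at least one $\psi$ factor, so $\widehat{\theta^{(i)}}$ vanishes in a neighborhood of $\mathbf{0}\in\mathbb R^n$, giving $\theta^{(i)}\in\mathcal S_\infty(\mathbb R^n)$. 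That $\{\theta^{(i)}_Q:i\in\{1,\ldots,2^n-1\},\,Q\in\mathscr Q\}$ forms an orthonormal basis of $L^2(\mathbb R^n)$ then follows by iterating the one-dimensional MRA decomposition on the successive factors of $L^2(\mathbb R^n)=\bigotimes_{\ell=1}^n L^2(\mathbb R)$, where the telescoping of the detail spaces across the $n$ coordinates produces precisely the $2^n-1$ orientations listed above.

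The main obstacle is the construction of the one-dimensional scaling function with all properties (compact Fourier support, $C^\infty$ smoothness, and the square-summability identity) holding simultaneously, since these interact delicately; this is the classical Lemari\'e--Meyer step recorded in \cite{lm86}. The tensor-product passage and the verification that each $\theta^{(i)}$ lies in $\mathcal S_\infty$ are then routine.
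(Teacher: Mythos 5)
Your overall plan---build a one-dimensional Meyer scaling function $\varphi$ with smooth compactly supported $\widehat\varphi$, derive the mother wavelet $\psi$, and pass to $\mathbb R^n$ by tensoring $\{\varphi,\psi\}$ over the $2^n-1$ nontrivial index patterns---is exactly the Lemari\'e--Meyer construction the paper invokes by citation, so the approach is the right one.

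However, the explicit formula you wrote for the mother wavelet,
\[
\widehat\psi(\xi)=e^{-i\xi/2}\,\overline{\widehat\varphi(\xi/2+\pi)}\,\widehat\varphi(\xi/2),
\]
is not correct, and the error is not cosmetic: it breaks precisely the property you then rely on. The standard formula is $\widehat\psi(\xi)=e^{\pm i\xi/2}\,\overline{m_0(\xi/2+\pi)}\,\widehat\varphi(\xi/2)$, where $m_0$ is the \emph{$2\pi$-periodic} low-pass filter, i.e.\ the $2\pi$-periodization of $\xi\mapsto\widehat\varphi(2\xi)$, not $\widehat\varphi$ itself shifted by $\pi$. In the Meyer case, $m_0(\xi/2+\pi)=\overline{\widehat\varphi(\xi+2\pi)}+\overline{\widehat\varphi(\xi-2\pi)}$ on the support of $\widehat\varphi(\xi/2)$, which yields a $\widehat\psi$ supported symmetrically in $\{2\pi/3\leq|\xi|\leq8\pi/3\}$ and vanishing near the origin. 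With your formula, $\widehat\varphi(\xi/2+\pi)$ need not vanish at $\xi=0$ (since $\pi$ lies inside the transition region $[2\pi/3,4\pi/3]$ of $\widehat\varphi$), so $\widehat\psi(0)\neq0$; hence $\psi$ would not lie in $\mathcal S_\infty$, contrary to your claim that all of its moments vanish, and the one-sided support also spoils both the dilation-covering identity $\sum_j|\widehat\psi(2^j\xi)|^2\equiv1$ and real-valuedness of $\psi$. Replacing $\widehat\varphi(\xi/2+\pi)$ by the periodized filter repairs the argument; everything after that point (membership in $\mathcal S_\infty$, the $n$-fold tensor/telescoping argument for the orthonormal basis of $L^2(\mathbb R^n)$) is sound as you have written it.
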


\begin{remark}\label{121}
In Lemma \ref{wavelet basis}, for any $f\in\mathcal{S}_\infty'$,
\begin{equation*}
f=\sum_{i=1}^{2^n-1}\sum_{Q\in\mathscr{Q}}
\left\langle f,\theta^{(i)}_Q\right\rangle\theta^{(i)}_Q
\end{equation*}
in $\mathcal{S}_\infty'$
(see, for instance, \cite[The proof of Theorem 7.20]{fjw91}).
\end{remark}

\begin{theorem}\label{wavelet}
Let $s\in\mathbb R$, $\tau\in[0,\infty)$, $p\in(0,\infty)$, $q\in(0,\infty]$, and $W\in A_p$.
Let $\{\theta^{(i)}\}_{i=1}^{2^n-1}\subset\mathcal{S}_\infty$
be the same as in Lemma \ref{wavelet basis}.
Then $\vec f\in\dot A^{s,\tau}_{p,q}(W)$ if and only if $\vec f\in(\mathcal{S}_\infty')^m$ and
$$
\left\|\vec f\right\|_{\dot A_{p,q}^{s,\tau}(W)_\mathrm{w}}
:=\sum_{i=1}^{2^n-1}
\left\|\left\{\left\langle\vec f,\theta^{(i)}_Q \right\rangle
\right\}_{Q\in\mathscr{Q}}\right\|_{\dot a^{s,\tau}_{p,q}(W)}
<\infty.
$$
Moreover, for any $\vec f\in(\mathcal{S}_\infty')^m$,
$\|\vec f\|_{\dot A^{s,\tau}_{p,q}(W)}
\sim\|\vec f\|_{\dot A_{p,q}^{s,\tau}(W)_\mathrm{w}},$
where the positive equivalence constants are independent of $\vec f$.
\end{theorem}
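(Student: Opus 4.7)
The plan is to deduce both directions of the equivalence directly from the molecular characterization established in Theorem \ref{89}, exploiting the crucial property that the Meyer wavelets $\theta^{(i)} \in \mathcal{S}_\infty$ yield essentially perfect molecules. Since each $\theta^{(i)}$ is a Schwartz function with vanishing moments of all orders, after scaling by an appropriate dimensional constant $c_n$ (chosen to absorb the relevant Schwartz seminorms of the finitely many wavelets $\theta^{(1)},\ldots,\theta^{(2^n-1)}$), each $c_n\theta^{(i)}_Q$ is simultaneously an $\dot A^{s,\tau}_{p,q}(d)$-analysis molecule and an $\dot A^{s,\tau}_{p,q}(d)$-synthesis molecule on $Q$ for every admissible parameter choice $(K,L,M,N)$ in Definition \ref{def A mol}. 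The verification is routine: the rapid Schwartz decay of $\theta^{(i)}$ and its derivatives produces the decay bounds governed by $K$ and $M$; membership in $\mathcal{S}_\infty$ handles the cancellation condition for any $L$; and the mean value theorem applied to the partial derivatives of $\theta^{(i)}$ delivers the fractional Hölder condition encoded by $N^{**}$.

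For the estimate $\|\vec f\|_{\dot A^{s,\tau}_{p,q}(W)_{\mathrm w}} \lesssim \|\vec f\|_{\dot A^{s,\tau}_{p,q}(W)}$, I would fix $\vec f \in \dot A^{s,\tau}_{p,q}(W)$, invoke Lemma \ref{88} to make sense of each pairing $\langle \vec f, \theta^{(i)}_Q\rangle$, and then apply Theorem \ref{89}(i) with $m_Q := c_n\theta^{(i)}_Q$ to obtain
$$
\left\|\left\{\left\langle \vec f, \theta^{(i)}_Q\right\rangle\right\}_{Q\in\mathscr Q}\right\|_{\dot a^{s,\tau}_{p,q}(W)}
\lesssim \left\|\vec f\right\|_{\dot A^{s,\tau}_{p,q}(W)},
$$
with an implicit constant independent of $i$; summing over the finite range $i\in\{1,\ldots,2^n-1\}$ concludes this direction.

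For the reverse inequality, suppose $\vec f \in (\mathcal{S}_\infty')^m$ has finite $\|\cdot\|_{\dot A_{p,q}^{s,\tau}(W)_{\mathrm w}}$ norm. Setting $\vec t^{(i)} := \{\langle \vec f, \theta^{(i)}_Q\rangle\}_{Q\in\mathscr Q} \in \dot a^{s,\tau}_{p,q}(W)$ for each $i$, Theorem \ref{89}(ii) applied to the synthesis molecules $c_n\theta^{(i)}_Q$ produces an element $\vec g^{(i)} \in \dot A^{s,\tau}_{p,q}(W)$ satisfying
$$
\vec g^{(i)} = \sum_{Q \in \mathscr Q} \left\langle \vec f,\theta^{(i)}_Q\right\rangle \theta^{(i)}_Q
\quad\text{in }(\mathcal{S}_\infty')^m,
\qquad
\left\|\vec g^{(i)}\right\|_{\dot A^{s,\tau}_{p,q}(W)} \lesssim \left\|\vec t^{(i)}\right\|_{\dot a^{s,\tau}_{p,q}(W)}.
$$
Summing over $i$ and invoking the wavelet reproducing formula from Remark \ref{121}, I get $\vec f = \sum_{i=1}^{2^n-1} \vec g^{(i)}$ in $(\mathcal{S}_\infty')^m$, and the triangle inequality in $\dot A^{s,\tau}_{p,q}(W)$ (valid up to a $p$-dependent constant when $p<1$, which is harmless for a finite sum) yields the desired estimate.

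The main technical point to nail down is the uniform molecular claim for $\theta^{(i)}_Q$ made in the first paragraph; this is essentially a bookkeeping exercise, but one must be careful to choose $c_n$ absorbing all relevant Schwartz seminorms and dimensional factors simultaneously so that Theorem \ref{89} applies with an implicit constant independent of both $Q \in \mathscr Q$ and $i \in \{1,\ldots,2^n-1\}$. A minor subtlety is that the $A_p$-dimension $d$ of $W$ enters the molecular parameters through $\widetilde J$ and $\widetilde s$ in \eqref{tauJ2}, but, because $\theta^{(i)} \in \mathcal{S}_\infty$, it automatically has decay and cancellation of any finite order, so the molecular requirements are met regardless of the specific $d \in [0,n)$.
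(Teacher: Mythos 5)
Your proposal is correct and follows essentially the same route as the paper, with one small difference in packaging: for the estimate $\|\vec f\|_{\dot A^{s,\tau}_{p,q}(W)_\mathrm{w}}\lesssim\|\vec f\|_{\dot A^{s,\tau}_{p,q}(W)}$ you invoke Theorem \ref{89}\eqref{890} as a black box (with $m_Q=c_n\theta^{(i)}_Q$ as analysis molecules), while the paper re-derives that argument inline for the specific matrix $B^{(i)}=\{\langle\theta^{(i)}_Q,\varphi_R\rangle\}_{Q,R}$ using the Calder\'on reproducing formula, Corollary \ref{83}\eqref{831}, Lemma \ref{ad BF2}, and the $\varphi$-transform characterization. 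One minor point you gloss over but that deserves a sentence: Theorem \ref{89}\eqref{890} is stated in terms of the pairing of Lemma \ref{88}, whereas the wavelet norm uses the genuine distributional pairing $\langle\vec f,\theta^{(i)}_Q\rangle$; these coincide because $\theta^{(i)}_Q\in\mathcal S_\infty$, so $\theta^{(i)}_Q=\sum_R\langle\theta^{(i)}_Q,\varphi_R\rangle\psi_R$ in $\mathcal S_\infty$ by Lemma \ref{7}, and the two pairings agree term by term. The reverse direction of your argument matches the paper exactly.
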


\begin{proof}
We first prove that, for any $\vec f\in\dot A^{s,\tau}_{p,q}(W)$,
\begin{equation}\label{123}
\left\|\vec f\right\|_{\dot A_{p,q}^{s,\tau}(W)_\mathrm{w}}
\lesssim\left\|\vec f\right\|_{\dot A^{s,\tau}_{p,q}(W)}.
\end{equation}
Let $\varphi,\psi\in\mathcal{S}$ satisfy \eqref{19}, \eqref{20}, and \eqref{21}.
By $\theta^{(i)}\in\mathcal{S}_\infty$ and Lemma \ref{7},
we find that, for any $i\in\{1,\ldots,2^n-1\}$ and $Q\in\mathscr{Q}$,
$$
\theta_Q^{(i)}=\sum_{R\in\mathscr{Q}}
\left\langle\theta^{(i)}_Q,\varphi_R\right\rangle\psi_R
$$
in $\mathcal{S}_\infty$.
Thus, for any $i\in\{1,\ldots,2^n-1\}$ and $\vec f\in(\mathcal{S}_\infty')^m$,
\begin{equation}\label{196}
\left\langle\vec f,\theta^{(i)}_Q\right\rangle
=\sum_{R\in\mathscr{Q}}
\left\langle\theta^{(i)}_Q,\varphi_R\right\rangle
\left\langle\vec f,\psi_R\right\rangle.
\end{equation}
Using $\theta^{(i)}\in\mathcal{S}_\infty$ again, we conclude that
$\{\theta^{(i)}_Q\}_{Q\in\mathscr{Q}}$ is a harmless constant multiple of
a family of $\dot A^{s,\tau}_{p,q}(d)$-synthesis molecules,
respectively, on $\{Q\}_{Q\in\mathscr{Q}}$.
From this and Corollary \ref{83}, we infer that
$B^{(i)}:=\{\langle\theta^{(i)}_Q,\varphi_R\rangle\}_{Q,R\in\mathscr{Q}}$
is an $\dot a^{s,\tau}_{p,q}(d)$-almost diagonal operator.
This, combined with \eqref{196}, Lemma \ref{ad BF2}, and \cite[Theorem 3.29]{bhyy1},
further implies that, for any $i\in\{1,\ldots,2^n-1\}$ and $\vec f\in\dot A^{s,\tau}_{p,q}(W)$,
\begin{align*}
\left\|\left\{\left\langle\vec f,\theta^{(i)}_Q\right\rangle
\right\}_{Q\in\mathscr{Q}}\right\|_{\dot a^{s,\tau}_{p,q}(W)}
&=\left\|B^{(i)}\left(S_\psi\vec f\right)\right\|_{\dot a^{s,\tau}_{p,q}(W)}
\lesssim\left\|S_\psi\vec f\right\|_{\dot a^{s,\tau}_{p,q}(W)}
\lesssim\left\|\vec f\right\|_{\dot A^{s,\tau}_{p,q}(W)}.
\end{align*}
This finishes the proof of \eqref{123}.

Now, we show that, for any $\vec f\in(\mathcal{S}_\infty')^m$
with $\|\vec f\|_{\dot A_{p,q}^{s,\tau}(W)_\mathrm{w}}<\infty$,
\begin{equation}\label{120}
\left\|\vec f\right\|_{\dot A^{s,\tau}_{p,q}(W)}
\lesssim\left\|\vec f\right\|_{\dot A_{p,q}^{s,\tau}(W)_\mathrm{w}}.
\end{equation}
Let $\vec f\in(\mathcal{S}_\infty')^m$
satisfy $\|\vec f\|_{\dot A_{p,q}^{s,\tau}(W)_\mathrm{w}}<\infty$.
Then, from the definition of $\|\cdot\|_{\dot A_{p,q}^{s,\tau}(W)_\mathrm{w}}$,
it follows that, for any $i\in\{1,\ldots,2^n-1\}$,
$$
\vec t^{(i)}
:=\left\{\vec t^{(i)}_Q \right\}_{Q\in\mathscr{Q}}
:=\left\{\left\langle\vec f,
\theta^{(i)}_Q\right\rangle\right\}_{Q\in\mathscr{Q}}
\in\dot a^{s,\tau}_{p,q}(W).
$$
By Remark \ref{121}, we obtain
\begin{equation}\label{195}
\left\|\vec f\right\|_{\dot A^{s,\tau}_{p,q}(W)}
=\left\|\sum_{i=1}^{2^n-1}\sum_{Q\in\mathscr{Q}}
\vec t^{(i)}_Q \theta^{(i)}_Q\right\|_{\dot A^{s,\tau}_{p,q}(W)}
\lesssim\sum_{i=1}^{2^n-1}\left\|\sum_{Q\in\mathscr{Q}}
\vec t^{(i)}_Q \theta^{(i)}_Q\right\|_{\dot A^{s,\tau}_{p,q}(W)}.
\end{equation}
Using $\theta^{(i)}\in \mathcal{S}_\infty$,
we conclude that $\{\theta^{(i)}_Q\}_{Q\in\mathscr{Q}}$ is a harmless constant multiple of
a family of $\dot A^{s,\tau}_{p,q}(d)$-synthesis molecules,
respectively, on $\{Q\}_{Q\in\mathscr{Q}}$.
From this, the fact that $\vec t^{(i)}\in\dot a^{s,\tau}_{p,q}(W)$, Theorem \ref{89}(ii),
and \eqref{195}, we deduce that
$$
\left\|\vec f\right\|_{\dot A^{s,\tau}_{p,q}(W)}
\lesssim\sum_{i=1}^{2^n-1}\left\|\vec t^{(i)}\right\|_{\dot a^{s,\tau}_{p,q}(W)}
=\left\|\vec f\right\|_{\dot A_{p,q}^{s,\tau}(W)_\mathrm{w}},
$$
which completes the proof of \eqref{120}
and hence Theorem \ref{wavelet}.
\end{proof}

\begin{remark}
When $m=1$ and $W\equiv 1$, Theorem \ref{wavelet}
is just \cite[Theorem 8.2]{ysy10}.
Furthermore, if $\tau=0$, Theorem \ref{wavelet}
reduces to the classical result \cite[Theorem 7.20]{fjw91}.
\end{remark}

In what follows, for any $\mathscr{N}\in\mathbb{N}$,
we use $C^{\mathscr{N}}$ to denote the set of all
$\mathscr{N}$ times continuously differentiable functions on $\mathbb{R}^n$.

\begin{definition}\label{def Dau}
Let $\mathscr{N}\in\mathbb{N}$. We say that $\{\theta^{(i)}\}_{i=1}^{2^n-1}$
are Daubechies wavelets of class $C^{\mathscr N}$ if each
$\theta^{(i)}\in C^{\mathscr N}$ is real-valued with bounded support and if
$$
\left\{\theta^{(i)}_Q :\ i\in\{1,\ldots,2^n-1\},\ Q\in\mathscr{Q}\right\}
$$
is an orthonormal basis of $L^2$.
\end{definition}

The following wavelet basis was constructed by Daubechies
(see, for instance, \cite{d88}).

\begin{lemma}\label{wavelet basis 2}
For any $\mathscr{N}\in\mathbb{N}$,
there exist Daubechies wavelets of class $C^{\mathscr N}$.
\end{lemma}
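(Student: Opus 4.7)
The plan is to invoke the classical construction of Daubechies, which produces compactly supported orthonormal wavelets of arbitrary prescribed smoothness, and then tensorize to get the multidimensional wavelet family required here. The statement is essentially a citation of \cite{d88}, but I would still sketch how the three key features (real-valuedness, compact support, and $C^{\mathscr N}$ smoothness, along with the orthonormal basis property indexed by $\{1,\ldots,2^n-1\}\times\mathscr Q$) combine.

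First I would recall the one-dimensional Daubechies construction. Given $\mathscr N\in\mathbb N$, one produces a trigonometric polynomial $m_0(\xi)$ with real coefficients such that $|m_0(\xi)|^2+|m_0(\xi+\pi)|^2=1$, $m_0(0)=1$, and $m_0$ has a zero of sufficiently high order at $\xi=\pi$. Setting $\widehat\phi(\xi):=\prod_{j=1}^\infty m_0(2^{-j}\xi)$ defines a compactly supported real-valued scaling function $\phi\in C^{\mathscr N}(\mathbb R)$ for $m_0$ chosen with sufficiently many vanishing moments (the degree of $m_0$ must grow linearly with $\mathscr N$; this is the quantitative regularity estimate proved by Daubechies). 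The associated wavelet $\psi(x):=\sum_k(-1)^k\overline{h_{1-k}}\phi(2x-k)$, where $\{h_k\}$ are the filter coefficients of $m_0$, is also real-valued, compactly supported, and in $C^{\mathscr N}$.

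Next I would pass to $\mathbb R^n$ by the standard tensor product construction. For each $\varepsilon=(\varepsilon_1,\ldots,\varepsilon_n)\in\{0,1\}^n\setminus\{\mathbf 0\}$, define
\begin{equation*}
\theta^{(\varepsilon)}(x_1,\ldots,x_n):=\prod_{k=1}^n\eta^{(\varepsilon_k)}(x_k),\qquad \eta^{(0)}:=\phi,\ \eta^{(1)}:=\psi.
\end{equation*}
Enumerating the $2^n-1$ nonzero elements of $\{0,1\}^n$ as $\varepsilon_1,\ldots,\varepsilon_{2^n-1}$ and writing $\theta^{(i)}:=\theta^{(\varepsilon_i)}$ yields real-valued functions in $C^{\mathscr N}(\mathbb R^n)$ with compact support (each being a product of $n$ compactly supported factors). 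Orthonormality of $\{\theta_Q^{(i)}:i\in\{1,\ldots,2^n-1\},\,Q\in\mathscr Q\}$ in $L^2(\mathbb R^n)$ and its completeness there are the standard conclusions of the multidimensional multiresolution analysis built on $\phi(x_1)\cdots\phi(x_n)$; I would refer to a textbook treatment (for instance the discussion surrounding \cite[Theorem 7.20]{fjw91}) to avoid reproducing that argument.

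The only step requiring genuine care is the quantitative smoothness of the one-dimensional Daubechies scaling function: the regularity of $\phi$ as a function of the filter length is not elementary and is the technical heart of Daubechies' paper. Since the statement of the lemma is \emph{qualitative} in $\mathscr N$, it suffices to cite the fact that by increasing the number of vanishing moments of $m_0$ one can push the H\"older exponent (and hence the $C^{\mathscr N}$ class) of $\phi$ arbitrarily high. With that input, the tensor-product passage and orthonormal basis property are routine, and the lemma follows.
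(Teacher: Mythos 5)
Your proposal is correct and matches the paper's treatment: the paper itself gives no proof of this lemma and simply cites Daubechies \cite{d88}, and your sketch (one-dimensional filter construction giving a real-valued, compactly supported scaling function and wavelet of arbitrarily prescribed smoothness, followed by the standard tensor-product passage to $\mathbb R^n$ indexed by $\{0,1\}^n\setminus\{\mathbf 0\}$) is precisely the classical construction that the citation stands for; indeed the paper makes this tensor structure explicit later in Lemma \ref{wavelet basis 3}. The one point worth keeping in mind is that the quantitative regularity estimate — roughly linear growth of the H\"older exponent in the filter length — is the nontrivial ingredient, and you correctly identify it as the step to be cited rather than reproved.
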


\begin{remark}\label{Dau can}
Let $\{\theta^{(i)}:\ i\in\{1,\ldots,2^n-1\}\}$ be Daubechies wavelets of class $C^{\mathscr N}$.
By \cite[Corollary 5.5.2]{d92}, we find that,
for every $\alpha\in\mathbb{Z}_+^n$ with $|\alpha|\leq\mathscr{N}$, the following cancellation conditions are satisfied:
$$
\int_{\mathbb{R}^n}x^\alpha\theta^{(i)}(x)\,dx=0.
$$
\end{remark}

\begin{corollary}\label{88 corollary}
Let $s\in\mathbb R$, $\tau\in[0,\infty)$, $p\in(0,\infty)$, $q\in(0,\infty]$, and $d\in[0,n)$.
Let  $\widetilde J$ and $\widetilde s$ be the same as in \eqref{tauJ2},
and let $\mathscr{N}\in\mathbb{N}$ satisfy
\begin{equation}\label{NDau}
\mathscr{N}>\max\left\{\widetilde J-n-\widetilde s,\widetilde s\right\},
\end{equation}
and let $\{\theta^{(i)}\}_{i=1}^{2^n-1}$ be Daubechies wavelets of class $C^{\mathscr{N}}$.
\begin{enumerate}[\rm(i)]
\item\label{88c1} For each $i\in\{1,\ldots,2^n-1\}$ and $Q\in\mathscr Q$,
the wavelet $\theta^{(i)}_Q$ is a constant multiple of both an $\dot A^{s,\tau}_{p,q}(d)$-analysis molecule and an  $\dot A^{s,\tau}_{p,q}(d)$-synthesis molecule on $Q$, where the constant is independent of $Q\in\mathscr Q$.
\item\label{88c2} Let $W\in A_p$ have the $A_p$-dimension $d$, let $\varphi,\psi\in\mathcal{S}$ satisfy \eqref{19}, \eqref{20}, and \eqref{21}, and let $\vec f\in\dot A^{s,\tau}_{p,q}(W)$.
Then, for any $ i\in\{1,\ldots,2^n-1\}$ and $Q\in\mathscr{Q}$,
$$
\left\langle\vec f,\theta^{(i)}_Q\right\rangle
:=\sum_{R\in\mathscr{Q}}
\left\langle\vec f,\varphi_R\right\rangle
\left\langle\psi_R,\theta^{(i)}_Q\right\rangle
$$
is well defined: the series converges absolutely
and its value is independent of the choice of $\varphi$ and $\psi$.
\end{enumerate}
\end{corollary}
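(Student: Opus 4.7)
The plan is to establish part \eqref{88c1} by direct verification of the molecular conditions in Definitions \ref{moleKLMN} and \ref{def A mol}, and then to derive part \eqref{88c2} as an immediate consequence of part \eqref{88c1} and Lemma \ref{88}.

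For part \eqref{88c1}, every Daubechies wavelet $\theta^{(i)}$ is, by Definition \ref{def Dau} and Remark \ref{Dau can}, a real-valued $C^{\mathscr{N}}$ function with bounded support satisfying $\int_{\mathbb{R}^n}x^\alpha\theta^{(i)}(x)\,dx=0$ for every $\alpha\in\mathbb{Z}_+^n$ with $|\alpha|\leq\mathscr{N}$. After translating and dilating to $\theta^{(i)}_Q(x)=|Q|^{-\frac12}\theta^{(i)}(\ell(Q)^{-1}(x-x_Q))$, each of these structural properties transfers cleanly: the bounded support of $\theta^{(i)}$ yields $|\theta^{(i)}_Q(x)|\lesssim(u_K)_Q(x)$ and $|\partial^\gamma\theta^{(i)}_Q(x)|\lesssim\ell(Q)^{-|\gamma|}(u_M)_Q(x)$ for arbitrarily large $K,M\in(0,\infty)$ and all $|\gamma|\leq\mathscr{N}$; the cancellation conditions on $\theta^{(i)}_Q$ against $x^\gamma$ for $|\gamma|\leq\mathscr{N}$ follow by expanding $x^\gamma=((x-x_Q)+x_Q)^\gamma$ via the binomial theorem, changing variables, and invoking the cancellations of $\theta^{(i)}$; finally, the H\"older condition on the top-order derivatives (with $N=\mathscr{N}$, so $\lfloor\!\lfloor N\rfloor\!\rfloor=\mathscr{N}-1$ and $N^{**}=1$) reduces to a Lipschitz estimate, which follows from the mean value theorem applied to the derivatives of order $\mathscr{N}$, these being bounded since $\theta^{(i)}\in C^{\mathscr{N}}$. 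Thus $\theta^{(i)}_Q$ is a ($Q$-independent) constant multiple of a $(K,\mathscr{N},M,\mathscr{N})$-molecule on $Q$ for arbitrarily large $K,M$. Under the hypothesis $\mathscr{N}>\max\{\widetilde J-n-\widetilde s,\widetilde s\}$, the four molecular parameters simultaneously satisfy the requirements of both the synthesis and the analysis molecule conditions of Definition \ref{def A mol}(i)--(ii), namely $L\geq\widetilde{J}-n-\widetilde{s}$ and $N>\widetilde{s}$ for the synthesis case, and $L\geq\widetilde{s}$ and $N>\widetilde{J}-n-\widetilde{s}$ for the analysis case, with the decay conditions on $K$ and $M$ trivially met.

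For part \eqref{88c2}, once part \eqref{88c1} is in hand, $\theta^{(i)}_Q$ is a constant multiple of an $\dot A^{s,\tau}_{p,q}(d)$-analysis molecule on $Q$, so Lemma \ref{88} applied with $\Phi:=\theta^{(i)}_Q$ immediately furnishes both the absolute convergence of the series $\sum_{R\in\mathscr{Q}}\langle\vec f,\varphi_R\rangle\langle\psi_R,\theta^{(i)}_Q\rangle$ and the independence of its sum from the choice of the pair $(\varphi,\psi)$. No further argument is required.

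The main technical subtlety, which is not deep but demands attention, lies in reconciling the single symmetric hypothesis $\mathscr{N}>\max\{\widetilde J-n-\widetilde s,\widetilde s\}$ with the asymmetric cancellation/smoothness trade-off between synthesis and analysis molecules, and in handling correctly the integer/fractional part conventions encoded by $\lfloor\!\lfloor\cdot\rfloor\!\rfloor$ and $N^{**}$ from \eqref{ceil} and \eqref{r**} when $N=\mathscr{N}\in\mathbb{N}$. Once one observes that taking $N=L=\mathscr{N}$ (with $K,M$ arbitrarily large) forces precisely the inequalities $\mathscr{N}>\widetilde s$ and $\mathscr{N}>\widetilde J-n-\widetilde s$, the verification becomes essentially mechanical.
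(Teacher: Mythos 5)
Your proof is correct and follows essentially the same route as the paper's: part (i) by directly checking the $(K,L,M,N)$-molecule conditions of Definitions \ref{moleKLMN}, \ref{def Js mol}, and \ref{def A mol} using the compact support, cancellation (Remark \ref{Dau can}), and $C^{\mathscr N}$-smoothness of the Daubechies wavelets with $L=N=\mathscr N$ and $K,M$ arbitrarily large; part (ii) as an immediate application of Lemma \ref{88} once the analysis-molecule status from part (i) is established. The only difference is presentational: you spell out the verification of decay/cancellation/H\"older estimates (which the paper compresses to a single sentence), and you are slightly more careful than the paper in taking exactly $N=\mathscr N$ rather than ``$N\geq\mathscr N$''.
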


\begin{proof}
\eqref{88c1} Recall from Definition \ref{moleKLMN} that the parameters of a $(K,L,M,N)$-molecule describe its decay, cancellation, decay of derivatives, and smoothness, respectively. For functions of the form $\theta_Q^{(i)}$, where $\theta^{(i)}$ has bounded support, the decay conditions are automatic, and it remains to consider cancellation and smoothness.

Now suppose that $\{\theta^{(i)}\}_{i=1}^{2^n-1}$ are Daubechies wavelets
of class $C^{\mathscr{N}}$. Then $\theta^{(i)}_Q$ clearly has smoothness $N\geq\mathscr N$
by definition and also cancellation $L\geq\mathscr N$ by Remark \ref{Dau can}.
If $\mathscr N$ is the same as in \eqref{NDau},
then in particular $L\geq\widetilde J-n-\widetilde s$ and $N>\widetilde s$,
as required for a $(\widetilde J,\widetilde s)$-molecule,
and also $L\geq\widetilde s$ and $N>\widetilde J-n-\widetilde s$,
as required for a $(\widetilde J,\widetilde J-n-\widetilde s)$-molecule,
according to Definition \ref{def Js mol}.

Finally, these are precisely the conditions for an $\dot A^{s,\tau}_{p,q}(d)$-synthesis molecule and an $\dot A^{s,\tau}_{p,q}(d)$-analysis molecule, by Definition \ref{def A mol}.

\eqref{88c2} This is an immediate consequence of part \eqref{88c1} and Lemma \ref{88}.
This finishes the proof of Corollary \ref{88 corollary}.
\end{proof}

\begin{remark}
In Corollary \ref{88 corollary},
if $\vec f\in(L^2)^m\cap\dot A^{s,\tau}_{p,q}(W)$, then,
for any $i\in\{1,\ldots,2^n-1\}$ and $Q\in\mathscr{Q}$,
$$
\left\langle\vec f,\theta^{(i)}_Q\right\rangle
=\int_{\mathbb{R}^n}\vec f(x)\overline{\theta^{(i)}_Q(x)}\,dx
$$
because $\vec f=\sum_{R\in\mathscr{Q}}\langle\vec f,\varphi_R\rangle\psi_R$ in $(L^2)^m$.
\end{remark}

Next, we can establish the wavelet decomposition of $\dot A^{s,\tau}_{p,q}(W)$
via Daubechies wavelets.

\begin{theorem}\label{wavelet 2}
Let $s\in\mathbb R$, $\tau\in[0,\infty)$, $p\in(0,\infty)$, $q\in(0,\infty]$,
and $W\in A_p$ have the $A_p$-dimension $d\in[0,n)$.
Let $\widetilde J$ and $\widetilde s$ be the same as in \eqref{tauJ2},
$\mathscr{N}\in\mathbb{N}$ satisfy \eqref{NDau},
and $\{\theta^{(i)}\}_{i=1}^{2^n-1}$ be Daubechies wavelets of class $C^{\mathscr{N}}$.
Then, for any $\vec f\in\dot A^{s,\tau}_{p,q}(W)$,
\begin{equation}\label{204}
\vec f=\sum_{i=1}^{2^n-1}\sum_{Q\in\mathscr{Q}}
\left\langle\vec f,\theta^{(i)}_Q\right\rangle\theta^{(i)}_Q
\end{equation}
in $(\mathcal{S}_\infty')^m$ and
\begin{equation}\label{2160}
\left\|\vec f\right\|_{\dot A^{s,\tau}_{p,q}(W)}
\sim\left\|\vec f\right\|_{\dot A_{p,q}^{s,\tau}(W)_\mathrm{w}}
:=\sum_{i=1}^{2^n-1}
\left\|\left\{\left\langle\vec f,\theta^{(i)}_Q \right\rangle
\right\}_{Q\in\mathscr{Q}}\right\|_{\dot a^{s,\tau}_{p,q}(W)},
\end{equation}
where the positive equivalence constants are independent of $\vec f$.
\end{theorem}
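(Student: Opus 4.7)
The plan is to run a two-sided argument parallel to that of Theorem~\ref{wavelet}, but with the key difference that the Daubechies wavelet expansion \eqref{204} must be \emph{derived} rather than quoted (Remark~\ref{121} only applies to Meyer wavelets, which are Schwartz). The condition \eqref{NDau} is precisely what Corollary~\ref{88 corollary}\eqref{88c1} needs in order to guarantee that each $\theta^{(i)}_Q$ is simultaneously an $\dot A^{s,\tau}_{p,q}(d)$-analysis molecule and an $\dot A^{s,\tau}_{p,q}(d)$-synthesis molecule on $Q$, up to a constant independent of $Q$. This is the central structural input.

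For the direction $\|\vec f\|_{\dot A_{p,q}^{s,\tau}(W)_\mathrm{w}}\lesssim\|\vec f\|_{\dot A^{s,\tau}_{p,q}(W)}$, fix $\vec f\in\dot A^{s,\tau}_{p,q}(W)$. By Corollary~\ref{88 corollary}\eqref{88c2} the pairings $\langle\vec f,\theta^{(i)}_Q\rangle$ are well-defined, and Theorem~\ref{89}\eqref{890} applied to the analysis molecules $\{\theta^{(i)}_Q\}_{Q\in\mathscr Q}$ yields $\|\{\langle\vec f,\theta^{(i)}_Q\rangle\}_Q\|_{\dot a^{s,\tau}_{p,q}(W)}\lesssim\|\vec f\|_{\dot A^{s,\tau}_{p,q}(W)}$ for each $i\in\{1,\ldots,2^n-1\}$; summing in $i$ gives the claim.

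For the converse direction, given $\vec f\in(\mathcal S'_\infty)^m$ with $\|\vec f\|_{\dot A_{p,q}^{s,\tau}(W)_\mathrm{w}}<\infty$, set $\vec t^{(i)}:=\{\langle\vec f,\theta^{(i)}_Q\rangle\}_{Q\in\mathscr Q}\in\dot a^{s,\tau}_{p,q}(W)$ and apply Theorem~\ref{89}\eqref{892} to the synthesis molecules $\{\theta^{(i)}_Q\}_{Q\in\mathscr Q}$. This produces, for each $i$, an element $\vec g^{(i)}:=\sum_{Q\in\mathscr Q}\vec t^{(i)}_Q\theta^{(i)}_Q\in\dot A^{s,\tau}_{p,q}(W)$ with the correct norm bound, and the sum converges in $(\mathcal S'_\infty)^m$. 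Let $\vec g:=\sum_{i=1}^{2^n-1}\vec g^{(i)}$; then $\vec g$ satisfies $\|\vec g\|_{\dot A^{s,\tau}_{p,q}(W)}\lesssim\|\vec f\|_{\dot A_{p,q}^{s,\tau}(W)_\mathrm{w}}$, so if we can show $\vec g=\vec f$ in $(\mathcal S'_\infty)^m$, both \eqref{204} and \eqref{2160} follow.

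The main obstacle is the identification $\vec g=\vec f$. The strategy is to compare their $\varphi$-transforms. Pick $\varphi,\psi\in\mathcal S$ satisfying \eqref{19}, \eqref{20}, and \eqref{21}; it suffices to show $\langle\vec g,\varphi_P\rangle=\langle\vec f,\varphi_P\rangle$ for every $P\in\mathscr Q$, after which the injectivity part of the $\varphi$-transform characterization (\cite[Theorem~3.29]{bhyy1}) gives $\vec g=\vec f$. From the $(\mathcal S'_\infty)^m$-convergent definition of $\vec g$ and then Corollary~\ref{88 corollary}\eqref{88c2} used backwards on each $\langle\vec f,\theta^{(i)}_Q\rangle$, one writes
\begin{equation*}
\langle\vec g,\varphi_P\rangle
=\sum_{i=1}^{2^n-1}\sum_{Q\in\mathscr Q}\sum_{R\in\mathscr Q}
\bigl\langle\vec f,\varphi_R\bigr\rangle
\bigl\langle\psi_R,\theta^{(i)}_Q\bigr\rangle
\bigl\langle\theta^{(i)}_Q,\varphi_P\bigr\rangle,
\end{equation*}
and the crux is to justify absolute convergence of this triple series so that Fubini can be applied. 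This I would obtain by treating each factor as an almost-diagonal entry: $\{\langle\psi_R,\theta^{(i)}_Q\rangle\}_{R,Q}$ and $\{\langle\theta^{(i)}_Q,\varphi_P\rangle\}_{Q,P}$ are both $\dot a^{s,\tau}_{p,q}(d)$-almost diagonal (Theorem~\ref{adMol} combined with Corollary~\ref{88 corollary}\eqref{88c1}), their composition in $Q$ is again $\dot a^{s,\tau}_{p,q}(d)$-almost diagonal by the composition lemma \cite[Corollary 9.6]{bhyy1}, and the resulting sum against $\{\langle\vec f,\varphi_R\rangle\}_R\in\dot a^{s,\tau}_{p,q}(W)$ converges absolutely by Lemma~\ref{ad BF2}, exactly as in the proof of Lemma~\ref{88}. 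Once Fubini is legal, summing over $(i,Q)$ first and invoking Parseval in $L^2$ for the orthonormal system $\{\theta^{(i)}_Q\}$ (note $\psi_R,\varphi_P\in\mathcal S_\infty\subset L^2$) collapses the inner double sum to $\langle\psi_R,\varphi_P\rangle$, leaving
\begin{equation*}
\langle\vec g,\varphi_P\rangle
=\sum_{R\in\mathscr Q}\bigl\langle\vec f,\varphi_R\bigr\rangle\bigl\langle\psi_R,\varphi_P\bigr\rangle
=\langle\vec f,\varphi_P\rangle,
\end{equation*}
where the last equality is the Calder\'on reproducing identity of Lemma~\ref{7} applied in $(\mathcal S'_\infty)^m$ to $\vec f$. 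This closes the argument. The hardest single point is the triple-series absolute-convergence estimate; everything else is a routine assembly of already-established tools.
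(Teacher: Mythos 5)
Your proposal is correct and uses essentially the same ingredients as the paper's proof. The only difference is organizational: the paper first proves the reproducing identity \eqref{204} directly by running the absolutely convergent triple series both ways against an arbitrary $\phi\in\mathcal S_\infty$, and then derives both directions of \eqref{2160} from \eqref{204} together with Theorem~\ref{89}; you instead package the same Fubini argument as ``construct $\vec g$, then show $S_\varphi\vec g=S_\varphi\vec f$, hence $\vec g=\vec f$,'' which is \eqref{204} in disguise. One cosmetic point: for the converse direction it suffices (and is cleaner) to start from $\vec f\in\dot A^{s,\tau}_{p,q}(W)$ as in the statement rather than from general $\vec f\in(\mathcal S'_\infty)^m$ with finite wavelet norm; the latter would require a separate justification that the wavelet coefficients are even defined for such $\vec f$, whereas Corollary~\ref{88 corollary}\eqref{88c2} already settles this when $\vec f\in\dot A^{s,\tau}_{p,q}(W)$.
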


\begin{proof}
Let $\varphi,\psi\in\mathcal{S}$ satisfy \eqref{19}, \eqref{20}, and \eqref{21}.
For any $\phi\in\mathcal{S}_\infty$, we consider the series
\begin{align}\label{222}
S:=\sum_{i=1}^{2^n-1}\sum_{Q\in\mathscr{Q}}\sum_{R\in\mathscr{Q}}
\left\langle\vec f,\varphi_R\right\rangle
\left\langle\psi_R,\theta^{(i)}_Q\right\rangle
\left\langle\theta^{(i)}_Q,\phi\right\rangle.
\end{align}
Here $\vec t_R:=\langle\vec f,\varphi_R\rangle$ for any $R\in\mathscr Q$
satisfies $\vec t:=\{\vec t_R\}_{R\in\mathscr Q}\in \dot a^{s,\tau}_{p,q}(W)$
by \cite[Theorem 3.29]{bhyy1}. Moreover, both $\psi_R$ and $\theta_Q^{(i)}$
are (harmless constant multiples of) both $\dot A^{s,\tau}_{p,q}(W)$-analysis molecules
and $\dot A^{s,\tau}_{p,q}(W)$-synthesis molecules on the cubes
indicated by their subscripts, and $\phi\in\mathcal{S}_\infty$ is also
a harmless constant multiple of a molecule of either type.
Thus, the absolute convergence of \eqref{222} is an application of Corollary \ref{83}.

If we first sum over $Q$ and $i$ and use the fact that
$\{\theta_Q^{(i)}:\ Q\in\mathscr Q,\ i\in\{1,\ldots,2^n-1\}\}$
is an orthonormal basis of $L^2(\mathbb R^n)$, followed by Lemma \ref{7}, we obtain
\begin{equation*}
S=\sum_{R\in\mathscr{Q}}\left\langle\vec f,\varphi_R\right\rangle
\sum_{i=1}^{2^n-1}\sum_{Q\in\mathscr{Q}}
\left\langle\psi_R,\theta^{(i)}_Q\right\rangle
\left\langle\theta^{(i)}_Q,\phi\right\rangle
=\sum_{R\in\mathscr{Q}}\left\langle\vec f,\varphi_R\right\rangle 
\left\langle\psi_R,\phi\right\rangle
=\left\langle\vec f,\phi\right\rangle.
\end{equation*}
On the other hand, if we first sum over $R$,
then we recognise the definition of the pairing of $\vec f\in\dot A^{s,\tau}_{p,q}(W)$
with an $\dot A^{s,\tau}_{p,q}(d)$-analysis molecule $\theta^{(i)}_Q$, to the result that
\begin{equation*}
S=\sum_{i=1}^{2^n-1}\sum_{Q\in\mathscr{Q}}\left(\sum_{R\in\mathscr{Q}}\left\langle\vec f,\varphi_R\right\rangle  \left\langle\psi_R,\theta^{(i)}_Q\right\rangle\right)
\left\langle\theta^{(i)}_Q,\phi\right\rangle
=\sum_{i=1}^{2^n-1}\sum_{Q\in\mathscr{Q}}\left\langle\vec f,\theta^{(i)}_Q\right\rangle  \left\langle\theta^{(i)}_Q,\phi\right\rangle.
\end{equation*}
A combination of the last two displays proves \eqref{204} in the sense of
convergence in $(\mathcal S_\infty')^m$.

The estimate
$\|\{\langle\vec f,\theta^{(i)}_Q\rangle\}_{Q\in\mathscr{Q}}\|_{\dot a^{s,\tau}_{p,q}(W)}
\lesssim\|\vec f\|_{\dot A^{s,\tau}_{p,q}(W)}$
is immediate from Theorem \ref{89}\eqref{890} and the fact that
$\{\theta^{(i)}_Q\}_{Q\in\mathscr{Q}}$ are (harmless constant multiples of)
$\dot A^{s,\tau}_{p,q}(d)$-analysis molecules.
Thus, $\|\vec f\|_{\dot A^{s,\tau}_{p,q}(W)_\mathrm{w}}
\lesssim\|\vec f\|_{\dot A_{p,q}^{s,\tau}(W)}$
follows from summing over $i\in\{1,\ldots,2^n-1\}$.

For the converse estimate, since $\{\langle\vec f,\theta^{(i)}_Q\rangle\}_{Q\in\mathscr{Q}}
\in\dot a^{s,\tau}_{p,q}(W)$ and the functions $\{\theta^{(i)}_Q\}_{Q\in\mathscr{Q}}$
are (harmless constant multiples of) $\dot A^{s,\tau}_{p,q}(d)$-synthesis molecules,
it follows from Theorem \ref{89}\eqref{892} that the series
$\vec f^{(i)}:=\sum_{Q\in\mathscr Q}\langle\vec f,\theta^{(i)}_Q\rangle \theta^{(i)}_Q$
converges in $(\mathcal S_\infty')^m$ and satisfies the estimate
\begin{equation*}
\left\|\vec f^{(i)}\right\|_{\dot A^{s,\tau}_{p,q}(W)}
\lesssim  \left\|\left\{\left\langle\vec f,\theta^{(i)}_Q \right\rangle\right\}_{Q\in\mathscr{Q}}\right\|_{\dot a^{s,\tau}_{p,q}(W)}.
\end{equation*}
By \eqref{204}, which we already showed, there holds $\vec f=\sum_{i=1}^{2^n-1}\vec f^{(i)}$, and thus
\begin{equation*}
\left\|\vec f\right\|_{\dot A^{s,\tau}_{p,q}(W)}
\lesssim\sum_{i=1}^{2^n-1}  \left\|\vec f^{(i)}\right\|_{\dot A^{s,\tau}_{p,q}(W)}
\lesssim \sum_{i=1}^{2^n-1} \left\|\left\{\left\langle\vec f,\theta^{(i)}_Q \right\rangle\right\}_{Q\in\mathscr{Q}}\right\|_{\dot a^{s,\tau}_{p,q}(W)}
=  \left\|\vec f\right\|_{\dot A^{s,\tau}_{p,q}(W)_{\mathrm w}}
\end{equation*}
by the quasi-triangle inequality in $\dot A^{s,\tau}_{p,q}(W)$.
This finishes the proof of \eqref{2160} and hence Theorem \ref{wavelet 2}.
\end{proof}

\begin{remark}
When $\tau=0$, Theorems \ref{wavelet} and \ref{wavelet 2}
reduce to \cite[Theorem 10.2 and Corollary 10.3]{ro03} in the case of Besov spaces
with $p\in[1,\infty)$, to \cite[Theorem 4.4]{fr04}
in the case of Besov spaces with $p\in(0,1)$,
and to \cite[Theorem 1.2]{fr21} in the case of Triebel--Lizorkin spaces.
When $m=1$ and $W\equiv 1$,
Theorem \ref{wavelet 2} contains \cite[Theorem 8.3]{ysy10}, where $ s\in(0,\infty)$,
and is contained by \cite[Theorem 6.3]{lsuyy}, where Liang et al.
considered the biorthogonal wavelet system
and established the wavelet characterization of the unweighted $\dot A^{s,\tau}_{p,q}$.
\end{remark}

Wavelet characterizations for many other function spaces related to $\dot A^{s,\tau}_{p,q}(W)$ are also known;
see, for instance, \cite{fr04, is09, lyysu, r13, ro03, ysy10} for more details.

We conclude this section with a characterization of $\dot A^{s,\tau}_{p,q}(W)$
in terms of the following atoms.

\begin{definition}
Let $r,L,N\in(0,\infty)$.
A function $ a_Q $ is called an \emph{$(r,L,N)$-atom} on a cube $Q$ if,
for any $\gamma\in\mathbb{Z}_+^n$ and $x\in\mathbb{R}^n$,
\begin{enumerate}[\rm(i)]
\item $\operatorname{supp}a_Q\subset rQ$;
\item $\int_{\mathbb{R}^n}x^\gamma a_Q(x)\,dx=0$ if $|\gamma|\leq L$;
\item $|D^\gamma a_Q(x)|\leq|Q|^{-\frac12-\frac{|\gamma|}{n}}$ if $|\gamma|\leq N$.
\end{enumerate}
\end{definition}

\begin{theorem}\label{90}
Let $s\in\mathbb R$, $\tau\in[0,\infty)$, $p\in(0,\infty)$, $q\in(0,\infty]$,
and $W\in A_p$ have the $A_p$-dimension $ d\in[0,n) $.
Let $L,N\in\mathbb{R}$ satisfy
\begin{equation}\label{900}
L\geq\widetilde J-n-\widetilde s\text{ and }N>\widetilde s,
\end{equation}
where $\widetilde J$ and $\widetilde s$ are the same as in \eqref{tauJ2}.
Then there exists $r\in(0,\infty)$, depending only on $L\vee N$,
such that the following two assertions hold.
\begin{enumerate}[{\rm (i)}]
\item\label{901} For any $\vec f\in\dot A^{s,\tau}_{p,q}(W)$,
there exist a sequence $\vec t:=\{\vec t_Q \}_{Q\in\mathscr{Q}}\in\dot a^{s,\tau}_{p,q}(W)$
and $(r,L,N)$-atoms $\{a_Q\}_{Q\in\mathscr{Q}}$, each on the cube indicated by its subscript,
such that $\vec f=\sum_{Q\in\mathscr{Q}}\vec t_Q a_Q$ in $(\mathcal{S}_\infty')^m$.
Moreover, there exists a positive constant $C$,
independent of $\vec f$, such that
$$
\left\|\vec t \right\|_{\dot a^{s,\tau}_{p,q}(W)}
\leq C\left\|\vec{f}\right\|_{\dot A^{s,\tau}_{p,q}(W)}.
$$
\item\label{902} Conversely, if $\{a_Q\}_{Q\in\mathscr{Q}}$
is a family of $(r,L,N)$-atoms,
each on the cube indicated by its subscript,
then, for any $\vec t:=\{\vec t_Q\}_{Q\in\mathscr{Q}}\in\dot a^{s,\tau}_{p,q}(W)$,
there exists $\vec f\in\dot A^{s,\tau}_{p,q}(W)$ such that
$\vec f=\sum_{Q\in\mathscr{Q}}\vec t_Qa_Q$ in $(\mathcal{S}_\infty')^m$.
Moreover, there exists a positive constant $C$,
independent of both $\vec t$ and $\{a_Q\}_{Q\in\mathscr{Q}}$, such that
$\|\vec f\|_{\dot A^{s,\tau}_{p,q}(W)}
\leq C\|\vec t\|_{\dot a^{s,\tau}_{p,q}(W)}.$
\end{enumerate}
\end{theorem}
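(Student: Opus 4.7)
The plan is to deduce Theorem \ref{90} by combining the molecular characterization (Theorem \ref{89}) with the Daubechies wavelet characterization (Theorem \ref{wavelet 2}). The bridging observation is that, under the parameter assumption \eqref{900}, every $(r,L,N)$-atom is a bounded multiple of an $\dot A^{s,\tau}_{p,q}(d)$-synthesis molecule, while every Daubechies wavelet of sufficient smoothness is a bounded multiple of an $(r,L,N)$-atom on the same cube. Once these two matchings are in place, the atomic decomposition and reconstruction reduce to their molecular counterparts.

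For the synthesis direction (part \eqref{902}), I would first establish the key auxiliary assertion: under $L\ge \widetilde J-n-\widetilde s$ and $N>\widetilde s$, each $(r,L,N)$-atom $a_Q$ equals a constant $C(r,L,N,n)$ times an $\dot A^{s,\tau}_{p,q}(d)$-synthesis molecule on $Q$. The decay conditions of Definition \ref{moleKLMN} are automatic from $\operatorname{supp} a_Q\subset rQ$: on $rQ$ one uses $1+|x-x_Q|/\ell(Q)\le 1+Cr$, and $a_Q$ vanishes elsewhere, so both $|a_Q|\le C(u_K)_Q$ and $|\partial^\gamma a_Q|\le C[\ell(Q)]^{-|\gamma|}(u_M)_Q$ hold with any $K,M$. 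The cancellation condition matches verbatim by hypothesis. The interior derivative bounds and the top-order Hölder condition of the molecule are extracted from the atomic derivative bounds through the mean-value theorem, which converts boundedness of one more derivative of the atom into Lipschitz, hence fractional Hölder, regularity of the next-lower derivative. With this matching in hand, part \eqref{902} follows directly from Theorem \ref{89}\eqref{892}.

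For the analysis direction (part \eqref{901}), I would fix $\mathscr N\in\mathbb N$ satisfying both $\mathscr N\ge L\vee N$ and \eqref{NDau}, and invoke Lemma \ref{wavelet basis 2} to select Daubechies wavelets $\{\theta^{(i)}\}_{i=1}^{2^n-1}$ of class $C^{\mathscr N}$. Theorem \ref{wavelet 2} then furnishes the representation
\begin{equation*}
\vec f=\sum_{i=1}^{2^n-1}\sum_{Q\in\mathscr Q}\left\langle\vec f,\theta^{(i)}_Q\right\rangle\theta^{(i)}_Q\quad\text{in }(\mathcal S_\infty')^m,
\end{equation*}
together with the norm equivalence $\|\vec f\|_{\dot A^{s,\tau}_{p,q}(W)_{\mathrm w}}\sim\|\vec f\|_{\dot A^{s,\tau}_{p,q}(W)}$. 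Since $\theta^{(i)}$ has compact support inside some fixed cube $r_0 Q_{0,\mathbf 0}$, the rescaled $\theta^{(i)}_Q$ is supported in $rQ$ for some $r$ depending only on $r_0$ (hence on $L\vee N$ through $\mathscr N$); its derivatives of order $\le\mathscr N$ obey the atomic bound $|D^\gamma \theta^{(i)}_Q(x)|\lesssim|Q|^{-1/2-|\gamma|/n}$ directly from the scaling $\theta^{(i)}_Q(x)=|Q|^{-1/2}\theta^{(i)}(2^jx-k)$; and its moments up to order $\mathscr N\ge L$ vanish by Remark \ref{Dau can}. Hence each $\theta^{(i)}_Q$ is a uniform multiple of an $(r,L,N)$-atom on $Q$. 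To repackage the bi-indexed sum as a single $\mathscr Q$-indexed sum as the statement demands, I would fix a bijection $\{1,\ldots,2^n-1\}\times\mathscr Q\leftrightarrow\mathscr Q$, for instance by distributing the $2^n-1$ pairs $(i,Q)$ among the $2^n$ dyadic children of $Q$ and enlarging $r$ by a factor of two to absorb the change of enclosing cube; this relabeling preserves the sequence-space norm up to a constant depending only on $n$, and the norm bound $\|\vec t\|_{\dot a^{s,\tau}_{p,q}(W)}\le C\|\vec f\|_{\dot A^{s,\tau}_{p,q}(W)}$ follows.

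The main technical obstacle is the smoothness matching in part \eqref{902}: producing the Hölder-type regularity of the molecule's top derivative from only finitely many bounded integer-order derivatives of the atom. For integer $N$ this is immediate via the mean-value theorem, which upgrades boundedness of the $N$-th derivative into Lipschitz (equivalently $N^{**}=1$-Hölder) continuity of the $(N-1)$-st derivative. For non-integer $N$ the atom is only $C^{\lfloor N\rfloor}$, so one must carefully choose the molecular smoothness parameter $N_{\mathrm{mol}}\in(\widetilde s,N]$ whose floor $\lfloor N_{\mathrm{mol}}\rfloor$ leaves enough room to apply the mean-value theorem from the next-higher atomic derivative; this choice is what determines the dependence of the admissible $r$ on $L\vee N$ asserted in the statement.
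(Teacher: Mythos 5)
Your proposal follows the paper's own route exactly: part \eqref{902} by observing that a compactly supported atom is a $(K,L,M,N)$-molecule for every $K,M$ and then invoking Theorem \ref{89}\eqref{892}, and part \eqref{901} by selecting Daubechies wavelets of class $C^{\mathscr N}$ with $\mathscr N$ large enough to satisfy \eqref{NDau}, applying Theorem \ref{wavelet 2}, and then redistributing the $2^n-1$ wavelet families over the dyadic children of each cube to obtain a single $\mathscr Q$-indexed atomic sum.

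Two small points are worth flagging. First, the closing sentence of your proposal -- that the choice of $N_{\mathrm{mol}}$ ``determines the dependence of the admissible $r$ on $L\vee N$'' -- contradicts what you correctly state earlier: the constant $r$ is fixed by the support radius of the Daubechies wavelets of class $C^{\mathscr N}$ (together with the parent-to-child relabeling), and $\mathscr N$ depends on $L\vee N$; the molecular smoothness parameter plays no role in $r$. Second, your worry about the Hölder bound in part \eqref{902} is well placed, but the proposed fix does not fully close it: to apply the mean-value theorem one needs $\lfloor\!\lfloor N_{\mathrm{mol}}\rfloor\!\rfloor+1\le\lfloor N\rfloor$, hence $N_{\mathrm{mol}}\le\lfloor N\rfloor$, and then $N_{\mathrm{mol}}>\widetilde s$ forces $\lfloor N\rfloor>\widetilde s$, which is strictly stronger than the stated hypothesis $N>\widetilde s$ when $N\notin\mathbb Z$ (for example $\widetilde s=0.5$, $N=0.7$ leaves no admissible $N_{\mathrm{mol}}$, since such an atom is merely bounded with no modulus of continuity). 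The paper's own proof asserts, with no further comment, that an $(r,L,N)$-atom is a $(K,L,M,N)$-molecule for every $K,M$, so this corner case is glossed over there as well; your proposal inherits exactly the same gap but at least makes it visible. This is a sharpness issue in the parameter range for part \eqref{902} rather than a flaw in the overall strategy, which is sound.
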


\begin{proof}
\eqref{902} An $(r,L,N)$-atom is a $(K,L,M,N)$-molecule for every $K$ and $M$
and hence an $\dot A^{s,\tau}_{p,q}(d)$-synthesis molecule
provided that $L$ and $N$ satisfy \eqref{900}.
Thus, \eqref{902} is a direct application of Theorem \ref{89}\eqref{892}.

\eqref{901} Let $\mathscr N>\max\{L,N\}$,
and let $\theta^{(i)}$ be Daubechies wavelets of class $C^{\mathscr N}$,
which exist by Lemma \ref{wavelet basis 2}.
For any $L$ and $N$ in \eqref{900}, it also follows that
$\mathscr N$ satisfies \eqref{NDau}, and hence Theorem \ref{wavelet 2} guarantees that
\begin{equation*}
\vec f=\sum_{i=1}^{2^n-1}\sum_{Q\in\mathscr Q}\vec t_Q^{(i)}
\theta_Q^{(i)},\
\sum_{i=1}^{2^n-1}\left\|\vec t^{(i)}\right\|_{\dot a^{s,\tau}_{p,q}(W)}
\lesssim\left\|\vec f\right\|_{\dot A^{s,\tau}_{p,q}(W)},\text{ and }
\vec t_Q^{(i)}:=\left\langle\vec f,\theta_Q^{(i)}\right\rangle
\end{equation*}
with convergence in $(\mathcal S_\infty')^m$.

It remains to rearrange this sum into a sum over $Q\in\mathscr Q$ only.
This is achieved as follows. For each $Q\in\mathscr Q$,
let $Q_i$, $i\in\{1,\ldots,2^n\}$,
be an enumeration of the dyadic child-cubes of $Q$.
By Definition \ref{def Dau} and Remark \ref{Dau can},
for some constants $c$ and $r$, the functions
\begin{equation*}
a_{Q_i}:=\begin{cases}
c\theta_Q^{(i)} & \text{if }i\in\{1,\ldots,2^n-1\}, \\
0 & \text{if }i=2^n,
\end{cases}
\end{equation*}
are $(r,\mathscr N,\mathscr N)$-atoms, and hence also $(r,L,N)$-atoms, over the respective cubes indicated by their subscripts. We also define the coefficient
\begin{equation*}
\vec t_{Q_i}:=\begin{cases} c^{-1} \vec t_Q^{(i)} & \text{if }i\in\{1,\ldots,2^n-1\}, \\ \vec{\mathbf{0}} & \text{if }i=2^n.\end{cases}
\end{equation*}
Since every $R\in\mathscr Q$ is of the form $Q_i$ for some $Q\in\mathscr Q$ and $i\in\{1,\ldots,2^n-1\}$, we have
\begin{equation*}
\sum_{R\in\mathscr Q}\vec t_R a_R
=\sum_{i=1}^{2^n-1} \sum_{Q\in\mathscr Q}\vec t_{Q_i} a_{Q_i}
=\sum_{i=1}^{2^n-1} \sum_{Q\in\mathscr Q}\vec t_{Q}^{(i)} \theta_{Q}^{(i)}=\vec f,
\end{equation*}
which is the desired series representation of $\vec f$.
While $\vec t$ has the same {\em set} of coefficients as
the union of $\{\vec t^{(i)}\}_{i=1}^{2^n-1}$,
the indexing of these coefficients is shifted by one level.
However, it is easy to infer from the definition of the norm
$\|\cdot\|_{\dot a^{s,\tau}_{p,q}(W)}$ that such
a shift changes the norm at most by a positive constant $C$ that is independent of $\vec t$.
This finishes the proof of Theorem \ref{90}.
\end{proof}

\section{Trace Theorems}
\label{trace theorems}

In this section, we discuss the boundedness of both trace operators
and extension operators for $\dot A^{s,\tau}_{p,q}(W)$.
Let us make some conventions on symbols first, which are only used in this section.
Let $n\geq2$ and $\mathscr{Q}(\mathbb{R}^n)$
[resp. $\mathscr{Q}(\mathbb{R}^{n-1})$]
be the set of all dyadic cubes on $\mathbb{R}^n$
(resp. $\mathbb{R}^{n-1}$).
Since the underlying space under consideration may vary,
we use $\dot A^{s,\tau}_{p,q}(W,\mathbb{R}^n)$,
$\dot a^{s,\tau}_{p,q}(W,\mathbb{R}^n)$,
and $\dot a^{s,\tau}_{p,q}(\mathbb{A},\mathbb{R}^n)$
instead of $\dot A^{s,\tau}_{p,q}(W)$,
$\dot a^{s,\tau}_{p,q}(W)$, and $\dot a^{s,\tau}_{p,q}(\mathbb{A})$ in this section.
We denote a point $x\in\mathbb{R}^n$ by $x=(x',x_n)$,
where $x'\in\mathbb{R}^{n-1}$ and $x_n\in\mathbb{R}$.
We also denote a $\lambda\in\{0,1\}^n$ by $\lambda=(\lambda',\lambda_n)$,
where $\lambda'\in\{0,1\}^{n-1}$ and $\lambda_n\in\{0,1\}$.
To establish the trace theorem for $\dot A^{s,\tau}_{p,q}(W,\mathbb{R}^n)$,
we need more details about the Daubechies wavelet (see, for instance, \cite{d88}).

\begin{lemma}\label{wavelet basis 3}
For any $\mathscr{N}\in\mathbb{N}$,
there exist two real-valued $C^{\mathscr{N}}(\mathbb{R})$ functions
$\varphi$ and $\psi$ with bounded support such that,
for any $ n\in\mathbb{N}$,
$\{\theta^{(\lambda)}_Q:\ Q\in\mathscr{Q},\
\lambda\in\Lambda_n:=\{0,1\}^n\setminus\{\mathbf 0\}\}$
form an orthonormal basis of $L^2(\mathbb R^n)$,
where, for any $\lambda:=(\lambda_1,\ldots,\lambda_n)\in\Lambda_n$
and $x:=(x_1,\ldots,x_n)\in\mathbb{R}^n$,
$\theta^{(\lambda)}(x):=\prod_{i=1}^n\phi^{(\lambda_i)}(x_i)$
with $\phi^{(0)}:=\varphi$ and $\phi^{(1)}:=\psi$.
\end{lemma}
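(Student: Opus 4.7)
The lemma is a classical result: it asserts the existence of compactly supported $n$-dimensional orthonormal wavelet bases of arbitrary smoothness obtained via the tensor-product construction from a one-dimensional multiresolution analysis (MRA). My plan is simply to combine Daubechies' 1-D construction with the standard tensor-product recipe.

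First, for the given $\mathscr N\in\mathbb N$, I would invoke the Daubechies construction (see \cite{d88} or \cite[Chapter 6]{d92}) to obtain real-valued, compactly supported functions $\varphi,\psi\in C^{\mathscr N}(\mathbb R)$ such that $\{\varphi(\cdot-k):k\in\mathbb Z\}$ is orthonormal, the spaces $V_j:=\overline{\mathrm{span}}\{2^{j/2}\varphi(2^j\cdot-k):k\in\mathbb Z\}$ form an MRA of $L^2(\mathbb R)$, the detail spaces $W_j:=V_{j+1}\ominus V_j$ admit the orthonormal basis $\{\psi_{j,k}:=2^{j/2}\psi(2^j\cdot-k):k\in\mathbb Z\}$, and
$$
L^2(\mathbb R)=\bigoplus_{j\in\mathbb Z}W_j.
$$
For the desired smoothness $\mathscr N$, one simply picks a member of the Daubechies family of sufficiently high order.

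Next, I would pass to $n$ dimensions by the standard tensor-product trick. Write $X_j^{(0)}:=V_j$ and $X_j^{(1)}:=W_j$, so that $V_{j+1}=X_j^{(0)}\oplus X_j^{(1)}$, and set $V_j^{(n)}:=V_j\otimes\cdots\otimes V_j$ with $n$ factors. Expanding each factor,
$$
V_{j+1}^{(n)}=\bigotimes_{i=1}^n\left(X_j^{(0)}\oplus X_j^{(1)}\right)=V_j^{(n)}\oplus\bigoplus_{\lambda\in\Lambda_n}\bigotimes_{i=1}^n X_j^{(\lambda_i)}.
$$
Since $\bigcup_j V_j^{(n)}$ is dense in $L^2(\mathbb R^n)$ and $\bigcap_j V_j^{(n)}=\{0\}$ (both inherited from the 1-D MRA), telescoping over $j\in\mathbb Z$ gives the orthogonal decomposition
$$
L^2(\mathbb R^n)=\bigoplus_{j\in\mathbb Z}\bigoplus_{\lambda\in\Lambda_n}\bigotimes_{i=1}^n X_j^{(\lambda_i)}.
$$

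For each fixed $j\in\mathbb Z$ and $\lambda\in\Lambda_n$, an orthonormal basis of $\bigotimes_{i=1}^n X_j^{(\lambda_i)}$ is obtained by tensoring the corresponding 1-D bases, giving
$$
\left\{2^{jn/2}\prod_{i=1}^n\phi^{(\lambda_i)}(2^jx_i-k_i):k\in\mathbb Z^n\right\},
$$
where $\phi^{(0)}=\varphi$ and $\phi^{(1)}=\psi$. With $\theta^{(\lambda)}(x):=\prod_{i=1}^n\phi^{(\lambda_i)}(x_i)$ and in the dyadic-cube notation $Q_{j,k}$ of the paper, each such basis element is exactly $\theta^{(\lambda)}_{Q_{j,k}}$. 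Collecting these across all $j\in\mathbb Z$, $k\in\mathbb Z^n$, and $\lambda\in\Lambda_n$ then produces the claimed orthonormal basis of $L^2(\mathbb R^n)$. The sole genuinely nontrivial ingredient is Daubechies' 1-D construction, which is imported from the literature; the remaining tensor-product assembly is routine, so no real obstacle is anticipated.
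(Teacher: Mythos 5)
Your proof is correct and reconstructs the standard argument: the paper itself gives no proof and simply cites Daubechies \cite{d88}, and your tensor-product assembly from a 1-D compactly supported MRA is precisely the classical construction found there and in \cite{d92}. The one point worth being explicit about — which you handle correctly — is that the statement requires a single pair $(\varphi,\psi)$ to work simultaneously for every $n\in\mathbb N$, and your construction delivers this because the 1-D MRA is fixed once and the tensor-product decomposition $V_{j+1}^{(n)}=V_j^{(n)}\oplus\bigoplus_{\lambda\in\Lambda_n}\bigotimes_{i=1}^n X_j^{(\lambda_i)}$ then holds for each $n$ with no further input.
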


\begin{remark}\label{k0}
In Lemma \ref{wavelet basis 3}, there exists a sequence
$\{h_k\}_{k\in\mathbb{Z}}\subset\mathbb{C}$ such that,
for any $t\in\mathbb{R}$,
\begin{equation}\label{266}
\varphi(t)=\sqrt{2}\sum_{k\in\mathbb{Z}}h_k\varphi(2t-k);
\end{equation}
see \cite[(4.1) and (4.2)]{d88} or \cite[Theorem 6.3.6]{d92}.
By \eqref{266} and the continuity of $\varphi$,
we find that there exists $k_0\in\mathbb{Z}$ such that $\varphi(-k_0)\neq0$.
\end{remark}

Next, we recall averaging matrix-weighted Besov-type
and Triebel--Lizorkin-type sequence spaces (see \cite[Definition 3.26]{bhyy1}).

\begin{definition}
Let $s\in\mathbb{R}$, $\tau\in[0,\infty)$, $p\in(0,\infty)$, $q\in(0,\infty]$,
$W\in A_p(\mathbb R^n,\mathbb C^m)$,
and $\mathbb{A}:=\{A_Q\}_{Q\in\mathscr{Q}(\mathbb{R}^n)}$ be a sequence of
reducing operators of order $p$ for $W$.
The \emph{homogeneous averaging matrix-weighted Besov-type sequence space}
$\dot{b}^{s,\tau}_{p,q}(\mathbb{A},\mathbb{R}^n)$
and the \emph{homogeneous averaging matrix-weighted Triebel--Lizorkin-type sequence space}
$\dot{f}^{s,\tau}_{p,q}(\mathbb{A},\mathbb{R}^n)$
are defined to be the sets of all sequences
$\vec t:=\{\vec t_Q\}_{Q\in\mathscr{Q}(\mathbb{R}^n)}\subset\mathbb{C}^m$ such that
$$
\left\|\vec{t}\right\|_{\dot{a}^{s,\tau}_{p,q}(\mathbb{A},\mathbb{R}^n)}
:=\left\|\left\{2^{js}\sum_{Q\in\mathscr{Q}_j(\mathbb{R}^n)}
\left|A_Q\vec t_Q\right|\widetilde{\mathbf{1}}_Q\right\}_{j\in\mathbb Z}
\right\|_{L\dot A_{p,q}^\tau(\mathbb{R}^n)}<\infty,
$$
where $\|\cdot\|_{L\dot A_{p,q}^\tau(\mathbb{R}^n)}$ is the same as in \eqref{LApq}.
\end{definition}

The following conclusion is just \cite[Theorem 3.27]{bhyy1},
which proves $\dot{a}^{s,\tau}_{p,q}(W,\mathbb{R}^n)
=\dot{a}^{s,\tau}_{p,q}(\mathbb{A},\mathbb{R}^n)$.

\begin{lemma}\label{37}
Let $s\in\mathbb{R}$, $\tau\in[0,\infty)$,
$p\in(0,\infty)$, $q\in(0,\infty]$, $W\in A_p(\mathbb R^n,\mathbb C^m)$,
and $\mathbb{A}:=\{A_Q\}_{Q\in\mathscr{Q}(\mathbb{R}^n)}$ be a sequence of
reducing operators of order $p$ for $W$.
Then, for any $\vec t:=\{\vec t_Q\}_{Q\in\mathscr{Q}(\mathbb{R}^n)}\subset\mathbb{C}^m$,
$$
\left\|\vec{t}\right\|_{\dot a^{s,\tau}_{p,q}(W,\mathbb R^n)}
\sim\left\|\vec{t}\right\|_{\dot{a}^{s,\tau}_{p,q}(\mathbb{A},\mathbb R^n)},
$$
where the positive equivalence constants are independent of $\vec t$.
\end{lemma}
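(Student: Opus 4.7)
The plan is to reduce both the Besov and the Triebel--Lizorkin equivalences to a cubewise $L^p$-average identity arising from the defining property of reducing operators. Fix $j\in\mathbb Z$ and let $Q(x,j)$ denote the unique cube of $\mathscr{Q}_j(\mathbb R^n)$ containing $x$. Since $\mathscr{Q}_j(\mathbb R^n)$ partitions $\mathbb R^n$, both the $W$-version and the $\mathbb A$-version of the inner expression collapse to a single summand, and we obtain
$$
\left|W^{\frac{1}{p}}(x)\sum_{Q\in\mathscr{Q}_j}\vec t_Q\widetilde{\mathbf{1}}_Q(x)\right|
=|Q(x,j)|^{-\frac12}\left|W^{\frac{1}{p}}(x)\vec t_{Q(x,j)}\right|,
$$
$$
\sum_{Q\in\mathscr{Q}_j}|A_Q\vec t_Q|\widetilde{\mathbf{1}}_Q(x)
=|Q(x,j)|^{-\frac12}|A_{Q(x,j)}\vec t_{Q(x,j)}|.
$$
The relation \eqref{equ_reduce} applied with $\vec z:=\vec t_Q$ gives, for every $Q\in\mathscr{Q}_j$,
$$
\left[\fint_Q\left|W^{\frac{1}{p}}(y)\vec t_Q\right|^p\,dy\right]^{\frac{1}{p}}
\sim|A_Q\vec t_Q|,
$$
with equivalence constants depending only on $m$ and $p$; this is the only structural input used about reducing operators.

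For the Besov case $\dot a=\dot b$, the conclusion follows almost immediately from this cubewise equivalence. At each fixed scale $j\geq j_P$, raising to the $p$-th power and summing over $Q\in\mathscr{Q}_j$ with $Q\subset P$ yields
$$
\int_P\left|W^{\frac{1}{p}}\sum_{Q\in\mathscr{Q}_j}\vec t_Q\widetilde{\mathbf{1}}_Q\right|^p\,dx
\sim\int_P\left(\sum_{Q\in\mathscr{Q}_j}|A_Q\vec t_Q|\widetilde{\mathbf{1}}_Q\right)^p\,dx,
$$
after which taking the $\ell^q$ norm in $j\geq j_P$ (with the usual modification when $q=\infty$), multiplying by $|P|^{-\tau}$, and taking the supremum over $P\in\mathscr{Q}(\mathbb R^n)$ produces the equivalence $\|\vec t\|_{\dot b^{s,\tau}_{p,q}(W,\mathbb R^n)}\sim\|\vec t\|_{\dot b^{s,\tau}_{p,q}(\mathbb A,\mathbb R^n)}$.

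For the Triebel--Lizorkin case $\dot a=\dot f$, the order of $L^p$ and $\ell^q$ is reversed, so the cubewise $L^p$-average comparison does not translate directly into the pointwise-in-$j$ comparison the norm requires. The key observation is that $|Q(x,j)|^{-p/2}|A_{Q(x,j)}\vec t_{Q(x,j)}|^p$ is exactly, up to constants, the dyadic conditional expectation at scale $j$ of $|Q(x,j)|^{-p/2}|W^{\frac{1}{p}}(x)\vec t_{Q(x,j)}|^p$. To upgrade this scalewise martingale identity to an $L^p\ell^q$ equivalence, one combines the pointwise sandwich
$$
\left|W^{\frac{1}{p}}(x)\vec t_{Q(x,j)}\right|
\leq\left\|W^{\frac{1}{p}}(x)A_{Q(x,j)}^{-1}\right\|\,|A_{Q(x,j)}\vec t_{Q(x,j)}|,
$$
together with its reverse, with the $A_p$-control of the averages $\fint_Q\|W^{\frac1p}(y)A_Q^{-1}\|^p\,dy\lesssim1$, itself a direct consequence of \eqref{equ_reduce} applied to a basis of $\mathbb C^m$. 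The resulting vector-valued weighted inequality absorbs the varying operator-norm factors along the sequence of dyadic ancestors of $x$ via a Fefferman--Stein-type bound, yielding the equivalence for $\dot f^{s,\tau}_{p,q}$.

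The main obstacle is precisely this Triebel--Lizorkin step: the reducing operator yields only an averaged, scalewise comparison, while the $L^p\ell^q$ norm demands a genuinely pointwise handling of the full sequence $\{|W^{\frac1p}(x)\vec t_{Q(x,j)}|\}_{j}$ before integration. This is where the $A_p$ hypothesis on $W$ is used in a substantive (rather than merely cosmetic) way, and is why the proof in \cite[Theorem 3.27]{bhyy1} proceeds through a careful matrix-weighted vector-valued analysis rather than a direct application of \eqref{equ_reduce} alone.
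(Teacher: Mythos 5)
Your Besov-case argument is correct and complete: because the $\ell^qL^p$ norm integrates scale-by-scale, raising the cubewise equivalence $|A_Q\vec t_Q|\sim\bigl(\fint_Q|W^{1/p}\vec t_Q|^p\bigr)^{1/p}$ to the $p$-th power and summing over $Q\in\mathscr Q_j$ with $Q\subset P$ gives the equality of the $L^p(P)$ masses at each scale $j$, and the remaining $\ell^q$-in-$j$, $|P|^{-\tau}$, and $\sup_P$ operations transfer verbatim. (For the record, the present paper contains no proof of this lemma; it is imported from \cite[Theorem 3.27]{bhyy1}, so there is no in-text proof to compare against line by line.)

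The Triebel--Lizorkin step has a genuine gap, and your own closing paragraph senses it but does not close it. Put $g_j(y):=\bigl[2^{js}|Q(y,j)|^{-1/2}|W^{1/p}(y)\vec t_{Q(y,j)}|\bigr]^p$ and let $E_j$ denote the dyadic conditional expectation at scale $j$. Your observation says $[2^{js}|Q(x,j)|^{-1/2}|A_{Q(x,j)}\vec t_{Q(x,j)}|]^p\sim E_jg_j(x)$ (a scalewise conditional expectation of a $j$-dependent function, not a bona fide martingale), so the claimed $L^p\ell^q$ equivalence becomes $\|\{E_jg_j\}_j\|_{L^1\ell^{q/p}(P)}\sim\|\{g_j\}_j\|_{L^1\ell^{q/p}(P)}$. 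No Fefferman--Stein or Stein-type inequality for conditional expectations covers this range: those hold in $L^r\ell^s$ only for $r,s\in(1,\infty)$, while here the outer exponent is pinned at $1$. Worse, the lemma allows $q=\infty$, where the needed bound $\|\sup_jE_jg_j\|_{L^1(P)}\lesssim\|\sup_jg_j\|_{L^1(P)}$ is false even for unweighted scalar data: take all $g_j$ equal to a single nonnegative $g\in L^1(P)$ whose dyadic maximal function is not integrable on $P$; then the left side is infinite while the right side equals $\|g\|_{L^1(P)}<\infty$. So ``a Fefferman--Stein-type bound'' cannot be the finishing move. The Triebel--Lizorkin direction must use $W\in A_p$ quantitatively beyond \eqref{equ_reduce} --- for instance via boundedness of the Christ--Goldberg matrix-weighted maximal function, or a reverse-H\"older/good-$\lambda$ argument on the scalar weights $\|W^{1/p}(\cdot)A_Q^{-1}\|^p$ and $\|A_QW^{-1/p}(\cdot)\|^p$ --- and that mechanism is precisely what your sketch leaves unfilled.
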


\subsection{Trace Theorems for $\dot B^{s,\tau}_{p,q}(W,\mathbb{R}^n) $}

Throughout the subsection, unless otherwise mentioned,
let $\mathscr{N}\in\mathbb{N}$ satisfy \eqref{NDau}
for $\widetilde J$ and $\widetilde s$ corresponding to both spaces $\dot B^{s,\tau}_{p,q}(W,\mathbb{R}^n)$
and $\dot B^{s-\frac{1}{p},\frac{n}{n-1}\tau}_{p,q}(V,\mathbb{R}^{n-1})$,
and let both $\{\theta^{(\lambda)}\}_{\lambda\in\Lambda_n}\subset C^{\mathscr{N}}(\mathbb{R}^n)$
and $\{\theta^{(\lambda')}\}_{\lambda'\in\Lambda_{n-1}}
\subset C^{\mathscr{N}}(\mathbb{R}^{n-1})$
be the same as in Lemma \ref{wavelet basis 3}.
For any $I\in\mathscr{Q}(\mathbb{R}^{n-1})$ and $k\in\mathbb{Z}$, let
$$
Q(I,k):=I\times[\ell(I)k,\ell(I)(k+1)).
$$
By their construction, it is easy to show that,
for any $Q\in\mathscr{Q}(\mathbb{R}^n)$,
there exist $I\in\mathscr{Q}(\mathbb{R}^{n-1})$ and $k\in\mathbb{Z}$
such that $Q=Q(I,k)$, and we let $I(Q):=I$.
Let $W\in A_p(\mathbb{R}^n,\mathbb{C}^m)$ and
$\mathbb{A}(W):=\{A_{Q,W}\}_{Q\in\mathscr{Q}(\mathbb{R}^n)}$,
where, for any $Q\in\mathscr{Q}(\mathbb{R}^n)$,
$A_{Q, W}$ denotes the reducing operator of order $p$ for $W$.
Let $V\in A_p(\mathbb{R}^{n-1},\mathbb{C}^m)$ and
$\mathbb{A}(V):=\{A_{I,V}\}_{I\in\mathscr{Q}(\mathbb{R}^{n-1})}$,
where, for any $I\in\mathscr{Q}(\mathbb{R}^{n-1})$,
$A_{I,V}$ denotes the reducing operator of order $p$ for $V$.

\begin{lemma}\label{126}
Let $k\in\mathbb{Z}$.
Then, for any $R\in\mathscr{Q}(\mathbb{R}^{n-1})$,
there exists $P_R\in\mathscr{Q}(\mathbb{R}^n)$
such that
$$
\ell(P_R)=\begin{cases}
2^{\lceil\log_2(k+1)\rceil}\ell(R)&\text{if }k\geq0,\\
2^{\lceil\log_2(-k)\rceil}\ell(R)&\text{if }k\leq-1,
\end{cases}
$$
where $\lceil\cdot\rceil$ is the same as in \eqref{ceil},
and that, for any $I\in\mathscr{Q}(\mathbb{R}^{n-1})$ with $I\subset R$,
$Q(I,k)\subset P_R$.
\end{lemma}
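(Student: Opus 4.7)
The plan is to construct $P_R$ explicitly as a Cartesian product of a dyadic ancestor of $R$ in $\mathbb{R}^{n-1}$ and a suitable dyadic interval in $\mathbb{R}$, with a specified common edge length $\ell(P_R)$.

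First I would fix $R\in\mathscr{Q}(\mathbb{R}^{n-1})$ and $k\in\mathbb{Z}$, and set $\ell(P_R):=2^{\lceil\log_2(k+1)\rceil}\ell(R)$ if $k\geq0$ and $\ell(P_R):=2^{\lceil\log_2(-k)\rceil}\ell(R)$ if $k\leq-1$. In either case $\ell(P_R)$ is a nonnegative integer power of $2$ times $\ell(R)$, so $\ell(P_R)\geq\ell(R)$, and in particular $\ell(P_R)$ is of the dyadic form $2^{-j}$ for some $j\in\mathbb Z$. Let $R^*\in\mathscr{Q}(\mathbb{R}^{n-1})$ be the unique dyadic cube of edge length $\ell(P_R)$ containing $R$ (the dyadic ancestor of $R$ at that scale).

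Next I would define
$$
P_R:=\begin{cases} R^*\times\bigl[0,\ell(P_R)\bigr) & \text{if }k\geq0,\\ R^*\times\bigl[-\ell(P_R),0\bigr) & \text{if }k\leq-1,\end{cases}
$$
and verify that $P_R\in\mathscr{Q}(\mathbb{R}^n)$: both factors are dyadic cubes in the appropriate dimension with the common edge length $\ell(P_R)$, so the product sits in $\mathscr{Q}(\mathbb{R}^n)$. To check the containment $Q(I,k)\subset P_R$ for any dyadic $I\subset R$, I would use $I\subset R\subset R^*$ for the first $n-1$ coordinates, so only the last coordinate remains to be analysed. Since $I\subset R$ is dyadic, we have $\ell(I)\leq\ell(R)$.

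For $k\geq0$, both endpoints of $[\ell(I)k,\ell(I)(k+1))$ are nonnegative and bounded above by $\ell(R)(k+1)\leq 2^{\lceil\log_2(k+1)\rceil}\ell(R)=\ell(P_R)$, giving the required inclusion in $[0,\ell(P_R))$. For $k\leq-1$, we have $\ell(I)(k+1)\leq0$ and $\ell(I)k\geq\ell(R)k=-(-k)\ell(R)\geq-2^{\lceil\log_2(-k)\rceil}\ell(R)=-\ell(P_R)$, which yields the inclusion in $[-\ell(P_R),0)$. No serious obstacle is expected here; the only point that warrants care is making sure the chosen last-coordinate interval is itself a dyadic interval (this is guaranteed by taking the left endpoint equal to $0$ or $-\ell(P_R)$, both of which are integer multiples of $\ell(P_R)$) and that the common edge length matches across all $n$ coordinates.
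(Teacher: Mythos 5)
Your construction is exactly the paper's: your $R^*$ is their $\widetilde R$, and your $R^*\times[0,\ell(P_R))$ and $R^*\times[-\ell(P_R),0)$ are precisely $Q(\widetilde R,0)$ and $Q(\widetilde R,-1)$, with the same elementary verification of the last-coordinate inclusion. The proof is correct and takes essentially the same route.
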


\begin{proof}
Let $k\in\mathbb{Z}$ and $R\in\mathscr{Q}(\mathbb{R}^{n-1})$.
We first consider the case when $k\geq0$.
In this case, choose $\widetilde{R}\in\mathscr{Q}(\mathbb{R}^{n-1})$
satisfying $R\subset\widetilde{R}$
and $\ell(\widetilde{R})=2^{\lceil\log_2(k+1)\rceil}\ell(R)$,
and let $P_R :=Q(\widetilde{R},0)$.
Then $\ell(P_R)=2^{\lceil\log_2(k+1)\rceil}\ell(R)$ and,
for any $I\in\mathscr{Q}(\mathbb{R}^{n-1})$ with $I\subset R$,
\begin{align*}
Q(I,k)=I\times[\ell(I)k,\ell(I)(k+1))
\subset R\times[0,\ell(R)(k+1))
\subset\widetilde{R}\times\left[0,\ell\left(\widetilde{R}\right)\right)
=P_R.
\end{align*}
This finishes the proof of the present lemma in this case.

Now, we consider the case when $k\leq-1$.
In this case, choose $\widetilde{R}\in\mathscr{Q}(\mathbb{R}^{n-1})$
satisfying $R\subset\widetilde{R}$
and $\ell(\widetilde{R})=2^{\lceil\log_2(-k)\rceil}\ell(R)$,
and let $P_R:=Q(\widetilde{R},-1)$.
Then $\ell(P_R)=2^{\lceil\log_2(-k)\rceil}\ell(R)$ and,
for any $I\in\mathscr{Q}(\mathbb{R}^{n-1})$ with $I\subset R$,
\begin{align*}
Q(I,k)
=I\times[\ell(I)k,\ell(I)(k+1))
\subset R\times[\ell(I)k,0)
\subset\widetilde{R}\times\left[-\ell\left(\widetilde{R}\right),0\right)
=P_R.
\end{align*}
This finishes the proof of Lemma \ref{126}.
\end{proof}

Next, we define the trace and the extension operators.
For any $\lambda:=(\lambda',\lambda_n)\in\Lambda_n$,
$Q:=Q(I,k)\in\mathscr{Q}(\mathbb{R}^n)$
with $I\in\mathscr{Q}(\mathbb{R}^{n-1})$ and $k\in\mathbb{Z}$,
and $x'\in\mathbb{R}^{n-1}$, let
$$
\left[\operatorname{Tr}\theta^{(\lambda)}_Q\right](x')
:=\theta^{(\lambda)}_Q(x',0)
=[\ell(Q)]^{-\frac12}\theta^{(\lambda')}_{I(Q)}(x')\phi^{(\lambda_n)}(-k).
$$

For any functions $g$ and $h$, respectively,
on $\mathbb{R}^{n-1}$ and $\mathbb{R}$, let
$(g\otimes h)(x):=g(x')h(x_n)$
for any $x:=(x',x_n)\in\mathbb{R}^n$.
For any $\lambda'\in\Lambda_{n-1}$,
$I\in\mathscr{Q}(\mathbb{R}^{n-1})$,
and $x:=(x',x_n)\in\mathbb{R}^n$, let
\begin{align}\label{identity pre}
\left[\operatorname{Ext}\theta^{(\lambda')}_I\right](x)
:=&\,\frac{[\ell(I)]^{\frac12}}{\varphi(-k_0)}
\left[\theta^{(\lambda')}\otimes\varphi\right]_{Q(I,k_0)}(x)
=\frac{[\ell(I)]^{\frac12}}{\varphi(-k_0)}
\theta^{((\lambda',0))}_{Q(I,k_0)}(x)\\
=&\,\frac{1}{\varphi(-k_0)}\theta^{(\lambda')}_I(x')\varphi\left(\frac{x_n}{\ell(I)}-k_0 \right),\notag
\end{align}
where $\varphi$ and $k_0$ are the same as, respectively, in Lemma \ref{wavelet basis 3} and Remark \ref{k0}.
Then, for any $x'\in\mathbb{R}^{n-1}$,
\begin{equation}\label{identity}
\left(\operatorname{Tr}\circ\operatorname{Ext}\right)\left[\theta^{(\lambda')}_I\right](x')
=\frac{[\ell(I)]^{\frac12}}{\varphi(-k_0)}
\left[\operatorname{Tr}\theta^{((\lambda',0))}_{Q(I,k_0)}\right](x')
=\theta^{(\lambda')}_{I}(x').
\end{equation}

\begin{theorem}\label{trace B}
Let $\tau\in[0,\infty)$, $p\in(0,\infty)$, $q\in(0,\infty]$, and
$s\in(\frac{1}{p}+E,\infty)$, where
\begin{equation}\label{219}
E:=\left\{\begin{aligned}
&\frac{n-1}{p}-n\tau&&\text{if}\,\,\frac{n}{n-1}\tau>\frac{1}{p},\\
&0&&\text{if}\,\,\frac{n}{n-1}\tau=\frac{1}{p}\text{ and }q=\infty,\\
&(n-1)\left(\frac{1}{p}-1\right)_+&&\text{otherwise}.
\end{aligned}\right.
\end{equation}
Let $W\in A_p(\mathbb{R}^n,\mathbb{C}^m)$ and $V\in A_p(\mathbb{R}^{n-1},\mathbb{C}^m)$.
Then the trace operator $\operatorname{Tr}$ can be extended to a
continuous linear operator
$\operatorname{Tr}:\ \dot B^{s,\tau}_{p,q}(W,\mathbb{R}^n)
\to\dot B^{s-\frac{1}{p},\frac{n}{n-1}\tau}_{p,q}(V,\mathbb{R}^{n-1})$
if and only if there exists a positive constant $C$ such that,
for any $I\in\mathscr{Q}(\mathbb{R}^{n-1})$
and $\vec z\in\mathbb{C}^m$,
\begin{equation}\label{116}
\fint_I\left|V^{\frac{1}{p}}(x')\vec z\right|^p\,dx'
\leq C\fint_{Q(I,0)}\left|W^{\frac{1}{p}}(x)\vec z\right|^p\,dx.
\end{equation}
\end{theorem}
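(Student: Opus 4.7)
The plan is to reduce the function-space statement to a sequence-space statement via the Daubechies wavelet characterization (Theorem \ref{wavelet 2}) and the averaging/reducing-operator characterization (Lemma \ref{37}), so that every bound becomes a comparison between scalar quantities $|A_{Q,W}\vec z|$ and $|A_{I,V}\vec z|$. Choosing $\mathscr N$ large enough that Theorem \ref{wavelet 2} applies to both $\dot B^{s,\tau}_{p,q}(W,\mathbb R^n)$ and $\dot B^{s-1/p,\frac{n}{n-1}\tau}_{p,q}(V,\mathbb R^{n-1})$, we can define $\operatorname{Tr}$ on the wavelet building blocks using the explicit formula given before the statement and extend by linearity. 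The condition \eqref{116} is precisely the statement that $\|A_{I,V} A_{Q(I,0),W}^{-1}\|\lesssim 1$ for every $I\in\mathscr Q(\mathbb R^{n-1})$, so the whole theorem becomes a question of when such a uniform bound governs the trace operation on wavelet coefficients.

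For the sufficiency direction, I would expand $\vec f=\sum_{\lambda,Q}\vec t^{(\lambda)}_Q\theta^{(\lambda)}_Q$ with $\vec t^{(\lambda)}_Q:=\langle\vec f,\theta^{(\lambda)}_Q\rangle$, and compute
\begin{equation*}
\operatorname{Tr}\vec f(x')=\sum_{\lambda\in\Lambda_n}\sum_{I\in\mathscr Q(\mathbb R^{n-1})}\sum_{k\in\mathbb Z}\vec t^{(\lambda)}_{Q(I,k)}[\ell(I)]^{-\frac12}\theta^{(\lambda')}_I(x')\phi^{(\lambda_n)}(-k).
\end{equation*}
Because $\varphi,\psi$ have bounded supports, the inner $k$-sum is finite for each $I$. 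When $\lambda'\in\Lambda_{n-1}$, these terms are already expressed in the $(n-1)$-dimensional wavelet basis; when $\lambda'=\mathbf 0$, the resulting scaling-function contribution at scale $\ell(I)$ must be re-expanded on finer-scale wavelets, and this is where the threshold $s>\frac1p+E$ enters to keep the resulting geometric series summable in the three regimes of $J_\tau$. Defining the output coefficients $\vec s^{(\lambda')}_J$ accordingly, I would then bound $|A_{J,V}\vec s^{(\lambda')}_J|$ by a finite almost-diagonal sum of $|A_{Q(I,k),W}\vec t^{(\lambda)}_{Q(I,k)}|$ using \eqref{116} and Lemma \ref{22 precise} (the latter controls both $\|A_{Q(I,0),W}A_{Q(I,k),W}^{-1}\|$ for the bounded range of $k$ and the scale-shift between $J$ and $I$ via Lemma \ref{126}). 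A final application of the almost-diagonal boundedness from Lemma \ref{ad BF2} on $\dot b^{s-1/p,\frac{n}{n-1}\tau}_{p,q}(V,\mathbb R^{n-1})$ converts this pointwise bound into the desired norm estimate.

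For the necessity direction, I would test $\operatorname{Tr}$ on the single-term extensions $\vec f^{(\lambda',I,\vec z)}:=\vec z\cdot\operatorname{Ext}\theta^{(\lambda')}_I$, which by \eqref{identity pre} are harmless constant multiples of $\vec z\cdot\theta^{((\lambda',0))}_{Q(I,k_0)}$. The wavelet characterization (Theorem \ref{wavelet 2}) combined with Lemma \ref{37} yields, for a single-cube sequence, the exact evaluation
\begin{equation*}
\bigl\|\vec f^{(\lambda',I,\vec z)}\bigr\|_{\dot B^{s,\tau}_{p,q}(W,\mathbb R^n)}\sim[\ell(I)]^{\alpha}\bigl|A_{Q(I,k_0),W}\vec z\bigr|,\quad\bigl\|\operatorname{Tr}\vec f^{(\lambda',I,\vec z)}\bigr\|_{\dot B^{s-\frac1p,\frac{n}{n-1}\tau}_{p,q}(V,\mathbb R^{n-1})}\sim[\ell(I)]^{\alpha}\bigl|A_{I,V}\vec z\bigr|
\end{equation*}
with the same exponent $\alpha$ (a direct arithmetic check using $n\tau\cdot(n-1)/(n-1)=n\tau$). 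Boundedness of $\operatorname{Tr}$ then forces $|A_{I,V}\vec z|\lesssim|A_{Q(I,k_0),W}\vec z|$, and Lemma \ref{22 precise} applied to the pair of equal-sized, nearby cubes $Q(I,0)$ and $Q(I,k_0)$ upgrades this to $|A_{I,V}\vec z|\lesssim|A_{Q(I,0),W}\vec z|$ uniformly in $I$ and $\vec z$, which by the defining equivalence \eqref{equ_reduce} of reducing operators is exactly \eqref{116}.

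The main obstacle I anticipate is the bookkeeping in the sufficiency direction when $\lambda'=\mathbf 0$, where the trace formula produces a scaling-function term rather than a wavelet term. Re-expanding this in finer-scale wavelets introduces a family of off-diagonal contributions, and showing that the composite matrix acting on the input coefficients is $\dot b^{s-1/p,\frac{n}{n-1}\tau}_{p,q}(d_V)$-almost diagonal requires simultaneously controlling the reducing-operator shifts from $W$ to $V$ (via \eqref{116}) and across scales (via Lemma \ref{22 precise}), while staying within the parameter windows of Lemma \ref{ad BF2}. The definition of $E$ in \eqref{219} is tailored exactly to the three cases of $J_\tau$ appearing in that window, so the threshold $s>\frac1p+E$ should emerge naturally once the computations are organized by these three regimes.
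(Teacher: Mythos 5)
Your necessity argument matches the paper's Lemma \ref{trace nec}: test on $\operatorname{Ext}\theta^{(\lambda')}_I$, compute the two norms via Theorem \ref{wavelet 2} and Lemma \ref{37}, observe the scaling exponents coincide (the built-in $[\ell(I)]^{1/2}$ in \eqref{identity pre} compensates the $n$ vs.\ $n-1$ half-measures), deduce $|A_{I,V}\vec z|\lesssim|A_{Q(I,k_0),W}\vec z|$, and use Lemma \ref{22 precise} to shift $k_0$ to $0$. That part is fine.

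The sufficiency direction takes a genuinely different and more involved route than the paper, and as stated it has a gap. You split the finitely many traces $\theta^{(\lambda)}_{Q(I,k)}(\cdot,0)$ by whether $\lambda'$ is a genuine $(n-1)$-dimensional wavelet index or $\lambda'=\mathbf 0$, and in the latter case you plan to re-expand the resulting scaling-function term in the $(n-1)$-dimensional wavelet basis and then invoke the almost-diagonal theory of Lemma \ref{ad BF2}. Two concerns. First, a slip: a scaling function $\varphi^{\otimes(n-1)}_I$ expands in wavelets at \emph{coarser} scales, not finer, since $V_j=\bigoplus_{k<j}W_k$ and $\langle\varphi_I,\theta^{(\mu)}_J\rangle=0$ when $\ell(J)\leq\ell(I)$; the relevant almost-diagonal constraint is therefore the $F$-exponent (coefficients $\sim(\ell(I)/\ell(J))^{(n-1)/2}$), not the $E$-exponent, and calling it a ``geometric series'' glosses over the dependence of the threshold on the $A_p$-dimension through $\widetilde J$ and $\widetilde s$. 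Second, you explicitly identify the ``bookkeeping'' for this re-expansion as an anticipated obstacle without resolving it. The paper circumvents the whole case distinction: since $\theta^{(\lambda)}_{Q(I,k)}(\cdot,0)$ has bounded support and $C^{\mathscr N}$ smoothness, $[\ell(I)]^{1/2}\theta^{(\lambda)}_{Q(I,k)}(\cdot,0)$ is a harmless constant multiple of a $\dot B^{s-\frac1p,\frac{n}{n-1}\tau}_{p,q}(d_V)$-synthesis molecule on $I$ \emph{regardless of $\lambda'$}, because the hypothesis $s>\frac1p+E$ is precisely (by Lemma \ref{mol no canc}, see Remark \ref{rem trace B}) the condition that synthesis molecules of the target space carry no cancellation requirements. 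Then Theorem \ref{89}(ii), combined with Lemma \ref{l-new}, \eqref{inequa-n}, and Lemma \ref{37}, closes the estimate in a few lines. This is the mechanism that produces the $E$ in \eqref{219}, and the wavelet re-expansion is never needed. Your plan could likely be made to work, but it reinvents a harder path around exactly the point where the paper's molecular theorem was built to be applied.
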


\begin{remark}\label{rem trace B}
By Lemma \ref{mol no canc}, we find that the requirement that $s-\frac1p>E$, with $E$ in \eqref{219}, is precisely the condition to guarantee that synthesis molecules of the target space $\dot B^{s-\frac1p,\frac{n}{n-1}\tau}_{p,q}(\mathbb R^{n-1},V)$ are not required to have any cancellation conditions. Note that this condition depends only on the explicit parameters of this space, not on the $A_p$-dimension of the weight $V$.
\end{remark}

To prove Theorem \ref{trace B}, we need the following lemma.

\begin{lemma}\label{l-new}
Let $p\in(0,\infty)$, $V\in A_p(\mathbb{R}^{n-1},\mathbb{C}^m)$,
and $W\in A_p(\mathbb{R}^n,\mathbb{C}^m)$ have $A_p$-dimensions $(d,\widetilde d,\Delta)$.
If \eqref{116} holds, then
there exists a positive constant $C$ such that,
for any $I\in\mathscr{Q}(\mathbb{R}^{n-1})$, $k\in\mathbb{Z}$, and $\vec{z}
\in \mathbb{C}^m$,
$|A_{I,V}\vec z|\le C(1+|k|)^{\Delta}|A_{Q(I,k),W}\vec z|.$
\end{lemma}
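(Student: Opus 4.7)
The plan is to combine three ingredients: the defining equivalence \eqref{equ_reduce} for reducing operators, the hypothesis \eqref{116}, and the sharp two-cube estimate from Lemma \ref{22 precise}. The strategy is first to pass from $V$ on $I$ to $W$ on the ground cube $Q(I,0)$ via \eqref{116}, and then to transport the bound from $Q(I,0)$ to $Q(I,k)$ using the $A_p$-dimension estimate on the ratio of reducing operators.

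More concretely, I would first use \eqref{equ_reduce} to rewrite
\[
|A_{I,V}\vec z|\sim\left[\fint_I\left|V^{\frac1p}(x')\vec z\right|^p\,dx'\right]^{\frac1p}
\]
and bound the right-hand side via \eqref{116} by a constant multiple of
\[
\left[\fint_{Q(I,0)}\left|W^{\frac1p}(x)\vec z\right|^p\,dx\right]^{\frac1p}\sim|A_{Q(I,0),W}\vec z|,
\]
again by \eqref{equ_reduce}. This yields the cleanly-stated bound $|A_{I,V}\vec z|\lesssim|A_{Q(I,0),W}\vec z|$, which is the $k=0$ case of the claim.

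For general $k\in\mathbb Z$, I would write $\vec z=A_{Q(I,k),W}^{-1}A_{Q(I,k),W}\vec z$ and apply the operator norm estimate
\[
|A_{Q(I,0),W}\vec z|\leq\left\|A_{Q(I,0),W}A_{Q(I,k),W}^{-1}\right\||A_{Q(I,k),W}\vec z|.
\]
Since $\ell(Q(I,0))=\ell(Q(I,k))=\ell(I)$, the $\max\{\cdot,\cdot\}$ factor in Lemma \ref{22 precise} is equal to $1$, and only the $[1+|c_Q-c_R|/(\ell(Q)\vee\ell(R))]^\Delta$ term survives. A direct computation of the centers $c_{Q(I,0)}=(c_I,\ell(I)/2)$ and $c_{Q(I,k)}=(c_I,\ell(I)(k+1/2))$ gives $|c_{Q(I,0)}-c_{Q(I,k)}|=|k|\ell(I)$, hence
\[
\left\|A_{Q(I,0),W}A_{Q(I,k),W}^{-1}\right\|\lesssim(1+|k|)^\Delta.
\]
Chaining the two estimates then produces the desired inequality.

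There is no serious obstacle here; the only minor care needed is the verification that in the application of Lemma \ref{22 precise} both cubes have the same edge length so that the ratio factors collapse, leaving only the $\Delta$-power growth in the separation, which is exactly tuned to the factor $(1+|k|)^\Delta$ in the conclusion. Everything else is a transparent chain of inequalities from reducing-operator equivalences and the hypothesis \eqref{116}.
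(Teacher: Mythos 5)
Your proof is correct and takes essentially the same approach as the paper: apply \eqref{equ_reduce} together with \eqref{116} to get $|A_{I,V}\vec z|\lesssim|A_{Q(I,0),W}\vec z|$, then pass to $Q(I,k)$ via the operator norm of $A_{Q(I,0),W}A_{Q(I,k),W}^{-1}$ bounded by Lemma \ref{22 precise}, where the equal edge lengths collapse the ratio factors and leave the $(1+|k|)^\Delta$ term. Your explicit verification of the center distance is a correct and helpful filling-in of what the paper leaves implicit.
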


\begin{proof}
If \eqref{116} holds, then, by
\eqref{equ_reduce} and Lemma \ref{22 precise}, we conclude that,
for any  $I\in\mathscr{Q}(\mathbb{R}^{n-1})$, $k\in\mathbb{Z}$, and  $\vec z\in\mathbb{C}^m$,
\begin{align*}
\left|A_{I,V}\vec z\right|
\lesssim\left|A_{Q(I,0),W}\vec z\right|
\leq\left\|A_{Q(I,0),W}\left[A_{Q(I,k),W}\right]^{-1}\right\|
\left|A_{Q(I,k),W}\vec z\right|
\lesssim(1+|k|)^{\Delta}\left|A_{Q(I,k),W}\vec z\right|.
\end{align*}
This finishes the proof of Lemma \ref{l-new}.
\end{proof}

Concerning the necessity of Theorem \ref{trace B},
we show a slightly more general statement in a separate lemma that we can apply later on.

\begin{lemma}\label{trace nec}
Let $\tau\in[0,\infty)$, $p\in(0,\infty)$, $q,r\in(0,\infty]$, and $s\in\mathbb R$.
Let $W\in A_p(\mathbb{R}^n,\mathbb{C}^m)$ and $V\in A_p(\mathbb{R}^{n-1},\mathbb{C}^m)$.
Let $\mathscr{N}\in\mathbb{N}$ satisfy \eqref{NDau}
for $\widetilde J$ and $\widetilde s$ corresponding to both spaces $\dot A^{s,\tau}_{p,q}(W,\mathbb{R}^n)$
and $\dot B^{s-\frac{1}{p},\frac{n}{n-1}\tau}_{p,r}(V,\mathbb{R}^{n-1})$,
and let both $\{\theta^{(\lambda)}\}_{\lambda\in\Lambda_n}\subset C^{\mathscr{N}}(\mathbb{R}^n)$
and $\{\theta^{(\lambda')}\}_{\lambda'\in\Lambda_{n-1}}
\subset C^{\mathscr{N}}(\mathbb{R}^{n-1})$
be the same as in Lemma \ref{wavelet basis 3}.
Suppose that the trace operator $\operatorname{Tr}:\ \dot A^{s,\tau}_{p,q}(W,\mathbb{R}^n)
\to\dot B^{s-\frac{1}{p},\frac{n}{n-1}\tau}_{p,r}(V,\mathbb{R}^{n-1})$ is continuous.
Then there exists a positive constant $C$ such that \eqref{116} holds
for any dyadic cube $I\subset\mathbb{R}^{n-1}$ and any $\vec z\in\mathbb{C}^m$.
\end{lemma}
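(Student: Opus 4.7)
The plan is to test the boundedness of $\operatorname{Tr}$ against very specific inputs that make both norms computable via the wavelet characterization.

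Fix a dyadic cube $I\subset\mathbb R^{n-1}$, a vector $\vec z\in\mathbb C^m$, and an index $\lambda'\in\Lambda_{n-1}$. I would introduce the test vector
\begin{equation*}
\vec f:=\vec z\cdot\operatorname{Ext}\!\left[\theta^{(\lambda')}_I\right]
=\frac{[\ell(I)]^{1/2}}{\varphi(-k_0)}\,\vec z\cdot\theta^{((\lambda',0))}_{Q(I,k_0)},
\end{equation*}
which by \eqref{identity pre} is a fixed scalar multiple of a single Daubechies wavelet on $\mathbb R^n$, centred on the cube $Q_0:=Q(I,k_0)$. Since $\{\theta^{(\lambda)}_Q\}$ is an orthonormal basis of $L^2(\mathbb R^n)$, the only nonzero wavelet coefficient of $\vec f$ is $\langle \vec f,\theta^{((\lambda',0))}_{Q_0}\rangle=[\ell(I)]^{1/2}\varphi(-k_0)^{-1}\vec z$. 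By Theorem \ref{wavelet 2} (applicable since $\mathscr N$ is chosen to satisfy \eqref{NDau} for the source space) and Lemma \ref{37}, the norm of $\vec f$ reduces to a single term expressible in terms of the reducing operator $A_{Q_0,W}$; a direct accounting of the factors $2^{j_0 s}$, $|Q_0|^{-1/2}$, $\|\mathbf 1_{Q_0}\|_{L^p(Q_0)}$, and $|Q_0|^{-\tau}$ (the sup over $P\supset Q_0$ being attained at $P=Q_0$) yields
\begin{equation*}
\left\|\vec f\right\|_{\dot A^{s,\tau}_{p,q}(W,\mathbb R^n)}
\sim [\ell(I)]^{-s+n/p-n\tau-(n-1)/2}\left|A_{Q_0,W}\vec z\right|.
\end{equation*}

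Next I would compute $\operatorname{Tr}(\vec f)$. By \eqref{identity}, $\operatorname{Tr}(\vec f)=\vec z\cdot\theta^{(\lambda')}_I$, which is itself a single wavelet on $\mathbb R^{n-1}$. Applying Theorem \ref{wavelet 2} and Lemma \ref{37} on $\mathbb R^{n-1}$ to the target space $\dot B^{s-1/p,n\tau/(n-1)}_{p,r}(V,\mathbb R^{n-1})$, together with the analogous dimensional accounting (now with $|I|^{-n\tau/(n-1)}=[\ell(I)]^{-n\tau}$, $|I|^{-1/2}=[\ell(I)]^{-(n-1)/2}$, and $\|\mathbf 1_I\|_{L^p(I)}=[\ell(I)]^{(n-1)/p}$), I get
\begin{equation*}
\left\|\operatorname{Tr}(\vec f)\right\|_{\dot B^{s-1/p,\frac{n\tau}{n-1}}_{p,r}(V,\mathbb R^{n-1})}
\sim [\ell(I)]^{-s+n/p-n\tau-(n-1)/2}\left|A_{I,V}\vec z\right|.
\end{equation*}
A small point worth verifying is that the Besov index $r$ enters neither norm, because the sequences are supported at a single scale.

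Since the two exponents of $\ell(I)$ match exactly, the assumed continuity of $\operatorname{Tr}$ gives $|A_{I,V}\vec z|\lesssim|A_{Q(I,k_0),W}\vec z|$ with a constant independent of $I$ and $\vec z$. To finish, I would move from $k_0$ back to the cube $Q(I,0)$ appearing in \eqref{116}: the cubes $Q(I,k_0)$ and $Q(I,0)$ share side length $\ell(I)$ and their centres are separated by $|k_0|\ell(I)$, so Lemma \ref{22 precise} (applied with $W$ having $A_p$-dimensions $(d,\widetilde d,\Delta)$) yields $\|A_{Q(I,k_0),W}A_{Q(I,0),W}^{-1}\|\le C(1+|k_0|)^\Delta$, an absolute constant. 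Combining with \eqref{equ_reduce} translates the reducing-operator inequality into the integral inequality \eqref{116} after taking $p$-th powers. The main obstacle is purely bookkeeping: getting the exponents of $\ell(I)$ to match on both sides so that the comparison is scale-invariant; the wavelet characterization and the orthonormality do the rest.
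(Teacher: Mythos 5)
Your proposal is correct and takes essentially the same route as the paper: test $\operatorname{Tr}$ against a single (extended) wavelet, compute both norms via the wavelet characterization and Lemma \ref{37}, and transfer from $Q(I,k_0)$ to $Q(I,0)$ using Lemma \ref{22 precise} and then \eqref{equ_reduce}. The paper differs only in two cosmetic ways: it normalizes the test coefficient $\vec t_{I_0}$ so that all powers of $\ell(I_0)$ are absorbed from the start (where you carry them through and check they cancel), and it bounds $\|\vec f\|_{\dot A^{s,\tau}_{p,q}(W,\mathbb R^n)}$ from above via Theorem \ref{89}(ii) (a single wavelet being a synthesis molecule) rather than invoking the full two-sided Theorem \ref{wavelet 2}; an upper bound is all that is needed on that side, since continuity of $\operatorname{Tr}$ provides $\|\operatorname{Tr}\vec f\|\lesssim\|\vec f\|$. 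Your appeal to orthonormality of the Daubechies system is justified by the remark after Corollary \ref{88 corollary}, since the test function is compactly supported and lies in $(L^2)^m\cap\dot A^{s,\tau}_{p,q}(W,\mathbb R^n)$, so the Lemma \ref{88} pairings coincide with the $L^2$ pairings and only a single coefficient survives. Your observation that the secondary index $r$ is irrelevant (single-scale sequence) is also the correct reason the lemma can serve both the $\dot B$ (with $r=q$) and $\dot F$ (with $r=p$) trace theorems.
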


\begin{proof}
Fix a dyadic cube $I_0\subset\mathbb{R}^{n-1}$
and $\vec z\in\mathbb{C}^m$.
Let $\vec t:=\{\vec t_I\}_{I\in\mathscr{Q}(\mathbb{R}^{n-1})}$,
where, for any $I\in\mathscr{Q}(\mathbb{R}^{n-1})$,
$$
\vec t_I:=\begin{cases}
[\ell(I_0)]^{n\tau+(s-\frac{1}{p})-(n-1)(\frac{1}{p}-\frac{1}{2})}\vec z
&\text{if }I=I_0,\\
\vec{\mathbf{0}}&\text{otherwise},
\end{cases}
$$
and let $\vec g:=\vec t_{I_0}\theta^{(\lambda')}_{I_0}$ for some $\lambda'\in\Lambda_{n-1}$.
Then, by Theorem \ref{wavelet} and Lemma \ref{37}, we find that
\begin{align}\label{125}
\left\|\vec g\right\|_{\dot A^{s-\frac{1}{p},\frac{n}{n-1}\tau}_{p,q}(V,\mathbb{R}^{n-1})}
&\sim\left\|\vec t\right\|_{\dot a^{s-\frac{1}{p},\frac{n}{n-1}\tau}_{p,q}(V,\mathbb{R}^{n-1})}
\sim\left\|\vec t\right\|_{\dot a^{s-\frac{1}{p},\frac{n}{n-1}\tau}_{p,q}(\mathbb{A}(V), \mathbb{R}^{n-1})}\\
&=[\ell(I_0)]^{-n\tau-(s-\frac{1}{p})+(n-1)(\frac{1}{p}-\frac{1}{2})}
\left|A_{I_0,V}\vec t_{I_0}\right|
=\left|A_{I_0,V}\vec z\right|.\notag
\end{align}
Let $\vec u:=\{\vec u_Q\}_{Q\in\mathscr{Q}(\mathbb{R}^n)}$,
where, for any $Q\in\mathscr{Q}(\mathbb{R}^n)$,
$$
\vec u_Q:=\begin{cases}
[\ell(I_0)]^{\frac12}\vec t_{I_0}&\text{if }Q=Q(I_0,k_0),\\
\vec{\mathbf{0}}&\text{otherwise}.
\end{cases}
$$
For any $x:=(x',x_n)\in\mathbb{R}^n$, let
$$
\vec f
:=\vec t_{I_0}\operatorname{Ext}\left(\theta^{(\lambda')}_{I_0}\right)
=\frac{1}{\varphi(-k_0)}\vec u_{Q(I_0,k_0)}
\left[\theta^{(\lambda')}\otimes \varphi \right]_{Q(I_0,k_0)},
$$
where $\varphi$ and $k_0$ are the same as, respectively, in Lemma \ref{wavelet basis 3} and Remark \ref{k0}.
Then, using \eqref{identity}, we conclude that, for any $x'\in\mathbb{R}^{n-1}$,
\begin{equation}\label{235}
\left(\operatorname{Tr}\vec f\right)(x')
=\vec t_{I_0}\theta^{(\lambda')}_{I_0}(x')
=\vec g(x').
\end{equation}
Let $W\in A_p(\mathbb{R}^n,\mathbb{C}^m)$
have $A_p$-dimensions $(d_W,\widetilde d_W,\Delta_W)$.
Since $\theta^{(\lambda')}\otimes\varphi$ has bounded support
and $\mathscr{N}\in\mathbb{N}$ satisfies \eqref{NDau}
for $\dot A^{s,\tau}_{p,q}(W,\mathbb{R}^n)$,
we deduce that $[\theta^{(\lambda')}\otimes\varphi]_{Q(I_0,k_0)}$ is a harmless constant multiple of
a $\dot A^{s,\tau}_{p,q}(d_W)$-synthesis molecule on $Q(I_0,k_0)$.
By this, Theorem \ref{89}(ii), and Lemma \ref{37}, we obtain
\begin{align*}
\left\|\vec f\right\|_{\dot A^{s,\tau}_{p,q}(W,\mathbb{R}^n)}
&\lesssim\left\|\vec u\right\|_{\dot a^{s,\tau}_{p,q}(W,\mathbb{R}^n)}
\sim\left\|\vec u\right\|_{\dot a^{s,\tau}_{p,q}(\mathbb{A}(W),\mathbb{R}^n)}\\
&=[\ell(Q(I,k_0))]^{-n\tau-s+n(\frac{1}{p}-\frac{1}{2})}
\left|A_{Q(I,k_0),W}\vec u_{Q(I,k_0)}\right|
=\left|A_{Q(I,k_0),W}\vec z\right|,
\end{align*}
which, together with \eqref{125}, \eqref{235}, and the assumption that
$\operatorname{Tr}$ is bounded operator, further implies that
\begin{align*}
\left|A_{I_0,V}\vec z\right|
\sim\left\|\vec g\right\|_{\dot A^{s-\frac{1}{p},\frac{n}{n-1}\tau}_{p,q}(V,\mathbb{R}^{n-1})}
=\left\|\operatorname{Tr}\vec f\right\|_{\dot A^{s-\frac{1}{p},\frac{n}{n-1}\tau}_{p,q}(V,\mathbb{R}^{n-1})}
\lesssim\left\|\vec f\right\|_{\dot A^{s,\tau}_{p,q}(W,\mathbb{R}^n)}
\lesssim\left|A_{Q(I_0,k_0),W}\vec z\right|.
\end{align*}
From Lemma \ref{22 precise}, we infer that, for any $\vec z\in\mathbb{C}^m$,
\begin{align*}
\left|A_{Q(I_0,k_0),W}\vec z\right|
\leq\left\|A_{Q(I_0,k_0),W}\left[A_{Q(I_0,0),W}\right]^{-1}\right\|
\left|A_{Q(I_0,0),W}\vec z\right|
\lesssim(1+|k_0|)^{\Delta_W}\left|A_{Q(I_0,0),W}\vec z\right|
\end{align*}
and hence
$|A_{I_0,V}\vec z|
\lesssim|A_{Q(I_0,0),W}\vec z|.$
By this and \eqref{reduce}, we find that \eqref{116} holds for $I=I_0$.
Since the choice of $I_0$ was arbitrary, this finishes the proof of Lemma \ref{trace nec}.
\end{proof}

Now, we prove Theorem \ref{trace B}.

\begin{proof}[Proof of Theorem \ref{trace B}]
We first show the necessity.
Suppose that
$$\operatorname{Tr}:\ \dot B^{s,\tau}_{p,q}(W,\mathbb{R}^n)
\to\dot B^{s-\frac{1}{p},\frac{n}{n-1}\tau}_{p,q}(V,\mathbb{R}^{n-1})$$
is continuous.
By the standing assumptions stated in the beginning of the section,
the assumptions of Lemma \ref{trace nec} are satisfied with
$\dot A^{s,\tau}_{p,q}(W,\mathbb{R}^n)=\dot B^{s,\tau}_{p,q}(W,\mathbb{R}^n)$
and $r=q$. Thus, the conclusion of that lemma gives the necessity of \eqref{116}.

Now, we prove the sufficiency.
Suppose that \eqref{116} holds.
Using Theorem \ref{wavelet 2}, we obtain,
for any $\vec f\in\dot B^{s,\tau}_{p,q}(W,\mathbb{R}^n)$,
$\vec f=\sum_{\lambda\in\Lambda_n}\sum_{Q\in\mathscr{Q}(\mathbb{R}^n)}
\langle\vec f,\theta^{(\lambda)}_Q\rangle\theta^{(\lambda)}_Q$
in $[\mathcal{S}_\infty'(\mathbb{R}^n)]^m$, and hence it is natural to define
\begin{equation}\label{traceop}
\operatorname{Tr}\vec f
:=\sum_{\lambda\in\Lambda_n}\sum_{Q\in\mathscr{Q}(\mathbb{R}^n)}
\left\langle\vec f,\theta^{(\lambda)}_Q\right\rangle\operatorname{Tr}\theta^{(\lambda)}_Q
\end{equation}
in $[\mathcal{S}_\infty'(\mathbb{R}^{n-1})]^m$.
Next, we show that $\operatorname{Tr}\vec f$ is well defined and
$$
\operatorname{Tr}:\ \dot B^{s,\tau}_{p,q}(W,\mathbb{R}^n)
\to\dot B^{s-\frac{1}{p},\frac{n}{n-1}\tau}_{p,q}(V,\mathbb{R}^{n-1})
$$
is a continuous linear operator.
For any $\lambda\in\Lambda_n$,
let $\vec t^{(\lambda)}:=\{\vec t^{(\lambda)}_Q\}_{Q\in\mathscr{Q}(\mathbb{R}^n)}$,
where, for any $Q\in\mathscr{Q}(\mathbb{R}^n)$,
\begin{equation}\label{dequen}
\vec t^{(\lambda)}_Q:=[\ell(Q)]^{-\frac12}\left\langle\vec f,\theta^{(\lambda)}_Q\right\rangle.
\end{equation}
Since $\theta^{(\lambda)}$ has bounded support,
we deduce that there exists $N\in\mathbb{N}$ such that
$\operatorname{supp}\theta^{(\lambda)}\subset B(\mathbf{0},N)$.
Thus, by the definition of $\theta^{(\lambda)}_{Q(I,k)}$,
we obtain, for any $\lambda\in\Lambda_n $, $I\in\mathscr{Q}(\mathbb{R}^{n-1})$,
$k\in\mathbb{Z}$ with $|k|>N$, and $x'\in\mathbb{R}^{n-1}$,
$$
\theta^{(\lambda)}_{Q(I,k)}(x',0)
=[\ell(I)]^{-\frac{n}{2}}\theta^{(\lambda)}\left(\frac{x'-x_I}{\ell(I)},-k\right)=0.
$$
From this, we infer that, for any $x'\in\mathbb{R}^{n-1}$,
\begin{align*}
\sum_{\lambda\in\Lambda_n}
\sum_{Q\in\mathscr{Q}(\mathbb{R}^n)}\left\langle\vec f,\theta^{(\lambda)}_Q
\right\rangle\theta^{(\lambda)}_Q(x',0)
&=\sum_{\lambda\in\Lambda_n}\sum_{Q\in\mathscr{Q}(\mathbb{R}^n)}
\vec t^{(\lambda)}_Q[\ell(Q)]^{\frac12}\theta^{(\lambda)}_Q(x',0)\\
&=\sum_{\lambda\in\Lambda_n}\sum_{I\in\mathscr{Q}(\mathbb{R}^{n-1})}\sum_{k=-N}^N
\vec t^{(\lambda)}_{Q(I,k)}[\ell(I)]^{\frac12}\theta^{(\lambda)}_{Q(I,k)}(x',0).
\end{align*}
Using this and the definition of $\operatorname{Tr}\vec f$, we obtain
\begin{equation}\label{223}
\operatorname{Tr}\vec f
=\sum_{\lambda\in\Lambda_n}\sum_{k=-N}^N\sum_{I\in\mathscr{Q}(\mathbb{R}^{n-1})}
\vec t^{(\lambda)}_{Q(I,k)}[\ell(I)]^{\frac12}\theta^{(\lambda)}_{Q(I,k)}(\cdot,0)
\end{equation}
in $[\mathcal{S}_\infty'(\mathbb{R}^{n-1})]^m$.
Applying Lemma \ref{l-new},  we conclude that,
for any $\lambda\in\Lambda_n$, $I\in\mathscr{Q}(\mathbb{R}^{n-1})$,
and $k\in\mathbb{Z}$ with $|k|\leq N$,
\begin{align}\label{inequa-n}
\left|A_{I,V}\vec t^{(\lambda)}_{Q(I,k)}\right|
\lesssim(1+|k|)^{\Delta_W}\left|A_{Q(I,k),W}\vec t^{(\lambda)}_{Q(I,k)}\right|
\leq(1+N)^{\Delta_W}\left|A_{Q(I,k),W}\vec t^{(\lambda)}_{Q(I,k)}\right|.
\end{align}
This, combined with Lemma \ref{37}, further implies that,
for any $\lambda\in\Lambda_n $ and $k\in\mathbb{Z}$ with $ |k|\leq N $,
\begin{align}\label{218}
&\left\|\left\{\vec t^{(\lambda)}_{Q(I,k)}\right\}_{I\in\mathscr{Q}(\mathbb{R}^{n-1})}
\right\|_{\dot b^{s-\frac{1}{p},\frac{n}{n-1}\tau}_{p,q}(V,\mathbb{R}^{n-1})}\\
&\quad\sim\left\|\left\{\vec t^{(\lambda)}_{Q(I,k)}\right\}_{I\in\mathscr{Q}(\mathbb{R}^{n-1})}
\right\|_{\dot b^{s-\frac{1}{p},\frac{n}{n-1}\tau}_{p,q}(\mathbb{A}(V), \mathbb{R}^{n-1})}\notag\\
&\quad=\left\|\left\{\left|A_{I,V}\vec t^{(\lambda)}_{Q(I,k)}\right|
\right\}_{I\in\mathscr{Q}(\mathbb{R}^{n-1})}
\right\|_{\dot b^{s-\frac{1}{p},\frac{n}{n-1}\tau}_{p,q}(\mathbb{R}^{n-1})}\notag\\
&\quad\lesssim\left\|\left\{\left|A_{Q(I,k),W}\vec t^{(\lambda)}_{Q(I,k)}\right|
\right\}_{I\in\mathscr{Q}(\mathbb{R}^{n-1})}
\right\|_{\dot b^{s-\frac{1}{p},\frac{n}{n-1}\tau}_{p,q}(\mathbb{R}^{n-1})}\notag\\
&\quad\leq\left\|\left\{\left\langle\vec f,
\theta^{(\lambda)}_Q \right\rangle\right\}_{Q\in\mathscr{Q}(\mathbb{R}^n)}\right\|_{\dot b^{s,\tau}_{p,q}(\mathbb{A}(W), \mathbb{R}^n)}
\sim\left\|\left\{\left\langle\vec f,
\theta^{(\lambda)}_Q \right\rangle\right\}_{Q\in\mathscr{Q}(\mathbb{R}^n)}
\right\|_{\dot b^{s,\tau}_{p,q}(W,\mathbb{R}^n)}.\notag
\end{align}
Using Theorem \ref{wavelet 2}, we find that
$\{\langle\vec f,\theta^{(\lambda)}_Q\rangle\}_{Q\in\mathscr{Q}(\mathbb{R}^n)}
\in\dot b^{s,\tau}_{p,q}(W,\mathbb{R}^n)$ and hence
\begin{equation}\label{217}
\left\{\vec t^{(\lambda)}_{Q(I,k)}\right\}_{I\in\mathscr{Q}(\mathbb{R}^{n-1})}
\in\dot b^{s-\frac{1}{p},\frac{n}{n-1}\tau}_{p,q}(V,\mathbb{R}^{n-1}).
\end{equation}
Let $V\in A_p(\mathbb{R}^{n-1},\mathbb{C}^m)$ have the $A_p$-dimension $d_V\in[0, n-1)$.
Notice that $s\in(\frac{1}{p}+E,\infty)$, where $E$ is the same as in \eqref{219}.
Hence, by Remark \ref{rem trace B}, the vanishing moment condition of synthesis molecules is void for spaces
$\dot B^{s-\frac{1}{p},\frac{n}{n-1}\tau}_{p,q}(V,\mathbb{R}^{n-1})$.
Together with the fact that $\theta^{(\lambda)}_{Q(I,k)}(\cdot,0)$ has bounded support, this further implies that the function
$[\ell(I)]^{\frac12}\theta^{(\lambda)}_{Q(I,k)}(\cdot,0)$ is a harmless constant multiple of
a $\dot B^{s-\frac{1}{p},\frac{n}{n-1}\tau}_{p,q}(d_V)$-synthesis molecule on $I$.
From this, \eqref{223}, \eqref{217}, Theorem \ref{89}(ii),
\eqref{218}, and Theorem \ref{wavelet 2},
it follows that $\operatorname{Tr}\vec f$ is well defined and
\begin{align*}
\left\|\operatorname{Tr}\vec f\right\|_{\dot B^{s-\frac{1}{p},\frac{n}{n-1}\tau}_{p,q}(V,\mathbb{R}^{n-1})}
&\lesssim\sum_{\lambda\in\Lambda_n}\sum_{k=-N}^N
\left\|\sum_{I\in\mathscr{Q}(\mathbb{R}^{n-1})}
\vec t^{(\lambda)}_{Q(I,k)} [\ell(I)]^{\frac12}\theta^{(\lambda)}_{Q(I,k)}(\cdot,0)
\right\|_{\dot B^{s-\frac{1}{p},\frac{n}{n-1}\tau}_{p,q}(V,\mathbb{R}^{n-1})}\\
&\lesssim\sum_{\lambda\in\Lambda_n}\sum_{k=-N}^N
\left\|\left\{\vec t^{(\lambda)}_{Q(I,k)}\right\}_{I\in\mathscr{Q}(\mathbb{R}^{n-1})}
\right\|_{\dot b^{s-\frac{1}{p},\frac{n}{n-1}\tau}_{p,q}(V,\mathbb{R}^{n-1})}\\
&\lesssim\sum_{\lambda\in\Lambda_n}
\left\|\left\{\left\langle\vec f,
\theta^{(\lambda)}_Q \right\rangle\right\}_{Q\in\mathscr{Q}(\mathbb{R}^n)}
\right\|_{\dot B^{s,\tau}_{p,q}(W,\mathbb{R}^n)}
\sim\left\|\vec f\right\|_{\dot B^{s,\tau}_{p,q}(W,\mathbb{R}^n)}.
\end{align*}
This finishes the proof of the sufficiency and hence Theorem \ref{trace B}.
\end{proof}

We now discuss an extension theorem for $\operatorname{Ext}$.

\begin{theorem}\label{extension B}
Let $s\in\mathbb{R}$, $\tau\in[0,\infty)$, $p\in(0,\infty)$, $q\in(0,\infty]$,
$W\in A_p(\mathbb{R}^n,\mathbb{C}^m)$, and $V\in A_p(\mathbb{R}^{n-1},\mathbb{C}^m)$.
Assume that there exists a positive constant $C$ such that,
for any $I\in\mathscr{Q}(\mathbb{R}^{n-1})$
and $\vec z\in\mathbb{C}^m$,
\begin{equation}\label{127}
\fint_{Q(I,0)}\left|W^{\frac{1}{p}}(x)\vec z\right|^p\,dx
\leq C\fint_I\left|V^{\frac{1}{p}}(x')\vec z\right|^p\,dx'.
\end{equation}
Then the extension operator $\operatorname{Ext}$ can be extended to a
continuous linear operator
\begin{equation}\label{225}
\operatorname{Ext}:\
\dot B^{s-\frac{1}{p},\frac{n}{n-1}\tau}_{p,q}(V,\mathbb{R}^{n-1})
\to\dot B^{s,\tau}_{p,q}(W,\mathbb{R}^n).
\end{equation}
Furthermore, if $s\in(\frac{1}{p}+E,\infty)$, where $E$ is the same as in \eqref{219},
and \eqref{116} also holds,
then $\operatorname{Tr}\circ\operatorname{Ext}$ is the identity on
$\dot B^{s-\frac{1}{p},\frac{n}{n-1}\tau}_{p,q}(V,\mathbb{R}^{n-1})$.
\end{theorem}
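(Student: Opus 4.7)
First I would mirror the sufficiency direction of Theorem \ref{trace B} with the roles of $\mathbb R^n$ and $\mathbb R^{n-1}$ interchanged. Given $\vec g\in\dot B^{s-\frac{1}{p},\frac{n}{n-1}\tau}_{p,q}(V,\mathbb R^{n-1})$, Theorem \ref{wavelet 2} supplies the wavelet expansion $\vec g=\sum_{\lambda'\in\Lambda_{n-1}}\sum_{I\in\mathscr Q(\mathbb R^{n-1})}\vec t^{(\lambda')}_I\theta^{(\lambda')}_I$ with $\vec t^{(\lambda')}_I:=\langle\vec g,\theta^{(\lambda')}_I\rangle$. Applying $\operatorname{Ext}$ termwise and using \eqref{identity pre}, I would set
\begin{equation*}
\operatorname{Ext}\vec g:=\frac{1}{\varphi(-k_0)}\sum_{\lambda'\in\Lambda_{n-1}}\sum_{I\in\mathscr Q(\mathbb R^{n-1})}[\ell(I)]^{\frac{1}{2}}\vec t^{(\lambda')}_I\,\theta^{((\lambda',0))}_{Q(I,k_0)},
\end{equation*}
which, by Corollary \ref{88 corollary}\eqref{88c1}, is a sum of (constant multiples of) $\dot B^{s,\tau}_{p,q}(d_W)$-synthesis molecules on the cubes $Q(I,k_0)\in\mathscr Q(\mathbb R^n)$. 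Hence, via Theorem \ref{89}\eqref{892}, the continuity statement \eqref{225} reduces, for each $\lambda'\in\Lambda_{n-1}$, to the sequence-space bound $\|\vec u^{(\lambda')}\|_{\dot b^{s,\tau}_{p,q}(W,\mathbb R^n)}\lesssim\|\vec g\|_{\dot B^{s-1/p,n\tau/(n-1)}_{p,q}(V,\mathbb R^{n-1})}$, where $\vec u^{(\lambda')}$ is the sequence with $\vec u^{(\lambda')}_{Q(I,k_0)}:=\varphi(-k_0)^{-1}[\ell(I)]^{1/2}\vec t^{(\lambda')}_I$ and zero elsewhere.

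To prove this sequence-norm inequality, I would pass to averaging norms via Lemma \ref{37}. The hypothesis \eqref{127} together with \eqref{equ_reduce} yields $|A_{Q(I,0),W}\vec z|\lesssim|A_{I,V}\vec z|$, and Lemma \ref{22 precise} applied to $Q(I,0)$ and $Q(I,k_0)$ (same edge length, centers $|k_0|\ell(I)$ apart, so the cube-ratio in that lemma is the fixed constant $|k_0|$) upgrades this to $|A_{Q(I,k_0),W}\vec z|\lesssim|A_{I,V}\vec z|$ with an $I$-independent constant. For the supremum defining the $\dot b^{s,\tau}_{p,q}$-norm, I would use the projection $\pi\colon\mathscr Q(\mathbb R^n)\to\mathscr Q(\mathbb R^{n-1})$ sending a dyadic cube to the dyadic cube spanned by its first $n-1$ coordinates (of the same edge length): for any $P\in\mathscr Q(\mathbb R^n)$, the condition $Q(I,k_0)\subset P$ at scale $j\geq j_P$ forces $I\subset\pi(P)$. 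A direct bookkeeping of the powers of $2^j$ (from $[\ell(I)]^{1/2}$, $|Q(I,k_0)|^{1/p}=[\ell(I)]^{n/p}$, and the Besov weight $2^{js}$ on the $\mathbb R^n$ side versus $|I|^{1/p}=[\ell(I)]^{(n-1)/p}$ and $2^{j(s-1/p)}$ on the $\mathbb R^{n-1}$ side) shows that the two $j$-layer $L^p$-contributions coincide exactly; combined with $|P|^{-\tau}=[\ell(P)]^{-n\tau}=|\pi(P)|^{-n\tau/(n-1)}$, we obtain $\sup_P\leq\sup_{P'}$, as desired. Summing the finitely many $\lambda'\in\Lambda_{n-1}$ completes \eqref{225}.

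For the identity claim, the extra hypotheses $s>1/p+E$ and \eqref{116} ensure via Theorem \ref{trace B} that $\operatorname{Tr}$ is continuous and, by its definition \eqref{traceop}, is expressible through the wavelet coefficients of its argument. Corollary \ref{88 corollary}\eqref{88c2} gives meaning to each $\langle\operatorname{Ext}\vec g,\theta^{(\lambda)}_Q\rangle$, and since $\operatorname{Ext}\vec g=\sum_R\vec u_Rb_R$ converges in $(\mathcal S_\infty'(\mathbb R^n))^m$ by Theorem \ref{89}\eqref{892}, I can interchange summations, justified by the absolute convergence from Corollary \ref{83}\eqref{832}, and invoke the $L^2(\mathbb R^n)$-orthonormality of $\{\theta^{(\lambda)}_Q\}$ to conclude that $\langle\operatorname{Ext}\vec g,\theta^{(\lambda)}_Q\rangle$ vanishes unless $\lambda=(\lambda',0)$ with $\lambda'\in\Lambda_{n-1}$ and $Q=Q(I,k_0)$, in which case it equals $\varphi(-k_0)^{-1}[\ell(I)]^{1/2}\vec t^{(\lambda')}_I$. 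A direct computation from the definitions of $\theta^{((\lambda',0))}$ and $Q(I,k_0)$ yields $\operatorname{Tr}\theta^{((\lambda',0))}_{Q(I,k_0)}=\varphi(-k_0)[\ell(I)]^{-1/2}\theta^{(\lambda')}_I$, so substituting these into \eqref{traceop} collapses all prefactors and gives $\operatorname{Tr}(\operatorname{Ext}\vec g)=\sum_{\lambda',I}\vec t^{(\lambda')}_I\theta^{(\lambda')}_I=\vec g$ by the wavelet expansion of $\vec g$. The main obstacle throughout will be the legitimacy of the summation interchanges in the pairing computation of the last paragraph, which I would address by leveraging the molecular machinery developed in Corollaries \ref{83} and \ref{88 corollary}.
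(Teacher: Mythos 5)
Your proof is correct and follows the same route as the paper: wavelet decomposition via Theorem \ref{wavelet 2}, the reducing-operator comparison (your invocation of \eqref{equ_reduce} together with Lemma \ref{22 precise} reproduces Lemma \ref{l-new2}), the averaging-norm equivalence of Lemma \ref{37}, and molecular synthesis via Theorem \ref{89}\eqref{892}; the identity claim likewise rests on \eqref{identity}, the definitions of $\operatorname{Tr}$ and $\operatorname{Ext}$, and the observation that $\operatorname{Ext}$ carries wavelets on $\mathbb R^{n-1}$ to (constant multiples of) wavelets on $\mathbb R^n$. The only difference is expository: you spell out two steps the paper leaves implicit, namely the scale-by-scale power counting behind the $\lesssim$ in \eqref{220} and the vanishing of extraneous wavelet coefficients of $\operatorname{Ext}\vec g$ via $L^2$-orthonormality, which is a useful clarification but not a different argument.
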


To prove this theorem, we need the following lemma.

\begin{lemma}\label{l-new2}
Let $p\in(0,\infty)$, $V\in A_p(\mathbb{R}^{n-1},\mathbb{C}^m)$,
and $W\in A_p(\mathbb{R}^{n},\mathbb{C}^m)$
which has $A_p$-dimensions $(d,\widetilde d,\Delta)$.
If \eqref{127} holds,
then there exists a positive constant $C$ such that,
for any $I\in\mathscr{Q}(\mathbb{R}^{n-1})$, $k\in\mathbb{Z}$, and $\vec{z}\in\mathbb{C}^m$,
$|A_{Q(I,k),W}\vec z|
\leq C(1+|k|)^{\Delta}|A_{I,V}\vec z|.$
\end{lemma}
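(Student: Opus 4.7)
The plan is to mirror the proof of Lemma \ref{l-new} with the roles of the inequality direction reversed: exchange the starting step to exploit the reverse comparison \eqref{127}, and then transfer the reducing operator from $Q(I,0)$ to $Q(I,k)$ via Lemma \ref{22 precise}. The two lemmas are formal duals of one another, so the argument will be short and no new ideas are needed.

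More concretely, the first step is to combine the hypothesis \eqref{127} with the defining equivalence \eqref{equ_reduce} to obtain
$$
\left|A_{Q(I,0),W}\vec z\right|
\sim\left[\fint_{Q(I,0)}\left|W^{\frac{1}{p}}(x)\vec z\right|^p\,dx\right]^{\frac{1}{p}}
\lesssim\left[\fint_I\left|V^{\frac{1}{p}}(x')\vec z\right|^p\,dx'\right]^{\frac{1}{p}}
\sim\left|A_{I,V}\vec z\right|
$$
for any $I\in\mathscr{Q}(\mathbb{R}^{n-1})$ and $\vec z\in\mathbb{C}^m$. The second step is to pass from $Q(I,0)$ to $Q(I,k)$ for arbitrary $k\in\mathbb{Z}$. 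Since these two cubes have the same edge length $\ell(I)$ and their centres differ only in the $x_n$-coordinate by $|k|\ell(I)$, Lemma \ref{22 precise} gives
$$
\left\|A_{Q(I,k),W}\left[A_{Q(I,0),W}\right]^{-1}\right\|
\lesssim(1+|k|)^{\Delta},
$$
and hence
$$
\left|A_{Q(I,k),W}\vec z\right|
\leq\left\|A_{Q(I,k),W}\left[A_{Q(I,0),W}\right]^{-1}\right\|
\left|A_{Q(I,0),W}\vec z\right|
\lesssim(1+|k|)^{\Delta}\left|A_{I,V}\vec z\right|,
$$
which is the desired estimate.

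There is essentially no obstacle here; the only point that deserves care is verifying that the cubes $Q(I,0)$ and $Q(I,k)$ indeed satisfy $\ell(Q(I,k))=\ell(Q(I,0))=\ell(I)$ and $|c_{Q(I,k)}-c_{Q(I,0)}|=|k|\ell(I)$, so that the exponent of $(1+|k|)$ produced by Lemma \ref{22 precise} is exactly $\Delta$ (the two prefactors $[\ell(R)/\ell(Q)]^{d/p}$ and $[\ell(Q)/\ell(R)]^{\widetilde d/p'}$ both reduce to $1$ in this equal-length case). Everything else is a direct transcription of the argument used for Lemma \ref{l-new}.
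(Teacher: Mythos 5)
Your proof is correct and follows essentially the same path as the paper's: use \eqref{127} together with \eqref{equ_reduce} to get $|A_{Q(I,0),W}\vec z|\lesssim|A_{I,V}\vec z|$, then transfer from $Q(I,0)$ to $Q(I,k)$ via Lemma \ref{22 precise} at cost $(1+|k|)^{\Delta}$. The paper simply compresses these two steps into a single chain of inequalities, so your proposal is a faithful (slightly more explicit) version of the same argument.
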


\begin{proof}
If \eqref{127} holds, then, by
\eqref{equ_reduce} and Lemma \ref{22 precise}, we conclude that,
for any  $I\in\mathscr{Q}(\mathbb{R}^{n-1})$, $k\in\mathbb{Z}$, and  $\vec z\in\mathbb{C}^m$,
\begin{align*}
\left|A_{Q(I,k),W}\vec z\right|
\leq\left\|A_{Q(I,k),W}\left[ A_{Q(I,0),W}\right]^{-1}\right\|
\left|A_{Q(I,0),W}\vec z\right|
\lesssim(1+|k|)^{\Delta}\left|A_{I,V}\vec z\right|.
\end{align*}
This finishes the proof of Lemma \ref{l-new2}.
\end{proof}

Now, we show Theorem \ref{extension B}.

\begin{proof}[Proof of Theorem \ref{extension B}]
By \eqref{204}, we find that,
for any $\vec f\in\dot B^{s-\frac{1}{p},\frac{n}{n-1}\tau}_{p,q}(V,\mathbb{R}^{n-1})$,
$$
\vec f=\sum_{\lambda'\in\Lambda_{n-1}}\sum_{I\in\mathscr{Q}(\mathbb{R}^{n-1})}
\left\langle\vec f,\theta^{(\lambda')}_I\right\rangle\theta^{(\lambda')}_I
$$
in $[\mathcal{S}_\infty'(\mathbb{R}^{n-1})]^m$,
and hence it is natural to define
\begin{equation}\label{exten-op}
\operatorname{Ext}\vec f
:=\sum_{\lambda'\in\Lambda_{n-1}}\sum_{I\in\mathscr{Q}(\mathbb{R}^{n-1})}
\left\langle\vec f,\theta^{(\lambda')}_I\right\rangle\operatorname{Ext}\theta^{(\lambda')}_I
\end{equation}
in $[\mathcal{S}_\infty'(\mathbb{R}^n)]^m$.
Next, we prove that $\operatorname{Ext}\vec f$ is well defined and
$$
\operatorname{Ext}:\
\dot B^{s-\frac{1}{p},\frac{n}{n-1}\tau}_{p,q}(V,\mathbb{R}^{n-1})
\to\dot B^{s,\tau}_{p,q}(W,\mathbb{R}^n)
$$
is a continuous linear operator.
For any $\lambda'\in\Lambda_{n-1}$,
let $\vec t^{(\lambda')}:=\{\vec t^{(\lambda')}_Q\}_{Q\in\mathscr{Q}(\mathbb{R}^n)}$,
where, for any $Q\in\mathscr{Q}(\mathbb{R}^n)$,
\begin{equation}\label{eex}
\vec t^{(\lambda')}_Q:=\begin{cases}
[\ell(I)]^{\frac12}\left\langle\vec f,\theta^{(\lambda')}_I\right\rangle
&\text{if }Q=Q(I,k_0)\text{ for some }I\in\mathscr{Q}(\mathbb{R}^{n-1}),\\
\vec{\mathbf{0}}&\text{otherwise}
\end{cases}
\end{equation}
with $k_0$ the same as in Remark \ref{k0}.
Then, using the definitions of $\vec t^{(\lambda')}_Q$ and $\operatorname{Ext}$,
we obtain, for any $\lambda'\in\Lambda_{n-1}$ ,
$I\in\mathscr{Q}(\mathbb{R}^{n-1})$, and $x\in\mathbb{R}^n$,
$$
\left\langle\vec f,\theta^{(\lambda')}_I\right\rangle
\left[\operatorname{Ext}\theta^{(\lambda')}_I\right](x)
=\frac{1}{\varphi(-k_0)}\vec t^{(\lambda')}_{Q(I,k_0)}
\left[\theta^{(\lambda')}\otimes\varphi\right]_{Q(I,k_0)}(x),
$$
where $\varphi$ is the same as in Lemma \ref{wavelet basis 3}.
Therefore, from the definition of $\operatorname{Ext}$, it follows that
\begin{equation}\label{224}
\operatorname{Ext}\vec f
=\frac{1}{\varphi(-k_0)}\sum_{\lambda'\in\Lambda_{n-1}}
\sum_{I\in\mathscr{Q}(\mathbb{R}^{n-1})}\vec t^{(\lambda')}_{Q(I,k_0)}
\left[\theta^{(\lambda')}\otimes\varphi\right]_{Q(I,k_0)}(x)
\end{equation}
in $[\mathcal{S}_\infty'(\mathbb{R}^n)]^m$.
Let $W\in A_p(\mathbb{R}^n,\mathbb{C}^m)$
have $A_p$-dimensions $(d_W,\widetilde d_W,\Delta_W)$.
Applying Lemma \ref{l-new2}, we conclude that,
for any $\lambda'\in\Lambda_{n-1}$ and $I\in\mathscr{Q}(\mathbb{R}^{n-1})$,
\begin{align}\label{oprat-do}
\left|A_{Q(I,k_0),W}\vec t^{(\lambda')}_{Q(I,k_0)}\right|
&\lesssim(1+|k_0|)^{\Delta_W}\left|A_{I,V}\vec t^{(\lambda')}_{Q(I,k_0)}\right|.
\end{align}
This, combined with Lemma \ref{37}, further implies that,
for any $\lambda'\in\Lambda_{n-1}$,
\begin{align}\label{220}
\left\|\left\{\vec t^{(\lambda')}_Q\right\}_{Q\in\mathscr{Q}(\mathbb{R}^n)}\right\|
_{\dot b^{s,\tau}_{p,q}(W,\mathbb{R}^n)}
&\sim\left\|\left\{\vec t^{(\lambda')}_Q\right\}_{Q\in\mathscr{Q}(\mathbb{R}^n)}\right\|
_{\dot b^{s,\tau}_{p,q}(\mathbb{A}(W),\mathbb{R}^n)}\\
&=\left\|\left\{\left|A_{Q,W}\vec t^{(\lambda')}_Q\right|
\right\}_{Q\in\mathscr{Q}(\mathbb{R}^n)}
\right\|_{\dot b^{s,\tau}_{p,q}(\mathbb{R}^n)}\notag\\
&\lesssim\left\|\left\{\left|A_{I,V}
\left\langle\vec f,\theta^{(\lambda')}_I\right\rangle
\right|\right\}_{I\in\mathscr{Q}(\mathbb{R}^{n-1})}
\right\|_{\dot b^{s-\frac{1}{p},\frac{n}{n-1}\tau}_{p,q}(\mathbb{R}^{n-1})}\notag\\
&=\left\|\left\{\left\langle\vec f,\theta^{(\lambda')}_I\right\rangle
\right\}_{I\in\mathscr{Q}(\mathbb{R}^{n-1})}\right\|
_{\dot b^{s-\frac{1}{p},\frac{n}{n-1}\tau}_{p,q}(\mathbb{A}(V),\mathbb{R}^{n-1})}\notag\\
&\sim\left\|\left\{\left\langle\vec f, \theta^{(\lambda')}_I\right\rangle
\right\}_{I\in\mathscr{Q}(\mathbb{R}^{n-1})}
\right\|_{\dot b^{s-\frac{1}{p},\frac{n}{n-1}\tau}_{p,q}(V,\mathbb{R}^{n-1})}.\notag
\end{align}
Using Theorem \ref{wavelet 2}, we obtain
$\{\langle\vec f,\theta^{(\lambda')}_I\rangle\}_{I\in\mathscr{Q}(\mathbb{R}^{n-1})}
\in\dot b^{s-\frac{1}{p},\frac{n}{n-1}\tau}_{p,q}(V,\mathbb{R}^{n-1})$ and hence
\begin{equation}\label{221}
\left\{\vec t^{(\lambda')}_Q\right\}_{Q\in\mathscr{Q}(\mathbb{R}^n)}
\in\dot b^{s,\tau}_{p,q}(W,\mathbb{R}^n).
\end{equation}
Since $\theta^{(\lambda')}\otimes\varphi$ has bounded support
and $\mathscr{N}\in\mathbb{N}$ satisfies \eqref{NDau}
for $\dot B^{s,\tau}_{p,q}(W,\mathbb{R}^n)$,
we deduce that $[\theta^{(\lambda')}\otimes\varphi]_Q$
is a harmless constant multiple of a $\dot B^{s,\tau}_{p,q}(d_W)$-synthesis molecule on $Q$.
From this, \eqref{224}, \eqref{221}, Theorem \ref{89}(ii),
\eqref{220}, and Theorem \ref{wavelet 2},
it follows that $\operatorname{Ext}\vec f$ is well defined and
\begin{align*}
\left\|\operatorname{Ext}\vec f\right\|_{\dot B^{s,\tau}_{p,q}(W,\mathbb{R}^n)}
&\lesssim\sum_{\lambda'\in\Lambda_{n-1}}
\left\|\sum_{Q\in\mathscr{Q}(\mathbb{R}^n)}\vec t^{(\lambda')}_Q
\left[\theta^{(\lambda')}\otimes \varphi\right]_Q\right\|
_{\dot B^{s,\tau}_{p,q}(W,\mathbb{R}^n)}\\
&\lesssim\sum_{\lambda'\in\Lambda_{n-1}}
\left\|\left\{\vec t^{(\lambda')}_Q\right\}_{Q\in\mathscr{Q}(\mathbb{R}^n)}
\right\|_{\dot b^{s,\tau}_{p,q}(W,\mathbb{R}^n)}\\
&\lesssim\sum_{\lambda'\in\Lambda_{n-1}}
\left\|\left\{\left\langle\vec f,\theta^{(\lambda')}_I\right\rangle
\right\}_{I\in\mathscr{Q}(\mathbb{R}^{n-1})}
\right\|_{\dot b^{s-\frac{1}{p},\frac{n}{n-1}\tau}_{p,q}(V,\mathbb{R}^{n-1})}
=\left\|\vec f\right\|_{\dot B^{s-\frac{1}{p},\frac{n}{n-1}\tau}_{p,q}(V,\mathbb{R}^{n-1})}.
\end{align*}
This finishes the proof of \eqref{225}.

Finally, assume that $s\in(\frac{1}{p}+E,\infty)$, where $E$ is the same as in \eqref{219},
and \eqref{116} holds.
Then, by the definition of $\operatorname{Ext}$, we obtain,
for any $\vec f\in\dot B^{s-\frac{1}{p},\frac{n}{n-1}\tau}_{p,q}(V,\mathbb{R}^{n-1})$,
$$
(\operatorname{Tr}\circ\operatorname{Ext})\left(\vec f\right)
=\operatorname{Tr}\left(\sum_{\lambda'\in\Lambda_{n-1}}\sum_{I\in\mathscr{Q}(\mathbb{R}^{n-1})}
\left\langle\vec f,\theta^{(\lambda')}_I\right\rangle \operatorname{Ext}\theta^{(\lambda')}_I\right),
$$
where, by \eqref{identity pre}, we find that
$\{\operatorname{Ext}\theta^{(\lambda')}_I
\}_{\lambda'\in\Lambda_{n-1},I\in\mathscr{Q}(\mathbb{R}^{n-1})}
\subset\{\theta^{(\lambda)}_Q\}_{\lambda\in\Lambda_n,Q\in\mathscr{Q}(\mathbb{R}^n)}$
with $\{\theta^{(\lambda)}_Q\}_{\lambda\in\Lambda_n,Q\in\mathscr{Q}(\mathbb{R}^n)}$
being the orthonormal basis of $L^2(\mathbb R^n)$.
From these, the definition of $\operatorname{Tr}$, \eqref{identity}, and \eqref{204}, we infer that,
for any $\vec f\in\dot B^{s-\frac{1}{p},\frac{n}{n-1}\tau}_{p,q}(V,\mathbb{R}^{n-1})$,
\begin{align}\label{8.25x}
(\operatorname{Tr}\circ\operatorname{Ext})\left(\vec f\right)
&=\sum_{\lambda'\in\Lambda_{n-1}}\sum_{I\in\mathscr{Q}(\mathbb{R}^{n-1})}
\left\langle\vec f,\theta^{(\lambda')}_I\right\rangle
(\operatorname{Tr}\circ\operatorname{Ext})\left(\theta^{(\lambda')}_I\right)\\
&=\sum_{\lambda'\in\Lambda_{n-1}}\sum_{I\in\mathscr{Q}(\mathbb{R}^{n-1})}
\left\langle\vec f,\theta^{(\lambda')}_I\right\rangle\theta^{(\lambda')}_I
=\vec f\notag
\end{align}
in $[\mathcal{S}_\infty'(\mathbb R^n)]^m$.
This finishes the proof of Theorem \ref{extension B}.
\end{proof}

\begin{remark}
\begin{enumerate}[{\rm (i)}]
\item When $\tau=0$, Theorem \ref{trace B}
improves the results in \cite[Theorem 1.2]{fr08}
when $W$ is restricted to $A_p({\mathbb R}^n, {\mathbb C}^m)$-matrix weights.
Indeed, \cite[Theorem 1.2]{fr08} when the weight
$W$ is restricted to $A_p({\mathbb R}^n, {\mathbb C}^m)$-matrix weights
needs the assumption that
\begin{equation}\label{s}
s>\frac 1p+(n-1)\left(\frac1p-1\right)_++\frac{\beta_W-n+1}p,
\end{equation}
while Theorem \ref{trace B}
when $\tau=0$ only needs the assumption that
$s>\frac 1p+(n-1)(\frac1p-1)_+$, where $\beta_W\in[n,\infty)$
is the same as in \cite[Definition 2.20]{bhyy1}. However, we should mention
that \cite[Theorem 1.2]{fr08} concerns general weights in which setting
the assumption \eqref{s} was showed to
be necessary in \cite[Remark 2.3]{fr08}, that is, $\frac{\beta_W-n+1}p$ is irremovable.

\item When $m=1$, $W\equiv1$, and $V\equiv1$,
Theorems \ref{trace B} and \ref{extension B} contain \cite[Theorem 1.3]{syy10}, where $\tau\in[0,\frac{1}{p}+\frac{s+n-J}{n})$.
Furthermore, if $\tau=0$,
Theorems \ref{trace B} and \ref{extension B} reduce to the classical result \cite[Theorem 2.1]{ja78}.
\end{enumerate}
\end{remark}

\subsection{Trace Theorems for $\dot F^{s,\tau}_{p,q}(W,\mathbb{R}^n) $}

Throughout this section, let $\mathscr{N}\in\mathbb{N}$ satisfy \eqref{NDau}
for both $\dot F^{s,\tau}_{p,q}(W,\mathbb{R}^n)$
and $\dot F^{s-\frac{1}{p},\frac{n}{n-1}\tau}_{p,p}(V,\mathbb{R}^{n-1})$,
and let both $\{\theta^{(\lambda)}\}_{\lambda\in\Lambda_n}\subset C^{\mathscr{N}}(\mathbb{R}^n)$
and $\{\theta^{(\lambda')}\}_{\lambda'\in\Lambda_{n-1}}
\subset C^{\mathscr{N}}(\mathbb{R}^{n-1})$
be the same as in Lemma \ref{wavelet basis 3}.

\begin{theorem}\label{trace F}
Let $\tau\in[0,\infty)$, $p\in(0,\infty)$, $q\in(0,\infty]$, and
$s\in(\frac{1}{p}+E,\infty)$, where
\begin{equation}\label{E of F}
E:=\left\{\begin{aligned}
&\frac{n-1}{p}-n\tau&&\text{if }\frac{n}{n-1}\tau>\frac{1}{p},\\
&(n-1)\left(\frac{1}{p}-1\right)_+&&\text{otherwise}.
\end{aligned}\right.
\end{equation}
Let $W\in A_p(\mathbb{R}^n,\mathbb{C}^m)$ and $V\in A_p(\mathbb{R}^{n-1},\mathbb{C}^m)$.
Then the trace operator $\operatorname{Tr}$ can be extended to a
continuous linear operator
$\operatorname{Tr}:\ \dot F^{s,\tau}_{p,q}(W,\mathbb{R}^n)
\to\dot F^{s-\frac{1}{p},\frac{n}{n-1}\tau}_{p,p}(V,\mathbb{R}^{n-1})$
if and only if \eqref{116} holds.
\end{theorem}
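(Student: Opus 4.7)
The proof will closely parallel that of Theorem \ref{trace B}. I will handle necessity and sufficiency separately.

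\textbf{Necessity.} The key is that when summability and integrability indices agree, Fubini gives $L\dot F_{p,p}=L\dot B_{p,p}$, whence $\dot F^{s-1/p,\frac{n}{n-1}\tau}_{p,p}(V,\mathbb R^{n-1}) = \dot B^{s-1/p,\frac{n}{n-1}\tau}_{p,p}(V,\mathbb R^{n-1})$ as normed spaces. Thus a continuous
$\operatorname{Tr}:\dot F^{s,\tau}_{p,q}(W,\mathbb R^n)\to \dot F^{s-1/p,\frac{n}{n-1}\tau}_{p,p}(V,\mathbb R^{n-1})$
is equally a continuous trace into the Besov target, and I invoke Lemma \ref{trace nec} with $r=p$ and source $\dot A^{s,\tau}_{p,q}(W,\mathbb R^n)=\dot F^{s,\tau}_{p,q}(W,\mathbb R^n)$ to conclude \eqref{116}, after choosing $\mathscr N$ satisfying \eqref{NDau} for both spaces.

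\textbf{Sufficiency.} I mirror the wavelet/molecule scheme of Theorem \ref{trace B}. Fix Daubechies wavelets $\{\theta^{(\lambda)}\}_{\lambda\in\Lambda_n}$ of class $C^{\mathscr N}$ with $\mathscr N$ satisfying \eqref{NDau}, expand $\vec f\in\dot F^{s,\tau}_{p,q}(W,\mathbb R^n)$ via Theorem \ref{wavelet 2}, and define $\operatorname{Tr}\vec f$ by the analog of \eqref{traceop}. The bounded support of $\theta^{(\lambda)}$ in the $x_n$-direction truncates the sum to $|k|\leq N$, giving the analog of \eqref{223}. The hypothesis $s>\frac 1p+E$ with $E$ as in \eqref{E of F} is chosen precisely so that, by Lemma \ref{mol no canc}, the surviving pieces $[\ell(I)]^{1/2}\theta^{(\lambda)}_{Q(I,k)}(\cdot,0)$ are (uniform constant multiples of) synthesis molecules on $I\subset\mathbb R^{n-1}$ for the target $\dot F^{s-1/p,\frac{n}{n-1}\tau}_{p,p}(V,\mathbb R^{n-1})$, with no cancellation needed; note $E$ in \eqref{E of F} is simpler than \eqref{219} because the intermediate ``critical'' case collapses when the target summability equals $p$. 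Theorem \ref{89}(ii), followed by Lemma \ref{37} and Lemma \ref{l-new} (using \eqref{116}), reduces matters, as in the Besov case, to the unweighted scalar inequality
\begin{equation*}
\|\{|A_{Q(I,k),W}\vec t^{(\lambda)}_{Q(I,k)}|\}_I\|_{\dot b^{s-1/p,\frac{n}{n-1}\tau}_{p,p}(\mathbb R^{n-1})}
\lesssim \|\{\langle\vec f,\theta^{(\lambda)}_Q\rangle\}_Q\|_{\dot f^{s,\tau}_{p,q}(\mathbb A(W),\mathbb R^n)},
\end{equation*}
the right-hand side being equivalent to $\|\vec f\|_{\dot F^{s,\tau}_{p,q}(W,\mathbb R^n)}$.

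\textbf{Main obstacle.} The only genuinely new ingredient compared with the Besov setting is this final inequality, which now bounds a Besov-type ($\ell^pL^p$) norm on $\mathbb R^{n-1}$ by a Triebel--Lizorkin-type ($L^p\ell^q$) norm on $\mathbb R^n$. I plan to proceed as follows. For each dyadic $P'\subset\mathbb R^{n-1}$ and each $|k|\leq N$, enclose the relevant vertical slabs $\{P'\times[k2^{-j},(k+1)2^{-j}):j\geq j_{P'}\}$ in $O(N)$ dyadic $n$-cubes $P_1,\ldots,P_M$ of side $\ell(P')$, so that $|P_m|^{\tau} = |P'|^{\frac{n}{n-1}\tau}$. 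Using the piecewise constancy of $g_j(x',x_n):=2^{j(s+n/2)}|A_{Q_j(x),W}\langle\vec f,\theta^{(\lambda)}_{Q_j(x)}\rangle|$ on dyadic $n$-cubes, the LHS rewrites as $\sum_{j\geq j_{P'}}\int_{\mathrm{slab}_j}|g_j|^p\,dx$, the factor $2^j$ arising from the $(n{-}1)$-vs-$n$ dimension comparison being exactly absorbed by the shift $s\mapsto s-\frac 1p$. For $q\leq p$, the pointwise inequality $\sum_j|g_j|^p\leq(\sum_j|g_j|^q)^{p/q}$ immediately reduces the estimate to the integral defining the $\dot f^{s,\tau}_{p,q}$ norm on each $P_m$. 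For $q>p$, this inequality reverses, and the resolution splits: for $|k|\geq 1$, the slabs $\{\mathrm{slab}_j\}_j$ are pairwise disjoint in $x_n$, so at each point only one $j$ contributes and the sum reduces to a supremum-in-$j$ controlled by the weaker norm $\|\cdot\|_{\dot f^{s,\tau}_{p,\infty}}\leq\|\cdot\|_{\dot f^{s,\tau}_{p,q}}$; for $k=0$, the slabs are nested but the active $j$-range has cardinality $O(\log(\ell(P')/x_n))$, which I plan to absorb via H\"older's inequality applied to the transverse $x_n$-integral, where the logarithmic weight integrates to a constant multiple of $\ell(P')$. This case-split handles the $q$-mismatch; the remainder of the argument proceeds line-by-line as in Theorem \ref{trace B}.
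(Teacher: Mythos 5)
Your necessity argument and the general sufficiency framework (wavelet decomposition, identification of $[\ell(I)]^{1/2}\theta^{(\lambda)}_{Q(I,k)}(\cdot,0)$ as synthesis molecules via the no-cancellation condition, Lemma~\ref{l-new}, Lemma~\ref{37}) all match the paper. The divergence is in the key sequence-norm inequality bounding the $\dot b^{s-1/p,\frac{n}{n-1}\tau}_{p,p}(\mathbb{R}^{n-1})$ norm by the $\dot f^{s,\tau}_{p,q}(\mathbb{R}^n)$ norm, and your resolution has a genuine gap in the case $q>p$, $k=0$.

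The paper avoids your entire case-split by a uniform trick: instead of working with the full slabs $Q(I,k)=I\times[\ell(I)k,\ell(I)(k+1))$, it replaces them with the middle-third sets
$E_{Q(I,k)}:=I\times[\ell(I)(k+\tfrac13),\ell(I)(k+\tfrac23))$
as in \eqref{EQ}. Since $|E_{Q(I,k)}|\sim|Q(I,k)|$, the discrete sum rewrites exactly as an integral against $\mathbf 1_{E_{Q(I,k)}}$, and — crucially — the family $\{E_{Q(I,k)}\}_{I\in\mathscr Q(\mathbb R^{n-1})}$ is pairwise disjoint \emph{across all scales $j'$ and for every $k$, including $k=0$}, because the $x_n$-interval $[\ell(I)(k+\tfrac13),\ell(I)(k+\tfrac23))$ moves and shrinks with the scale in a way that prevents nesting. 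At each $x$ at most one term in the double sum is nonzero, so $\sum_{j',I}[\cdot]^p\mathbf 1_{E_{Q(I,k)}}(x)=(\sum_{j',I}[\cdot]^q\mathbf 1_{E_{Q(I,k)}}(x))^{p/q}$ holds trivially for every $q\in(0,\infty]$, and bounding $\mathbf 1_{E_Q}\leq\mathbf 1_Q$ finishes the comparison in one stroke, with no restriction on $q$ or $k$.

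Your plan for $q>p$, $k=0$ does not close as written. Applying Hölder in the $j'$-index gives the pointwise bound
$\sum_{j'}|g_{j'}(x)|^p\leq\bigl(\sum_{j'}|g_{j'}(x)|^q\bigr)^{p/q}\Lambda(x)^{1-p/q}$
with $\Lambda(x)\sim\log(\ell(P')/x_n)$, but you then need
$\int_{P_R}\bigl(\sum|g_{j'}|^q\bigr)^{p/q}\Lambda(x)^{1-p/q}\,dx\lesssim\int_{P_R}\bigl(\sum|g_{j'}|^q\bigr)^{p/q}\,dx$,
and since $\Lambda(x)\to\infty$ as $x_n\to 0$, this cannot be absorbed by the bare observation that $\int_0^{\ell(P')}\Lambda(x_n)^{1-p/q}\,dx_n\lesssim\ell(P')$: the first factor may concentrate its mass precisely where $\Lambda$ is large, and there is no a priori decoupling. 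A correct route through full slabs for $k=0$ does exist — exploit that $g_{j'}(x',x_n)$ is constant in $x_n$ on each slab, integrate out $x_n$ to reduce to the trivial inequality $\sum_j a_j^p 2^{-j}\leq\sum_k 2^{-k}(\sum_{j\leq k}a_j^q)^{p/q}$ — but this is not what your Hölder/log-weight sentence describes, and even if repaired it is strictly more work than the paper's uniform middle-third argument. You should replace the entire case-split with the $E_{Q(I,k)}$ disjointness trick.
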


\begin{remark}\label{rem trace F}
By Lemma \ref{mol no canc}, we find that the requirement that $s-\frac1p>E$,
with $E$ as defined in the theorem, is precisely the condition to
guarantee that synthesis molecules of the target space
$\dot F^{s-\frac1p,\frac{n}{n-1}\tau}_{p,p}(\mathbb R^{n-1},V)$
are not required to have any cancellation conditions.
Observe that this condition depends only on the explicit parameters of this space,
not on the $A_p$-dimension of the weight $V$.
\end{remark}

\begin{proof}[Proof of Theorem \ref{trace F}]
For the necessity, suppose that
$\operatorname{Tr}:\ \dot F^{s,\tau}_{p,q}(W,\mathbb{R}^n)
\to\dot F^{s-\frac{1}{p},\frac{n}{n-1}\tau}_{p,p}(V,\mathbb{R}^{n-1})$ is continuous.
By the standing assumptions stated in the beginning of this section,
it then follows that the assumptions of Lemma \ref{trace nec} are satisfied with
$\dot A^{s,\tau}_{p,q}(W,\mathbb{R}^n)=\dot F^{s,\tau}_{p,q}(W,\mathbb{R}^n)$ and $r=p$,
noting that $\dot F^{s-\frac{1}{p},\frac{n}{n-1}\tau}_{p,p}(V,\mathbb{R}^{n-1})
=\dot B^{s-\frac{1}{p},\frac{n}{n-1}\tau}_{p,p}(V,\mathbb{R}^{n-1})$
in the case of equal exponents. Thus, the necessity of \eqref{116}
follows from Lemma \ref{trace nec}.

Now, we consider the sufficiency and assume that \eqref{116} holds.
From Theorem \ref{wavelet 2}, we deduce that,
for any $\vec f\in\dot F^{s,\tau}_{p,q}(W,\mathbb{R}^n)$,
$\vec f=\sum_{\lambda\in\Lambda_n}\sum_{Q\in\mathscr{Q}(\mathbb{R}^n)}
\langle\vec f,\theta^{(\lambda)}_Q\rangle\theta^{(\lambda)}_Q$
in $[\mathcal{S}_\infty'(\mathbb{R}^n)]^m$.
Let $\operatorname{Tr}\vec f$ be  the same as in  \eqref{traceop}.
To prove that $\operatorname{Tr}\vec f$ is well defined and is continuous from
$\dot F^{s,\tau}_{p,q}(W,\mathbb{R}^n)$ to
$\dot F^{s-\frac{1}{p},\frac{n}{n-1}\tau}_{p,p}(V,\mathbb{R}^{n-1})$,
we let
$\vec t^{(\lambda)}:=\{\vec t^{(\lambda)}_Q\}_{Q\in\mathscr{Q}(\mathbb{R}^n)}$
be the same as in \eqref{dequen}. Clearly, in this case,
\eqref{223} and \eqref{inequa-n} remain true.
From this and Lemma \ref{37}, we infer that,
for any $\lambda\in\Lambda_n$ and $k\in\mathbb{Z}$ with $|k|\leq N$,
\begin{align*}
&\left\|\left\{\vec t^{(\lambda)}_{Q(I,k)}\right\}_{I\in\mathscr{Q}(\mathbb{R}^{n-1})}
\right\|_{\dot f^{s-\frac{1}{p},\frac{n}{n-1}\tau}_{p,p}(V,\mathbb{R}^{n-1})}\\
&\quad\sim\left\|\left\{\vec t^{(\lambda)}_{Q(I,k)}\right\}_{I\in\mathscr{Q}(\mathbb{R}^{n-1})}
\right\|_{\dot f^{s-\frac{1}{p},\frac{n}{n-1}\tau}_{p,p}(\mathbb{A}(V),\mathbb{R}^{n-1})}
=\left\|\left\{\left|A_{I,V}\vec t^{(\lambda)}_{Q(I,k)}\right|
\right\}_{I\in\mathscr{Q}(\mathbb{R}^{n-1})}
\right\|_{\dot f^{s-\frac{1}{p},\frac{n}{n-1}\tau}_{p,p}(\mathbb{R}^{n-1})}\notag\\
&\quad\lesssim\left\|\left\{\left|A_{Q(I,k),W}\vec t^{(\lambda)}_{Q(I,k)}\right|
\right\}_{I\in\mathscr{Q}(\mathbb{R}^{n-1})}
\right\|_{\dot f^{s-\frac{1}{p},\frac{n}{n-1}\tau}_{p,p}(\mathbb{R}^{n-1})}\notag\\
&\quad=\sup_{R\in\mathscr{Q}(\mathbb{R}^{n-1})}\frac{1}{|R|^{\frac{n}{n-1}\tau}}
\left[\sum_{j'=j_R}^\infty\sum_{\substack{I\in\mathscr{Q}_{j'}(\mathbb{R}^{n-1})\\I\subset R}}
|I|^{\frac{n}{n-1}-(\frac{s}{n-1}+\frac12)p}
\left|A_{Q(I,k),W}\vec t_{Q(I,k)}^{(\lambda)}\right|^p\right]^{\frac{1}{p}}\\
&\quad\lesssim\sup_{R\in\mathscr{Q}(\mathbb{R}^{n-1})}\frac{1}{|P_R|^\tau}
\left\{\sum_{j'=j_R}^\infty\int_{P_R}
\sum_{I\in\mathscr{Q}_{j'}(\mathbb{R}^{n-1})}\Bigg[|Q(I,k)|^{-\frac{s}{n}-\frac12}\right.\\
&\qquad\left.\times\left|A_{Q(I,k),W}\left\langle\vec f,\theta^{(\lambda)}_{Q(I,k)}\right\rangle
\right|\mathbf{1}_{E_{Q(I,k)}}(x)\Bigg]^p\,dx\right\}^{\frac{1}{p}},
\end{align*}
where $P_R$ is the same as in Lemma \ref{126} and,
for any $Q:=Q(I,k)\in\mathscr{Q}(\mathbb{R}^n)$,
\begin{equation}\label{EQ}
E_Q:=I\times\left[\ell(I)\left(k+\frac13\right),\ell(I)\left(k+\frac23\right)\right).
\end{equation}
This, together with the fact that
$\{E_{Q(I,k)}\}_{I\in\mathscr{Q}(\mathbb{R}^{n-1})}$
is pairwise disjoint, further implies that
\begin{align}\label{218y}
&\left\|\left\{\vec t^{(\lambda)}_{Q(I,k)}\right\}_{I\in\mathscr{Q}(\mathbb{R}^{n-1})}
\right\|_{\dot f^{s-\frac{1}{p},\frac{n}{n-1}\tau}_{p,p}(V,\mathbb{R}^{n-1})}\\
&\quad\lesssim\sup_{R\in\mathscr{Q}(\mathbb{R}^{n-1})}\frac{1}{|P_R|^\tau}
\left(\int_{P_R}\left\{ \sum_{I\in\mathscr{Q}(\mathbb{R}^{n-1})}
\left[|Q(I,k)|^{-\frac{s}{n}-\frac12}\right.\right.\right.\notag\\
&\qquad\left.\left.\left.\times
\left|A_{Q(I,k),W}\left\langle\vec f,\theta^{(\lambda)}_{Q(I,k)}\right\rangle
\right|\mathbf{1}_{E_{Q(I,k)}}(x)\right]^q\right\}^{\frac{p}{q}}\,dx\right)^{\frac{1}{p}}\notag\\
&\quad\leq\sup_{P\in\mathscr{Q}(\mathbb{R}^n)}\frac{1}{|P|^\tau}
\left(\int_P\left\{\sum_{Q\in\mathscr{Q}(\mathbb{R}^n),Q\subset P}
\left[|Q|^{-\frac{s}{n}}
\left|A_{Q,W}\left\langle\vec f,\theta^{(\lambda)}_Q\right\rangle
\right|\widetilde{\mathbf{1}}_Q (x)\right]^q\right\}^{\frac{p}{q}}\,dx\right)^{\frac{1}{p}}\notag\\
&\quad=\left\|\left\{\left\langle\vec f,
\theta^{(\lambda)}_Q \right\rangle\right\}_{Q\in\mathscr{Q}(\mathbb{R}^n)}
\right\|_{\dot f^{s,\tau}_{p,q}(\mathbb{A}(W),\mathbb{R}^n)}
\sim\left\|\left\{\left\langle\vec f,
\theta^{(\lambda)}_Q \right\rangle\right\}_{Q\in\mathscr{Q}(\mathbb{R}^n)}
\right\|_{\dot f^{s,\tau}_{p,q}(W,\mathbb{R}^n)}.\notag
\end{align}
By Theorem \ref{wavelet 2}, we conclude that
$\{\langle\vec f,\theta^{(\lambda)}_Q\rangle\}_{Q\in\mathscr{Q}(\mathbb{R}^n)}\in\dot f^{s,\tau}_{p,q}(W,\mathbb{R}^n)$ and hence
\begin{equation}\label{217y}
\left\{\vec t^{(\lambda)}_{Q(I,k)}\right\}_{I\in\mathscr{Q}(\mathbb{R}^{n-1})}
\in\dot f^{s-\frac{1}{p},\frac{n}{n-1}\tau}_{p,p}(V,\mathbb{R}^{n-1}).
\end{equation}
Let $V\in A_p(\mathbb{R}^{n-1},\mathbb{C}^m)$ have the $A_p$-dimension $d_V\in[0, n-1)$.
By Remark \ref{rem trace F}, that the assumption $ s\in(\frac{1}{p}+E,\infty) $ implies that
the vanishing moment condition of the synthesis molecule is void for the space
$\dot F^{s-\frac{1}{p},\frac{n}{n-1}\tau}_{p,p}(V,\mathbb{R}^{n-1})$.
This, combined with the fact that $\theta^{(\lambda)}_{Q(I,k)}(\cdot,0)$ has bounded support, further implies that the function
$[\ell(I)]^{\frac12}\theta^{(\lambda)}_{Q(I,k)}(\cdot,0)$ is a harmless constant multiple of
an $\dot F^{s-\frac{1}{p},\frac{n}{n-1}\tau}_{p,p}(d_V)$-synthesis molecule on $I$.
From this, \eqref{223}, \eqref{217y}, Theorem \ref{89}(ii),
\eqref{218y}, and Theorem \ref{wavelet 2},
we deduce that $\operatorname{Tr}\vec f$ is well defined and
\begin{align*}
\left\|\operatorname{Tr}\vec f\right\|_{\dot F^{s-\frac{1}{p},\frac{n}{n-1}\tau}_{p,p}(V,\mathbb{R}^{n-1})}
&\lesssim\sum_{\lambda\in\Lambda_n}\sum_{k=-r}^r
\left\|\sum_{I\in\mathscr{Q}(\mathbb{R}^{n-1})}
\vec t^{(\lambda)}_{Q(I,k)}[\ell(I)]^{\frac12}\theta^{(\lambda)}_{Q(I,k)}(\cdot,0)\right\|
_{\dot F^{s-\frac{1}{p},\frac{n}{n-1}\tau}_{p,p}(V,\mathbb{R}^{n-1})}\\
&\lesssim\sum_{\lambda\in\Lambda_n}\sum_{k=-r}^r
\left\|\left\{\vec t^{(\lambda)}_{Q(I,k)}\right\}_{I\in\mathscr{Q}(\mathbb{R}^{n-1})}\right\|
_{\dot f^{s-\frac{1}{p},\frac{n}{n-1}\tau}_{p,p}(V,\mathbb{R}^{n-1})}\\
&\lesssim\sum_{\lambda\in\Lambda_n}
\left\|\left\{\left\langle\vec f,
\theta^{(\lambda)}_Q \right\rangle\right\}_{Q\in\mathscr{Q}(\mathbb{R}^n)}
\right\|_{\dot f^{s,\tau}_{p,q}(W,\mathbb{R}^n)}
\sim\left\|\vec f\right\|_{\dot F^{s,\tau}_{p,q}(W,\mathbb{R}^n)}.
\end{align*}
This finishes the proof of the sufficiency and hence Theorem \ref{trace F}.
\end{proof}

We now present an extension theorem for $\operatorname{Ext}$.

\begin{lemma}\label{44}
Let $s\in\mathbb{R}$, $\tau\in[0,\infty)$,
$p\in(0,\infty)$, $q\in(0,\infty]$, and $\delta\in(0,1)$.
Suppose that, for any $Q\in\mathscr{Q}$, $E_Q\subset Q$ is a measurable set
with $|E_Q|\geq\delta|Q|$.
Then, for any sequence $t:=\{t_Q\}_{Q\in\mathscr{Q}}\subset\mathbb{C}$,
$$
\|t\|_{\dot f^{s,\tau}_{p,q}(\mathbb R^n)}
\sim\left\|\left\{2^{j(s+\frac{n}{2})}
\sum_{Q\in\mathscr{Q}_j}t_Q\mathbf{1}_{E_Q}
\right\}_{j\in\mathbb Z}\right\|_{L\dot F_{p,q}^\tau(\mathbb R^n)},
$$
where the positive equivalence constants are independent of $t$.
\end{lemma}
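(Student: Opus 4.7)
My plan is to prove the two inequalities separately. First observe that both sides of the claimed equivalence are unchanged if each $t_Q$ is replaced by $|t_Q|$, since within each level $\mathscr{Q}_j$ the cubes $Q$, and hence the subsets $E_Q\subset Q$, are pairwise disjoint; thus I assume $t_Q\geq 0$ throughout. The direction $\gtrsim$ follows at once from $\mathbf{1}_{E_Q}\leq\mathbf{1}_Q$ together with the monotonicity of the $L^p$ and $\ell^q$ norms underlying $\|\cdot\|_{L\dot F_{p,q}^\tau}$, so the real work lies in the reverse inequality.

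For the $\lesssim$ direction, the key tool is a pointwise bound via the Hardy--Littlewood maximal operator $M$. For $j\in\mathbb{Z}$ and $x\in\mathbb{R}^n$, let $Q_j(x)$ denote the unique cube in $\mathscr{Q}_j$ containing $x$. Averaging $\mathbf{1}_{E_{Q_j(x)}}$ over $Q_j(x)$ itself immediately gives $M(\mathbf{1}_{E_{Q_j(x)}})(x)\geq |E_{Q_j(x)}|/|Q_j(x)|\geq\delta$; combining this with the disjointness of $\{E_Q\}_{Q\in\mathscr{Q}_j}$ (which implies $\sum_{Q\in\mathscr{Q}_j}t_Q^r\mathbf{1}_{E_Q}=(\sum_{Q\in\mathscr{Q}_j}t_Q\mathbf{1}_{E_Q})^r$ for any $r>0$), I will derive, for any fixed $r\in(0,\min\{p,q\})$ and any $x\in\mathbb{R}^n$, the estimate
\begin{equation*}
2^{j(s+\frac{n}{2})}\sum_{Q\in\mathscr{Q}_j}t_Q\mathbf{1}_Q(x)
\leq\delta^{-1/r}\left[M\bigl(h_j^r\bigr)(x)\right]^{1/r},
\quad\text{where}\quad h_j:=2^{j(s+\frac{n}{2})}\sum_{Q\in\mathscr{Q}_j}t_Q\mathbf{1}_{E_Q}.
\end{equation*}

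To upgrade this pointwise bound to the $L\dot F_{p,q}^\tau$-norm, I fix $P\in\mathscr{Q}$ and exploit the crucial observation that, for $j\geq j_P$ and $x\in P$, only cubes $Q\in\mathscr{Q}_j$ with $Q\subset P$ contribute to either sum, since every other $Q\in\mathscr{Q}_j$ is disjoint from $P$; hence on $P$ one may replace $h_j$ by $h_j\mathbf{1}_P$. Taking the $L^p(P)$-norm of the $\ell^q$-norm of the pointwise bound, rewriting the right-hand side as the $(1/r)$-th power of the $L^{p/r}(\mathbb{R}^n;\ell^{q/r})$-norm of $\{M(h_j^r\mathbf{1}_P)\}_{j\geq j_P}$, and invoking the Fefferman--Stein vector-valued maximal inequality (legitimate because $p/r,q/r>1$) then yields
\begin{equation*}
\left\|\left\{2^{j(s+\frac{n}{2})}\sum_{Q\in\mathscr{Q}_j}t_Q\mathbf{1}_Q\right\}_{j\in\mathbb{Z}}\right\|_{L\dot F_{p,q}(\widehat{P})}
\lesssim\delta^{-1/r}\left\|\{h_j\}_{j\in\mathbb{Z}}\right\|_{L\dot F_{p,q}(\widehat{P})};
\end{equation*}
dividing by $|P|^\tau$ and taking the supremum over $P\in\mathscr{Q}$ completes the argument. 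The main obstacle I anticipate is the compatibility of Fefferman--Stein with the $|P|^{-\tau}$-sup defining $\|\cdot\|_{L\dot F_{p,q}^\tau}$: the dyadic localization $h_j=h_j\mathbf{1}_P$ on $P$ for $j\geq j_P$ is the crucial step that converts the restricted $L^p(P)$-estimate into a clean global Fefferman--Stein application on the cut-off $h_j\mathbf{1}_P$; without this observation the maximal function would spread beyond $P$ and break the $\tau$-sup structure.
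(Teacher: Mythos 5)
Your proof is correct, and it is the standard argument (going back to the unweighted $\tau=0$ case in Frazier and Jawerth \cite{fj90}) that the paper evidently has in mind; the paper states this lemma without proof, treating it as a routine $\tau$-localized variant of the classical fact. Let me flag the points where the argument has to be careful, all of which you handle, at least implicitly.

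The reduction to $t_Q\geq 0$ is legitimate because the cubes in $\mathscr{Q}_j$, and therefore the sets $E_Q$, are pairwise disjoint at each level $j$, so $|\sum_{Q\in\mathscr{Q}_j}t_Q\mathbf{1}_{E_Q}|=\sum_{Q\in\mathscr{Q}_j}|t_Q|\mathbf{1}_{E_Q}$. The $\gtrsim$ direction is indeed immediate from $\mathbf{1}_{E_Q}\leq\mathbf{1}_Q$. For the $\lesssim$ direction, the pointwise maximal bound is exactly right: for $x\in P$, $j\geq j_P$, and $Q:=Q_j(x)\subset P$,
\begin{equation*}
M\bigl((h_j\mathbf{1}_P)^r\bigr)(x)
\geq\fint_{Q}(h_j\mathbf{1}_P)^r\,dy
=\fint_{Q}h_j^r\,dy
=2^{jr(s+\frac n2)}t_Q^r\,\frac{|E_Q|}{|Q|}
\geq\delta\cdot2^{jr(s+\frac n2)}t_Q^r,
\end{equation*}
where the middle identity uses $Q\subset P$ and the disjointness of $\{E_R\}_{R\in\mathscr{Q}_j}$. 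The passage from the $L^p(P;\ell^q)$ estimate to the global Fefferman--Stein application on $\{M(h_j^r\mathbf{1}_P)\}_{j\geq j_P}$ is correct because $L^{p/r}(P;\ell^{q/r})$-norms are dominated by $L^{p/r}(\mathbb{R}^n;\ell^{q/r})$-norms, and $p/r,\,q/r>1$ when $r\in(0,\min\{p,q\})$. The endpoint $q=\infty$ is also fine, via $\sup_j M(g_j)\leq M(\sup_j|g_j|)$, with $r\in(0,p)$. Taking the resulting estimate to the power $1/r$, dividing by $|P|^\tau$, and taking the supremum over $P\in\mathscr{Q}$ completes the argument, with implicit constant $\delta^{-1/r}$ times the Fefferman--Stein constant. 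Nothing is missing.
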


\begin{theorem}\label{extension F}
Let $s\in\mathbb{R}$, $\tau\in[0,\infty)$, $p\in(0,\infty)$, $q\in(0,\infty]$,
$W\in A_p(\mathbb{R}^n,\mathbb{C}^m)$, and $V\in A_p(\mathbb{R}^{n-1},\mathbb{C}^m)$.
Assume that \eqref{127} holds.
Then the extension operator $\operatorname{Ext}$ can be extended to a
continuous linear operator
\begin{equation}\label{225y}
\operatorname{Ext}:\
\dot F^{s-\frac{1}{p},\frac{n}{n-1}\tau}_{p,p}(V,\mathbb{R}^{n-1})
\to\dot F^{s,\tau}_{p,q}(W,\mathbb{R}^n).
\end{equation}
Furthermore, if $s\in(\frac{1}{p}+E,\infty)$, where $E$ is the same as in \eqref{E of F},
and \eqref{116} also holds,
then $\operatorname{Tr}\circ\operatorname{Ext}$ is the identity on
$\dot F^{s-\frac{1}{p},\frac{n}{n-1}\tau}_{p,p}(V,\mathbb{R}^{n-1})$.
\end{theorem}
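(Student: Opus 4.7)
The plan is to mirror the proof of Theorem \ref{extension B}, adjusting for the Triebel--Lizorkin norm structure. Given $\vec f\in \dot F^{s-\frac{1}{p},\frac{n}{n-1}\tau}_{p,p}(V,\mathbb{R}^{n-1})$, invoke the Daubechies wavelet expansion from Theorem \ref{wavelet 2} and define $\operatorname{Ext}\vec f$ by the analogue of \eqref{exten-op}. With the coefficient sequence $\vec t^{(\lambda')}$ as in \eqref{eex}, which is supported only on cubes of the form $Q(I,k_0)$ with $k_0$ as in Remark \ref{k0}, rewrite
$$
\operatorname{Ext}\vec f
=\frac{1}{\varphi(-k_0)}\sum_{\lambda'\in\Lambda_{n-1}}\sum_{I\in\mathscr{Q}(\mathbb{R}^{n-1})}
\vec t^{(\lambda')}_{Q(I,k_0)}\left[\theta^{(\lambda')}\otimes\varphi\right]_{Q(I,k_0)}.
$$
Since each $[\theta^{(\lambda')}\otimes\varphi]_{Q(I,k_0)}$ has bounded support and smoothness $\mathscr{N}$ satisfying \eqref{NDau} for $\dot F^{s,\tau}_{p,q}(W,\mathbb{R}^n)$, it is a harmless constant multiple of an $\dot F^{s,\tau}_{p,q}(d_W)$-synthesis molecule on $Q(I,k_0)$. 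Thus Theorem \ref{89}(ii) reduces \eqref{225y} to the sequence-space estimate
$$
\left\|\left\{\vec t^{(\lambda')}_Q\right\}_{Q\in\mathscr{Q}(\mathbb{R}^n)}\right\|_{\dot f^{s,\tau}_{p,q}(W,\mathbb{R}^n)}
\lesssim\left\|\left\{\left\langle\vec f,\theta^{(\lambda')}_I\right\rangle\right\}_{I\in\mathscr{Q}(\mathbb{R}^{n-1})}\right\|_{\dot f^{s-\frac{1}{p},\frac{n}{n-1}\tau}_{p,p}(V,\mathbb{R}^{n-1})}
$$
for each $\lambda'\in\Lambda_{n-1}$.

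To prove this estimate, replace $W$ by the reducing operators $\mathbb{A}(W)$ via Lemma \ref{37}, and apply Lemma \ref{l-new2} to obtain the pointwise bound $|A_{Q(I,k_0),W}\vec t^{(\lambda')}_{Q(I,k_0)}| \lesssim [\ell(I)]^{1/2}|A_{I,V}\langle\vec f,\theta^{(\lambda')}_I\rangle|$. The decisive structural feature is that at each dyadic scale $j$, the nontrivial coefficients sit on the pairwise disjoint family $\{Q(I,k_0):\,I\in\mathscr{Q}_j(\mathbb{R}^{n-1})\}$; consequently, at each point $x\in\mathbb{R}^n$ the internal $\ell^q$-sum in $\|\cdot\|_{\dot f^{s,\tau}_{p,q}}$ contains at most one nonzero term per level and trivially coincides with its $\ell^p$ analogue. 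Next, apply Lemma \ref{44} with the slabs $E_{Q(I,k_0)}$ from \eqref{EQ} to replace $\widetilde{\mathbf{1}}_{Q(I,k_0)}$ by a constant multiple of $|Q|^{-\frac12}\mathbf{1}_{E_{Q(I,k_0)}}$. Fubini in $(x',x_n)$ then separates the variables: the $x_n$-integration over a slab of height $\ell(I)/3$ contributes a factor proportional to $[\ell(I)]^{1/p}$, which combines with $[\ell(I)]^{1/2}$ to absorb the scaling $|Q(I,k_0)|=|I|^{n/(n-1)}$ and produces exactly the spectral shift $s\mapsto s-\frac1p$ together with the $\tau$-shift $\tau\mapsto\frac{n}{n-1}\tau$ in the target norm. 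Finally, the supremum over $P\in\mathscr{Q}(\mathbb{R}^n)$ is dominated by the supremum over $R\in\mathscr{Q}(\mathbb{R}^{n-1})$ by enclosing each relevant $Q(I,k_0)$ in a cube $P_R$ of the form given by Lemma \ref{126}.

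For the identity $\operatorname{Tr}\circ\operatorname{Ext}=\operatorname{Id}$, repeat verbatim the argument leading to \eqref{8.25x}: once both operators have been shown continuous on the relevant spaces, \eqref{identity} yields $(\operatorname{Tr}\circ\operatorname{Ext})\theta^{(\lambda')}_I=\theta^{(\lambda')}_I$ on each basis element, and continuity together with the wavelet decomposition extends this to every $\vec f$. The main obstacle is the sequence-space estimate of the second paragraph: in the Besov case the passage from $\mathbb{R}^n$ to $\mathbb{R}^{n-1}$ is nearly automatic because both sides already have $\ell^q$-over-scales outside $L^p$, whereas in the Triebel--Lizorkin case one must justify the exchange of the internal $\ell^q$ with the external $L^p$. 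This exchange is legitimate here only because the support restriction on $\vec t^{(\lambda')}$ collapses the $\ell^q$ to a single entry, which is precisely the structural reason why the target space must be $\dot F^{s-\frac1p,\frac{n}{n-1}\tau}_{p,p}$ (with $q=p$) rather than retaining the original microscopic index $q$.
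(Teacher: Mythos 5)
Your proposal follows the paper's proof in all its major steps: wavelet expansion via Theorem \ref{wavelet 2}, the coefficients \eqref{eex}, recognition of $[\theta^{(\lambda')}\otimes\varphi]_{Q(I,k_0)}$ as synthesis molecules and reduction via Theorem \ref{89}(ii), passage to reducing operators via Lemma \ref{37}, the comparison Lemma \ref{l-new2}, Lemma \ref{44} with $E_Q$ from \eqref{EQ}, Fubini, and the identity argument via \eqref{8.25x}. This is essentially the same approach as the paper.

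There is, however, a genuine gap in the order of operations within the sequence-space estimate. You claim that the internal $\ell^q$-sum ``trivially coincides with its $\ell^p$ analogue'' on the grounds that at each level the nonzero coefficients sit on the disjoint family $\{Q(I,k_0)\}_{I\in\mathscr Q_j(\mathbb R^{n-1})}$, and you invoke Lemma \ref{44} only afterward. But single-level disjointness is automatic for any dyadic family and does not collapse the sum over $j$: for $k_0=0$ (a perfectly admissible value of $k_0$ from Remark \ref{k0}), the cubes $Q(I,0)=I\times[0,\ell(I))$ across different scales are \emph{nested}, so a point near $\{x_n=0\}$ lies in infinitely many of them and the $\ell^q$-sum over $j$ is not a single term. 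The paper's proof first applies Lemma \ref{44} to replace $\widetilde{\mathbf 1}_{Q(I,k_0)}$ by (a constant multiple of) $\mathbf 1_{E_{Q(I,k_0)}}$, and only then invokes the \emph{cross-scale} pairwise disjointness of $\{E_{Q(I,k_0)}\}_{I\in\mathscr Q(\mathbb R^{n-1})}$---which holds by design of the middle-third slabs in \eqref{EQ} and is the genuinely nontrivial structural fact---to reduce the double sum to a single nonzero term per point; this is what legitimizes the $\ell^q$-to-$\ell^p$ exchange. You would need to reorder your argument accordingly. As a minor remark, your appeal to Lemma \ref{126} is misplaced in this direction: that lemma serves the trace estimate; for the extension one only needs the elementary observation that $Q(I,k_0)\subset P$ forces $I\subset I(P)$, which lets $\sup_{P\in\mathscr Q(\mathbb R^n)}$ pass to $\sup_{R\in\mathscr Q(\mathbb R^{n-1})}$ with $R=I(P)$.
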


\begin{proof}
The proof of the present theorem is similar to that of
Theorem \ref{extension B} and hence we only describe the difference.

By \eqref{204}, we find that,
for any $\vec f\in\dot F^{s-\frac{1}{p},\frac{n}{n-1}\tau}_{p,p}(V,\mathbb{R}^{n-1})$,
$\vec f=\sum_{\lambda'\in\Lambda_{n-1}}\sum_{I\in\mathscr{Q}(\mathbb{R}^{n-1})}
\langle\vec f,\theta^{(\lambda')}_I\rangle\theta^{(\lambda')}_I$
in $[\mathcal{S}_\infty'(\mathbb{R}^{n-1})]^m$.
Define $\operatorname{Ext}\vec f$ to be the same as in \eqref{exten-op}.
To show that $\operatorname{Ext}\vec f$ is well defined and
is  continuous  from $\dot F^{s-\frac{1}{p},\frac{n}{n-1}\tau}_{p,p}(V,\mathbb{R}^{n-1})$ to
$\dot F^{s,\tau}_{p,q}(W,\mathbb{R}^n)$, we let, for each $\lambda'\in\Lambda_{n-1}$,
$\vec t^{(\lambda')}:=\{\vec t^{(\lambda')}_Q\}_{Q\in\mathscr{Q}(\mathbb{R}^n)}$
be the same as in \eqref{eex}. Then \eqref{224} remains true in this case.

Let $W\in A_p(\mathbb{R}^n,\mathbb{C}^m)$
have $A_p$-dimensions $(d_W,\widetilde d_W,\Delta_W)$.
By Lemma \ref{l-new2}, we still have the estimate \eqref{oprat-do}.
This, together with Lemmas \ref{37} and \ref{44}
with $E_Q$ being the same as in \eqref{EQ},
further implies that, for any $\lambda'\in\Lambda_{n-1}$,
\begin{align*}
&\left\|\left\{\vec t^{(\lambda')}_Q\right\}_{Q\in\mathscr{Q}(\mathbb{R}^n)}
\right\|_{\dot f^{s,\tau}_{p,q}(W,\mathbb{R}^n)}\\
&\quad\sim\left\|\left\{\vec t^{(\lambda')}_Q\right\}_{Q\in\mathscr{Q}(\mathbb{R}^n)}
\right\|_{\dot f^{s,\tau}_{p,q}(\mathbb{A}(W),\mathbb{R}^n)}
=\left\|\left\{\left|A_{Q,W}\vec t^{(\lambda')}_Q\right|
\right\}_{Q\in\mathscr{Q}(\mathbb{R}^n)}
\right\|_{\dot f^{s,\tau}_{p,q}(\mathbb{R}^n)}\\
&\quad\sim\left\|\left\{2^{j(s+\frac{n}{2})}
\sum_{Q\in\mathscr{Q}_j}\left|A_{Q,W}\vec t^{(\lambda')}_Q\right|\mathbf{1}_{E_Q}
\right\}_{j\in\mathbb Z}\right\|_{L\dot F_{p,q}^\tau(\mathbb{R}^n)}\\
&\quad=\sup_{P\in\mathscr{Q}(\mathbb{R}^n)}\frac{1}{|P|^\tau}
\left(\int_P\left\{\sum_{\substack{I\in\mathscr{Q}(\mathbb{R}^{n-1})\\
Q(I,k_0)\subset P}}\left[|Q(I,k_0)|^{-\frac{s}{n}-\frac12}
\left|A_{Q(I,k_0),W}\vec t^{(\lambda')}_{Q(I,k_0)}\right|
\mathbf{1}_{E_{Q(I,k_0)}}(x)\right]^q
\right\}^{\frac{p}{q}}\,dx\right)^{\frac{1}{p}}\\
&\quad\lesssim\sup_{P\in\mathscr{Q}(\mathbb{R}^n)}\frac{1}{|P|^\tau}
\left(\int_P\left\{\sum_{\substack{I\in\mathscr{Q}(\mathbb{R}^{n-1})\\Q(I,k_0)\subset P}}
\left[|I|^{-\frac{s}{n-1}-\frac12}
\left|A_{I,V}\left\langle\vec f,\theta^{(\lambda')}_I\right\rangle\right|
\mathbf{1}_{E_{Q(I,k_0)}}(x)\right]^q
\right\}^{\frac{p}{q}}\,dx\right)^{\frac{1}{p}}.
\end{align*}
Using this, combined with the fact that
$\{E_{Q(I,k_0)}\}_{I\in\mathscr{Q}(\mathbb{R}^{n-1})}$
is pairwise disjoint and Lemma \ref{37}, we obtain,
\begin{align}\label{220y}
&\left\|\left\{\vec t^{(\lambda')}_Q\right\}_{Q\in\mathscr{Q}(\mathbb{R}^n)}\right\|
_{\dot f^{s,\tau}_{p,q}(W,\mathbb{R}^n)}\\
&\quad\lesssim\sup_{P\in\mathscr{Q}(\mathbb{R}^n)}\frac{1}{|P|^\tau}
\left\{\int_P\sum_{\substack{I\in\mathscr{Q}(\mathbb{R}^{n-1})\\Q(I,k_0)\subset P}}
\left[|I|^{-\frac{s}{n-1}-\frac12}
\left|A_{I,V}\left\langle\vec f,\theta^{(\lambda')}_I\right\rangle\right|\right.
\mathbf{1}_{E_{Q(I,k_0)}}(x)\Bigg]^p\,dx\right\}^{\frac{1}{p}}\notag\\
&\quad\lesssim\sup_{P\in\mathscr{Q}(\mathbb{R}^n)}\frac{1}{|I(P)|^{\frac{n}{n-1}\tau}}
\left[\sum_{\substack{I\in\mathscr{Q}(\mathbb{R}^{n-1})\\I\subset I(P)}}
|I|^{\frac{n}{n-1}-(\frac{s}{n-1}-\frac12)p}
\left|A_{I,V}\left\langle\vec f,\theta^{(\lambda')}_I\right\rangle\right|^p \right]^{\frac{1}{p}}\notag\\
&\quad\lesssim\sup_{R\in\mathscr{Q}(\mathbb{R}^{n-1})}\frac{1}{|R|^{\frac{n}{n-1}\tau}}
\left\{\int_R \sum_{\substack{I\in\mathscr{Q}(\mathbb{R}^{n-1})\\I\subset R}}
\left[|I|^{-\frac{s-\frac{1}{p}}{n-1}}
\left|A_{I,V}\left\langle\vec f, \theta^{(\lambda')}_I\right\rangle\right|\right.
\widetilde{\mathbf{1}}_I(x)\Bigg]^p\,dx\right\}^{\frac{1}{p}}\notag\\
&\quad=\left\|\left\{\left|A_{I,V}\left\langle\vec f,\theta^{(\lambda')}_I \right\rangle\right|
\right\}_{I\in\mathscr{Q}(\mathbb{R}^{n-1})}
\right\|_{\dot f^{s-\frac{1}{p},\frac{n}{n-1}\tau}_{p,p}(\mathbb{R}^{n-1})}\notag\\
&\quad=\left\|\left\{\left\langle\vec f,\theta^{(\lambda')}_I \right\rangle\right\}_{I\in\mathscr{Q}(\mathbb{R}^{n-1})}\right\|
_{\dot f^{s-\frac{1}{p},\frac{n}{n-1}\tau}_{p,p}(\mathbb{A}(V),\mathbb{R}^{n-1})}
\sim\left\|\left\{\left\langle\vec f,\theta^{(\lambda')}_I \right\rangle\right\}_{I\in\mathscr{Q}(\mathbb{R}^{n-1})}\right\|
_{\dot f^{s-\frac{1}{p},\frac{n}{n-1}\tau}_{p,p}(V,\mathbb{R}^{n-1})}.\notag
\end{align}
By Theorem \ref{wavelet 2}, we find that
$\{\langle\vec f,\theta^{(\lambda')}_I\rangle\}_{I\in\mathscr{Q}(\mathbb{R}^{n-1})}
\in\dot f^{s-\frac{1}{p},\frac{n}{n-1}\tau}_{p,p}(V,\mathbb{R}^{n-1})$ and hence
\begin{equation}\label{221y}
\left\{\vec t^{(\lambda')}_Q\right\}_{Q\in\mathscr{Q}(\mathbb{R}^n)}
\in\dot f^{s,\tau}_{p,p}(W,\mathbb{R}^n).
\end{equation}

Notice that, for any $Q\in\mathscr{Q}(\mathbb{R}^n)$,  $[\theta^{(\lambda')}\otimes \varphi]_Q$
is a harmless constant multiple of an $\dot F^{s,\tau}_{p,q}(d_W)$-synthesis molecule on $Q$.
From this, \eqref{224}, \eqref{221y}, Theorem \ref{89}(ii),
\eqref{220y}, and Theorem \ref{wavelet 2},
we infer that $\operatorname{Ext}\vec f$ is well defined and
\begin{align*}
\left\|\operatorname{Ext}\vec f\right\|_{\dot F^{s,\tau}_{p,q}(W,\mathbb{R}^n)}
&\lesssim\sum_{\lambda'\in\Lambda_{n-1}}
\left\|\sum_{Q\in\mathscr{Q}(\mathbb{R}^n)}\vec t^{(\lambda')}_Q
\left[\theta^{(\lambda')}\otimes \varphi \right]_Q \right\|
_{\dot F^{s,\tau}_{p,q}(W,\mathbb{R}^n)}\\
&\lesssim\sum_{\lambda'\in\Lambda_{n-1}}
\left\|\left\{\vec t^{(\lambda')}_Q\right\}_{Q\in\mathscr{Q}(\mathbb{R}^n)}\right\|
_{\dot f^{s,\tau}_{p,q}(W,\mathbb{R}^n)}\\
&\lesssim\sum_{\lambda'\in\Lambda_{n-1}}
\left\|\left\{\left\langle\vec f,\theta^{(\lambda')}_I \right\rangle\right\}_{I\in\mathscr{Q}(\mathbb{R}^{n-1})}\right\|
_{\dot f^{s-\frac{1}{p},\frac{n}{n-1}\tau}_{p,p}(V,\mathbb{R}^{n-1})}
\sim\left\|\vec f\right\|_{\dot F^{s-\frac{1}{p},\frac{n}{n-1}\tau}_{p,p}(V,\mathbb{R}^{n-1})}.
\end{align*}
This finishes the proof of \eqref{225y}.

Finally, we assume that $s\in(\frac{1}{p}+E,\infty)$,
where $E$ is the same as in \eqref{E of F}, and \eqref{116} holds.
Repeating the proof of \eqref{8.25x}, we find that \eqref{8.25x}
for any $\vec f\in\dot F^{s-\frac{1}{p},\frac{n}{n-1}\tau}_{p,p}(V,\mathbb{R}^{n-1})$
also holds. This finishes the proof of Theorem \ref{extension F}.
\end{proof}

\begin{remark}
When $m=1$, $W\equiv 1$, and $V\equiv 1$,
Theorems \ref{trace B} and \ref{extension F} in this case contain \cite[Theorem 1.4]{syy10}
in which $\tau\in[0,\frac{1}{p}+\frac{s+n-J}{n})$.
Furthermore, if $\tau=0$, Theorems \ref{trace B} and \ref{extension F} in this case
go back to the classical result \cite[Theorem 5.1]{ja77}.
\end{remark}

\section{Calder\'on--Zygmund Operators}
\label{C-Z operators}

Calder\'on--Zygmund operators are formally given by expressions of the form
\begin{equation*}
Tf(x)=\int_{\mathbb R^n}K(x,y)f(y)\,dy,
\end{equation*}
where the kernel $K(x,y)$ satisfies size, cancellation, and smoothness estimates
resembling those of the Hilbert kernel $K(x,y):=\frac{1}{x-y}$ when $n=1$
or the Riesz kernels $K_i(x,y):=(x_i-y_i)|x-y|^{-n-1}$
with $i\in\{1,\ldots,n\}$ when $n\geq2$.
One of the major themes in harmonic analysis is obtaining sufficient
(and sometimes necessary) conditions for their boundedness on different function spaces.
Many of these results are modeled after the ``$T(1)$ theorem''
of David and Journ\'e \cite{dj84} concerning the boundedness on $L^2$.
For analogous results in Triebel--Lizorkin spaces $\dot F^s_{p,q}$,
an approach via the atomic and the molecular decompositions was developed first
in a limited range of the parameters in \cite{fhjw},
and extended to the full scale of these spaces in \cite{ftw88,tor}.
This approach consists of showing that
\begin{enumerate}[\rm(i)]
\item\label{CZ a2mol} under appropriate assumptions of Calder\'on--Zygmund type,
an operator $T$ maps sufficiently regular atoms to molecules of the same space, and
\item\label{a2mol bd} any operator with this property can be extended to act boundedly on the whole space.
\end{enumerate}

Our goal in this section is to extend such results to
$\dot A^{s,\tau}_{p,q}(W)$ in place of $\dot F^s_{p,q}$.
Since we already know from Lemma \ref{mol vs old} that
molecules for $\dot A^{s,\tau}_{p,q}(W)$
[more precisely, $\dot A^{s,\tau}_{p,q}(d)$-molecules if $W\in A_p$ has the $A_p$-dimension $d$]
coincide with $\dot F^{\widetilde s}_{\widetilde r,\widetilde r}$-molecules
for suitable $\widetilde s$ and $\widetilde r$,
the existing theory of \cite{ftw88,tor} would offer us a soft short-cut to such an extension:
the same Calder\'on--Zygmund conditions that guarantee the mapping of
atoms to $\dot F^{\widetilde s}_{\widetilde r,\widetilde r}$-molecules
also give us the mapping of atoms to $\dot A^{s,\tau}_{p,q}(d)$-molecules
because the two classes of molecules are actually equal.
Then it would remain for us to check the analogue of point \eqref{a2mol bd}
above for $\dot A^{s,\tau}_{p,q}(W)$ in place of $\dot F^s_{p,q}$.

Nevertheless, we decide to take a longer route
and develop both \eqref{CZ a2mol} and \eqref{a2mol bd} in detail in our setting,
as this gives us an opportunity to refine some issues of the theory
and eventually obtain a result that also improves the existing results
in the classical spaces $\dot F^s_{p,q}$ found in \cite{ftw88,tor}.
As we did with molecules, a key is to work with continuous parameters as entities in their own right,
avoiding as much as possible a need to impose separate conditions
on the integer and the fractional parts of these parameters.
While it is still necessary to work with integer and fractional parts in the proofs,
we largely succeed in avoiding them in the final statements.

The structure of this section is as follows.
In Subsection \ref{extension}, we discuss the general problem of
extending a given operator $T:\ \mathcal{S}_\infty\to\mathcal{S}_\infty'$
to $\widetilde T:\ \dot A^{s,\tau}_{p,q}(W)\to\dot A^{s,\tau}_{p,q}(W)$
and prove that such an extension exists (in appropriate sense)
provided that $W\in A_p$ has the $A_p$-dimension $d$ and
$T$ maps a class of atoms into $\dot A^{s,\tau}_{p,q}(d)$-molecules.
In Subsection \ref{ss CZO}, we introduce some Calder\'on--Zygmund conditions
and give some estimates associated with them.
In Subsection \ref{ss T1}, we establish the $T(1)$ theorem for $\dot A^{s,\tau}_{p,q}(W)$.
Finally, in Section \ref{T1 compare}, we compare our results with earlier results.

\subsection{Extension of Weakly Defined Operators}\label{extension}

Let us discuss the problem of extending a given operator
$T:\ \mathcal{S}_\infty\to\mathcal{S}_\infty'$ to $\widetilde T:\ \dot A^{s,\tau}_{p,q}(W)\to\dot A^{s,\tau}_{p,q}(W)$.
Let an infinite matrix $B:=\{b_{Q,P}\}_{Q,P\in\mathscr{Q}}\subset\mathbb{C}$ be given.
For any sequence $t:=\{t_R\}_{R\in\mathscr{Q}}\subset\mathbb{C}$,
we define $Bt:=\{(Bt)_Q\}_{Q\in\mathscr{Q}}$ by setting,
for any $Q\in\mathscr{Q}$,
$(Bt)_Q:=\sum_{R\in\mathscr{Q}}b_{Q,R}t_R$
if the above summation is absolutely convergent.
We begin with the following lemma.

\begin{lemma}\label{That1}
Let $\varphi,\psi\in\mathcal{S}$ satisfy \eqref{19}, \eqref{20}, and \eqref{21}. Suppose that
$T\in\mathcal L(\mathcal{S}_\infty,\mathcal{S}_\infty'),$
and define the infinite matrix
$\widehat T:=\{\langle \varphi_Q,T\psi_R\rangle\}_{Q,R\in\mathscr{Q}}.$
Then $\widehat T$ maps
$S_\varphi\mathcal{S}_\infty:=
\{\{\langle\varphi_Q,f\rangle\}_{Q\in\mathscr Q}:\ f\in\mathcal{S}_\infty\}$
to
$$S_\varphi{\mathcal{S}_\infty'}:=
\left\{\{\langle\varphi_Q,f\rangle\}_{Q\in\mathscr Q}:\ f\in{\mathcal{S}_\infty'}\right\}
$$
and satisfies the identity $\widehat T\circ S_{\varphi}=S_{\varphi}\circ T$ on $\mathcal{S}_\infty$.
\end{lemma}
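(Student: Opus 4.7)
The plan is to invert $T$ through the discrete Calder\'on reproducing formula of Lemma \ref{7} and read off the matrix representation of $T$ in the $(\varphi,\psi)$-system. Fix $f\in\mathcal{S}_\infty$. By Lemma \ref{7}, we have
$$
f=\sum_{R\in\mathscr{Q}}\langle f,\varphi_R\rangle\psi_R
$$
with convergence in $\mathcal{S}_\infty$. Since $T\in\mathcal L(\mathcal{S}_\infty,\mathcal{S}_\infty')$ is continuous, we would apply $T$ term-by-term to obtain
$$
Tf=\sum_{R\in\mathscr{Q}}\langle f,\varphi_R\rangle T\psi_R
$$
with convergence in $\mathcal{S}_\infty'$, that is, weak-$*$ convergence when tested against any element of $\mathcal{S}_\infty$.

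Then we would evaluate both sides of the previous identity on $\varphi_Q\in\mathcal{S}_\infty$ for an arbitrary $Q\in\mathscr{Q}$. The weak-$*$ convergence in $\mathcal{S}_\infty'$ permits the exchange of the summation with the pairing, producing the numerical identity
$$
(S_\varphi Tf)_Q=\langle Tf,\varphi_Q\rangle
=\sum_{R\in\mathscr{Q}}\langle f,\varphi_R\rangle\langle T\psi_R,\varphi_Q\rangle
=\sum_{R\in\mathscr{Q}}(\widehat T)_{Q,R}(S_\varphi f)_R
=(\widehat T S_\varphi f)_Q,
$$
where we have used $(\widehat T)_{Q,R}=\langle\varphi_Q,T\psi_R\rangle=\langle T\psi_R,\varphi_Q\rangle$ (the distribution-test function pairing being bilinear in the paper's convention) and $(S_\varphi f)_R=\langle f,\varphi_R\rangle$. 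This establishes the asserted identity $\widehat T\circ S_\varphi=S_\varphi\circ T$ on $\mathcal{S}_\infty$ and, simultaneously, shows that the series defining $(\widehat T\,t)_Q$ converges for every $t\in S_\varphi\mathcal{S}_\infty$.

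For the remaining claim, any $t\in S_\varphi\mathcal{S}_\infty$ has the form $t=S_\varphi f$ for some $f\in\mathcal{S}_\infty$, and the identity just proved gives $\widehat T\,t=S_\varphi(Tf)$; since $Tf\in\mathcal{S}_\infty'$, this sequence lies in $S_\varphi\mathcal{S}_\infty'$, which is exactly the mapping property asserted by the lemma. No quantitative estimates are required at this stage, and the only step needing a small amount of care is the legitimacy of pulling $T$ inside the Calder\'on expansion, which is a direct consequence of the continuity $T:\mathcal{S}_\infty\to\mathcal{S}_\infty'$ and the fact that the expansion converges in the test-function space $\mathcal{S}_\infty$; this is the sole bookkeeping point of the proof.
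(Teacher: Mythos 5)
Your proof is correct and follows essentially the same route as the paper's: expand $f$ via the Calder\'on reproducing formula of Lemma \ref{7}, use the continuity of $T$ to push it through the series in the weak-$*$ sense, and then pair with $\varphi_Q$ to read off both the convergence of the matrix action and the identity $\widehat T\circ S_\varphi=S_\varphi\circ T$. The only cosmetic difference is the order of arguments in the pairings, which you justify by the bilinearity convention; this matches how the paper itself interchanges $\langle f,\varphi_R\rangle$ and $\langle\varphi_R,f\rangle$ between Lemma \ref{7} and the proof.
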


\begin{proof}
Let $f\in\mathcal{S}_\infty$. By Lemma \ref{7}, we find that
$f=\sum_{R\in\mathscr{Q}}\psi_R\langle\varphi_R,f\rangle$ in $\mathcal{S}_\infty$.
Using $T\in\mathcal L(\mathcal{S}_\infty,\mathcal{S}_\infty')$, we obtain
$Tf=\sum_{R\in\mathscr{Q}}T\psi_R\langle\varphi_R,f\rangle$
with convergence in $\mathcal{S}_\infty'$.
By $\varphi_Q\in\mathcal{S}_\infty$, we further conclude that
the aforementioned convergence in $\mathcal{S}_\infty'$ implies in particular that
\begin{equation*}
\langle\varphi_Q,Tf\rangle
=\sum_{R\in\mathscr{Q}}\langle\varphi_Q,T\psi_R\rangle\langle\varphi_R,f\rangle.
\end{equation*}
Since $\{\langle\varphi_R,f\rangle\}_{R\in\mathscr{Q}}$ is an arbitrary element of $S_\varphi\mathcal{S}_\infty$,
it follows that $\widehat T:\ S_\varphi\mathcal{S}_\infty\to\mathbb C^{\mathscr{Q}}$, where
$$
\mathbb C^{\mathscr{Q}}:=\left\{t:\ t:=\{t_Q\}_{Q\in\mathscr{Q}}\subset\mathbb C\right\},
$$
is well defined.
Noticing $Tf\in\mathcal{S}_\infty'$, this also shows that $\widehat T:\ S_\varphi\mathcal{S}_\infty\to S_\varphi{\mathcal{S}_\infty'}$
as well as the formula $\widehat T\circ S_{\varphi}=S_{\varphi}\circ T$.
This finishes the proof of Lemma \ref{That1}.
\end{proof}

\begin{lemma}\label{That2}
Let $s\in\mathbb R$, $\tau\in[0,\infty)$, $p\in(0,\infty)$, $q\in(0,\infty]$, and $W\in A_p$.
Let $\varphi,\psi\in\mathcal{S}$ satisfy \eqref{19}, \eqref{20}, and \eqref{21}.
Let $T\in\mathcal L(\mathcal{S}_\infty,\mathcal{S}_\infty')$
and suppose that
$\widehat T:=\{\langle\varphi_Q,T\psi_R\rangle\}_{Q,R\in\mathscr{Q}}$
has an extension $\widetilde{\widehat T}\in\mathcal L(\dot a^{s,\tau}_{p,q}(W))$. Then
$\widetilde{T}:=T_\psi\circ\widetilde{\widehat T}\circ S_{\varphi}$
is an extension of $T$ and $\widetilde T\in\mathcal L(\dot A^{s,\tau}_{p,q}(W))$.
\end{lemma}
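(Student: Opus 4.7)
The plan is to establish the two conclusions separately, both via short diagram chases that invoke already-proven facts, without needing any new computation.

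For the boundedness assertion $\widetilde T\in\mathcal L(\dot A^{s,\tau}_{p,q}(W))$, I will simply read off the composition $\widetilde T=T_\psi\circ\widetilde{\widehat T}\circ S_\varphi$. The $\varphi$-transform characterization of $\dot A^{s,\tau}_{p,q}(W)$ in \cite[Theorem 3.29]{bhyy1} supplies the boundedness of the outer two maps $S_\varphi\colon\dot A^{s,\tau}_{p,q}(W)\to\dot a^{s,\tau}_{p,q}(W)$ and $T_\psi\colon\dot a^{s,\tau}_{p,q}(W)\to\dot A^{s,\tau}_{p,q}(W)$, while the middle operator $\widetilde{\widehat T}$ is bounded on $\dot a^{s,\tau}_{p,q}(W)$ by hypothesis. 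The composition is therefore a bounded map $\dot A^{s,\tau}_{p,q}(W)\to\dot A^{s,\tau}_{p,q}(W)$, with norm controlled by the product of the three operator norms.

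For the extension assertion, I fix $f\in\mathcal S_\infty$ and track the image of $f$ through the three stages of $\widetilde T$. By Proposition \ref{6.18}, $\mathcal S_\infty\subset\dot A^{s,\tau}_{p,q}(W)$, hence $S_\varphi f\in\dot a^{s,\tau}_{p,q}(W)\cap S_\varphi\mathcal S_\infty$, so the element $\widetilde{\widehat T}(S_\varphi f)$ is well defined and, because $\widetilde{\widehat T}$ extends $\widehat T$, equals $\widehat T(S_\varphi f)$. Now Lemma \ref{That1} yields the intertwining identity $\widehat T\circ S_\varphi=S_\varphi\circ T$ on $\mathcal S_\infty$, and Lemma \ref{7} applied to the element $Tf\in\mathcal S_\infty'$ gives $T_\psi(S_\varphi(Tf))=Tf$ in $\mathcal S_\infty'$. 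Concatenating these three identities produces
$$
\widetilde T f=T_\psi\widetilde{\widehat T}S_\varphi f=T_\psi\widehat T S_\varphi f=T_\psi S_\varphi(Tf)=Tf,
$$
as required.

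No significant obstacle is anticipated: the proof is a direct composition argument whose ingredients, namely the $\varphi$-transform boundedness of $S_\varphi$ and $T_\psi$, the discrete Calder\'on reproducing formula of Lemma \ref{7}, and the intertwining identity from Lemma \ref{That1}, are all in hand. The only point that deserves a moment of care is that the hypothesis supplies \emph{some} extension of $\widehat T$ rather than a canonical continuous extension, since $S_\varphi\mathcal S_\infty$ need not be dense in $\dot a^{s,\tau}_{p,q}(W)$ (compare the remark following Proposition \ref{6.18} when $\tau>0$); but this is harmless, because the extension argument only invokes the equality of $\widetilde{\widehat T}$ and $\widehat T$ on the sequences $S_\varphi f$ with $f\in\mathcal S_\infty$, on which $\widehat T$ is already defined by Lemma \ref{That1}.
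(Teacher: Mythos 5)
Your proof is correct and follows essentially the same route as the paper's: boundedness comes from composing the $\varphi$-transform bounds of \cite[Theorem 3.29]{bhyy1} with the hypothesis on $\widetilde{\widehat T}$, and the extension identity comes from concatenating the intertwining $\widehat T\circ S_\varphi=S_\varphi\circ T$ of Lemma \ref{That1} with the reproducing identity $T_\psi\circ S_\varphi=\mathrm{id}$ on $\mathcal S_\infty'$. Your closing remark about non-density being harmless, because only the values of $\widetilde{\widehat T}$ on $S_\varphi\mathcal S_\infty$ are invoked, is a correct and worthwhile clarification that the paper leaves implicit.
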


\begin{proof}
By \cite[Theorem 3.29]{bhyy1}, we conclude that
\begin{equation*}
S_\varphi\in\mathcal L(\dot A^{s,\tau}_{p,q}(W),\dot a^{s,\tau}_{p,q}(W))
\text{ and }
T_\psi\in\mathcal L(\dot a^{s,\tau}_{p,q}(W),\dot A^{s,\tau}_{p,q}(W)),
\end{equation*}
which, together with $\widetilde{\widehat T}\in\mathcal L(\dot a^{s,\tau}_{p,q}(W))$,
further implies that $\widetilde{T}\in\mathcal L(\dot A^{s,\tau}_{p,q}(W))$.
Since $\widetilde{\widehat T}$ is an extension of $\widehat T$,
from Lemma \ref{That1}, we deduce that,
for any $\vec f\in(\mathcal{S}_\infty)^m$,
$$
\widetilde{T}\vec f
=T_\psi\circ\widehat T\circ S_{\varphi}\vec f
=T_\psi\circ S_{\varphi}\circ T\vec f
=T\vec f.
$$
Using this and Proposition \ref{6.18}, we find that $\widetilde{T}$ is an extension of $T$.
This finishes the proof of Lemma \ref{That2}.
\end{proof}

\begin{corollary}\label{That3}
Let $s\in\mathbb R$, $\tau\in[0,\infty)$, $p\in(0,\infty)$, $q\in(0,\infty]$, and $d\in[0,n)$.
Let $\varphi,\psi\in\mathcal{S}$ satisfy \eqref{19}, \eqref{20}, and \eqref{21}, and
let $T\in\mathcal L(\mathcal{S}_\infty,\mathcal{S}_\infty')$. Suppose that
$\widehat T:=\{\langle\varphi_Q,T\psi_R\rangle\}_{Q,R\in\mathscr{Q}}$
is $\dot a^{s,\tau}_{p,q}(d)$-almost diagonal and $W\in A_p$ has the $A_p$-dimension $d$.
Then $T$ has an extension $\widetilde T\in\mathcal L(\dot A^{s,\tau}_{p,q}(W))$.
\end{corollary}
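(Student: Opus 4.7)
The plan is to assemble the conclusion from three prior results, namely Definition \ref{def ad tau}, Lemma \ref{ad BF2}, and Lemma \ref{That2}, without any substantial new calculation. The key observation is that the hypotheses of Corollary \ref{That3} have been engineered so that each of these lemmas is directly applicable in turn.

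First, since $\widehat T$ is $\dot a^{s,\tau}_{p,q}(d)$-almost diagonal, by Definition \ref{def ad tau} its entries satisfy the bound $|\langle\varphi_Q,T\psi_R\rangle| \lesssim b_{Q,R}^{DEF}$ for some $D,E,F$ subject to \eqref{ad new2}. Since $W\in A_p$ has the $A_p$-dimension $d$, Lemma \ref{ad BF2} applies and yields that the matrix operator
\begin{equation*}
(\widetilde{\widehat T}\,\vec t)_Q
:=\sum_{R\in\mathscr Q}\left\langle\varphi_Q,T\psi_R\right\rangle \vec t_R,
\qquad Q\in\mathscr Q,
\end{equation*}
is absolutely convergent and defines a bounded linear operator $\widetilde{\widehat T}\in\mathcal L(\dot a^{s,\tau}_{p,q}(W))$.

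Second, I would check that $\widetilde{\widehat T}$ is indeed an extension of $\widehat T$ in the sense required by Lemma \ref{That2}. By Proposition \ref{6.18} and \cite[Theorem 3.29]{bhyy1}, any sequence in $S_\varphi\mathcal S_\infty$ (componentwise applied to vector-valued test functions) lies in $\dot a^{s,\tau}_{p,q}(W)$, and on such sequences the action of $\widetilde{\widehat T}$ coincides with the matrix formula defining $\widehat T$ in Lemma \ref{That1}. Hence $\widetilde{\widehat T}|_{S_\varphi\mathcal S_\infty}=\widehat T$.

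Finally, applying Lemma \ref{That2} with this $\widetilde{\widehat T}$, the operator
\begin{equation*}
\widetilde T:=T_\psi\circ\widetilde{\widehat T}\circ S_\varphi
\end{equation*}
is an element of $\mathcal L(\dot A^{s,\tau}_{p,q}(W))$ and extends $T$ from $(\mathcal S_\infty)^m$ to $\dot A^{s,\tau}_{p,q}(W)$. Since each ingredient is already established, I do not anticipate any genuine obstacle; the only mildly delicate point is the verification in the middle step that the almost-diagonal extension from Lemma \ref{ad BF2} agrees on $S_\varphi\mathcal S_\infty$ with the matrix defined from $T$ in Lemma \ref{That1}, and this is immediate from the shared matrix formula together with the absolute convergence granted by almost diagonality.
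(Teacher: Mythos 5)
Your proposal is correct and takes essentially the same approach as the paper, assembling the conclusion from Definition \ref{def ad tau}, Lemma \ref{ad BF2}, and Lemma \ref{That2} in the same order. The only difference is that you make explicit (via Proposition \ref{6.18} and the $\varphi$-transform characterization) the verification that the bounded matrix operator from Lemma \ref{ad BF2} agrees with $\widehat T$ on $S_\varphi\mathcal S_\infty$, a point the paper leaves implicit; this is a harmless and correct elaboration rather than a genuinely different route.
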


\begin{proof}
By Lemma \ref{ad BF2} and Definition \ref{def ad tau},
we conclude that the operator $\widehat T$ has an extension $\widetilde{\widehat T}\in\mathcal L(\dot a^{s,\tau}_{p,q}(W))$.
From this and Lemma \ref{That2}, we infer that
$T$ has an extension $\widetilde T\in\mathcal L(\dot A^{s,\tau}_{p,q}(W))$.
This finishes the proof of Corollary \ref{That3}.
\end{proof}

In what follows, we use $C_{\mathrm{c}}^\infty$ to denote
the set of all functions $f\in C^\infty$ on $\mathbb R^n$ with compact support.

\begin{definition}
Let $L,N\in(0,\infty)$.
A function $a_Q\in C_{\mathrm{c}}^\infty$ is called an \emph{$(L,N)$-atom} on a cube $Q$ if
\begin{enumerate}[\rm(i)]
\item $\operatorname{supp}a_Q\subset3Q$;
\item $\int_{\mathbb{R}^n}x^\gamma a_Q(x)\,dx=0$
for any $\gamma\in\mathbb{Z}_+^n$ with $|\gamma|\leq L$;
\item $|D^\gamma a_Q(x)|\leq|Q|^{-\frac12-\frac{|\gamma|}{n}}$
for any $x\in\mathbb{R}^n$ and $\gamma\in\mathbb{Z}_+^n$ with $|\gamma|\leq N$.
\end{enumerate}
\end{definition}

\begin{proposition}\label{ext}
Let $s\in\mathbb R$, $\tau\in[0,\infty)$, $p\in(0,\infty)$, $q\in(0,\infty]$,
$d\in[0,n)$, and $L,N\in(0,\infty)$.
\begin{enumerate}[\rm(i)]
\item\label{ext1} If $T\in\mathcal L(\mathcal S,\mathcal S')$
maps $(L,N)$-atoms to $\dot a^{s,\tau}_{p,q}(d)$-synthesis molecules
and $W\in A_p$ has the $A_p$-dimension $d$,
then there exists an operator $\widetilde T\in\mathcal L(\dot A^{s,\tau}_{p,q}(W))$ that agrees with $T$ on $(\mathcal{S}_\infty)^m$.
\item\label{ext2} If, in addition, there exists an operator $T_2\in\mathcal L(L^2)$
that agrees with $T$ on $\mathcal S$,
then $\widetilde T$ agrees with $T$ on
$(\mathcal S)^m\cap\dot A^{s,\tau}_{p,q}(W)$.
\end{enumerate}
\end{proposition}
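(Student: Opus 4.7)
The plan is to use the Daubechies wavelet decomposition as a bridge: wavelets of sufficient smoothness are (up to uniform constants) $(L,N)$-atoms, the hypothesis in (i) then turns them into $\dot a^{s,\tau}_{p,q}(d)$-synthesis molecules, and Theorem \ref{89}\eqref{892} reconstitutes from these molecules a bounded operator on $\dot A^{s,\tau}_{p,q}(W)$.

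For (i), I would first fix $\mathscr N\in\mathbb N$ large enough so that both \eqref{NDau} holds and $\mathscr N>\max\{L,N\}$, and take Daubechies wavelets $\{\theta^{(i)}\}_{i=1}^{2^n-1}$ of class $C^{\mathscr N}$ from Lemma \ref{wavelet basis 2}. The bounded support, $C^{\mathscr N}$-regularity, and vanishing moments (Remark \ref{Dau can}) of each $\theta^{(i)}$ imply that $\theta^{(i)}_Q$ is a uniform constant multiple of an $(L,N)$-atom on a cube $\widetilde Q$ with $\ell(\widetilde Q)\sim\ell(Q)$, so the hypothesis yields that $T\theta^{(i)}_Q$ is a uniform constant multiple of an $\dot a^{s,\tau}_{p,q}(d)$-synthesis molecule. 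For $\vec f\in\dot A^{s,\tau}_{p,q}(W)$, Theorem \ref{wavelet 2} gives $\sum_i\|\{\langle\vec f,\theta^{(i)}_Q\rangle\}_Q\|_{\dot a^{s,\tau}_{p,q}(W)}\sim\|\vec f\|_{\dot A^{s,\tau}_{p,q}(W)}$, and I would define
$$\widetilde T\vec f:=\sum_{i=1}^{2^n-1}\sum_{Q\in\mathscr Q}\langle\vec f,\theta^{(i)}_Q\rangle T\theta^{(i)}_Q,$$
with convergence of each inner series in $(\mathcal S_\infty')^m$ and the appropriate norm estimate furnished by Theorem \ref{89}\eqref{892}; summing over $i$ via the quasi-triangle inequality yields $\widetilde T\in\mathcal L(\dot A^{s,\tau}_{p,q}(W))$, linearity being automatic from linearity of the wavelet coefficient functionals.

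The agreement $\widetilde T\vec f=T\vec f$ on $(\mathcal S_\infty)^m$ is the main technical step and where I expect the principal obstacle. Testing both sides against $\phi\in\mathcal S_\infty$, one has $\langle\widetilde T\vec f,\phi\rangle=\sum_{i,Q}\langle\vec f,\theta^{(i)}_Q\rangle\langle T\theta^{(i)}_Q,\phi\rangle$, absolutely convergent by Corollary \ref{83} (since $\phi$ is a harmless constant multiple of both an analysis and a synthesis molecule on any fixed cube). For the other side, I would expand $\vec f=\sum_R\langle\vec f,\varphi_R\rangle\psi_R$ in $(\mathcal S_\infty)^m$ via Lemma \ref{7}, so continuity of $T:\mathcal S\to\mathcal S'$ gives $\langle T\vec f,\phi\rangle=\sum_R\langle\vec f,\varphi_R\rangle\langle T\psi_R,\phi\rangle$. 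The hard part is matching these two expressions: one decomposes each $\psi_R\in\mathcal S_\infty$ into a series of $(L,N)$-atoms convergent in $\mathcal S$ (a standard smooth-partition-of-unity argument together with vanishing-moment corrections to preserve cancellation), applies the hypothesis atom by atom to realise $T\psi_R$ as a series of synthesis molecules, and finally rearranges the resulting triple sum using the absolute convergence furnished by Lemma \ref{like B3}.

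For part (ii), the extra hypothesis $T_2\in\mathcal L(L^2)$ sidesteps this technicality. For $\vec f\in(\mathcal S)^m\cap\dot A^{s,\tau}_{p,q}(W)$, the orthonormality of $\{\theta^{(i)}_Q\}$ in $L^2$ yields $\vec f=\sum_{i,Q}\langle\vec f,\theta^{(i)}_Q\rangle\theta^{(i)}_Q$ in $(L^2)^m$, and $L^2$-continuity of $T_2$ gives $T_2\vec f=\sum_{i,Q}\langle\vec f,\theta^{(i)}_Q\rangle T_2\theta^{(i)}_Q$ in $(L^2)^m$. Identifying $T_2\theta^{(i)}_Q$ with the $T\theta^{(i)}_Q$ used in defining $\widetilde T$ (by the agreement of $T$ and $T_2$ on $\mathcal S$ and a density argument approximating $\theta^{(i)}_Q\in C_c^{\mathscr N}$ in $L^2$ by mollifications in $\mathcal S$), this $L^2$-series agrees with $\widetilde T\vec f$ in $(\mathcal S_\infty')^m$; combined with $T\vec f=T_2\vec f$ on $(\mathcal S)^m$, this gives $\widetilde T\vec f=T\vec f$ in $(\mathcal S_\infty')^m$.
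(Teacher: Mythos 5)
There is a genuine gap, and it lies at the very first step of your construction. The paper defines an $(L,N)$\emph{-atom} as a function $a_Q\in C_{\mathrm c}^{\infty}$, i.e.\ infinitely smooth with compact support (and hence Schwartz). This is essential because $T$ is only given as an element of $\mathcal L(\mathcal S,\mathcal S')$, so the hypothesis ``$T$ maps $(L,N)$-atoms to synthesis molecules'' is a statement about $T$ acting on certain Schwartz functions. Daubechies wavelets of class $C^{\mathscr N}$ are not smooth: they are $C^{\mathscr N}$ but not $C^{\infty}$, so $\theta^{(i)}_Q\notin\mathcal S$ and the quantity $T\theta^{(i)}_Q$ on which your entire definition of $\widetilde T$ rests is not defined. (Do not confuse these $(L,N)$-atoms with the $(r,L,N)$-atoms of Theorem \ref{90}; the latter need not be $C^\infty$ and are indeed realised by Daubechies wavelets there, but they play a different role.) Swapping to Meyer wavelets does not help: those are in $\mathcal S_\infty$ but have unbounded support, so they fail the support condition for atoms. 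The same undefinedness of $T\theta^{(i)}_Q$ also undermines your argument for part (ii): identifying $T_2\theta^{(i)}_Q$ with $T\theta^{(i)}_Q$ by mollification presupposes that the right-hand side exists.

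Beyond this definitional obstruction, the structural organisation around wavelets buys you nothing: the ``hard part'' you correctly identify (showing $\widetilde T=T$ on $(\mathcal S_\infty)^m$) would still require you to decompose each $\psi_R\in\mathcal S$ into a convergent-in-$\mathcal S$ series of genuine $C^\infty_c$ $(L,N)$-atoms, apply $T$ term by term, and recombine; and at that point you have already done all the work of the paper's proof. The paper avoids the wavelet detour entirely: one picks $\theta\in\mathcal S$ with compact support and enough vanishing moments, writes $\psi=\theta*\eta$ with $\eta:=(\widehat\psi/\widehat\theta)^\vee\in\mathcal S$, truncates the convolution over unit cubes to produce a rapidly decaying series $\psi_R=\sum_P\text{(coeff)}\,a^R_P$ of $C^\infty_c$ atoms convergent in $\mathcal S$, and then shows directly that $\widehat T:=\{\langle\varphi_Q,T\psi_R\rangle\}_{Q,R}$ is $\dot a^{s,\tau}_{p,q}(d)$-almost diagonal (as a product of two almost-diagonal matrices). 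Corollary \ref{That3} then produces $\widetilde T:=T_\psi\circ\widetilde{\widehat T}\circ S_\varphi$, which by Lemmas \ref{That1}--\ref{That2} agrees with $T$ on $(\mathcal S_\infty)^m$ automatically, with no separate identification argument needed. For part (ii), the identity $S_\varphi\circ T=\widehat T\circ S_\varphi$ on $\mathcal S$ is verified directly by convergence of $f=\sum_R\psi_R\langle\varphi_R,f\rangle$ in $L^2$ and $T_2\in\mathcal L(L^2)$.
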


\begin{proof}
Let $\varphi,\psi\in\mathcal{S}$ satisfy \eqref{19}, \eqref{20}, and \eqref{21}.
In the first step, we decompose $\psi_R$ in terms of $(L,N)$-atoms with the given numbers $L,N\in(0,\infty)$,
following the proof of \cite[Theorem 4.1]{fj90}.
According to \cite[p.\,783]{fj85}, we can pick a function $\theta\in\mathcal S$ such that
$$
\operatorname{supp}\theta\subset\{x\in\mathbb R^n:\ |x|\leq1\},\
\int_{\mathbb R^n}x^\gamma\theta(x)\,dx=0
\text{ if }\gamma\in\mathbb{Z}_+^n\text{ and }|\gamma|\leq L,
$$
and
$$
\left|\widehat\theta(\xi)\right|\geq C>0
\text{ if }\xi\in\mathbb{R}^n\text{ and }\frac12\leq|\xi|\leq2,
$$
where $C$ is a positive constant.
From \eqref{19}, it follows that $\widehat\psi/\widehat\theta\in\mathcal S$
and hence $\psi=\theta*\eta$, where $\eta:=(\widehat\psi/\widehat\theta)^\vee\in\mathcal S$.
Thus, from the definitions of $\psi$ and $\theta$,
it follows that, for any $x\in\mathbb{R}^n$,
\begin{align*}
\psi(x)=\int_{\mathbb R^n}\theta(x-y)\eta(y)\,dy
=\sum_{k\in\mathbb Z^n}\int_{Q_{0,k}}\theta(x-y)\eta(y)\,dy
=:\sum_{k\in\mathbb Z^n}g_k(x).
\end{align*}
Here, for any $k\in\mathbb Z^n$, $\operatorname{supp}g_k\subset[-1,2)^n+k=3Q_{0,k}$.
By the Fubini theorem, we find that,
for any $\gamma\in\mathbb{Z}_+^n$ with $|\gamma|\leq L$,
\begin{align*}
\int_{\mathbb R^n}x^\gamma g_k(x)\,dx
=\sum_{\beta\in\mathbb{Z}_+^n,\,\beta\leq\gamma}\binom{\gamma}{\beta}
\int_{Q_{0,k}}\left[\int_{\mathbb R^n}(x-y)^{\beta}\theta(x-y)\,dx\right]
y^{\gamma-\beta}\eta(y)\,dy
=0.
\end{align*}
From $\theta,\eta\in\mathcal{S}$, we also deduce that,
for any $\gamma\in\mathbb{Z}_+^n$ with $|\gamma|\leq N$
and for any $x\in\mathbb{R}^n$,
\begin{equation*}
|\partial^\gamma g_k(x)|
\leq\int_{Q_{0,k}}\left|\partial^\gamma\theta(x-y)\right||\eta(y)|\,dy
\lesssim(1+|k|)^{-E},
\end{equation*}
where $E$ is any number that we like.
Thus, there exists a positive constant $C$ such that
$C(1+|k|)^Eg_k$ is an $(L,N)$-atom on $Q_{0,k}$
and, for any $R\in\mathscr{Q}$,
\begin{equation*}
C(1+|k|)^E(g_k)_R
=:C\left[1+\frac{|x_R-x_P|}{\ell(R)}\right]^Eg^R_P
=:a^R_P
\end{equation*}
is an $(L,N)$-atom on $P:=R+k\ell(R)$.

Using the fact that $\theta,\eta\in\mathcal{S}$, we obtain,
for any $\alpha,\beta\in\mathbb{Z}_+^n$ and $x\in\mathbb{R}^n$,
\begin{equation}\label{converge}
\left| x^\alpha\partial^\beta g_k(x)\right|
=\left|\sum_{\gamma\leq\alpha}\binom{\alpha}{\gamma}
\int_{Q_{0,k}} (x-y)^{\gamma}\partial^\beta
\theta(x-y)y^{\alpha-\gamma}\eta(y)\,dy\right|
\lesssim(1+|k|)^{-M},
\end{equation}
where $M$ is any given positive integer.
Then, by \eqref{converge}, we conclude that the series $\psi=\sum_{k\in\mathbb Z^n}g_k$
converges in $\mathcal S$ and also, for any $R\in\mathscr{Q}$,
\begin{align*}
\psi_R
=\sum_{k\in\mathbb Z^n}(g_k)_R
=\sum_{P\in\mathscr{Q},\,\ell(P)=\ell(R)} g^R_P
=C^{-1}\sum_{P\in\mathscr{Q},\,\ell(P)=\ell(R)}
\left[1+\frac{|x_R-x_P|}{\ell(R)}\right]^{-E}a^R_P
\end{align*}
in $\mathcal S$. From this and the assumption that
$T\in\mathcal L(\mathcal S,\mathcal S')$, it follows that
\begin{equation*}
T\psi_R
=C^{-1}\sum_{P\in\mathscr{Q},\,\ell(P)=\ell(R)}
\left[1+\frac{|x_R-x_P|}{\ell(R)}\right]^{-E}T\left(a^R_P\right)
\end{equation*}
in $\mathcal S'$ and hence, for each $Q,R\in\mathscr{Q}$,
\begin{equation}\label{new add 9}
\langle\varphi_Q,T\psi_R\rangle
=\sum_{P\in\mathscr{Q}}\left\langle\varphi_Q,T\left(a^R_P\right)\right\rangle b_{P,R},
\end{equation}
where, for any $P,R\in\mathscr{Q}$,
$$
b_{P,R}:=
\left\{\begin{aligned}
&\left[1+\frac{|x_R-x_P|}{\ell(R)}\right]^{-E}&&\text{if }\ell(P)=\ell(R),\\
&0&&\text{otherwise}.
\end{aligned}\right.
$$
By the definition of $\dot a^{s,\tau}_{p,q}(d)$-almost diagonal matrices,
it is evident that $\{b_{P,R}\}_{P,R\in\mathscr{Q}}$
is $\dot a^{s,\tau}_{p,q}(d)$-almost diagonal, as soon as $E>\widetilde{J}$,
where $\widetilde{J}$ is the same as in \eqref{tauJ2}.
On the other hand, the assumption that $T$ maps $(L,N)$-atoms
to $\dot A^{s,\tau}_{p,q}(d)$-synthesis molecules,
the fact that each $a^R_P$ is such an atom on the cube $P$, and Corollary \ref{83} prove that,
for any $Q,P,R\in\mathscr{Q}$,
$|\langle\varphi_Q,T(a^R_P)\rangle|\leq a_{Q,P},$
where $\{a_{Q,P}\}_{Q,P\in\mathscr{Q}}$ is $\dot a^{s,\tau}_{p,q}(d)$-almost diagonal.
Using this and \eqref{new add 9}, we conclude that, for any $Q,R\in\mathscr{Q}$,
\begin{equation*}
|\langle\varphi_Q,T\psi_R\rangle|\leq\sum_{P\in\mathscr{Q}}a_{Q,P}b_{P,R}.
\end{equation*}
Applying this and \cite[Corollary 9.6]{bhyy1} on the composition of
$\dot a^{s,\tau}_{p,q}(d)$-almost diagonal matrices, we find that
$\{\langle\varphi_Q,T\psi_R\rangle\}_{Q,R\in\mathscr{Q}}$
is $\dot a^{s,\tau}_{p,q}(d)$-almost diagonal.
Then part \eqref{ext1} is a direct application of Corollary \ref{That3}.

It remains to show part \eqref{ext2}.
We recall from Corollary \ref{That3} and Lemma \ref{That2} that
the extension $\widetilde T$ is constructed by setting
$\widetilde T:=T_{\psi}\circ\widehat T\circ S_{\varphi},$
where $\widehat T:=\{\langle\varphi_Q,T\psi_R\rangle\}_{Q,R\in\mathscr{Q}}$.

Let $f\in L^2=\dot A^{0}_{2,2}$.
From \cite[Theorem 3.29]{bhyy1}, we infer that
$S_{\varphi}:\ \dot A^0_{2,2}\to\dot a^0_{2,2}=\ell^2$
and $T_\psi:\ \dot a^0_{2,2}\to\dot A^0_{2,2}$ are bounded, and hence
$f=\sum_{R\in\mathscr{Q}}\psi_R\langle \varphi_R,f\rangle$
in $L^2$. By $T_2\in\mathcal L(L^2)$, we conclude that
$T_2f=\sum_{R\in\mathscr{Q}}T_2\psi_R\langle \varphi_R,f\rangle$
in $L^2$ and hence
\begin{equation*}
\langle\varphi_Q,T_2f\rangle
=\sum_{R\in\mathscr{Q}}\langle\varphi_Q,T_2\psi_R\rangle\langle\varphi_R,f\rangle.
\end{equation*}
Since $T_2$ agrees with $T$ on $\mathcal S$,
it follows that, for any $f\in\mathcal S$,
\begin{equation*}
\langle\varphi_Q,Tf\rangle
=\sum_{R\in\mathscr{Q}}\langle\varphi_Q,T\psi_R\rangle\langle\varphi_R,f\rangle
\end{equation*}
or in other words that
\begin{equation}\label{new add 10}
S_\varphi\circ T=\widehat T\circ S_{\varphi}
\end{equation}
as operators from $\mathcal S$ to $\ell^2=\dot a^0_{2,2}$.
Applying $T_\psi\in\mathcal L(\ell^2,L^2)$ to both sides of \eqref{new add 10},
we obtain, for any $f\in\mathcal S$,
\begin{equation*}
Tf=T_{\psi}\circ S_{\varphi}\circ Tf=T_{\psi}\circ\widehat T\circ S_{\varphi}f
\end{equation*}
in $L^2\subset\mathcal{S}_\infty'$,
and hence, in particular, for any $f\in(\mathcal S)^m\cap\dot A^{s,\tau}_{p,q}(W)$,
$Tf=\widetilde Tf.$
This finishes the proof of Proposition \ref{ext}.
\end{proof}

\begin{remark}
It seems that, in a lot of the related literature,
the same situation as in part \eqref{ext1} of Proposition \ref{ext}
is (a bit imprecisely) referred to as saying that
``the operator $T$ admits a continuous extension to all $\dot A^{s,\tau}_{p,q}(W)$.''
Strictly speaking, as Torres said in \cite[p.\,21]{tor},
if no extra information on $T$ is provided,
we should say that ``the restriction of $T$ to $\mathcal{S}_\infty$
admits a continuous extension to all $\dot A^{s,\tau}_{p,q}(W)$.''
This type of statement involving $\mathcal{S}_\infty$ is typical
when dealing with spaces of distributions modulo polynomials.
We refer to \cite[pp.\,19--22]{tor} for further discussions
on the problem of extending $T$, once it is known that it maps atoms to molecules.

This issue does not arise in Corollary \ref{That3},
where we are dealing with an operator $T$ only defined on $\mathcal{S}_\infty$ to begin with.
One may then ask, why do we insist in
$T\in\mathcal L(\mathcal S,\mathcal S')$ in Proposition \ref{ext}.
One reason is that this is somewhat necessary in order to even make sense
of the condition that ``$T$ maps atoms to molecules'';
that is, a compactly supported atom can never be in $\mathcal{S}_\infty$
(in particular, be orthogonal to all polynomials), unless it is identically zero,
and hence it is necessary that the action of $T$ is
somehow defined also on functions outside $\mathcal{S}_\infty$.
Even if, by Corollary \ref{That3}, we would only need to estimate
the pairings $\langle\varphi_Q,T\psi_R\rangle$ involving $\varphi_Q,\psi_R\in\mathcal{S}_\infty$,
in practice we need to know about the action of $T$
on other kinds of functions as well, to make these estimates!
\end{remark}

\subsection{Definition and Consequences of Calder\'on--Zygmund Conditions}\label{ss CZO}

We now turn to the actual discussion of Calder\'on--Zygmund operators.
The following classical definitions are standard.
Let $\mathcal D :=C_{\mathrm{c}}^\infty $ with the usual inductive limit topology.
We denote by $\mathcal D' $ the space of
all continuous linear functionals on $\mathcal D$,
equipped with the weak-$*$ topology.
If $T\in\mathcal L(\mathcal S,\mathcal S')$,
then, by the well-known Schwartz kernel theorem,
we find that there exists $\mathcal K\in\mathcal S'(\mathbb R^n\times\mathbb R^n)$
such that, for each $\varphi,\psi\in\mathcal S$,
$\langle T\varphi,\psi\rangle
=\langle\mathcal K,\varphi\otimes\psi\rangle.$
This $\mathcal K$ is called the \emph{Schwartz kernel} of $T$.

\begin{definition}\label{WBP}
Let $T\in\mathcal L(\mathcal S,\mathcal S')$,
and let $\mathcal K\in\mathcal S'(\mathbb R^n\times\mathbb R^n)$ be its Schwartz kernel.
\begin{enumerate}[\rm(i)]
\item We say that $T$ satisfies the \emph{weak boundedness property}
and write $T\in\operatorname{WBP}$ if,
for any bounded subset $\mathcal B$ of $\mathcal D$,
there exists a positive constant $C=C(\mathcal B)$ such that,
for any $\varphi,\eta\in\mathcal B$, $h\in\mathbb R^n$, and $r\in(0,\infty)$,
\begin{equation*}
\left|\left\langle T \left[\varphi\left(\frac{\cdot-h}{r}\right)\right],
\eta \left(\frac{\cdot-h}{r}\right)\right\rangle\right|\leq Cr^n.
\end{equation*}
\item For any $\ell\in(0,\infty)$, we say that $T$ has a \emph{Calder\'on--Zygmund kernel} of order $\ell$
and write $T\in\operatorname{CZO}(\ell)$
if the restriction of $\mathcal K$ on the set $\{(x,y)\in\mathbb R^n\times\mathbb R^n:\ x\neq y\}$
is a continuous function with continuous partial derivatives
in the $x$ variable up to order $\lfloor\!\lfloor\ell\rfloor\!\rfloor$ satisfying that
there exists a positive constant $C$ such that,
for any $\gamma\in\mathbb{Z}_+^n$ with $|\gamma|\leq\lfloor\!\lfloor\ell\rfloor\!\rfloor$
and for any $x,y\in\mathbb{R}^n$ with $x\neq y$,
$|\partial_x^{\gamma}\mathcal K(x,y)|
\leq C|x-y|^{-n-|\gamma|}$
and, for any $\gamma\in\mathbb{Z}_+^n$ with $|\gamma|=\lfloor\!\lfloor\ell\rfloor\!\rfloor$
and for any $x,y,h\in\mathbb{R}^n$ with $|h|<\frac12|x-y|$,
\begin{equation*}
\left|\partial_x^{\gamma}\mathcal K(x,y)-\partial_x^\gamma\mathcal K(x+h,y)\right|
\leq C|x-y|^{-n-\ell}|h|^{\ell^{**}},
\end{equation*}
where $\lfloor\!\lfloor\ell\rfloor\!\rfloor $ and $\ell^{**}$
are the same as, respectively, in \eqref{ceil} and \eqref{r**}.
For any $\ell\in(-\infty,0]$, we interpret $T\in\operatorname{CZO}(\ell)$ as a void condition.
\end{enumerate}
\end{definition}

\begin{remark}
For any $\ell_1,\ell_2\in\mathbb{R}$ with $\ell_1<\ell_2$,
it is easy to prove that $\operatorname{CZO}(\ell_2)\subset\operatorname{CZO}(\ell_1)$.
\end{remark}

Extra conditions that we like to impose on Calder\'on--Zygmund operators
require the extension of their action beyond the initial domain $\mathcal S$ of definition.
For this purpose, we quote the following result
which is a special case of \cite[Lemma 2.2.12]{tor}
(see also the comment right after the proof of the said lemma).

\begin{lemma}\label{tor2.2.12}
Let $\ell\in(0,\infty)$ and $T\in\operatorname{CZO}(\ell)$,
and let $\{\phi_j\}_{j\in\mathbb{N}}\subset\mathcal D$ be a sequence of functions such that
\begin{enumerate}[\rm(i)]
\item for each compact set $K\subset\mathbb R^n$, there exists $j_K\in\mathbb{N}$
such that $\phi_j(x)=1$ for any $x\in K$ and $j\geq j_K$;
\item $\sup_{j\in\mathbb{N}}\|\phi_j\|_{L^\infty}<\infty $.
\end{enumerate}
Then the limit
\begin{equation}\label{Tphijf}
\langle T(f),g\rangle:=\lim_{j\to\infty}\langle T(\phi_jf),g\rangle
\end{equation}
exists for any $f\in\mathcal O^{\lfloor\!\lfloor\ell\rfloor\!\rfloor}$
and $g\in\mathcal D_{\lfloor\!\lfloor\ell\rfloor\!\rfloor}$, where
\begin{align*}
\mathcal O^{\lfloor\!\lfloor\ell\rfloor\!\rfloor}
&:=\left\{f\in C^\infty:\
\text{there exists a positive constant }C\text{ such that }\right.\\
&\qquad\qquad\quad\left.|f(x)|\leq C(1+|x|)^{\lfloor\!\lfloor\ell\rfloor\!\rfloor}
\text{ for any }x\in\mathbb{R}^n\right\},
\end{align*}
\begin{equation*}
\mathcal D_{\lfloor\!\lfloor\ell\rfloor\!\rfloor}
:=\left\{g\in\mathcal D:\
\int_{\mathbb R^n}x^\gamma g(x)\,dx=0\text{ if }
\gamma\in\mathbb{Z}_+^n\text{ with }|\gamma|\leq\lfloor\!\lfloor\ell\rfloor\!\rfloor \right\},
\end{equation*}
and $\lfloor\!\lfloor\ell\rfloor\!\rfloor $ is the same as in \eqref{ceil}.
Moreover, the limit \eqref{Tphijf} is independent of the choice of the sequence $\{\phi_j\}_{j\in\mathbb{N}}$.
\end{lemma}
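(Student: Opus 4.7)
The plan is to show that the sequence $\{\langle T(\phi_j f),g\rangle\}_{j\in\mathbb N}$ is Cauchy in $\mathbb C$ and that its limit does not depend on the particular sequence $\{\phi_j\}_{j\in\mathbb N}$. First I would fix $f\in\mathcal O^{\lfloor\!\lfloor\ell\rfloor\!\rfloor}$, $g\in\mathcal D_{\lfloor\!\lfloor\ell\rfloor\!\rfloor}$, and let $K:=\operatorname{supp}g$, which is compact. For any $j\geq j_K$ we have $\phi_j\equiv 1$ on $K$ and $\phi_jf\in\mathcal D\subset\mathcal S$, so each pairing $\langle T(\phi_jf),g\rangle$ is well defined through the given assumption $T\in\mathcal L(\mathcal S,\mathcal S')$.

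Next, for any $j_1,j_2\geq j_K$, I would set $\psi_{j_1,j_2}:=(\phi_{j_1}-\phi_{j_2})f$ and note that there exists an open neighbourhood $U\supset K$ on which $\psi_{j_1,j_2}\equiv 0$ (by uniform boundedness of $\{\phi_j\}$ and the stabilization hypothesis, we may also arrange that $\operatorname{supp}\psi_{j_1,j_2}\subset\mathbb R^n\setminus B(\mathbf 0,R)$ for some $R\to\infty$ as $j_1,j_2\to\infty$). Since $\operatorname{supp}\psi_{j_1,j_2}\cap\operatorname{supp}g=\emptyset$, the standard fact that the Schwartz kernel $\mathcal K$ of $T$ restricts to a continuous function off the diagonal allows one to represent
\begin{equation*}
\left\langle T(\phi_{j_1}f),g\right\rangle-\left\langle T(\phi_{j_2}f),g\right\rangle
=\int_{\mathbb R^n}\int_{\mathbb R^n}\mathcal K(x,y)\,\psi_{j_1,j_2}(y)\,g(x)\,dy\,dx.
\end{equation*}

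The key step is to exploit the vanishing moments of $g$. Pick $x_0\in K$ and, for each fixed $y\in\operatorname{supp}\psi_{j_1,j_2}$, let $P^{\lfloor\!\lfloor\ell\rfloor\!\rfloor}_{x_0}[\mathcal K(\cdot,y)](x)$ denote the Taylor polynomial of $\mathcal K(\cdot,y)$ at $x_0$ of degree $\lfloor\!\lfloor\ell\rfloor\!\rfloor$ in the $x$-variable. Since $g\in\mathcal D_{\lfloor\!\lfloor\ell\rfloor\!\rfloor}$ annihilates polynomials of degree $\leq\lfloor\!\lfloor\ell\rfloor\!\rfloor$, subtracting $P^{\lfloor\!\lfloor\ell\rfloor\!\rfloor}_{x_0}[\mathcal K(\cdot,y)](x)$ in the inner integral against $g(x)$ leaves the value of the double integral unchanged. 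Applying the H\"older-type smoothness bound from Definition \ref{WBP}(ii) combined with the size bounds on lower-order derivatives, together with the Taylor remainder formula, yields the refined estimate
\begin{equation*}
\left|\mathcal K(x,y)-P^{\lfloor\!\lfloor\ell\rfloor\!\rfloor}_{x_0}[\mathcal K(\cdot,y)](x)\right|
\lesssim\frac{|x-x_0|^{\ell}}{|y-x_0|^{n+\ell}}
\end{equation*}
valid for $x$ in a small neighbourhood of $x_0$ and $y$ far from $x_0$. Combined with $|f(y)|\lesssim(1+|y|)^{\lfloor\!\lfloor\ell\rfloor\!\rfloor}$ and the identity $\lfloor\!\lfloor\ell\rfloor\!\rfloor=\ell-\ell^{**}$, this produces an integrable decay $|y-x_0|^{-n-\ell^{**}}$ with $\ell^{**}\in(0,1]$; the uniform-in-$j_1,j_2$ bound on $\|\psi_{j_1,j_2}\|_{L^\infty_{\mathrm{loc}}}$ then gives that the above difference tends to $0$ as $\min\{j_1,j_2\}\to\infty$.

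This shows that $\{\langle T(\phi_j f),g\rangle\}_{j\in\mathbb N}$ is Cauchy and hence convergent. To obtain independence of the limit from the choice of sequence, I would apply exactly the same argument to the difference $(\phi_j^{(1)}-\phi_j^{(2)})f$ for two admissible sequences $\{\phi_j^{(1)}\}_{j\in\mathbb N}$ and $\{\phi_j^{(2)}\}_{j\in\mathbb N}$, which again has support disjoint from $K$ for large $j$ and pushes off to infinity in a controlled way. The main obstacle I anticipate is the bookkeeping of the Taylor remainder: the smoothness in the $x$-variable only goes up to the fractional order $\ell$, so one must carefully track integer versus fractional parts and verify that the remainder after subtracting the degree-$\lfloor\!\lfloor\ell\rfloor\!\rfloor$ Taylor polynomial decays with the sharp power $|y-x_0|^{-n-\ell}$ rather than just $|y-x_0|^{-n-\lfloor\!\lfloor\ell\rfloor\!\rfloor}$, which would be insufficient when $\ell\in\mathbb Z_+$ and $\ell^{**}=1$ is the only gain.
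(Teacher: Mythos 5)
The paper does not prove this lemma; it quotes it directly from \cite[Lemma 2.2.12]{tor}, and your proof is the standard argument from that source. The core steps are all correct: writing the difference as a kernel integral when the supports of $(\phi_{j_1}-\phi_{j_2})f$ and $g$ are separated, subtracting the degree-$\lfloor\!\lfloor\ell\rfloor\!\rfloor$ Taylor polynomial of $\mathcal K(\cdot,y)$ at a base point $x_0\in\operatorname{supp}g$ using the vanishing moments of $g$, and invoking the sharp remainder bound $|x-x_0|^{\ell}|x_0-y|^{-n-\ell}$ to obtain the integrable tail decay $|y|^{-n-\ell^{**}}$ after multiplying by $|f(y)|\lesssim(1+|y|)^{\lfloor\!\lfloor\ell\rfloor\!\rfloor}$. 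The final worry about the bookkeeping of integer and fractional parts is unnecessary: Lemma \ref{Taylor} with $N=\lfloor\!\lfloor\ell\rfloor\!\rfloor$ and $s=\ell^{**}\in(0,1]$ already produces the factor $|x-x_0|^{N+s}=|x-x_0|^{\ell}$ together with the H\"older quotient of the order-$\lfloor\!\lfloor\ell\rfloor\!\rfloor$ $x$-derivatives, which the second $\operatorname{CZO}(\ell)$ bound controls by $|x_0-y|^{-n-\ell}$ as soon as $|x-x_0|<\tfrac12|x_0-y|$ (arranged for $R$ large), so the exponent $\ell$ rather than $\lfloor\!\lfloor\ell\rfloor\!\rfloor$ is exactly what you get. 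One small imprecision: the quantity $\|\psi_{j_1,j_2}\|_{L^\infty_{\mathrm{loc}}}$ is not what is used; what is used is the uniform bound $\|\phi_{j_1}-\phi_{j_2}\|_{L^\infty}\leq 2\sup_j\|\phi_j\|_{L^\infty}<\infty$ from assumption (ii), with the polynomial growth of $f$ already absorbed into the $|y|^{-n-\ell^{**}}$ decay rather than controlled pointwise.
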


Based on Lemma \ref{tor2.2.12}, we can give the following definition.

\begin{definition}\label{10.10}
Let $\ell\in(0,\infty)$. For any $T\in\operatorname{CZO}(\ell)$ and $f\in\mathcal O^{\lfloor\!\lfloor\ell\rfloor\!\rfloor}$,
where $\lfloor\!\lfloor\ell\rfloor\!\rfloor$ is the same as in \eqref{ceil},
we define $Tf:\ \mathcal D_{\lfloor\!\lfloor\ell\rfloor\!\rfloor}\to\mathbb{C}$
to be the same as in \eqref{Tphijf}.
In particular, for any $f(y):=y^\gamma$ with $y\in\mathbb R^n$,
where $\gamma\in\mathbb{Z}_+^n$ satisfies $|\gamma|\leq\lfloor\!\lfloor\ell\rfloor\!\rfloor$,
we define $T(y^\gamma)$ in this way.
\end{definition}

As it turns out, providing Calder\'on--Zygmund type conditions
that are sufficient to give boundedness on the full scale of the spaces $\dot A^{s,\tau}_{p,q}(W)$
(or even just $\dot F^s_{p,q}$) requires kernel estimates
that are somewhat more complicated than just those obtained by
combining conditions of the type $T\in\operatorname{CZO}(\ell)$ and $T^*\in\operatorname{CZO}(\ell^*)$.
The following conditions are inspired by,
but not exactly the same as, those used in \cite{ftw88,tor}.

\begin{definition}\label{CZK}
Let $ E,F\in\mathbb{R}$, $T\in\mathcal L(\mathcal S,\mathcal S')$,
and $\mathcal K\in\mathcal S'(\mathbb R^n\times\mathbb R^n)$ be its Schwartz kernel.
We say that $T\in\operatorname{CZK}^0(E;F)$
if the restriction of $\mathcal K$ to
$\{(x,y)\in\mathbb R^n\times\mathbb R^n:\ x\neq y\}$
is a continuous function such that all derivatives below
exist as continuous functions and there exists a positive constant $C$ such that,
for any $\alpha\in\mathbb{Z}_+^n$ with $|\alpha|\leq\lfloor\!\lfloor E\rfloor\!\rfloor_+$
and for any $x,y\in\mathbb R^n$ with $x\neq y$,
\begin{equation}\label{CZK0}
|\partial_x^\alpha\mathcal K(x,y)|
\leq C|x-y|^{-n-|\alpha|},
\end{equation}
for any $\alpha\in\mathbb{Z}_+^n$ with $|\alpha|=\lfloor\!\lfloor E\rfloor\!\rfloor$
and for any $x,y,u\in\mathbb R^n$ with $|u|<\frac12|x-y|$,
\begin{equation}\label{CZKx}
|\partial_x^\alpha\mathcal K(x,y)-\partial_x^\alpha\mathcal K(x+u,y)|
\leq C|u|^{E^{**}}|x-y|^{-n-E}
\end{equation}
and, for any $\alpha,\beta\in\mathbb{Z}_+^n$ with $|\alpha|\leq\lfloor\!\lfloor E\rfloor\!\rfloor_+$
and $|\beta|=\lfloor\!\lfloor F-|\alpha|\rfloor\!\rfloor$
and for any $x,y,v\in\mathbb R^n$ with $|v|<\frac12|x-y|$,
\begin{equation}\label{CZKy}
\begin{aligned}
\left|\partial_x^\alpha\partial_y^\beta\mathcal K(x,y)
-\partial_x^\alpha\partial_y^\beta\mathcal K(x,y+v)\right|
\leq C|v|^{(F-|\alpha|)^{**}}|x-y|^{-n-|\alpha|-(F-|\alpha|)},
\end{aligned}
\end{equation}
where $\lfloor\!\lfloor E\rfloor\!\rfloor$ and $E^{**}$
are the same as, respectively, in \eqref{ceil} and \eqref{r**}.

We say that $T\in\operatorname{CZK}^1(E;F)$
if $T\in\operatorname{CZK}^0(E;F)$ and, in addition,
there exists a positive constant $C$ such that,
for any $\alpha,\beta\in\mathbb{Z}_+^n$ with
$|\alpha|=\lfloor\!\lfloor E\rfloor\!\rfloor$ and $|\beta|=\lfloor\!\lfloor F-E\rfloor\!\rfloor$
and for any $x,y,u,v\in\mathbb R^n$ with $|u|+|v|<\frac12|x-y|$,
\begin{align}\label{CZKxy}
&\left|\partial_x^\alpha\partial_y^\beta\mathcal K(x,y)
-\partial_x^\alpha\partial_y^\beta\mathcal K(x+u,y)
-\partial_x^\alpha\partial_y^\beta\mathcal K(x,y+v)
+\partial_x^\alpha\partial_y^\beta\mathcal K(x+u,y+v)\right|\\
&\quad\leq C|u|^{E^{**}}|v|^{(F-E)^{**}}
|x-y|^{-n-E-(F-E)}. \notag
\end{align}
We may write just $\operatorname{CZK}(E;F)$
if the parameter values are such that \eqref{CZKxy} is void
and hence $\operatorname{CZK}^0(E;F)$ and $\operatorname{CZK}^1(E;F)$ coincide.
\end{definition}

Let us note that the two upper bounds for $|\alpha|$ in both \eqref{CZK0} and \eqref{CZKx}
have the positive part $\lfloor\!\lfloor E\rfloor\!\rfloor_+$,
so that the case $\alpha=\mathbf{0}$ is always included here.
In all other cases, some conditions may become void for some parameter values.
In particular, the condition \eqref{CZKxy} is void unless $F>E>0$.
It might be worth stressing that these conditions are not symmetric
with respect to the $x$ and the $y$ variables,
which one might expect on the basis of the classical theory in $L^2$,
where the boundedness of an operator and its adjoint are equivalent.
This is no longer the case in the much more general spaces that we consider
and hence it is natural that the two variables are playing somewhat different roles.

\begin{lemma}\label{CZKaux}
Let $\varepsilon\in(0,\infty)$, $N\in\mathbb N$, and
$f\in C^N(\mathbb R^n\setminus\{\mathbf{0}\})$
satisfy that there exists a positive constant $C$ such that,
for any $\beta\in\mathbb{Z}_+^n$ with $|\beta|=N$
and for any $y,v\in\mathbb{R}^n\setminus\{\mathbf{0}\}$ with $|y|\geq2|v|$,
\begin{equation}\label{456x}
|f(y)|\leq C|y|^{-n}
\text{ and }
\left|\partial^\beta f(y)-\partial^\beta f(y+v)\right|
\leq C|v|^{\varepsilon}|y|^{-n-N-\varepsilon}.
\end{equation}
Then there exists a positive constant $\widetilde{C}$ such that,
for any $\beta\in\mathbb{Z}_+^n$ and $y,v\in\mathbb{R}^n\setminus\{\mathbf{0}\}$ with $|y|\geq2|v|$, when $|\beta|\leq N$,
$|\partial^\beta f(y)|
\leq\widetilde{C}|y|^{-n-|\beta|}$
and, when $|\beta|<N$,
\begin{equation*}
\left|\partial^\beta f(y)-\partial^\beta f(y+v)\right|
\leq\widetilde{C}|v|\,|y|^{-n-|\beta|-1}.
\end{equation*}
\end{lemma}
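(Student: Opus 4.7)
The approach is a scaling argument reducing the desired estimates to a uniform interior regularity estimate on the unit ball. Fix $y_0 \in \mathbb R^n \setminus \{\mathbf 0\}$ and write $r := |y_0|$. The rescaled function $g(z) := r^n f(y_0 + (r/8)z)$ is well defined on $B(\mathbf 0, 1)$, and for $z \in B(\mathbf 0, 1)$ one has $|y_0 + (r/8)z| \in [7r/8, 9r/8]$; so the hypothesis $|f(y)| \leq C|y|^{-n}$ from \eqref{456x} yields $\|g\|_{L^\infty(B(\mathbf 0, 1))} \lesssim 1$. A short computation using the second estimate in \eqref{456x} together with $|(r/8)(z_1 - z_2)| \leq r/4 \leq \tfrac12 |y_0 + (r/8)z_i|$ for $z_1, z_2 \in B(\mathbf 0,1)$ yields $[\partial^\beta g]_{C^\varepsilon(B(\mathbf 0, 1))} \lesssim 1$ for every $|\beta| = N$, with all implicit constants independent of $y_0$ and $r$.

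The next step is to invoke the following standard interior interpolation inequality for H\"older spaces: there exists a constant $\kappa_N > 0$ depending only on $n$, $N$, and $\varepsilon$ such that, for every $g \in C^{N,\varepsilon}(\overline{B(\mathbf 0, 1)})$,
\begin{equation*}
\max_{|\beta| \leq N} \|\partial^\beta g\|_{L^\infty(B(\mathbf 0, 1/2))}
\leq \kappa_N \left( \|g\|_{L^\infty(B(\mathbf 0, 1))} + \sum_{|\beta| = N} [\partial^\beta g]_{C^\varepsilon(B(\mathbf 0, 1))}\right).
\end{equation*}
This is a Landau--Kolmogorov-type estimate provable, for instance, by expanding $g$ around a base point in $B(\mathbf 0, 1/2)$ via Taylor's formula to order $N$, evaluating at sufficiently many distinct points to produce a Vandermonde-type linear system, and solving for the derivatives at the base point in terms of values of $g$ and the H\"older seminorm of $\partial^N g$. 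Applying this inequality at $z = \mathbf 0$ and inserting the scaling identity $\partial^\beta f(y_0) = 8^{|\beta|} r^{-n-|\beta|} \partial^\beta g(\mathbf 0)$ produces the desired size bound $|\partial^\beta f(y_0)| \leq \widetilde C |y_0|^{-n-|\beta|}$ for every $|\beta| \leq N$.

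For the Lipschitz-type estimate when $|\beta| < N$, given $y, v \in \mathbb R^n \setminus \{\mathbf 0\}$ with $|y| \geq 2|v|$, the segment $\{y + tv : t \in [0,1]\}$ is contained in $\{\zeta : |\zeta| \geq |y|/2\}$. Applying the fundamental theorem of calculus together with the size bound just established (now at order $|\beta| + 1 \leq N$, which is still permitted),
\begin{equation*}
|\partial^\beta f(y+v) - \partial^\beta f(y)|
\leq \sum_{i=1}^n |v_i| \int_0^1 \left|\partial^{\beta + e_i} f(y + t v)\right|\,dt
\lesssim |v|\,|y|^{-n-|\beta|-1},
\end{equation*}
which is the claimed inequality.

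The essentially unique technical obstacle is the interior $C^{N,\varepsilon}$ interpolation inequality; it is standard but must be stated in a scale-invariant form so that $\kappa_N$ appears in the final estimate as a universal multiplier, and the $|y_0|^{-n-|\beta|}$ scaling then emerges automatically from the powers of $r$ absorbed into $g$. All other steps amount to bookkeeping of the rescaling and an application of the mean value theorem.
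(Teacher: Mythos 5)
Your proof is correct, and it takes a genuinely different route from the paper. The paper's argument is a global one: it first shows, by a chaining argument along paths (dyadic dilations plus arcs at constant distance from the origin for $n\geq 2$, adapted separately for $n=1$), that the top-order derivatives $\partial^\beta f$ with $|\beta|=N$ converge to limits $v_\beta$ at infinity; it then runs a backward induction in $k=|\gamma|$ from $N$ down to $0$, constructing polynomials $p_\gamma(y)=\sum_{|\alpha|\le N-k}v_{\gamma+\alpha}y^\alpha/\alpha!$ with $|\partial^\gamma f-p_\gamma|\lesssim|y|^{-n-k}$, and finally uses the size bound $|f|\lesssim|y|^{-n}$ to force $p_0\equiv0$, hence all $p_\gamma\equiv0$. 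Your argument is instead purely local: you rescale around each $y_0$ at scale $|y_0|/8$, observe that the rescaled function $g$ has a uniform $L^\infty$ bound and uniform $C^\varepsilon$ bounds on its $N$-th order derivatives, and invoke a Landau--Kolmogorov-type interior interpolation inequality on the unit ball to control all intermediate derivatives, then undo the scaling. Both arguments use the mean value theorem for the Lipschitz-type difference estimate once the derivative bound is in hand.

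What each approach buys: your route is shorter and conceptually cleaner, and it dispenses with the separate treatment of $n=1$ that the paper needs (its path argument requires circling the origin, which is impossible on the line, whereas your argument never leaves a small ball around $y_0$). The cost is that you invoke the interior $C^{N,\varepsilon}$ interpolation inequality as a black box; this inequality is genuinely standard and provable exactly as you sketch (Taylor expansion plus a Vandermonde system), but it is not stated or proved in the paper, so in a fully self-contained write-up you would need to supply it. The paper's proof is longer and more hands-on, but uses only the triangle inequality, geometric series, and Taylor's formula. Your computation of the constants (the choice of $r/8$, the verification $|v|\le r/4\le\tfrac12|y_i|$, the scaling identity $\partial^\beta f(y_0)=8^{|\beta|}r^{-n-|\beta|}\partial^\beta g(\mathbf 0)$) is correct, and the final Lipschitz estimate correctly uses the size bound at order $|\beta|+1\le N$ together with $|y+tv|\ge|y|/2$ on the segment.
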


\begin{proof}
We first note that the claim concerning the differences
is an easy consequence of the derivative bound.
Indeed, for any $\beta\in\mathbb{Z}_+^n$ with $|\beta|<N$
and for any $y,v\in\mathbb{R}^n\setminus\{\mathbf{0}\}$ with $|y|\geq2|v|$,
by the mean value theorem, we obtain
\begin{align*}
\left|\partial^\beta f(y)-\partial^\beta f(y+v)\right|
=\left|\int_0^1v\nabla\partial^\beta f(y+tv)\,dt\right|
\lesssim|v|\int_0^1|y+tv|^{-n-|\beta|-1}\,dt
\lesssim|v|\,|y|^{-n-|\beta|-1}.
\end{align*}
This is the desired estimate.

For the derivative bound, let us first consider $n\geq2$.
We first claim that, for any $\beta\in\mathbb{Z}_+^n$ with $|\beta|=N$ and
for any $z,y\in\mathbb{R}^n\setminus\{\mathbf{0}\}$ with $|z|\geq|y|$,
\begin{equation}\label{CZKaux1}
\left|\partial^\beta f(y)-\partial^\beta f(z)\right|
\lesssim|y|^{-n-N}.
\end{equation}
To show this, suppose first that $|y|=|z|$.
We restrict our consideration to a plane that contains $\mathbf{0}$, $y$, and $z$
and identify this plane with $\mathbb C$;
this is just for the convenience of the notation with polar coordinates.
After a rotation, we may further assume that $y=r\in(0,\infty)$ and $z=re^{i\phi}$,
where $\phi\in(-\pi,\pi]$. Then, by \eqref{456x},
for any $\beta\in\mathbb{Z}_+^n$ with $|\beta|=N$, we have
\begin{align*}
\left|\partial^\beta f(y)-\partial^\beta f(z)\right|
\leq\sum_{k=1}^7\left|\partial^\beta f\left(re^{i\phi\frac{k}{7}}\right)
-\partial^\beta f\left( r e^{i\phi\frac{k-1}{7}}\right)\right|
\lesssim\sum_{k=1}^7\left(\frac{r}{2}\right)^{\varepsilon}r^{-n-N-\varepsilon}
\sim|y|^{-n-N},
\end{align*}
where we used the assumption with $r e^{i\phi\frac{k-1}{7}}$ in place of $y$ and
$$
v
=r\left(e^{i\phi\frac{k}{7}}-e^{i\phi\frac{k-1}{7}}\right)
=re^{i\phi\frac{k-1}{7}}\left(e^{i\frac{\phi}{7}}-1\right)
$$
with $|v|\leq r\frac{|\phi|}{7}\leq r\frac{\pi}{7}<\frac{r}{2}=\frac{|y|}{2}$.

Suppose then that $z=\rho y$ with $\rho\in(1,\infty)$.
Then, choosing $K\in\mathbb Z$ with $\frac{\rho}{2}<2^K\leq\rho$, by \eqref{456x}, we obtain
\begin{align*}
\left|\partial^\beta f(y)-\partial^\beta f(z)\right|
&\leq\sum_{k=1}^K\left|\partial^\beta f\left(2^{k-1}y\right)
-\partial^\beta f\left(2^ky\right)\right|
+\left|\partial^\beta f\left(2^Ky\right)-\partial^\beta f(z)\right|\\
&\lesssim\sum_{k=1}^K\left|2^{k-1}y\right|^{\varepsilon}
\left|2^ky\right|^{-n-N-\varepsilon}
+\left|\left(2^K-\rho\right)y\right|^{\varepsilon}
|\rho y|^{-n-N-\varepsilon}\\
&\lesssim\left[\sum_{k=1}^K2^{-k(n+N)}+\rho^{-n-N}\right]|y|^{-n-N}
\lesssim|y|^{-n-N}.
\end{align*}
A combination of the above two cases proves the general situation,
by choosing an auxiliary point $u$ such that $|u|=|y|$ and $z=\rho u$ with $\rho>1$.
This finishes the proof of \eqref{CZKaux1}.

From \eqref{CZKaux1}, it follows that,
for any $y\in\mathbb{R}^n\setminus\{\mathbf{0}\}$,
the limit $v_\beta:=\lim_{|y|\to\infty}\partial^\beta f(y)$ exists and
\begin{equation*}
\left|\partial^\beta f(y)-v_{\beta}\right|
\lesssim|y|^{-n-N}.
\end{equation*}
We next show by backwards induction that,
for any $k\in\{0,1,\ldots,N\}$ and $\gamma\in\mathbb{Z}_+^n$ with $|\gamma|=k$,
there exists a polynomial
\begin{equation}\label{CZKaux2}
p_\gamma(y):=\sum_{\alpha\in\mathbb{Z}_+^n,\,|\alpha|\leq N-k}
v_{\gamma+\alpha}\frac{y^\alpha}{\alpha!}
\end{equation}
such that, for any $y\in\mathbb{R}^n\setminus\{\mathbf{0}\}$,
\begin{equation}\label{CZKaux3}
\left|\partial^\gamma f(y)-p_\gamma(y)\right|
\lesssim|y|^{-n-k};
\end{equation}
moreover, it is a part of the assertion that the coefficient $v_{\gamma+\alpha}$
only depends on the sum $\gamma+\alpha$ as indicated by the notation.
We have already established the base case $k=N$ of the induction.

Before proceeding to the induction step,
we observe a key property of polynomials of the form \eqref{CZKaux2}.
For any $i\in\{1,\ldots,n\}$, let
$e_i:=(0,\ldots,0,1,0,\ldots,0)\in\mathbb{Z}_+^n$
where only the $i$-th component is $1$.
Then, for any $k\in\{0,1,\ldots,N-1\}$ and $\gamma\in\mathbb{Z}_+^n$ with $|\gamma|=k$,
\begin{align*}
\partial_ip_\gamma
=\sum_{\alpha\in\mathbb{Z}_+^n,\,|\alpha|\leq N-k,\,\alpha_i>0} v_{\gamma+\alpha}\frac{y^{\alpha-e_i}}{(\alpha-e_i)!}
=\sum_{\beta\in\mathbb{Z}_+^n,\,|\beta|\leq N-k-1} v_{\gamma+e_i+\beta}\frac{y^{\beta}}{\beta!}=p_{\gamma+e_i}.
\end{align*}

Let us now assume that our induction hypothesis is valid for any $\gamma\in\mathbb{Z}_+^n$ with $|\gamma|=k+1$,
and we prove it for any $\gamma\in\mathbb{Z}_+^n$ with $|\gamma|=k$. For any $|y|\leq|z|$,
we choose a path $\Gamma$ from $y$ to $z$ such that (for example)
$\Gamma$ first travels at constant distance $|y|$ from the origin to a point $u$
such that $z=\rho u$ with $\rho\geq1$. Then
\begin{align}\label{CZKaux4}
\partial^\gamma f(z)-\partial^\gamma f(y)
&=\int_{\Gamma}\nabla\partial^\gamma f(x)\cdot d\vec{x}
=\sum_{i=1}^n\int_{\Gamma}\partial^{\gamma+e_i}f(x)\,dx_i\\
&=\sum_{i=1}^n\int_{\Gamma}p_{\gamma+e_i}(x)\,dx_i
+\sum_{i=1}^n\int_{\Gamma}\left[\partial^{\gamma+e_i}f(x)-p_{\gamma+e_i}(x)\right]\,dx_i\notag\\
&=\sum_{i=1}^n\int_{\Gamma}\partial_i p_{\gamma}(x)\,dx_i
+\sum_{i=1}^n\int_{\Gamma} O \left( |x|^{-n-k-1}\right)\,dx_i\notag\\
&=\int_{\Gamma}\nabla p_{\gamma}(x)\cdot d\vec{x}
+O\left(|y|\,|y|^{-n-k-1}\right)
+\int_{|y|}^{|z|}O\left(r^{-n-k-1}\right)\, dr\notag\\
&=p_{\gamma}(z)-p_{\gamma}(y)
+O\left(|y|^{-n-k}\right),\notag
\end{align}
where we used the induction hypothesis to estimate the second term;
note that the length of the path at constant distance $|y|$
from the origin is at most $\pi|y|$. From here we deduce that,
for any $y,z\in\mathbb{R}^n\setminus\{\mathbf{0}\}$ with $|y|\leq|z|$,
\begin{equation*}
\left|\left[\partial^\gamma f(z)-p_{\gamma}(z)\right]
-\left[\partial^\gamma f(y)-p_{\gamma}(y)\right]\right|
\lesssim|y|^{-n-k},
\end{equation*}
and hence $\partial^\gamma f(y)-p_{\gamma}(y)$ converges to a limit as $|y|\to\infty$.
Observing that the constant term $v_{\gamma}$ of $p_\gamma$
is irrelevant for the computation \eqref{CZKaux4},
we can choose this constant term so that this limit is zero, which shows \eqref{CZKaux3}.

By induction, we have now proved \eqref{CZKaux3} for any $k\in\{0,1,\ldots,N\}$
and in particular for $k=0$. However, by assumption,
we also know that $|f(y)|\lesssim|y|^{-n}$, and hence
\begin{equation*}
|p_0(y)|\leq|p_0(y)-f(y)|+|f(y)|\lesssim|y|^{-n}.
\end{equation*}
The only way that a polynomial can satisfy such an estimate is that the zero polynomial
and hence all its coefficients must vanish identically.
Since the coefficients of each $p_\gamma$ are a subset of the coefficients of $p_0$,
we find that all these polynomials vanish.
But then the bound \eqref{CZKaux3} reduces to the assertion of the present lemma.
This finishes the proof of the first claim when $n\geq2$.

The key difference for $n=1$ is that in this case
it is not possible to travel around the origin at a constant distance from the origin.
Instead, one needs to run the previous argument separately on the positive and the negative half-lines,
constructing possibly different polynomials on each of them.
However, since the polynomials are eventually seen to vanish identically,
this has no impact on the final conclusions.
This finishes the proof of Lemma \ref{CZKaux}.
\end{proof}

\begin{remark}\label{CZK cases}
Let $E,F\in\mathbb{R}$, $T\in\mathcal L(\mathcal S,\mathcal S')$,
and $\mathcal K\in\mathcal S'(\mathbb R^n\times\mathbb R^n)$ be its Schwartz kernel.
We observe the following prominent special cases.
\begin{enumerate}[\rm(i)]
\item\label{T CZO} When $F\leq 0<E$,
the condition $T\in\operatorname{CZK}(E;F)=\operatorname{CZK}(E;0)$
consists of just \eqref{CZK0} and \eqref{CZKx}.
This is conventionally denoted by $T\in\operatorname{CZO}(E)$,
meaning Calder\'on--Zygmund estimates of order $E$ in the $x$ variable
and no assumptions in the $y$ variable.

\item\label{T* CZO} When $E\leq 0<F$,
the condition $T\in\operatorname{CZK}(E;F)=\operatorname{CZK}(0;F)$
consists of \eqref{CZK0} and \eqref{CZKy} for $\alpha=0$, that is,
for any $x,y\in\mathbb R^n$ with $x\neq y$,
\begin{equation}\label{CZK00}
|\mathcal K(x,y)|\lesssim|x-y|^{-n}
\end{equation}
and, for any $\beta\in\mathbb{Z}_+^n$ with $|\beta|=\lfloor\!\lfloor F\rfloor\!\rfloor$
and for any $x,y,v\in\mathbb R^n$ with $|v|<\frac12|x-y|$,
\begin{equation}\label{CZKy0}
\left|\partial_y^\beta\mathcal K(x,y)-\partial_y^\beta\mathcal K(x,y+v)\right|
\lesssim|v|^{F^{**}}|x-y|^{-n-F},
\end{equation}
where $\lfloor\!\lfloor F\rfloor\!\rfloor $ and $F^{**}$
are the same as, respectively, in \eqref{ceil} and \eqref{r**}.
By Lemma \ref{CZKaux} [applied to $f(y):=K(x,x+y)$],
these imply the ``intermediate'' smoothness estimates:
for any $\beta\in\mathbb{Z}_+^n$ with $|\beta|\leq\lfloor\!\lfloor F\rfloor\!\rfloor$
and for any $x,y\in\mathbb{R}^n$ with $x\neq y$,
\begin{equation}\label{CZKy1}
\left|\partial_y^\beta \mathcal K(x,y)\right|
\lesssim|x-y|^{-n-|\beta|}.
\end{equation}
The totality of conditions \eqref{CZK00}, \eqref{CZKy0}, and \eqref{CZKy1}
[which is equivalent to just \eqref{CZK00} and \eqref{CZKy0} by Lemma \ref{CZKaux}]
is conventionally denoted by $T^*\in\operatorname{CZO}(F)$,
meaning the Calder\'on--Zygmund estimates of order $F$ in the $y$ variable
and no assumptions in the $x$ variable.

\item\label{factor1} When $0<F<E\wedge 1$,
the condition $T\in\operatorname{CZK}(E;F)$
consists of \eqref{CZK0}, \eqref{CZKx}, and \eqref{CZKy0} (where $\beta=0$)
and hence $T\in\operatorname{CZK}(E;F)$ ``factorises'' into
the classical conditions $T\in\operatorname{CZO}(E)$
and $T^*\in\operatorname{CZO}(F)$ in this case.
\item\label{factor2} When $0<E<F\wedge1$,
the condition $ T\in\operatorname{CZK}^0(E;F)$ consists of
\eqref{CZK0}, \eqref{CZKx} (both with $\alpha=0$), and \eqref{CZKy0}
and hence $T\in\operatorname{CZK}^0(E;F)$ again ``factorises'' into
the conditions $T\in\operatorname{CZO}(E)$ and $T^*\in\operatorname{CZO}(F)$.
However, for these parameter values, this would not be the case with
$T\in\operatorname{CZK}^1(E;F)$, and this is the main reason
for introducing the distinction between the two conditions.
\end{enumerate}

By \eqref{T CZO} and \eqref{T* CZO} of Remark \ref{CZK cases},
the condition $T\in\operatorname{CZK}^0(E;F)$ implies that
\begin{equation}\label{CZK and CZO}
T\in\operatorname{CZO}(E)
\text{ and }
T^*\in\operatorname{CZO}(F).
\end{equation}
\end{remark}

We define the (usual) Taylor expansion of order $ N\in\mathbb{Z}$
of a function $f$ around a point $z$ as, for any $x\in\mathbb{R}^n$,
\begin{equation*}
\operatorname{Tayl}_z^N f(x)
:=\sum_{\alpha\in\mathbb{Z}_+^n,\,|\alpha|\leq N}\frac{(x-z)^\alpha}{\alpha!}\partial^\alpha f(z)
\end{equation*}
with the interpretation that an empty sum is zero;
hence $\operatorname{Tayl}_z^Nf(x)=0$ for $N<0$.
For convenience of reference,
we record the following standard bounds about the error of the Taylor approximation.

\begin{lemma}\label{Taylor}
Let $f$ have a Taylor expansion of order $N\in\mathbb{Z}_+$
around a point $z\in\mathbb{R}^n$ and let $s\in[0,1]$.
Then there exists a positive constant $C$,
depending only on $N$, such that, for any $y\in\mathbb{R}^n\setminus\{z\}$,
\begin{align}\label{10.16x}
\left|f(y)-\left(\operatorname{Tayl}_z^Nf\right)(y)\right|
&\leq C|y-z|^{N+s}\sup_{\alpha\in\mathbb{Z}_+^n,\,|\alpha|=N}\sup_{t\in(0,1)}
\frac{|\partial^\alpha f(z+t(y-z))-\partial^\alpha f(z)|}{|t(y-z)|^s}\\
&\leq C|y-z|^{N+1}\sup_{\beta\in\mathbb{Z}_+^n,\,|\beta|=N+1}\sup_{t\in(0,1)}
\left|\partial^\beta f(z+t(y-z))\right|\notag\\
&\leq C|y-z|^{N+1}\sup_{\beta\in\mathbb{Z}_+^n,\,|\beta|=N+1}\left\|\partial^\beta f \right\|_{L^\infty}.\notag
\end{align}
\end{lemma}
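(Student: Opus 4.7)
The plan is to reduce the multivariate claim to a one-dimensional Taylor expansion along the segment from $z$ to $y$. Set $g(t):=f(z+t(y-z))$ for $t\in[0,1]$. By the chain rule, for each $k\in\{0,1,\ldots,N+1\}$,
$$
g^{(k)}(t)=k!\sum_{\alpha\in\mathbb Z_+^n,\,|\alpha|=k}\frac{(y-z)^\alpha}{\alpha!}\,\partial^\alpha f(z+t(y-z)),
$$
so that $(\operatorname{Tayl}_z^N f)(y)=\sum_{k=0}^N g^{(k)}(0)/k!$ and the left-hand side of \eqref{10.16x} equals $|g(1)-\sum_{k=0}^N g^{(k)}(0)/k!|$.

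The key step is to rewrite the Taylor remainder in a form that reflects continuity of the $N$-th derivative rather than boundedness of the $(N+1)$-th derivative. For $N\geq1$, I would start from the standard integral remainder of order $N-1$,
$$
g(1)-\sum_{k=0}^{N-1}\frac{g^{(k)}(0)}{k!}=\int_0^1\frac{(1-t)^{N-1}}{(N-1)!}\,g^{(N)}(t)\,dt,
$$
and subtract $g^{(N)}(0)/N!=g^{(N)}(0)\int_0^1(1-t)^{N-1}/(N-1)!\,dt$ to obtain
$$
g(1)-\sum_{k=0}^{N}\frac{g^{(k)}(0)}{k!}=\int_0^1\frac{(1-t)^{N-1}}{(N-1)!}\,[g^{(N)}(t)-g^{(N)}(0)]\,dt.
$$
The chain rule formula above gives $|g^{(N)}(t)-g^{(N)}(0)|\leq C_N |y-z|^N\sup_{|\alpha|=N}|\partial^\alpha f(z+t(y-z))-\partial^\alpha f(z)|$. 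Writing this as $|t(y-z)|^s$ times the ratio that appears in \eqref{10.16x} and pulling the resulting supremum out of the integral yields the first inequality with the constant $C_N\int_0^1 (1-t)^{N-1}t^s/(N-1)!\,dt$, which is bounded uniformly in $s\in[0,1]$ and depends only on $N$. The case $N=0$ reduces to $|f(y)-f(z)|$, handled by taking $t=1$ in the supremum on the right-hand side.

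The second inequality follows from the first by specializing $s=1$ and applying the fundamental theorem of calculus to the difference $\partial^\alpha f(z+t(y-z))-\partial^\alpha f(z)$: this bounds it by $|t(y-z)|$ times $\sup_{|\beta|=N+1}\sup_{t'\in(0,1)}|\partial^\beta f(z+t't(y-z))|$, after which division by $|t(y-z)|$ and absorption of $t't$ into a single parameter in $(0,1)$ gives the claim. The third inequality is immediate by definition of $\|\cdot\|_{L^\infty}$. There is no real obstacle here; the only item that requires a little care is the derivation of the ``continuity-form'' remainder, because in the regime $s<1$ one cannot assume existence of any derivative of order higher than $N$, so the standard Lagrange or integral remainder involving $g^{(N+1)}$ would not be directly applicable.
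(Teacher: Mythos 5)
Your argument is essentially the paper's: the reduction to the one-variable function $g(t)=f(z+t(y-z))$, the chain-rule identity for $g^{(k)}$, and the subtraction of $g^{(N)}(0)/N!$ from the order-$(N-1)$ integral remainder is precisely the one-dimensional form of what the paper does directly with the multivariate integral-remainder Taylor formula and the identity $\int_0^1 N(1-t)^{N-1}\,dt=1$, and your constant $\int_0^1 (1-t)^{N-1}t^s\,dt/(N-1)!$ is bounded uniformly in $s$ exactly as needed. The one small variation is in the passage from the first displayed bound to the second: you specialize to $s=1$, which yields the composite estimate of the left-hand side by the second bound, but not literally the asserted intermediate inequality between the two bounds for general $s\in[0,1]$; the paper obtains that by writing $|t(y-z)|^{1-s}\le|y-z|^{1-s}$, though this distinction is immaterial for how the lemma is applied.
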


\begin{proof}
For any $y\in\mathbb{R}^n\setminus\{z\}$, from the multivariate Taylor formula
\begin{align*}
f(y)
&=\sum_{\alpha\in\mathbb{Z}_+^n,\,|\alpha|<N}\frac{(y-z)^\alpha}{\alpha!}\partial^\alpha f(z)\\
&\quad+\sum_{\alpha\in\mathbb{Z}_+^n,\,|\alpha|=N}\frac{(y-z)^\alpha}{\alpha!}
\int_0^1\partial^\alpha f(z+t(y-z))N(1-t)^{N-1}\,dt
\end{align*}
and the fact that $\int_0^1N(1-t)^{N-1}\,dt=1$, we infer that
\begin{align*}
&f(y)-\sum_{\alpha\in\mathbb{Z}_+^n,\,|\alpha|\leq N}\frac{(y-z)^\alpha}{\alpha!}\partial^\alpha f(z)\\
&\quad=\sum_{\alpha\in\mathbb{Z}_+^n,\,|\alpha|=N}\frac{(y-z)^\alpha}{\alpha!}
\int_0^1[\partial^\alpha f(z+t(y-z))-\partial^\alpha f(z)]N(1-t)^{N-1}\,dt,
\end{align*}
and hence
\begin{align*}
\left|f(y)-\operatorname{Tayl}_z^Nf(y)\right|
\leq\sum_{\alpha\in\mathbb{Z}_+^n,\,|\alpha|=N}\frac{|(y-z)^\alpha|}{\alpha!}
\int_0^1S|t(y-z)|^sN(1-t)^{N-1}\,dt
\lesssim S|y-z|^{N+s},
\end{align*}
where
$$
S:=\sup_{\alpha\in\mathbb{Z}_+^n,\,|\alpha|=N}\sup_{t\in(0,1)}
\frac{|\partial^\alpha f(z+t(y-z))-\partial^\alpha f(z)|}{|t(y-z)|^s}.
$$
This shows the first bound in \eqref{10.16x}.

For the second bound,
for any $i\in\{1,\ldots,n\}$, let
$e_i:=(0,\ldots,0,1,0,\ldots,0)\in\mathbb{Z}_+^n$
where only the $i$-th component is $1$.
Notice that, for any $t\in(0,1)$ and $v=t(y-z)$,
\begin{align*}
|v|^{-s}|\partial^\alpha f(z+v)-\partial^\alpha f(z)|
&=|v|^{-s}\left|\int_0^1v\cdot\nabla\partial^\alpha f(z+t'v)\,dt' \right| \\
&\leq|v|^{1-s}\sup_{i\in\{1,\ldots,n\}}\sup_{t'\in(0,1)}|\partial^{\alpha+e_i}f(z+t'v)| \\
&\leq|y-z|^{1-s}\sup_{\beta\in\mathbb{Z}_+^n,\,|\beta|=N+1}\sup_{t\in(0,1)}
\left|\partial^{\beta}f(z+t(y-z))\right|.
\end{align*}
Substituting this into the conclusion of the first bound gives the second bound in \eqref{10.16x}.
The third bound is trivial.
This finishes the proof of Lemma \ref{Taylor}.
\end{proof}

A useful connection between the Taylor error bounds
and Calder\'on--Zygmund kernel conditions is provided in the next two lemmas.
The role of the parameter $R$ may appear a bit arbitrary at this point, but this a form in which these technical bounds will be convenient to apply at a later point, where we choose $R$ to depend on the dimension $n$ only.

\begin{lemma}\label{CZK Taylor}
Let $E,F\in\mathbb{R}$, $\mathcal K\in\operatorname{CZK}^0(E;F)$, and $R\in(0,\infty)$.
Then there exists a positive constant $C$ such that,
for any $\alpha\in\mathbb{Z}_+^n$ with $|\alpha|\leq\lfloor\!\lfloor E\rfloor\!\rfloor_+$
and for any $x,y\in\mathbb{R}^n$ with
$0<|y|\leq R$ and $\max\{1,2|y|\}<|x|$,
\begin{equation*}
\left|\partial_x^\alpha\mathcal K(x,y)-
\left[\operatorname{Tayl}_{\mathbf{0}}^{\lfloor\!\lfloor F-|\alpha|\rfloor\!\rfloor}
\partial_x^\alpha\mathcal K(x,\cdot)\right](y)\right|
\leq C|x|^{-n-F},
\end{equation*}
where $\lfloor\!\lfloor E\rfloor\!\rfloor$ is the same as in \eqref{ceil}.
\end{lemma}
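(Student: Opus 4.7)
The plan is to split into two cases based on the sign of $F-|\alpha|$, because the Taylor polynomial $\operatorname{Tayl}_{\mathbf{0}}^{\lfloor\!\lfloor F-|\alpha|\rfloor\!\rfloor}$ collapses to the empty sum precisely when $F-|\alpha|\le 0$. In the trivial case $F-|\alpha|\le 0$, I would bound $|\partial_x^\alpha\mathcal K(x,y)|$ directly via \eqref{CZK0} (applicable since $|\alpha|\le\lfloor\!\lfloor E\rfloor\!\rfloor_+$) together with $|x-y|>|x|/2$ (which follows from $2|y|<|x|$), obtaining $|\partial_x^\alpha\mathcal K(x,y)|\lesssim|x|^{-n-|\alpha|}\le|x|^{-n-F}$; the last inequality uses $|x|\ge 1$ and $|\alpha|\ge F$. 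This handles in particular the borderline $F=|\alpha|$, where $\lfloor\!\lfloor 0\rfloor\!\rfloor=-1$.

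For the substantive case $F-|\alpha|>0$, I would set $N:=\lfloor\!\lfloor F-|\alpha|\rfloor\!\rfloor\in\mathbb Z_+$ and $s:=(F-|\alpha|)^{**}\in(0,1]$, and apply the first bound of Lemma \ref{Taylor} to $f(\cdot):=\partial_x^\alpha\mathcal K(x,\cdot)$ around $z=\mathbf{0}$; the required regularity of $f$ on the segment from $\mathbf{0}$ to $y$ is supplied by Definition \ref{CZK}, since this segment avoids $x$ thanks to $|y|<|x|/2$. This yields a Taylor remainder bounded by $C|y|^{N+s}S=C|y|^{F-|\alpha|}S$, where $S$ is the H\"older-type supremum over $|\beta|=N$ and $t\in(0,1)$ appearing in Lemma \ref{Taylor}. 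To estimate $S$, I would invoke \eqref{CZKy} with its ``$y$'' set to $\mathbf{0}$ and its ``$v$'' set to $ty$ (the admissibility $|ty|<\tfrac12|x|$ being immediate from $|y|<|x|/2$ and $t<1$), producing a bound of the form $C|ty|^s|x|^{-n-|\alpha|-(F-|\alpha|)}=C|ty|^s|x|^{-n-F}$, and hence $S\lesssim|x|^{-n-F}$. Using $|y|\le R$ to absorb the factor $|y|^{F-|\alpha|}\le R^{F-|\alpha|}$ into the constant completes this case, with a constant depending on $R$.

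The main obstacle is really just bookkeeping: making sure the index ranges in the hypotheses of \eqref{CZKy} (namely $|\alpha|\le\lfloor\!\lfloor E\rfloor\!\rfloor_+$ and $|\beta|=\lfloor\!\lfloor F-|\alpha|\rfloor\!\rfloor$) align with the parameters $N$ and $s$ fed into Lemma \ref{Taylor}, and verifying that the boundary/integer cases of $F-|\alpha|$ are properly absorbed by one of the two branches. The key structural observation that makes everything fit is the identity $\lfloor\!\lfloor F-|\alpha|\rfloor\!\rfloor+(F-|\alpha|)^{**}=F-|\alpha|$, which matches the Taylor exponent $N+s$ with the H\"older exponent supplied by \eqref{CZKy} and lets the factor of $|x|^{-n-F}$ emerge cleanly; no further regularity of $\mathcal K$ beyond what Definition \ref{CZK} provides is needed.
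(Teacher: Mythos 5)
Your proof is correct and takes essentially the same approach as the paper: both split on the sign of $F-|\alpha|$, dispatch the trivial case via \eqref{CZK0} together with $|x|\ge1$ and $|x-y|\sim|x|$, and handle the substantive case by applying the first bound of Lemma \ref{Taylor} with $N=\lfloor\!\lfloor F-|\alpha|\rfloor\!\rfloor$ and $s=(F-|\alpha|)^{**}$ and then estimating the H\"older supremum via \eqref{CZKy} (with the base point $\mathbf{0}$ and increment $ty$), absorbing the $|y|^{F-|\alpha|}\le R^{F-|\alpha|}$ factor into the constant.
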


\begin{proof}
Let $\alpha\in\mathbb{Z}_+^n$ satisfy $|\alpha|\leq\lfloor\!\lfloor E\rfloor\!\rfloor_+$.
If $F\leq|\alpha|$, then $\operatorname{Tayl}_{\mathbf{0}}^{\lfloor\!\lfloor F-|\alpha|\rfloor\!\rfloor}=0$,
and we simply use \eqref{CZK0} from Definition \ref{CZK} to obtain,
for any $x,y\in\mathbb{R}^n$ with $\max\{1, 2|y|\}<|x|$,
\begin{equation*}
|\partial_x^\alpha\mathcal K(x,y)|
\lesssim|x-y|^{-n-|\alpha|}
\sim|x|^{-n-|\alpha|}
\leq|x|^{-n-F}.
\end{equation*}

If $F>|\alpha|$, we choose $N=\lfloor\!\lfloor F-|\alpha|\rfloor\!\rfloor\geq0$
and $s=(F-|\alpha|)^{**}\in(0,1]$ (where $(F-\alpha)^{**}$ is the same as in \eqref{r**}), so that $N+s=F-|\alpha|$.
We apply Lemma \ref{Taylor} with these $N$ and $s$
and then \eqref{CZKy} from Definition \ref{CZK} to conclude that,
for any $x,y\in\mathbb{R}^n$ with
$0<|y|\leq R$ and $2|y|<|x|$,
\begin{align*}
&\left|\partial_x^\alpha\mathcal K(x,y)-
\left[\operatorname{Tayl}_{\mathbf{0}}^{\lfloor\!\lfloor F-|\alpha|\rfloor\!\rfloor}
\partial_x^\alpha\mathcal K(x,\cdot)\right](y)\right|\\
&\quad\lesssim|y-\mathbf{0}|^{F-|\alpha|}
\sup_{\substack{\beta\in\mathbb{Z}_+^n,\,|\beta|=\lfloor\!\lfloor F-|\alpha|\rfloor\!\rfloor \\t\in(0,1)}}
\frac{|\partial_y^\beta\partial_x^\alpha\mathcal K(x,ty)
-\partial_y^\beta\partial_x^\alpha\mathcal K(x,\mathbf{0})|}{|ty|^{(F-|\alpha|)^{**}}}\\
&\quad\lesssim|y|^{F-|\alpha|}|x-\mathbf{0}|^{-n-|\alpha|-(F-|\alpha|)}
\lesssim|x|^{-n-F}.
\end{align*}
A combination of the above two cases proves Lemma \ref{CZK Taylor}.
\end{proof}

\begin{lemma}\label{CZK Taylor2}
Let $E,F\in\mathbb{R}$, $\mathcal K\in\operatorname{CZK}^1(E;F)$,
and $R\in(0,\infty)$. Then there exists a positive constant $C$ such that,
for any $\alpha\in\mathbb{Z}_+^n$ with $|\alpha|=\lfloor\!\lfloor E\rfloor\!\rfloor$
and for any $x,y,u\in\mathbb{R}^n$ with $0<|y|\leq R$, $\max\{1,2|y|\}<|x|$, and $|u|+|y|<\frac12|x|$,
one has
\begin{equation*}
\left|\left(I-\operatorname{Tayl}_{\mathbf{0}}^{\lfloor\!\lfloor F-E\rfloor\!\rfloor}\right)
[\partial_x^\alpha\mathcal K(x,\cdot)-\partial_x^\alpha\mathcal K(x+u,\cdot)](y)\right|
\leq C|u|^{E^{**}}|x|^{-n-F},
\end{equation*}
where $\lfloor\!\lfloor E\rfloor\!\rfloor$ and $E^{**}$
are the same as, respectively, in \eqref{ceil} and \eqref{r**}.
\end{lemma}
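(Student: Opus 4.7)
The plan is to parallel the proof of Lemma \ref{CZK Taylor}, now applied to the ``fourth-difference'' function $g(y) := \partial_x^\alpha \mathcal K(x,y) - \partial_x^\alpha \mathcal K(x+u,y)$, and using the mixed-smoothness estimate \eqref{CZKxy} afforded by the stronger class $\operatorname{CZK}^1(E;F)$ (in place of the one-variable estimate \eqref{CZKy} used in the earlier lemma). As in that proof, I would split on the sign of $F-E$, since the Taylor polynomial of order $\lfloor\!\lfloor F-E\rfloor\!\rfloor$ is interpreted as the empty sum whenever $F-E \le 0$.

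First, if $F \le E$, then $\lfloor\!\lfloor F-E\rfloor\!\rfloor \le -1$ and the Taylor polynomial in the statement vanishes, so it suffices to prove $|g(y)| \lesssim |u|^{E^{**}} |x|^{-n-F}$. From $|u|+|y| < \tfrac12|x|$ it follows that $|y| < \tfrac12|x|$, whence $|x-y| \ge \tfrac12|x|$, and also $|u| < \tfrac12|x|-|y| \le \tfrac12|x-y|$. Thus \eqref{CZKx} applies at the given $\alpha$ (with $|\alpha|=\lfloor\!\lfloor E\rfloor\!\rfloor$) and yields
$$
|g(y)| \lesssim |u|^{E^{**}} |x-y|^{-n-E} \lesssim |u|^{E^{**}} |x|^{-n-E} \le |u|^{E^{**}} |x|^{-n-F},
$$
the last step using $|x| > 1$ together with $E \ge F$.

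For the substantive case $F > E$, I would set $N := \lfloor\!\lfloor F-E\rfloor\!\rfloor \in \mathbb Z_+$ and $s := (F-E)^{**} \in (0,1]$, so that $N+s = F-E$. The first bound of Lemma \ref{Taylor} (with $z = \mathbf 0$, applied to $g$) gives
$$
\left|g(y) - \operatorname{Tayl}_{\mathbf{0}}^{N}g(y)\right|
\lesssim |y|^{N+s}\sup_{\substack{\beta\in\mathbb Z_+^n,\,|\beta|=N \\ t\in(0,1)}}
\frac{|\partial^\beta g(ty) - \partial^\beta g(\mathbf{0})|}{|ty|^{s}}.
$$
For $|\beta|=N=\lfloor\!\lfloor F-E\rfloor\!\rfloor$, the numerator is exactly the mixed fourth-difference on the left-hand side of \eqref{CZKxy}, evaluated at the base points $(x,\mathbf{0})$ with perturbations $u$ (in the first slot) and $ty$ (in the second). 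The side condition $|u|+|ty| < \tfrac12|x-\mathbf{0}|$ is immediate from $|u|+|y| < \tfrac12|x|$, so \eqref{CZKxy} supplies $|\partial^\beta g(ty)-\partial^\beta g(\mathbf{0})| \lesssim |u|^{E^{**}}|ty|^{s}|x|^{-n-F}$.

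Substituting back and using that $|y|^{F-E} \le R^{F-E}$ is absorbed into the implicit constant (since $|y| \le R$ and $F-E > 0$) yields the claimed bound
$$
\left|g(y) - \operatorname{Tayl}_{\mathbf{0}}^{N}g(y)\right| \lesssim |u|^{E^{**}}|x|^{-n-F}.
$$
The main (but minor) obstacle is the bookkeeping of indices and side conditions; the key structural point is that the mixed-smoothness hypothesis \eqref{CZKxy}, which distinguishes $\operatorname{CZK}^1$ from $\operatorname{CZK}^0$, is precisely what allows the single-variable Taylor argument of Lemma \ref{CZK Taylor} to be upgraded to the two-variable difference estimate needed here.
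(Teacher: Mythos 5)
Your proposal is correct and follows essentially the same route as the paper's own proof: the same case split on the sign of $F-E$, the same direct application of \eqref{CZKx} when $F\leq E$ (after verifying that $|u|+|y|<\frac12|x|$ implies $|u|<\frac12|x-y|$), and the same combination of the first bound in Lemma \ref{Taylor} with the mixed-difference estimate \eqref{CZKxy} applied at base point $(x,\mathbf{0})$ with increments $(u,ty)$ when $F>E$, with the factor $|y|^{F-E}\leq R^{F-E}$ absorbed into the constant. The only cosmetic omission is that you do not explicitly note the claim is void when $E\leq 0$, but since the hypothesis $|\alpha|=\lfloor\!\lfloor E\rfloor\!\rfloor\geq0$ already forces $E>0$, this is immaterial.
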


\begin{proof}
Let $\alpha\in\mathbb{Z}_+^n$ satisfy $|\alpha|=\lfloor\!\lfloor E\rfloor\!\rfloor$.
If $E\leq 0$, the claim is void, so we assume that $E>0$.

If $F\leq E$, then $\operatorname{Tayl}_{\mathbf{0}}^{\lfloor\!\lfloor F-E\rfloor\!\rfloor}=0$,
and we simply use \eqref{CZKx} from Definition \ref{CZK} to obtain,
for any $x,y,u\in\mathbb{R}^n$ with $\max\{1,2|y|\}<|x|$ and $|u|<\frac12|x-y|$,
\begin{align*}
|\partial_x^\alpha\mathcal K(x,y)-\partial_x^\alpha\mathcal K(x+u,y)|
\lesssim|u|^{E^{**}}|x-y|^{-n-E}
\sim|u|^{E^{**}}|x|^{-n-E}
\leq|u|^{E^{**}}|x|^{-n-F}.
\end{align*}
Notice that, for any $x,y,u\in\mathbb{R}^n$ with $|u|+|y|<\frac12|x|$,
$|u|<\frac12|x|-|y|\leq\frac12(|x|-|y|)\leq\frac12|x-y|.$
This finishes the proof of the present lemma in this case.

If $F>E$, we choose $N=\lfloor\!\lfloor F-E\rfloor\!\rfloor\geq0$
and $s=(F-E)^{**}\in(0,1]$ (where $(F-E)^{**}$ is the same as in \eqref{r**}),
so that $N+s=F-E$.
We apply Lemma \ref{Taylor} with these $N$ and $s$
and then \eqref{CZKxy} from Definition \ref{CZK}
to conclude that $\mathcal K\in\operatorname{CZK}(E;F)$. Thus,
for any $x,y,u\in\mathbb{R}^n $ with $0<|y|\leq R$, $2|y|<|x|$,
and $|u|+|y|<\frac12|x|$,
\begin{align*}
&\left|\left( I-\operatorname{Tayl}_{\mathbf{0}}^{\lfloor\!\lfloor F-E\rfloor\!\rfloor}\right)
[\partial_x^\alpha\mathcal K(x,\cdot)-\partial_x^\alpha \mathcal K(x+u,\cdot)](y)\right|\\
&\quad\lesssim|y-\mathbf{0}|^{F-E}
\sup_{\substack{\beta\in\mathbb{Z}_+^n,\,|\beta|=\lfloor\!\lfloor F-E\rfloor\!\rfloor\\ t\in(0,1)}}
\Bigg[\Big|\partial^\beta[\partial_x^\alpha\mathcal K(x,\cdot)
-\partial_x^\alpha\mathcal K(x+u,\cdot)](ty)\\
&\qquad-\partial^\beta[\partial_x^\alpha\mathcal K(x,\cdot)
-\partial_x^\alpha \mathcal K(x+u,\cdot)](\mathbf{0})\Big|\Bigg]
\,|ty|^{-(F-E)^{**}}\\
&\quad\lesssim|y|^{F-E} |u|^{E^{**}}|x-\mathbf{0}|^{-n-E-(F-E)}
\lesssim|u|^{E^{**}}|x|^{-n-F}.
\end{align*}
A combination of the two cases shows Lemma \ref{CZK Taylor2}.
\end{proof}

\subsection{$T(1)$ Theorem for $\dot A^{s,\tau}_{p,q}(W)$}\label{ss T1}

The goal of this section is to identify the minimal size,
smoothness, and cancellation conditions of Calder\'on--Zygmund type
that guarantee the mapping of sufficiently regular atoms
into molecules of described parameters and to use this information to
provide reasonably sharp criteria for the boundedness of Calder\'on--Zygmund operators
on $\dot A^{s,\tau}_{p,q}(W)$. We begin by defining a short-hand notation
that combines the various types of assumptions that we have discussed above.
For any $T\in\mathcal L(\mathcal S,\mathcal S')$,
let $T^*\in\mathcal L(\mathcal S,\mathcal S')$
be defined by setting, for any $\varphi,\psi\in\mathcal S$,
$\langle T\varphi,\psi\rangle=\langle T^*\psi,\varphi\rangle.$

\begin{definition}
Let $\sigma\in\{0, 1\}$ and $ E,F,G,H\in\mathbb{R}$.
We say that $T\in\operatorname{CZO}^\sigma(E,F,G,H)$
if $T\in\mathcal L(\mathcal S,\mathcal S')$
and its Schwartz kernel $\mathcal K\in\mathcal S'(\mathbb R^n\times\mathbb R^n)$ satisfy
\begin{enumerate}[\rm(i)]
\item $T\in\operatorname{WBP}$, where $\operatorname{WBP}$
is the same as in Definition \ref{WBP}(i);
\item $\mathcal K\in\operatorname{CZK}^\sigma(E;F)$;
\item $T(y^\gamma)=0$ for any $\gamma\in\mathbb{Z}_+^n$ with $|\gamma|\leq G$;
\item $T^*(x^\theta)=0$ for any $\theta\in\mathbb{Z}_+^n$ with $|\theta|\leq H$.
\end{enumerate}
\end{definition}

The following theorem is the main result on the boundedness of
Calder\'on--Zygmund operators on $\dot A^{s,\tau}_{p,q}(W)$.

\begin{theorem}\label{T1 BF}
Let $s\in\mathbb R$, $\tau\in[0,\infty)$, $p\in(0,\infty)$, $q\in(0,\infty]$, $d\in[0,n)$,
and $\widetilde J,\widetilde s$ be the same as in \eqref{tauJ2}.
Let $W\in A_p$ have the $A_p$-dimension $d$
and let $T\in\operatorname{CZO}^\sigma(E,F,G,H)$, where
$\sigma\in\{0,1\}$ and $E,F,G,H\in\mathbb{R}$ satisfy
\begin{align}\label{T1Amol1}
\sigma\geq\mathbf{1}_{[0,\infty)}(\widetilde s),\
E>(\widetilde s)_+,\
F>\widetilde J-n+(\widetilde s)_-,\
G\geq\lfloor\widetilde s\rfloor_+,\text{ and }
H\geq\left\lfloor\widetilde J-n-\widetilde s\right\rfloor.
\end{align}
Then the following two statements hold.
\begin{enumerate}[\rm(i)]
\item\label{T1test} There exists an operator $\widetilde T\in\mathcal L(\dot A^{s,\tau}_{p,q}(W))$
that agrees with $T$ on $\mathcal{S}_\infty$.
\item\label{T1ext} If, in addition, $H\geq0$ or, in other words, if
\begin{equation}\label{T1Amol2}
\sigma\geq\mathbf{1}_{[0,\infty)}(\widetilde s),\
E>(\widetilde s)_+,\
F>\widetilde J-n+(\widetilde s)_-,\
G\geq\lfloor \widetilde s\rfloor_+,\text{ and }
H\geq\left\lfloor\widetilde J-n-\widetilde s\right\rfloor_+,
\end{equation}
then there exists an operator $\widetilde T\in\mathcal L(\dot A^{s,\tau}_{p,q}(W))$
that agrees with $T$ on $\mathcal S\cap\dot A^{s,\tau}_{p,q}(W)$.
\end{enumerate}
\end{theorem}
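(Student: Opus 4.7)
The plan is to invoke Proposition \ref{ext}: once it is known that $T$ maps sufficiently regular atoms to $\dot A^{s,\tau}_{p,q}(d)$-synthesis molecules, part \eqref{T1test} follows from part \eqref{ext1} of that proposition, and part \eqref{T1ext} from part \eqref{ext2} provided one additionally shows that $T$ extends to a bounded operator on $L^2$.

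The heart of the work is therefore the following atom-to-molecule claim: for $L,N\in(0,\infty)$ chosen large enough, whenever $a_P$ is an $(L,N)$-atom on a cube $P$, the image $Ta_P$ is (a uniform constant multiple of) an $\dot A^{s,\tau}_{p,q}(d)$-synthesis molecule on $P$ in the sense of Definition \ref{def A mol}. After reducing to $P=Q_{0,\mathbf 0}$ by translation and scaling, the pointwise bound $|Ta_P(x)|\lesssim (u_{K_m})_P(x)$ and its derivative and H\"older analogues are obtained via the standard splitting into a near field ($|x-x_P|\lesssim\ell(P)$) and a far field. In the near field I would use the weak boundedness property together with the bare kernel bound \eqref{CZK0} on $\partial_x^\alpha\mathcal K$ for $|\alpha|\leq\lfloor\!\lfloor E\rfloor\!\rfloor$ and the pointwise size of $a_P\in C^\infty_{\mathrm c}$. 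In the far field, one exploits the cancellation of $a_P$ by writing
\begin{equation*}
\partial_x^\alpha Ta_P(x)
=\int_{\mathbb R^n}\bigl[\partial_x^\alpha\mathcal K(x,y)-\bigl(\operatorname{Tayl}_{x_P}^{L}\partial_x^\alpha\mathcal K(x,\cdot)\bigr)(y)\bigr]\,a_P(y)\,dy
\end{equation*}
and applying Lemma \ref{CZK Taylor}, which converts the $y$-smoothness of $\mathcal K$ (parameter $F$) into decay of $\partial_x^\alpha Ta_P$ of order $|x-x_P|^{-n-F}$; the H\"older continuity of the top derivative $\partial_x^{\lfloor\!\lfloor N_m\rfloor\!\rfloor}Ta_P$ is handled analogously by Lemma \ref{CZK Taylor2}, whose hypothesis \eqref{CZKxy} is precisely why we need $\sigma\geq\mathbf 1_{[0,\infty)}(\widetilde s)$ in \eqref{T1Amol1}. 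The cancellation of $Ta_P$ up to order $L_m$ comes from
\begin{equation*}
\int_{\mathbb R^n}x^\theta\,Ta_P(x)\,dx
=\langle Ta_P,x^\theta\rangle
=\langle a_P,T^*(x^\theta)\rangle
=0\quad\text{for }|\theta|\leq H,
\end{equation*}
the pairing on the right being defined through Lemma \ref{tor2.2.12} and Definition \ref{10.10}. Matching parameters then reduces to checking that the four bounds in \eqref{T1Amol1} on $(E,F,G,H)$ yield the molecular parameters $K_m>\widetilde J+(\widetilde s)_-$, $L_m\geq\widetilde J-n-\widetilde s$, $M_m>\widetilde J$, and $N_m>\widetilde s$ of Definition \ref{def Js mol}, exploiting that only the integer part of $L_m$ matters, as noted right before that definition; the condition $G\geq\lfloor\widetilde s\rfloor_+$ enters here as the dual counterpart of the condition on $H$ and is what makes the bookkeeping of $T$-and-$T^*$ polynomial cancellations compatible with the extension of $T$ to $\mathcal O^{\lfloor\!\lfloor E\rfloor\!\rfloor}$ supplied by Lemma \ref{tor2.2.12}.

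The main technical obstacle I anticipate is this careful tracking of integer and fractional parts through the Taylor remainders, since both the CZ kernel conditions in Definition \ref{CZK} and the molecular conditions in Definition \ref{moleKLMN} split into integer-order derivative bounds and fractional-order H\"older increments, and the ranges claimed in \eqref{T1Amol1} are sharp only when these pieces align exactly. Once this bookkeeping is done, the conclusion is immediate: part \eqref{T1test} follows from Proposition \ref{ext}\eqref{ext1}. For part \eqref{T1ext}, the strengthened hypothesis \eqref{T1Amol2} gives $G,H\geq 0$, hence $T(1)=T^*(1)=0$; combined with $T\in\operatorname{WBP}$ and the kernel estimates \eqref{CZK0}, \eqref{CZKx}, \eqref{CZKy0} (which hold with positive H\"older exponents because $\widetilde J\geq n$ forces $E,F>0$), the classical $T(1)$ theorem of David--Journ\'e produces $T_2\in\mathcal L(L^2)$ agreeing with $T$ on $\mathcal S$, whereupon Proposition \ref{ext}\eqref{ext2} yields the desired extension $\widetilde T\in\mathcal L(\dot A^{s,\tau}_{p,q}(W))$ which agrees with $T$ on $(\mathcal S)^m\cap\dot A^{s,\tau}_{p,q}(W)$.
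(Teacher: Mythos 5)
Your overall strategy mirrors the paper's: reduce the theorem to an atom-to-molecule mapping claim for $\dot A^{s,\tau}_{p,q}(d)$-synthesis molecules (the paper packages this as Corollary \ref{T1Amol}, itself built on Corollary \ref{T1Js} and Proposition \ref{T1EFGH}), then invoke Proposition \ref{ext}\eqref{ext1} for part \eqref{T1test} and the David--Journ\'e $T(1)$ theorem plus Proposition \ref{ext}\eqref{ext2} for part \eqref{T1ext}. The far-field analysis (subtracting a Taylor polynomial, applying Lemmas \ref{CZK Taylor} and \ref{CZK Taylor2}) and the cancellation argument ($\int x^\theta\,Ta_P\,dx=\langle a_P,T^*(x^\theta)\rangle=0$ via Lemma \ref{tor2.2.12}) are also in line with the paper's Proposition \ref{T1EFGH}.

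There is, however, a genuine gap in the near-field argument. You state that in the region $|x-x_P|\lesssim\ell(P)$ you would control $\partial^\alpha Ta_P$ ``using the weak boundedness property together with the bare kernel bound \eqref{CZK0}\ldots and the pointwise size of $a_P$.'' That is not enough. WBP gives bounds on pairings $\langle T\varphi,\eta\rangle$ but does not by itself yield pointwise bounds on $\partial^\alpha Ta_P$ near the cube, let alone H\"older estimates on the top-order derivative there. The paper gets the near-field derivative bounds from \cite[Lemma 3.1.12]{tor} and the near-field H\"older estimate from the representation formula \cite[Lemma 3.1.22]{tor}; both of those lemmas require, in addition to WBP and $T\in\operatorname{CZO}(\ell+\varepsilon)$, the polynomial-annihilation condition $T(y^\gamma)=0$ for $|\gamma|\leq\ell$ with $\ell=\lfloor\!\lfloor E\rfloor\!\rfloor\wedge\lfloor G\rfloor$. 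This is where $G$ actually enters, and it is essential: without $T(y^\gamma)=0$, the near-field bound $\|\partial^\alpha Ta_P\|_{L^\infty}\lesssim 1$ for $|\alpha|\leq\ell$ simply fails. Your later remark that $G\geq\lfloor\widetilde s\rfloor_+$ ``makes the bookkeeping of $T$-and-$T^*$ polynomial cancellations compatible with the extension of $T$ to $\mathcal O^{\lfloor\!\lfloor E\rfloor\!\rfloor}$ supplied by Lemma \ref{tor2.2.12}'' misattributes $G$'s role: Lemma \ref{tor2.2.12} serves to give meaning to $T^*(x^\theta)$, which is governed by $H$; the parameter $G$ governs the $T$-side cancellation that drives the near-field smoothness estimates. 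As described, your near-field step would stall, and the subsequent parameter bookkeeping you flag as the main obstacle cannot be completed without first locating the correct mechanism ($T(y^\gamma)=0$ plus Torres's Lemmas 3.1.12 and 3.1.22) behind the constraint $G\geq\lfloor\widetilde s\rfloor_+$.
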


Notice that case \eqref{T1ext} involves an extra assumption
compared to the case \eqref{T1test} only when $\widetilde J-n-\widetilde s<0$.
We are now in a position to describe the conditions for
mapping of atoms into $(K,L,M,N)$-molecules for any prescribed tuple of parameters.
Once we have this result, we can then obtain criteria
for $\dot A^{s,\tau}_{p,q}(d)$-molecules
and eventually the $\dot A^{s,\tau}_{p,q}(W)$-boundedness (Theorem \ref{T1 BF} above),
as relatively quick corollaries further below.

\begin{proposition}\label{T1EFGH}
Let $\sigma\in\{0,1\}$, $ E,F,G,H\in\mathbb{R}$,
and $ K,L,M,N\in\mathbb{R}$.
Suppose that $T\in\operatorname{CZO}^\sigma(E,F,G,H)$.
Then $T$ maps sufficiently regular atoms to $(K,L,M,N)$-molecules provided that the following conditions are satisfied:
\begin{align*}
\sigma&\geq\mathbf{1}_{(0,\infty)}(N),\
\begin{cases}
E\geq N,\\
E>\lfloor N\rfloor_+,
\end{cases}
\begin{cases}
F\geq (K\vee M)-n,\\
F>\lfloor L\rfloor,
\end{cases}
G\geq\lfloor N\rfloor_+,
\end{align*}
and $H\geq\lfloor L\rfloor$.
\end{proposition}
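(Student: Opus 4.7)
The plan is to fix an $(L_a,N_a)$-atom $a_Q$ on $Q$ with $L_a$ and $N_a$ chosen sufficiently large in terms of $E,F,G,H$ and the target $K,L,M,N$, and then verify the four defining conditions of a $(K,L,M,N)$-molecule from Definition \ref{moleKLMN} for $cTa_Q$, with some harmless constant $c>0$ independent of $Q$. The argument splits naturally into a far region $|x-x_Q|\gtrsim\ell(Q)$, where $Ta_Q(x)$ is representable as an absolutely convergent integral against the kernel $\mathcal K$ (because $x\notin\operatorname{supp}a_Q\subset 3Q$), and a near region $|x-x_Q|\lesssim\ell(Q)$, where such a pointwise kernel representation fails and one must exploit cancellation and smoothness properties of $T$ itself.

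In the far region, for each $\alpha\in\mathbb Z_+^n$ with $|\alpha|\leq\lfloor\!\lfloor N\rfloor\!\rfloor$, I would differentiate under the integral (justified by $E\geq N>|\alpha|$) and subtract the $y$-Taylor polynomial of $\partial_x^\alpha\mathcal K(x,\cdot)$ about $x_Q$, which vanishes when integrated against $a_Q$ thanks to its cancellation condition (as long as $L_a\geq\lfloor\!\lfloor F\rfloor\!\rfloor$). After a translation and rescaling to fit the hypothesis of Lemma \ref{CZK Taylor}, that lemma yields
\begin{equation*}
|\partial^\alpha Ta_Q(x)|\lesssim\ell(Q)^{F-|\alpha|+n/2}|x-x_Q|^{-n-F}
\end{equation*}
in the far region, which produces both the $(u_K)_Q$-bound (taking $\alpha=\mathbf 0$ with $K\leq n+F$) and the $(u_M)_Q$-bounds on derivatives (with $M\leq n+F$), matching the hypothesis $F\geq(K\vee M)-n$. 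For the top-order H\"older condition ($|\alpha|=\lfloor\!\lfloor N\rfloor\!\rfloor$), the same scheme works with Lemma \ref{CZK Taylor2} replacing Lemma \ref{CZK Taylor}; this uses precisely the mixed H\"older bound \eqref{CZKxy} available under $\operatorname{CZK}^1$, explaining the hypothesis $\sigma\geq\mathbf 1_{(0,\infty)}(N)$.

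In the near region, my plan is to expand $a_Q$ around $x$ by its Taylor polynomial of order $\lfloor N\rfloor_+$ plus a H\"older remainder, apply $T$ term-by-term, and use the cancellation $T(y^\gamma)=0$ for $|\gamma|\leq G\geq\lfloor N\rfloor_+$ (interpreted in the sense of Definition \ref{10.10}) to annihilate the polynomial part. This reduces matters to estimating $T$ acting on a smooth function whose pointwise size is of order $|y-x|^{\lfloor N\rfloor_++1}\|a_Q\|_{C^{\lfloor N\rfloor_++1}}$, which is then controlled by combining the weak boundedness property with the Calder\'on--Zygmund kernel estimates, in the spirit of the local Campanato-type arguments used in \cite{ftw88,tor}. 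For the top-order H\"older condition, one differentiates this scheme once more, invoking the H\"older regularity of $\partial^\alpha a_Q$ encoded in the atom definition. The main obstacle of the whole proof is the bookkeeping in this near-region step: one must match the integer and the fractional parts of the six parameters $E,F,G,H,L,N$ against the non-strict inequalities $E\geq N$, $G\geq\lfloor N\rfloor_+$, and $H\geq\lfloor L\rfloor$ so as to recover the sharp conditions stated in the proposition, without wasting any room in the indices.

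The cancellation $\int x^\gamma(Ta_Q)(x)\,dx=0$ for $|\gamma|\leq L$ is finally a quick consequence of the decay bound $|Ta_Q(x)|\lesssim|x-x_Q|^{-n-F}$ (which is absolutely integrable against $|x|^{|\gamma|}$ thanks to the strict inequality $F>\lfloor L\rfloor\geq|\gamma|$) combined with the identity
\begin{equation*}
\int_{\mathbb R^n}x^\gamma(Ta_Q)(x)\,dx=\langle Ta_Q,x^\gamma\rangle=\langle a_Q,T^*(x^\gamma)\rangle=0,
\end{equation*}
where the middle pairing is legitimized by Definition \ref{10.10} (because $|\gamma|\leq\lfloor L\rfloor\leq H$) and the last equality uses the hypothesis $T^*(x^\gamma)=0$.
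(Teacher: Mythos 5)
Your overall architecture matches the paper's: for $|x-x_Q|\gtrsim\ell(Q)$ you write $\partial^\alpha Ta_Q$ against the kernel, subtract a $y$-Taylor polynomial of $\partial_x^\alpha\mathcal K(x,\cdot)$ that the atom annihilates, and invoke Lemma \ref{CZK Taylor} (resp. Lemma \ref{CZK Taylor2} for the top-order H\"older difference with $\sigma=1$); for $|x-x_Q|\lesssim\ell(Q)$ you use the weak boundedness property together with $T(y^\gamma)=0$; and the cancellation $\int x^\gamma Ta_Q\,dx=0$ comes from $T^*(x^\gamma)=0$. This is exactly the route the paper takes (it simply packages the near-region steps via \cite[Lemmas 3.1.12 and 3.1.22]{tor} rather than re-deriving them), and your far-region scaling $|\partial^\alpha Ta_Q(x)|\lesssim\ell(Q)^{F-|\alpha|+n/2}|x-x_Q|^{-n-F}$ is the correct dilated version of the paper's $Q_{0,\mathbf 0}$ estimate, matching the molecular bound precisely when $F\geq(K\vee M)-n$.

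However, there is a genuine gap in your treatment of the top-order H\"older estimate $|\partial^\alpha Ta_Q(x)-\partial^\alpha Ta_Q(x+h)|$, $|\alpha|=\lfloor\!\lfloor N\rfloor\!\rfloor$, in the near region $|h|<\ell(Q)$. The H\"older exponent $N^{**}$ in this bound does not come from ``the H\"older regularity of $\partial^\alpha a_Q$'' (the atom is actually Lipschitz at that derivative order, since its derivatives up to order $N_a>\lfloor\!\lfloor N\rfloor\!\rfloor$ are bounded); it comes from the H\"older smoothness of the kernel via \eqref{CZKx}. Concretely, after the four-term decomposition of \cite[Lemma 3.1.22]{tor}, the dominant term is an integral of the kernel difference $\partial_x^\alpha\mathcal K(x+h,y)-\partial_x^\alpha\mathcal K(x,y)$ over the annulus $2|h|<|x-y|\leq 2$, and one is led to
\begin{equation*}
|h|^{\delta}\int_{2|h|}^{2}t^{-1}\,dt
\end{equation*}
when the available smoothness exponent $\delta$ of the kernel equals $1$. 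This produces a logarithmic factor $1+\log\frac{1}{|h|}$ that cannot be absorbed into $|h|^{N^{**}}$ when $N\in\mathbb Z$ (so $N^{**}=1$). This is precisely why the proposition states both $E\geq N$ \emph{and} the strict inequality $E>\lfloor N\rfloor_+$: these coincide for $N\notin\mathbb Z$, but for $N$ a positive integer the second one forces strictly more smoothness $E>N$ of the kernel in the $x$-variable. The paper resolves the $N\in\mathbb Z$ case by observing that a $(K,L,M,N+\nu)$-molecule is in particular a $(K,L,M,N)$-molecule and running the whole argument with $N+\nu$ for a small $\nu>0$; your outline has no mechanism to avoid the logarithmic loss and would, as written, yield only the weaker conclusion under $E>N$ without recovering the case $E=N\notin\mathbb Z$ versus $E>N\in\mathbb Z$ dichotomy encoded in the statement.

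Two smaller points. In your cancellation argument, the identity $\langle Ta_Q,x^\gamma\rangle=\langle a_Q,T^*(x^\gamma)\rangle$ is not literal duality but the definition \eqref{Tphijf} as a limit over cutoffs $\phi_j x^\gamma$; the passage to the limit is where the integrability $F>\lfloor L\rfloor$ is actually spent (via dominated convergence), while the vanishing itself is the separate hypothesis $H\geq\lfloor L\rfloor$. You also need $F>\lfloor L\rfloor$ (not just $H\geq\lfloor L\rfloor$) to make $T^*(x^\gamma)$ well defined in the sense of Definition \ref{10.10}, since that requires $T^*\in\operatorname{CZO}(\ell)$ for some $\ell$ with $\lfloor\!\lfloor\ell\rfloor\!\rfloor\geq|\gamma|$; this is supplied by \eqref{CZK and CZO}. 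Your parenthetical ``because $|\gamma|\leq\lfloor L\rfloor\leq H$'' attributes the well-definedness to the wrong hypothesis.
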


\begin{proof}
Without essential loss of generality,
we may consider atoms and molecules on $Q_{0,\mathbf{0}}$ only.
Let $\mathcal K\in\mathcal S'(\mathbb R^n\times\mathbb R^n)$
be the Schwartz kernel of $T$.
Let $a$ be an $(L_a,N_a)$-atom, where
$$
L_a\in[\lfloor\!\lfloor F\rfloor\!\rfloor\vee\lfloor\!\lfloor F-E\rfloor\!\rfloor,\infty)
\text{ and }
N_a\in[(\lfloor\!\lfloor E\rfloor\!\rfloor\wedge\lfloor G\rfloor)+1,\infty).
$$

\textbf{Size and derivatives:}\quad
We need to estimate $\partial^\alpha Ta$ for any
$\alpha\in\mathbb{Z}_+^n $ with $|\alpha|\leq\lfloor\!\lfloor N\rfloor\!\rfloor_+$,
where $\lfloor\!\lfloor N\rfloor\!\rfloor $ is the same as in \eqref{ceil}.
When $\alpha=\mathbf{0}$, this includes in particular the size bound for $Ta$.

By the vanishing moments of the atom $a$ (with $L_a \geq \lfloor\!\lfloor F\rfloor\!\rfloor $), the support condition $\operatorname{supp} a\subset 3Q_{0,\mathbf{0}}$,
and Lemma \ref{CZK Taylor} with $R$ replaced by $ 2 \sqrt{n}$
[which only assumes $\mathcal K\in\operatorname{CZK}^0(E;F)$,
a condition that is always valid here],
we find that, for any $\alpha\in\mathbb{Z}_+^n$ with $|\alpha|\leq\lfloor\!\lfloor E\rfloor\!\rfloor$
and for any $x\in\mathbb{R}^n$ with $|x|>4\sqrt{n}$,
\begin{align}\label{T1mole0}
|\partial^\alpha Ta(x)|
&=\left|\int_{\mathbb R^n}\partial_x^\alpha \mathcal K(x,y)a(y)\,dy\right| \\
&=\left|\int_{\mathbb R^n}\left\{\partial^\alpha_x\mathcal K(x,y)
-\left[\operatorname{Tayl}_{\mathbf{0}}^{\lfloor\!\lfloor F-|\alpha|\rfloor\!\rfloor}
\partial_x^\alpha\mathcal K(x,\cdot)\right](y)\right\} a(y)\,dy\right|\notag\\
&\lesssim\int_{3Q_{0,\mathbf{0}}}|x|^{-n-F}\,dy
\sim|x|^{-n-F}.\notag
\end{align}

Using $T\in\operatorname{CZO}^\sigma(E,F,G,H)$, we conclude that
$T\in\operatorname{WBP}$ and $T\in\operatorname{CZK}^\sigma(E;F)$,
where $\operatorname{WBP}$
is the same as in Definition \ref{WBP}(i).
From $T\in\operatorname{CZK}^\sigma(E;F)$, we deduce that $T\in\operatorname{CZO}(E)$, where $\operatorname{CZO}(\cdot)$ is the same as in Definition \ref{WBP}(ii).
Note that \cite[Lemma 3.1.12]{tor}
says that the assumptions that $T\in\operatorname{WBP}$,
$T\in\operatorname{CZO}(\ell+\varepsilon)$
with $\ell\in\mathbb{Z}_+$ and $\varepsilon\in(0,1)$,
and $T(y^\gamma)=0$ for any $\gamma\in\mathbb{Z}_+^n$ with $|\gamma|\leq\ell$ imply that,
for any $\alpha\in\mathbb{Z}_+^n$ with $|\alpha|\leq\ell$
and for any $x_0\in\mathbb{R}^n$, $r\in(0,\infty)$, and
$\varphi\in\mathcal{D}:=C_{\mathrm{c}}^\infty$ with $\operatorname{supp}\varphi\subset B(x_0,r)$,
$$
\|\partial^\alpha T\varphi\|_{L^\infty}
\lesssim\sum_{\gamma\in\mathbb{Z}_+^n,\,|\gamma|\leq|\alpha|+1} r^{|\gamma|-|\alpha|}\|\partial^\gamma\varphi\|_{L^\infty},
$$
where the implicit positive constant depends only on $T$.
These assumptions hold with the values
$\ell=\lfloor\!\lfloor E\rfloor\!\rfloor\wedge\lfloor G\rfloor,$
$\varepsilon\in(0,E^{**}),$ $x_0=\mathbf{0},$ $r=4\sqrt{n},$ and
$\varphi=a,$ where $ E^{**}$ is the same as in \eqref{r**}.
Hence, for any $\alpha\in\mathbb{Z}_+^n $ with $|\alpha|\leq\lfloor\!\lfloor E\rfloor\!\rfloor\wedge\lfloor G\rfloor$, we obtain the bound
$$
\|\partial^\alpha Ta\|_{L^\infty}
\lesssim\max_{|\gamma|\leq|\alpha|+1}\|\partial^\gamma a\|_{L^\infty}
\leq\max_{|\gamma|\leq\lfloor\!\lfloor E\rfloor\!\rfloor\wedge\lfloor G\rfloor+1}\|\partial^\gamma a\|_{L^\infty}.
$$
Combined with the uniform boundedness of the derivatives of the atom $a$
[with $N_a\geq(\lfloor\!\lfloor E\rfloor\!\rfloor\wedge\lfloor G\rfloor)+1]$,
this further implies that $\|\partial^\alpha Ta\|_{L^\infty}\lesssim1$.
Using this, together with \eqref{T1mole0}, we obtain,
for each $\alpha\in\mathbb{Z}_+^n$ with $|\alpha|\leq\lfloor\!\lfloor E\rfloor\!\rfloor\wedge \lfloor G\rfloor$
and for each $x\in\mathbb R^n$,
\begin{equation}\label{T1mole1}
|\partial^\alpha Ta(x)|\lesssim(1+|x|)^{-n-F}.
\end{equation}
In particular, we have the molecular size and derivative estimates that,
for each $\alpha\in\mathbb{Z}_+^n$ with $|\alpha|\leq\lfloor\!\lfloor N\rfloor\!\rfloor$
and for each $x\in\mathbb{R}^n$,
\begin{equation}\label{T1mole2}
|Ta(x)|\lesssim(1+|x|)^{-K},\
|\partial^\alpha Ta(x)|\lesssim(1+|x|)^{-M}
\end{equation}
provided that
\begin{equation}\label{T1cond1}
E>\lfloor\!\lfloor N\rfloor\!\rfloor_+,\
F\geq(K\vee M)-n,\text{ and }
G\geq\lfloor\!\lfloor N\rfloor\!\rfloor_+,
\end{equation}
where the positive parts are needed to guarantee that
at least the size bound with $\alpha=\mathbf{0}$ is always included in \eqref{T1mole1}.

\textbf{Cancellation:}\quad
For the cancellation $\int_{\mathbb R^n}x^\gamma Ta(x)\,dx=0$
when $\gamma\in\mathbb{Z}_+^n$ with $|\gamma|\leq L$ to make sense,
we need to prove that $(1+|x|)^{\lfloor L\rfloor}Ta(x)$ is integrable.
By the size bound just established in \eqref{T1mole1}, this will be the case if $F>\lfloor L\rfloor$.
From $T\in\operatorname{CZK}^\sigma(E;F)$ and \eqref{CZK and CZO}, we infer that $T^*\in\operatorname{CZO}(F)$.
The assumption that $T^*\in\operatorname{CZO}(F)$ for some $F>\lfloor L\rfloor$
also ensures that $T^*(x^\gamma)$ for any $\gamma\in\mathbb{Z}_+^n$ with $|\gamma|\leq L$ is well defined
as in Definition \ref{10.10}.
Since the atom $a$ may be taken to have as many vanishing moments as we like,
it belongs to $\mathcal D_{\lfloor L\rfloor}$,
where $\mathcal D_{\lfloor L\rfloor}$ is the same as in Lemma \ref{tor2.2.12}. Thus,
letting $\{\phi_j\}_{j\in\mathbb{N}}$ be the same as in Lemma \ref{tor2.2.12},
by the Lebesgue dominated convergence theorem with dominated function
$\sup_{j\in\mathbb{N}}\|\phi_j\|_{L^\infty}|x^\gamma Ta(x)|$
and Definition \ref{10.10}, we conclude that,
for each $\gamma\in\mathbb{Z}_+^n$ with $|\gamma|\leq L$,
\begin{align*}
\int_{\mathbb R^n}x^\gamma Ta(x)\,dx
&=\lim_{j\to\infty}\int_{\mathbb R^n}\phi_j(x)x^\gamma Ta(x)\,dx
=\lim_{j\to\infty}\langle\phi_jx^\gamma,Ta\rangle\\
&=\lim_{j\to\infty}\langle T^*(\phi_j x^\gamma),a\rangle
=\langle T^*(x^\gamma),a\rangle=0
\end{align*}
provided that $ F>\lfloor L\rfloor $ and $ H\geq \lfloor L\rfloor $.

\textbf{Differences:}\quad
Finally, we need to estimate $\partial^\alpha Ta(x)-\partial^\alpha Ta(x+h)$
for any $x,h\in\mathbb R^n$ and $\alpha\in\mathbb{Z}_+^n$
with $|\alpha|=\lfloor\!\lfloor N\rfloor\!\rfloor$.
There is nothing to do if $N\leq0$, so we may in the remainder of the
present proof assume that $N>0$,
and hence that $K\in\operatorname{CZK}^1(E;F)$
[rather than the weaker $K\in\operatorname{CZK}^0(E;F)$].

Recall that we have already checked the bound \eqref{T1mole2} and hence,
for any $x,h\in\mathbb R^n$ with $|h|\geq1$, we have
\begin{align*}
\left|\partial^\alpha Ta(x)-\partial^\alpha Ta(x+h)\right|
&\lesssim(1+|x|)^{-M}+(1+|x+h|)^{-M}\\
&\lesssim\sup_{z\in\mathbb R^n,\,|z|\leq|h|}(1+|x+z|)^{-M}
\leq|h|^{N^{**}}\sup_{z\in\mathbb R^n,\,|z|\leq|h|}(1+|x+z|)^{-M}.
\end{align*}
Thus, we concentrate on $|h|<1$.
Suppose first that $\lfloor\!\lfloor N\rfloor\!\rfloor<\lfloor\!\lfloor E\rfloor\!\rfloor$.
For each $i\in\{1,\ldots,n\}$, let
$e_i:=(0,\ldots,0,1,0,\ldots,0)\in\mathbb{Z}_+^n$
where only the $i$-th component is $ 1 $.
By the already checked bound \eqref{T1mole0} applied to each $\alpha+e_i$
(with $|\alpha+e_i|=|\alpha|+1=\lfloor\!\lfloor N\rfloor\!\rfloor+1\leq\lfloor\!\lfloor E\rfloor\!\rfloor$) in place of $\alpha$
and by $n+F\geq M$, we find that,
for any $x,h\in\mathbb{R}^n$ with $|x|>2$ and $|h|<1$,
\begin{align}\label{**}
|\partial^\alpha Ta(x)-\partial^\alpha Ta(x+h)|
&=\left|\int_0^1h\nabla\partial^\alpha Ta(x+th)\,dt\right|\\
&\lesssim|h|\int_0^1\max_{i\in\{1,\ldots,n\}}|\partial^{\alpha+e_i}Ta(x+th)|\,dt\notag\\
&\lesssim|h|\int_0^1|x+th|^{-M}\,dt
\lesssim|h|^{N^{**}}|x|^{-M}.\notag
\end{align}

Since \eqref{T1cond1} implies that $\lfloor\!\lfloor N\rfloor\!\rfloor\leq\lfloor\!\lfloor E\rfloor\!\rfloor\wedge\lfloor G\rfloor$,
it follows that the only other possibility is that
$\lfloor\!\lfloor N\rfloor\!\rfloor=\lfloor\!\lfloor E\rfloor\!\rfloor$.
Then, for any $|\alpha|=\lfloor\!\lfloor N\rfloor\!\rfloor=\lfloor\!\lfloor E\rfloor\!\rfloor$,
we again write out the kernel representation,
subtract a Taylor polynomial [recalling the vanishing moments of
the atom $a$ (with $L_a\geq\lfloor\!\lfloor F-E\rfloor\!\rfloor$)],
and apply Lemma \ref{CZK Taylor2} to arrive at the following,
for any $x,h\in\mathbb{R}^n$ with $|x|>4\sqrt{n}+2$ and $|h|<1$:
\begin{align*}
&\left|\partial^\alpha Ta(x)-\partial^\alpha Ta(x+h)\right|\\
&\quad=\left|\int_{\mathbb R^n}[\partial_x^\alpha\mathcal K(x,y)
-\partial_x^\alpha\mathcal K(x+h,y)] a(y)\,dy\right|\\
&\quad=\left|\int_{\mathbb R^n}\left[\left(I-\operatorname{Tayl}_0^{\lfloor\!\lfloor F-E\rfloor\!\rfloor}\right)
(\partial_x^\alpha\mathcal K(x,\cdot)
-\partial_x^\alpha\mathcal K(x+h,\cdot))\right](y)a(y)\,dy\right|\\
&\quad\lesssim\int_{3Q_{0,\mathbf{0}}}|h|^{E^{**}}|x|^{-n-F}\,dy
\sim|h|^{E^{**}}|x|^{-n-F}.
\end{align*}
Notice that, by $|h|<1$, $|h|^{E^{**}}\leq|h|^{N^{**}}$ if and only if $E^{**}\geq N^{**}$
and, since we are considering the case that $\lfloor\!\lfloor N\rfloor\!\rfloor=\lfloor\!\lfloor E\rfloor\!\rfloor$,
it follows that the last inequality is equivalent to $E\geq N$.
As before, for any $x\in\mathbb{R}^n$ with $|x|>4\sqrt{n}$,
we obtain $|x|^{-n-F}\lesssim|x|^{-M}$ provided that $F\geq M-n$.
Altogether, we obtain the required estimate that,
for any $\alpha\in\mathbb{Z}_+^n$ with $|\alpha|=\lfloor\!\lfloor N\rfloor\!\rfloor$ and
for any $x,h\in\mathbb{R}^n$ with $|x|>4\sqrt{n}+2$ and $|h|<1$,
\begin{equation*}
\left|\partial^\alpha Ta(x)-\partial^\alpha Ta(x+h)\right|
\lesssim|h|^{N^{**}}|x|^{-M}
\end{equation*}
provided that, in addition to conditions already imposed earlier,
we have $E\geq N$, which subsumes the earlier condition that
$E>\lfloor\!\lfloor N\rfloor\!\rfloor_+$ when $N>0$.

For a general $x\in\mathbb R^n$, we apply \cite[Lemma 3.1.22]{tor}.
Assuming that $T\in\operatorname{WBP}\cap\operatorname{CZO}(\ell+\varepsilon)$
and $T(y^\eta)=0$ for each $|\eta|\leq\ell$ [which holds, by our assumptions,
with $\ell=\lfloor\!\lfloor E\rfloor\!\rfloor\wedge\lfloor G\rfloor$ and $\varepsilon\in(0, E^{**}\wedge1)$],
the said lemma provides the following representation:
for any $\alpha\in\mathbb{Z}_+^n$ with $|\alpha|\leq\ell$
and for any $x,h\in\mathbb{R}^n$ with $|h|<1$, the following identity is valid:
\begin{align}\label{tor3.1.22}
&\partial^\alpha Ta(x+h)-\partial^\alpha Ta(x)\\
&\quad=\int_{\mathbb{R}^n}\partial_x^\alpha\mathcal K(x+h,y)
\left(I-\operatorname{Tayl}_{x+h}^{|\alpha|}\right)a(y)
\chi\left(\frac{y-x}{|h|}\right)\,dy\notag\\
&\qquad-\int_{\mathbb{R}^n}\partial_x^\alpha\mathcal K(x,y)
\left(I-\operatorname{Tayl}_{x}^{|\alpha|}\right)a(y)
\chi\left(\frac{y-x}{|h|}\right)\,dy\notag\\
&\qquad+\int_{\mathbb{R}^n} [\partial_x^\alpha\mathcal K(x+h,y)
-\partial_x^\alpha \mathcal K(x,y)]
\left(I-\operatorname{Tayl}_x^{|\alpha|}\right)
a(y)\left[1-\chi\left(\frac{y-x}{|h|}\right)\right]\,dy\notag\\
&\qquad+\sum_{|\theta|\leq|\alpha|}\frac{1}{\theta!}
\left(I-\operatorname{Tayl}_x^{|\alpha|-|\theta|}\right)\partial^{\theta}a(x+h)
T_{\theta,\alpha}\left[\chi\left(\frac{\cdot-x}{|h|}\right)\right](x+h)\notag\\
&\quad=: \mathrm{I}+\mathrm{II}+\mathrm{III}+\mathrm{IV},\notag
\end{align}
where $\chi\in\mathcal{D}$ is a fixed function
with $\mathbf{1}_{B(\mathbf{0},2)}\leq\chi\leq\mathbf{1}_{B(\mathbf{0},4)}$
and $T_{\theta,\alpha}\in\mathcal L(\mathcal S,\mathcal S')$
is the operator determined by the distributional kernel
$(y-x)^\theta\partial_x^\alpha \mathcal K(x,y)$.
By \cite[Lemma 3.1.12]{tor}, under the same assumptions, this operator $T_{\theta,\alpha}$ satisfies,
for any $\phi\in\mathcal{D}$ with
$\operatorname{supp}\phi\subset B(x_0,t)$,
\begin{equation}\label{estimate}
\|T_{\theta,\alpha}\phi\|_{L^\infty}
\lesssim\sum_{\eta\in\mathbb{Z}_+^n,\,|\eta|\leq|\alpha|-|\theta|+1}
t^{|\eta|-|\alpha|+|\theta|}\|\partial^\eta\phi\|_{L^\infty}
\end{equation}
if $|\theta|\leq|\alpha|\leq\ell$, which agrees with the summation condition in $\mathrm{IV}$.

As before, we take $\ell=\lfloor\!\lfloor E\rfloor\!\rfloor\wedge\lfloor G\rfloor$ and then,
using \eqref{estimate}, we obtain,
for any $\theta,\alpha\in\mathbb{Z}_+^n$ with
$|\theta|\leq|\alpha|\leq\ell$ and for any $x,h\in\mathbb{R}^n$ with $|h|<1$,
\begin{align*}
\left\|T_{\theta,\alpha}\left[\chi\left(\frac{\cdot-x}{|h|}\right)\right] \,\right\|_{L^\infty}
\lesssim\sum_{\eta\in\mathbb{Z}_+^n,\,|\eta|\leq|\alpha|-|\theta|+1}
|h|^{|\eta|-|\alpha|+|\theta|}|h|^{-\eta}\|\partial^\eta\chi\|_{L^\infty}
\lesssim|h|^{|\theta|-|\alpha|}.
\end{align*}
From Lemma \ref{Taylor} and the uniform boundedness of the derivatives of the atom $a$
up to order $ N_a \geq (\lfloor\!\lfloor E\rfloor\!\rfloor\wedge \lfloor G\rfloor)+1 $, we deduce that
\begin{align*}
\left|\left(I-\operatorname{Tayl}_x^{|\alpha|-|\theta|}\right)\partial^{\theta}a(x+h)\right|
\lesssim|h|^{|\alpha|-|\theta|+1}\sup_{\beta\in\mathbb{Z}_+^n,\,|\beta|=|\alpha|-|\theta|+1}
\left\|\partial^{\beta+\theta}a\right\|_{L^\infty}
\leq|h|^{|\alpha|-|\theta|+1}.
\end{align*}
Altogether we obtain $|\mathrm{IV}|\lesssim|h|$.

Similarly, for any $x,y\in\mathbb{R}^n$ with $|x-y|\in(0,4)$,
$|(I-\operatorname{Tayl}_x^{|\alpha|})a(y)|
\lesssim|x-y|^{|\alpha|+1}$
and, by \eqref{CZK0} of Definition \ref{CZK},
we have $|\partial_x^\alpha\mathcal K(x,y)|\lesssim|x-y|^{-n-|\alpha|}$.
Substituting these estimates into $\mathrm{II}$ and using
$\chi\leq\mathbf{1}_{B(\mathbf{0},4)}$, we conclude that,
for any $h\in\mathbb{R}^n$ with $|h|\in(0,1)$,
\begin{align*}
|\mathrm{II}|
&\lesssim\int_{|x-y|<4|h|}|x-y|^{-n-|\alpha|}|x-y|^{|\alpha|+1}\,dy\\
&=\int_{|x-y|<4|h|}|x-y|^{-n+1}\,dy
\sim\int_0^{4|h|} t^{-n+1}t^{n-1}\,dt
\sim|h|.
\end{align*}
The estimation of $\mathrm{I}$ is essentially similar with $x+h$ in place of $x$.
(The symmetry of $x$ and $x+h$ is not complete,
but it is easy to check that this does not cause any trouble.)

Let us finally consider the term $\mathrm{III}$.
Using the uniform boundedness of the derivatives of the atom $a$
up to order $N_a\geq(\lfloor\!\lfloor E\rfloor\!\rfloor\wedge\lfloor G\rfloor)+1$
and Lemma \ref{Taylor},
we have the alternative bound that, for any $x,y\in\mathbb{R}^n$ with $x\neq y$,
$|(I-\operatorname{Tayl}_x^{|\alpha|})a(y)|
\lesssim|x-y|^{|\alpha|+1}.$
By the uniform boundedness of the derivatives of the atom $a$ again,
we obtain the alternative bound that, for any $x,y\in\mathbb{R}^n$ with $x\neq y$,
\begin{align*}
\left|\left(I-\operatorname{Tayl}_x^{|\alpha|}\right)a(y)\right|
&\leq|a(y)|+\sum_{\beta\in\mathbb{Z}_+^n,\,|\beta|\leq|\alpha|}
\frac{|(y-x)^\beta|}{\beta!}|\partial^\beta a(x)|\\
&\lesssim1+\sum_{k=0}^{|\alpha|}|y-x|^k
\sim1+|y-x|^{|\alpha|}.
\end{align*}

Recalling that we need to estimate \eqref{tor3.1.22} for $|\alpha|=\lfloor\!\lfloor N\rfloor\!\rfloor$,
we also make two alternative estimate for
$\partial_x^\alpha\mathcal K(x+h,y)-\partial_x^\alpha\mathcal K(x,y)$.

If $\lfloor\!\lfloor N\rfloor\!\rfloor<\lfloor\!\lfloor E\rfloor\!\rfloor$,
then any $\alpha+e_i$ still satisfies $|\alpha+e_i|\leq\lfloor\!\lfloor E\rfloor\!\rfloor$
and hence we can apply \eqref{CZK0} from Definition \ref{CZK} to obtain,
for any $x,y,h\in\mathbb{R}^n$ with $|h|<1$ and $|x-y|>2|h|$,
\begin{align*}
|\partial_x^\alpha \mathcal K(x+h,y)-\partial_x^\alpha\mathcal K(x,y)|
&=\left|\int_0^1h\cdot\nabla_x\partial_x^\alpha\mathcal K(x+th,y)\,dt\right|\\
&\lesssim|h|\sup_{ t\in(0,1)}|x+th-y|^{-n-|\alpha|-1}
\sim|h|\,|x-y|^{-n-|\alpha|-1},
\end{align*}
noticing that the factor $1-\chi(\frac{y-x}{|h|})$ in $\mathrm{III}$ ensures that $|x-y|>2|h|$.

Recalling that $\lfloor\!\lfloor N\rfloor\!\rfloor\leq\lfloor\!\lfloor E\rfloor\!\rfloor\wedge\lfloor G\rfloor$,
the only other possibility is that $\lfloor\!\lfloor N\rfloor\!\rfloor=\lfloor\!\lfloor E\rfloor\!\rfloor$
and, in this case, we apply \eqref{CZKx} from Definition \ref{CZK} to find that,
for any $x,y,h\in\mathbb{R}^n$ with $|h|<1$ and $|x-y|>2|h|$,
\begin{equation*}
|\partial_x^\alpha \mathcal K(x+h,y)-\partial_x^\alpha\mathcal K(x,y)|
\lesssim|h|^{E^{**}}|x-y|^{-n-E}
\end{equation*}
if $\alpha\in\mathbb{Z}_+^n$ with $|\alpha|=\lfloor\!\lfloor N\rfloor\!\rfloor=\lfloor\!\lfloor E\rfloor\!\rfloor$.
Thus, in general, we have a bound of the form that,
for any $x,y,h\in\mathbb{R}^n$ with $|h|<1$ and $|x-y|>2|h|$,
\begin{equation*}
|\partial_x^\alpha\mathcal K(x+h,y)-\partial_x^\alpha\mathcal K(x,y)|
\lesssim|h|^\delta|x-y|^{-n-|\alpha|-\delta},
\end{equation*}
where, since $|x-y|>2|h|$, one can take $\delta\in(0,1]$ if $\lfloor\!\lfloor N\rfloor\!\rfloor<\lfloor\!\lfloor E\rfloor\!\rfloor$
and take $\delta\in(0,E^{**}] $ if $\lfloor\!\lfloor N\rfloor\!\rfloor=\lfloor\!\lfloor E\rfloor\!\rfloor$.

Substituting the above obtained estimates into $\mathrm{III}$, we obtain
\begin{align*}
|\mathrm{III}|
&\lesssim\int_{2|h|<|x-y|\leq2}|h|^{\delta}|x-y|^{-n-|\alpha|-\delta} |x-y|^{|\alpha|+1}\,dy
+\int_{|x-y|>2}|h|^{\delta}|x-y|^{-n-|\alpha|-\delta}|x-y|^{|\alpha|}\,dy\\
&\lesssim|h|^{\delta}\int_{2|h|}^2 t^{-n-\delta+1}t^{n-1}\,dt
+|h|^{\delta}\int_2^\infty t^{-n-\delta}t^{n-1}\,dt\\
&\lesssim|h|^{\delta}
\begin{cases}
1&\text{if }\delta\in(0,1),\\
1+\log\frac{1}{|h|}&\text{if }\delta=1,
\end{cases}
\end{align*}
and we would like to have this bounded by $|h|^{N^{**}}$.
Notice that all other terms $\mathrm{I}$, $\mathrm{II}$, and $\mathrm{IV}$
were already estimated by $|h|\leq|h|^{N^{**}}$.

If $N^{**}\in(0,1)$ (equivalently, $N\notin\mathbb Z$),
the required bound can be achieved by choosing $\delta=N^{**}$,
which is always possible if $\lfloor\!\lfloor N\rfloor\!\rfloor<\lfloor\!\lfloor E\rfloor\!\rfloor$ and which requires $E^{**}\geq N^{**}$
or equivalently $E\geq N$ if $\lfloor\!\lfloor N\rfloor\!\rfloor=\lfloor\!\lfloor E\rfloor\!\rfloor$.
Thus, we have showed that,
for any $\alpha\in\mathbb{Z}_+^n$ with $|\alpha|=\lfloor\!\lfloor N\rfloor\!\rfloor$
and for any $x,h\in\mathbb{R}^n$ with $|h|<1$,
$|\partial^\alpha Ta(x+h)-\partial^\alpha Ta(x)|
\lesssim|h|^{N^{**}}.$
In combination with the estimate \eqref{**}, we have proved that,
for any $\alpha\in\mathbb{Z}_+^n$ with $|\alpha|=\lfloor\!\lfloor N\rfloor\!\rfloor$
and for any $x,h\in\mathbb{R}^n$ with $|h|<1$,
\begin{equation*}
|\partial^\alpha Ta(x+h)-\partial^\alpha Ta(x)|
\lesssim|h|^{N^{**}}|x|^{-M}
\lesssim|h|^{N^{**}}(1+|x|)^{-M},
\end{equation*}
provided that $E\geq N\notin\mathbb Z,$ $F\geq M-n,$ and
$G\geq\lfloor\!\lfloor N\rfloor\!\rfloor.$
In combination with the size, the derivative, and the cancellation conditions,
we conclude that $Ta$ is a $(K,L,M,N)$-molecule provided that
\begin{align}\label{NnotinZ}
\begin{cases}
E\geq N\notin\mathbb Z,\\
E>0,
\end{cases}
\begin{cases}
F\geq(K\vee M)-n,\\
F>\lfloor L\rfloor,
\end{cases}
G\geq\lfloor\!\lfloor N\rfloor\!\rfloor_+,\text{ and }
H\geq\lfloor L\rfloor.
\end{align}

Let us finally consider the case that $N\in\mathbb Z$.
In order to ensure that $Ta$ is a $(K,L,M,N)$-molecule,
it certainly suffices to ensure that it is a $(K,L,M,N+\nu)$-molecule for some $\nu\in(0,1)$.
Notice that in this case $N+\nu\notin\mathbb Z$ and the previous sufficient condition applies.
On $F$ and $H$, the conditions are unchanged from \eqref{NnotinZ};
on $E$ and $G$, we have
$E\geq N+\nu$ and $G\geq\lfloor\!\lfloor N+\nu\rfloor\!\rfloor_+=N_+=\lfloor N\rfloor_+$
and the condition that $E\geq N+\nu$, for some $\nu\in(0,1)$,
is equivalent to $E>N=\lfloor N\rfloor$.
Thus, conditions that cover both \eqref{NnotinZ}
and the discussed modification for any $N\in\mathbb Z$ can be stated as
\begin{align}\label{Ngeneral}
\begin{cases}
E\geq N,\\
E>\lfloor N\rfloor_+,
\end{cases}
\begin{cases}
F\geq(K\vee M)-n,\\
F>\lfloor L\rfloor,
\end{cases}
G\geq\lfloor N\rfloor_+,\text{ and }
H\geq\lfloor L\rfloor.
\end{align}
For any $N\notin\mathbb Z$ [equivalently, $N^{**}\in(0,1)$], we have $\lfloor\!\lfloor N\rfloor\!\rfloor=\lfloor N\rfloor$,
and the condition $E\geq N$ already implies that $E>\lfloor N\rfloor$,
so that we did not change the condition \eqref{NnotinZ}.
The last condition \eqref{Ngeneral} agrees with what we stated in the present proposition,
which completes the proof of Proposition \ref{T1EFGH}.
\end{proof}

We can now collect some useful consequences of the previous proposition.

\begin{corollary}\label{T1Js}
Let $\sigma\in\{0,1\}$, $E,F,G,H\in\mathbb{R}$,
and $T\in\operatorname{CZO}^\sigma(E,F,G,H)$.
Then $T$ maps sufficiently regular atoms to $(J,s)$-molecules provided that
\begin{align}\label{T1Js1}
\sigma&\geq\mathbf{1}_{[0,\infty)}(s),\
E>s_+,\
F>J-n+s_-,\
G\geq\lfloor s\rfloor_+,\text{ and }
H\geq\lfloor J-n-s\rfloor.
\end{align}
\end{corollary}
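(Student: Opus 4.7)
The plan is to reduce Corollary \ref{T1Js} to Proposition \ref{T1EFGH}. Recall from Definition \ref{def Js mol} that a $(J,s)$-molecule is any $(K,L,M,N)$-molecule whose parameters satisfy $K>J+s_-$, $L\geq J-n-s$, $M>J$, and $N>s$. Hence, it would suffice to identify a quadruple $(K,L,M,N)$ meeting these four inequalities and simultaneously satisfying the hypotheses of Proposition \ref{T1EFGH}; the latter would then produce the desired molecule from sufficiently regular atoms.

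For the cancellation parameter I would simply set $L:=J-n-s$, so that the condition $L\geq J-n-s$ holds with equality. The two conditions on $L$ in Proposition \ref{T1EFGH} then read $F>\lfloor J-n-s\rfloor$ and $H\geq\lfloor J-n-s\rfloor$. The second is identical to the corollary's assumption on $H$, while the first follows from $F>J-n+s_-\geq J-n-s\geq\lfloor J-n-s\rfloor$, using $s_-+s=s_+\geq 0$ and the fact that the first inequality is strict.

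For the smoothness parameter $N$, I would distinguish two cases. If $s<0$, pick any $N\in(s,0)$; then $\lfloor N\rfloor_+=0$ and $\mathbf{1}_{(0,\infty)}(N)=0$, so the conditions $\sigma\geq\mathbf{1}_{(0,\infty)}(N)$, $E\geq N$, $E>\lfloor N\rfloor_+$, and $G\geq\lfloor N\rfloor_+$ reduce to trivialities or to $E>s_+=0$ and $G\geq\lfloor s\rfloor_+=0$, both of which hold by assumption. If $s\geq 0$, use $E>s_+=s$ and $G\geq\lfloor s\rfloor_+=\lfloor s\rfloor$ to pick $N\in(s,E]$ so close to $s$ that $\lfloor N\rfloor=\lfloor s\rfloor$; then $\lfloor N\rfloor_+=\lfloor s\rfloor\leq G$, $E\geq N$ by construction, and $E>\lfloor N\rfloor_+$ because $E>s\geq\lfloor s\rfloor$. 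Moreover, $s\geq 0$ forces $N>0$, so $\sigma\geq\mathbf{1}_{[0,\infty)}(s)=1=\mathbf{1}_{(0,\infty)}(N)$.

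For the decay parameters $K$ and $M$, I need $K>J+s_-$, $M>J$, and $F\geq(K\vee M)-n$. Since $F+n>J+s_-\geq J$, I would pick $K\in(J+s_-,F+n]$ and $M\in(J,F+n]$. With $(K,L,M,N)$ so chosen, every hypothesis of Proposition \ref{T1EFGH} is in force and the desired conclusion follows at once. There is no deep obstacle here, as the substantive analytic work has already been carried out in the proposition; the only delicacy is the case bookkeeping around whether $s$ is nonnegative and whether $s$ is an integer, which controls when the quantities $\lfloor\cdot\rfloor_+$ and $\mathbf{1}_{(0,\infty)}(\cdot)$ in the proposition's hypotheses collapse.
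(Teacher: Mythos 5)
Your proposal is correct and follows essentially the same route as the paper: reduce to Proposition \ref{T1EFGH} by choosing a concrete quadruple $(K,L,M,N)$ that satisfies the $(J,s)$-molecule constraints of Definition \ref{def Js mol}, then verify the hypotheses of the proposition case-by-case depending on the sign of $s$. The only differences are cosmetic choices of representative values (you take $L=J-n-s$ exactly and $N\in(s,0)$ when $s<0$, where the paper allows an interval for $L$ and sets $N:=0$), and these change nothing substantive.
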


\begin{proof}
By Definition \ref{def Js mol} of $(J,s)$-molecules,
we need to show that $T$ maps sufficiently regular atoms
to $(K,L,M,N)$-molecules for some
\begin{equation}\label{T1EJ1}
K>J+s_-,\
L\geq J-n-s,\
M>J,\text{ and }
N>s.
\end{equation}
Using Proposition \ref{T1EFGH}, we conclude that the operator $T$
has this property provided that
\begin{align}\label{T1EJ2}
\sigma&\geq\mathbf{1}_{(0,\infty)}(N),\
\begin{cases}E\geq N,\\E>\lfloor N\rfloor_+,\end{cases}
\begin{cases}F\geq(K\vee M)-n,\\F>\lfloor L\rfloor,\end{cases}
G\geq\lfloor N\rfloor_+,\text{ and }
H\geq\lfloor L\rfloor.
\end{align}

We can choose numbers $K,L,M\in\mathbb{R}$ such that
$K\in(J+s_-,F+n),$ $L\in[J-n-s,\lceil\!\lceil J-n-s\rceil\!\rceil),$ and
$M\in(J,F+n),$
where $\lceil\!\lceil\cdot\rceil\!\rceil $ is the same as in \eqref{ceil}
because the intervals are non-empty by the assumptions \eqref{T1Js1}. We also choose
\begin{equation}\label{T1EJ4}
\begin{cases}
N:=0\in(s,E)&\text{if }s<0,\\
N\in(s,\lceil\!\lceil s\rceil\!\rceil\wedge E)&\text{if }s\geq 0,
\end{cases}
\end{equation}
where the interval for $N$ is non-empty by the assumptions \eqref{T1Js1} again
and the definition of the strict rounding-up function $\lceil\!\lceil x\rceil\!\rceil:=\min\{k\in\mathbb Z:\ k>x\}$ for any $x\in\mathbb R$.
These choices clearly satisfy \eqref{T1EJ1},
so it remains to check that they also satisfy \eqref{T1EJ2}.

First, by \eqref{T1EJ4}, we have
$\sigma\geq\mathbf{1}_{[0,\infty)}(s)=\mathbf{1}_{(0,\infty)}(N)$
and $\lfloor N\rfloor=\lfloor N\rfloor_+=\lfloor s\rfloor_+$
in each case, and hence $G\geq\lfloor s\rfloor_+=\lfloor N\rfloor_+$
satisfies the desired estimate. We also check that
$E>N\geq\lfloor N\rfloor_+$, $F>(K\vee M)-n$, and
\begin{equation*}
\lfloor L\rfloor=\lfloor J-n-s\rfloor
\leq\begin{cases}\lfloor J-n+s_-\rfloor<F,\\ H.\end{cases}
\end{equation*}
Thus, all parameters satisfy their respective conditions in \eqref{T1EJ2}.

From Proposition \ref{T1EFGH} and \eqref{T1EJ2} that we just verified,
we infer that $T$ maps sufficiently regular atoms to $(K,L,M,N)$-molecules
and hence to $(J,s)$-molecules because the parameters were chosen to satisfy \eqref{T1EJ1}.
This finishes the proof of Corollary \ref{T1Js}.
\end{proof}

\begin{corollary}\label{T1Amol}
Let $s\in\mathbb R$, $\tau\in[0,\infty)$, $p\in(0,\infty)$, $q\in(0,\infty]$,
$d\in[0,n)$, and $\widetilde J,\widetilde s$ be the same as in \eqref{tauJ2}.
Then $T\in\operatorname{CZO}^\sigma(E,F,G,H)$ maps sufficiently regular atoms
to $\dot A^{s,\tau}_{p,q}(d)$-synthesis molecules provided that \eqref{T1Amol1} is satisfied.
\end{corollary}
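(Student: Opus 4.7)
The proof is essentially a direct application of the previously established Corollary \ref{T1Js} with appropriate parameter substitutions. The plan is as follows.

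First, recall Definition \ref{def A mol}(i): an $\dot A^{s,\tau}_{p,q}(d)$-synthesis molecule on a cube $Q$ is by definition a $(\widetilde J,\widetilde s)$-molecule on $Q$, where $\widetilde J$ and $\widetilde s$ are the quantities in \eqref{tauJ2}. Therefore, to prove the corollary it suffices to show that $T$ maps sufficiently regular atoms to $(\widetilde J,\widetilde s)$-molecules.

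Second, I would invoke Corollary \ref{T1Js} with the substitutions $J:=\widetilde J$ and $s:=\widetilde s$. That corollary asserts exactly this mapping property for $(J,s)$-molecules, provided
$$
\sigma\geq\mathbf{1}_{[0,\infty)}(s),\quad E>s_+,\quad F>J-n+s_-,\quad G\geq\lfloor s\rfloor_+,\quad H\geq\lfloor J-n-s\rfloor.
$$
Under the substitutions above, these are precisely the hypotheses \eqref{T1Amol1} of the present corollary. Hence the conclusion follows immediately.

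There is essentially no obstacle here, since all the analytic work — the kernel/Taylor expansion estimates, the Frazier--Jawerth-type cancellation arguments, and the careful handling of integer versus fractional parts of the smoothness and cancellation parameters — has already been carried out in Proposition \ref{T1EFGH} and reformulated in Corollary \ref{T1Js}. The only thing to verify is that the translation between the two sets of conditions is literal, which it is.
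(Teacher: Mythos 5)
Your proof is correct and matches the paper's argument: both invoke Corollary \ref{T1Js} with $(J,s)$ replaced by $(\widetilde J,\widetilde s)$ and then identify $(\widetilde J,\widetilde s)$-molecules with $\dot A^{s,\tau}_{p,q}(d)$-synthesis molecules via Definition \ref{def A mol}(i). Nothing to add.
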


\begin{proof}
By Corollary \ref{T1Js}, we conclude that the assumptions \eqref{T1Amol1} imply that
$T$ maps sufficiently regular atoms to $(\widetilde J,\widetilde s)$-molecules.
From Definition \ref{def A mol}, it follows that,
for both $\widetilde J$ and $\widetilde s$ in \eqref{tauJ2},
these molecules are the same as $\dot A^{s,\tau}_{p,q}(d)$-synthesis molecules.
This finishes the proof of Corollary \ref{T1Amol}.
\end{proof}

Next, we can prove Theorem \ref{T1 BF}.

\begin{proof}[Proof of Theorem \ref{T1 BF}]
We first show \eqref{T1test}.
The present assumptions are the same as those of Corollary \ref{T1Amol},
which guarantee that $T$ maps sufficiently regular atoms to
$\dot A^{s,\tau}_{p,q}(d)$-synthesis molecules.
Then Proposition \ref{ext}\eqref{ext1} provides the existence of
an operator $\widetilde T\in\mathcal L(\dot A^{s,\tau}_{p,q}(W))$ as claimed.

Now, we prove \eqref{T1ext}.
Under the assumptions \eqref{T1Amol2}, we observe that $E,F>0$ and $G,H\geq 0$,
which mean in particular that $T\in\operatorname{WPB}\cap\operatorname{CZO}(E)$,
$T^*\in\operatorname{CZO}(F)$ for $E,F>0$, as well as $T(1)=0=T^*(1)$.
These are the assumptions of the original $T(1)$ theorem of David and Journ\'e \cite{dj84}
and hence the said theorem implies the existence of $T_2\in\mathcal L(L^2)$
that agrees with $T$ on $\mathcal S$.
By Proposition \ref{ext}\eqref{ext2},
we can construct $\widetilde T$ with the additional property as claimed.
This finishes the proof of Theorem \ref{T1 BF}.
\end{proof}

\subsection{Comparison with Earlier Results}\label{T1 compare}

As it turns out, Theorem \ref{T1 BF} improves and extends the earlier
related results of \cite{ftw88,tor} but,
due to subtle differences in the technical details of the formulation,
this might not be entirely obvious at a glance.
We dedicate this section to a case by case justification of this claim,
showing that Theorem \ref{T1 BF} reduces to earlier results in many cases,
but also provides a genuine improvement in some others.

\begin{remark}\label{T1 rem1}
Let us write down the assumptions of Theorem \ref{T1 BF} under particular sets of parameter values.
\begin{enumerate}[\rm(i)]
\item\label{ftw 3.1} When $\widetilde s<0$, assumptions \eqref{T1Amol1} become
$\sigma\geq0,$ $E>0,$ $F>\widetilde J-n+(\widetilde s)_-,$
$G\geq0,$ and $H\geq\left\lfloor\widetilde J-n-\widetilde s\right\rfloor\geq0,$
which coincide with \eqref{T1Amol2} in this case.
Since $E$ is only required to satisfy $E>0$
and the assumptions under consideration become weaker with decreasing $E$,
we may without loss of generality assume that $0<E<F\wedge 1$.
Then Remark \ref{CZK cases}\eqref{factor2} proves that the assumptions under consideration are equivalent to
$$
\begin{cases}
T\in\operatorname{CZO}(E)&\text{with }E>0,\\
T^*\in\operatorname{CZO}(F)&\text{with }F>\widetilde J-n-\widetilde s
\end{cases}\ \mathrm{and}\
\begin{cases}
T(1)=0,&\\
T^*(x^\theta)=0&\text{if }\theta\in\mathbb Z_+^n
\text{ with }|\theta|\leq\left\lfloor\widetilde J-n-\widetilde s\right\rfloor.
\end{cases}
$$

\item\label{ftw 3.7} When $\widetilde s\geq0$ and $\widetilde J=n$, 
assumptions \eqref{T1Amol1} become
\begin{equation*}
\sigma\geq 1,\
E>\widetilde s,\
F>0,\
G\geq\lfloor\widetilde s\rfloor,\text{ and }
\begin{cases}
H\geq 0&\text{if }\widetilde s=0,\\
\text{(void)}&\text{if }\widetilde s>0,
\end{cases}
\end{equation*}
which coincide with \eqref{T1Amol2} if $\widetilde s=0$,
but otherwise $H\geq 0$ needs to be additionally imposed in \eqref{T1Amol2}.
Since $F$ is only required to satisfy $F>0$
and the assumptions under consideration become weaker with decreasing $F$,
we may without loss of generality assume that $0<F<E\wedge1$.
Then Remark \ref{CZK cases}\eqref{factor1} shows that the assumptions under consideration are equivalent to
$$
\begin{cases}
T\in\operatorname{CZO}(E)&\text{with }E>\widetilde{s},\\
T^*\in\operatorname{CZO}(F)&\text{with }F>0
\end{cases}\ \mathrm{and}\
\begin{cases}
T(y^\gamma)=0&\text{if }\gamma\in\mathbb Z_+^n
\text{ with }|\gamma|\leq\lfloor\widetilde s\rfloor,\\
T^*(1)=0&\text{if }\widetilde s=0.
\end{cases}
$$

\item\label{ftw 3.13a} If $\widetilde s\geq\widetilde J-n>0$,
assumptions \eqref{T1Amol1} as such do not substantially simplify but,
taking $E=\widetilde{s}+\varepsilon$ and $F=\widetilde J-n+\varepsilon$,
we have $F-E=\widetilde J-n-\widetilde{s}\leq0$
and hence assumptions \eqref{CZKxy} in Definition \ref{CZK} become void for these parameters.
If $\varepsilon>0$ is small enough, we have
$\lfloor\!\lfloor E\rfloor\!\rfloor=\lfloor \widetilde s\rfloor,$
$E^{**}=(\widetilde s)^*+\varepsilon=:\delta,$
$\lfloor\!\lfloor F\rfloor\!\rfloor=\lfloor \widetilde J\rfloor-n,$ and
$F^{**}=(\widetilde J)^*+\varepsilon=:\rho,$
where $\lfloor\!\lfloor E\rfloor\!\rfloor $ and $ E^{**}$
are the same as, respectively, in \eqref{ceil} and \eqref{r**}.
Thus, \eqref{CZK0} and \eqref{CZKx} say that
$T\in\operatorname{CZO}(\lfloor \widetilde s\rfloor+\delta)$ and
$\delta>(\widetilde s)^*$, and \eqref{CZKy} becomes
\begin{align*}
\left|\partial_x^\alpha\partial_y^\beta K(x,y)
-\partial_x^\alpha\partial_y^\beta K(x,y+v)\right|
\lesssim|v|^{\rho}|x-y|^{-n-|\alpha|-|\beta|-\rho}\quad\text{if}\quad
\begin{cases}|\alpha|\leq\lfloor\widetilde s\rfloor,\\
|\beta|=\lfloor \widetilde J\rfloor-n-|\alpha|.\end{cases}
\end{align*}
\end{enumerate}
For $\tau=0$, in which case $\widetilde J=J$ and $\widetilde s=s$,
the assumptions in \eqref{ftw 3.1} coincide with
those in \cite[Theorem 3.1]{ftw88} or \cite[Theorem 3.2.1]{tor},
and the assumptions in \eqref{ftw 3.7} with
those in \cite[Theorem 3.7]{ftw88} or \cite[Theorem 3.2.7]{tor},
and assumptions in \eqref{ftw 3.13a} with
those in \cite[Theorem 3.13]{ftw88} or \cite[Theorem 3.2.13]{tor}
in the special case that $J-n-s\leq 0$.
Thus, we obtain an extension of their results to matrix-weighted spaces
$\dot A^s_{p,q}(W)$ under exactly the same assumptions that they have
in the unweighted case of $\dot F^s_{p,q}$.
For $\tau>0$, our result also gives a natural extension
by replacing $(J,s)$ with $(\widetilde J,\widetilde s)$.
\end{remark}

Let us finally consider the case when $\widetilde J-n>\widetilde s\geq 0$.
In this case, the $\operatorname{CZK}(E;F)$ assumption,
with parameters in \eqref{T1Amol1},
does not factorise into separate assumptions on $T$ and $T^*$ as above,
but we need to also consider the mixed smoothness assumptions in Definition \ref{CZK}.
Our formulation of these assumptions is slightly different from \cite{ftw88,tor},
which makes the comparison more delicate than the cases covered in Remark \ref{T1 rem1}.
For the sake of comparison, we first state the following result
which is the case $J-n-s>0$ of either \cite[Theorem 3.13]{ftw88} or \cite[Theorem 3.2.13]{tor}.

\begin{theorem}\label{T1 ftw3.13}
Let $s\in\mathbb R$, $p\in(0,\infty)$, $q\in(0,\infty]$,
and $J:=\frac{n}{\min\{1,p,q\}}$ satisfy
$J-n>s\geq 0.$ Let $T\in\mathcal L(\mathcal S,\mathcal S)$
with Schwartz kernel $\mathcal K\in\mathcal S'(\mathbb R^n\times\mathbb R^n)$ satisfy
$T\in\operatorname{WBP},$
$$\begin{cases}
T(y^\gamma)=0&\text{if }\gamma\in\mathbb{Z}_+^n\text{ and }|\gamma|\leq\lfloor s\rfloor,\\
T^*(x^\theta)=0&\text{if }\theta\in\mathbb{Z}_+^n\text{ and }|\theta|\leq\lfloor J-n-s\rfloor
\end{cases}
\ and\
\begin{cases}T\in\operatorname{CZO}(\lfloor s\rfloor+\delta),\\
T^*\in\operatorname{CZO}(\lfloor J-n\rfloor+\rho),\end{cases}
$$
where
\begin{equation}\label{ftw 3.15}
\delta>\max(s^*,J^*),\quad \rho>J^*,
\end{equation}
and, moreover, there exists a positive constant $C$ such that,
for any $\alpha,\beta\in\mathbb{Z}_+^n$ and $x,y,u,v\in\mathbb{R}^n$ with
$|u|,|v|<\frac12|x-y|$,
the following estimates hold\footnote{In \cite[(3.20)]{ftw88},
the exponent $\rho$ should be $\delta$ instead
because otherwise the assumption does not scale correctly with respect to dilations,
and the same assumption was stated correctly in \cite[(3.2.20)]{tor}.}
\begin{align}\label{ftw 3.19}
\left|\partial_x^\alpha\partial_y^\beta\mathcal K(x,y)
-\partial_x^\alpha\partial_y^\beta\mathcal K(x,y+v)\right|
&\leq C|v|^\rho|x-y|^{-n-|\alpha|-|\beta|-\rho}\ \text{if}\ \begin{cases}
|\alpha|\leq\lfloor s\rfloor,\\
|\beta|=\lfloor J-n\rfloor-|\alpha|
\end{cases}
\end{align}
and
\begin{align}\label{ftw 3.20}
\left|\partial_x^\alpha\partial_y^\beta \mathcal K(x,y)
-\partial_x^\alpha\partial_y^\beta \mathcal K(x+u,y)\right|
\leq C|u|^\delta|x-y|^{-n-|\alpha|-|\beta|-\delta}
&\ \text{if }\begin{cases}
|\alpha|=\lfloor s\rfloor,\\
|\beta|=\lfloor J-n\rfloor-\lfloor s\rfloor.
\end{cases}
\end{align}
Then there exists an operator $\widetilde T\in\mathcal L(\dot F^{s}_{p,q}(W))$
that agrees with $T$ on $\mathcal{S}_\infty$.
\end{theorem}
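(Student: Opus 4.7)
The plan is to reduce Theorem \ref{T1 ftw3.13} to an application of the atom-to-molecule framework of Section \ref{ss T1} combined with the extension machinery of Proposition \ref{ext}. Let $W\in A_p$ have $A_p$-dimension $d\in[0,n)$. Since $\tau=0$, the auxiliary parameters in \eqref{tauJ2} reduce to $\widetilde J=J$ and $\widetilde s=s$. By Lemma \ref{mol vs old}, the class of $\dot F^s_{p,q}(d)$-synthesis molecules equals the class of $(J,s)$-molecules of Definition \ref{def Js mol}, which in turn coincides with the classical $\dot F^s_{p,q}$-synthesis molecules of \cite{fj90,ftw88,tor}. Consequently, by Proposition \ref{ext}\eqref{ext1}, it suffices to show that $T$ maps sufficiently regular $(L,N)$-atoms into $(J,s)$-molecules: this single fact, applied with the $A_p$-dimension $d$ of the prescribed matrix weight $W$, immediately yields the desired extension $\widetilde T\in\mathcal L(\dot F^s_{p,q}(W))$ agreeing with $T$ on $\mathcal S_\infty$.

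The atom-to-molecule property under the present assumptions is precisely the content of \cite[Theorem 3.13]{ftw88} (equivalently, \cite[Theorem 3.2.13]{tor}), and I would invoke it as a black box. The verification there mirrors the scheme of our Proposition \ref{T1EFGH}: the size and derivative bounds on $Ta$ come from subtracting a Taylor polynomial in $y$ (using the atom's vanishing moments) and applying \eqref{ftw 3.19}, together with \cite[Lemma 3.1.12]{tor} for the near-origin control; the cancellation moments $\int x^\gamma Ta(x)\,dx=0$ for $|\gamma|\leq\lfloor J-n-s\rfloor$ follow from $T^*(x^\theta)=0$ via Lemma \ref{tor2.2.12}, exactly as in the cancellation portion of the proof of Proposition \ref{T1EFGH}; and the H\"older difference estimate on $\partial^\alpha Ta$ for $|\alpha|=\lfloor s\rfloor$ is the main technical point. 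To connect these conclusions to the present framework, one observes that the parameters $\delta>\max(s^*,J^*)$ and $\rho>J^*$ were tuned in \cite{ftw88,tor} precisely so that the resulting object meets the decay, cancellation, smoothness, and derivative-decay requirements of a $(J,s)$-molecule.

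The hard part is the H\"older bound for $\partial^\alpha Ta$ with $|\alpha|=\lfloor s\rfloor$. The natural approach applies the Taylor subtraction in $y$ to $f(y):=\partial_x^\alpha\mathcal K(x+h,y)-\partial_x^\alpha\mathcal K(x,y)$ up to order $N=\lfloor J-n-s\rfloor$ (matching the atom's cancellation), leaving a remainder controlled by the four-term mixed kernel difference $\partial_y^\beta\partial_x^\alpha[\mathcal K(x+h,ty)-\mathcal K(x,ty)-\mathcal K(x+h,0)+\mathcal K(x,0)]$ with $|\beta|=\lfloor J-n\rfloor-\lfloor s\rfloor$. Although \eqref{ftw 3.20} provides only a two-term $x$-difference, rather than the four-term mixed bound \eqref{CZKxy} demanded by the $\operatorname{CZK}^1$ template of Definition \ref{CZK}, the four-term quantity admits two alternative estimates: by \eqref{ftw 3.20} evaluated at the points $ty$ and $0$ separately, giving an $|h|^\delta|x|^{-n-|\alpha|-|\beta|-\delta}$ bound; or by \eqref{ftw 3.19} applied at $x$ and $x+h$ separately, giving an $|ty|^\rho|x|^{-n-|\alpha|-|\beta|-\rho}$ bound. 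Interpolating these two bounds via $\min(X,Y)\leq X^\theta Y^{1-\theta}$ and integrating against $a$ over its support produces a factor $|h|^{\theta\delta}|y|^{(1-\theta)\rho}$ together with the correct negative power of $|x|$; choosing $\theta$ so that $\theta\delta=s^{**}$ yields the required H\"older regularity of order $s^{**}$ in $h$, provided $\delta$ and $\rho$ exceed the thresholds in \eqref{ftw 3.15} (which give exactly the slack needed).

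The main obstacle is navigating the mismatch between the two-term kernel smoothness \eqref{ftw 3.20} available here and the four-term $\operatorname{CZK}^1$ smoothness used in Proposition \ref{T1EFGH}; the geometric-mean interpolation argument above is precisely what replaces the direct appeal to Lemma \ref{CZK Taylor2} in this more delicate regime. With the atom-to-molecule mapping in hand, the passage from the classical unweighted $\dot F^s_{p,q}$ setting of \cite{ftw88,tor} to the matrix-weighted $\dot F^s_{p,q}(W)$ setting is immediate: Lemma \ref{mol vs old} shows that no distinction exists between the two notions of molecule for these parameters, and Proposition \ref{ext}\eqref{ext1} supplies the bounded extension on $\dot F^s_{p,q}(W)$ with no further work.
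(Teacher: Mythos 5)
Your proposal is sound in substance, but note first that the paper does not prove Theorem \ref{T1 ftw3.13} at all: it is restated from \cite[Theorem 3.13]{ftw88} and \cite[Theorem 3.2.13]{tor} for the sake of comparison, and the (in fact stronger, matrix-weighted) conclusion is then recovered in Corollary \ref{229} by a different route, namely Lemma \ref{CZKmix}, which shows that the two-term hypotheses \eqref{ftw 3.19} and \eqref{ftw 3.20} already imply the four-term mixed condition \eqref{CZKxy} with some exponents $\varepsilon,\eta>0$, after which Theorem \ref{T1 BF}(i) (built from Proposition \ref{T1EFGH}, Corollaries \ref{T1Js} and \ref{T1Amol}, and Proposition \ref{ext}) applies. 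Your route is exactly the ``soft short-cut'' described at the beginning of Section \ref{C-Z operators} and deliberately declined there: since $\tau=0$ gives $\widetilde J=J$ and $\widetilde s=s$ independently of $d$, classical $\dot F^s_{p,q}$-synthesis molecules coincide with $(J,s)$-molecules, i.e.\ with $\dot A^{s,0}_{p,q}(d)$-synthesis molecules, so the atom-to-molecule mapping of \cite{ftw88,tor} plus Proposition \ref{ext}(i) yields the weighted extension. The analytic core is the same in both treatments---compensating for the missing four-term estimate by taking the minimum (geometric mean) of the bound from \eqref{ftw 3.19} applied at $x$ and $x+h$ and the bound from \eqref{ftw 3.20} applied at two $y$-points---but you run it inline inside the molecular estimate, whereas the paper isolates it once and for all at the kernel level as Lemma \ref{CZKmix}. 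Your short-cut buys brevity; the paper's longer route buys self-containedness (the atom-to-molecule step in \cite{ftw88,tor} is internal to their proofs rather than a quotable standalone statement, so invoking it ``as a black box'' is a citation of convenience) and the strictly stronger Theorem \ref{T1 BF}, whose smoothness hypothesis $T\in\operatorname{CZO}(E)$ with $E>s$ is weaker than $\operatorname{CZO}(\lfloor s\rfloor+\delta)$ with $\delta>\max(s^*,J^*)$.

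Two details of your interpolation sketch need tightening. First, the prescription $\theta\delta=s^{**}$ is not quite right: when $s\in\mathbb Z$ one has $s^{**}=1$, which may exceed $\delta$; what is actually needed is $\theta\delta>s^*$ (and merely $\theta\delta>0$ when $s\in\mathbb Z$), which is available since $\delta>s^*$, while the decay exponent $\lfloor J\rfloor+\theta\delta+(1-\theta)\rho>J$ then holds automatically because both $\delta>J^*$ and $\rho>J^*$. Second, you should separate the cases $J^*\geq s^*$ and $J^*<s^*$: in the latter, $\lfloor J-n-s\rfloor=\lfloor J-n\rfloor-\lfloor s\rfloor-1$, so the derivative order of the Taylor remainder of order $\lfloor J-n-s\rfloor$ no longer matches the order at which \eqref{ftw 3.19} is assumed; this is repaired either by expanding to the higher order $\lfloor J-n\rfloor-\lfloor s\rfloor$ (legitimate, since the atoms carry surplus vanishing moments) or by the mean-value-theorem step used in the second case of the proof of Lemma \ref{CZKmix}. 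With these adjustments your argument goes through.
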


Under the same assumptions on the parameters,
our Theorem \ref{T1 BF} involves a mixed difference assumption \eqref{CZKxy}
that does not appear in Theorem \ref{T1 ftw3.13}.
With both $E=\widetilde s+\varepsilon$ and $F=\widetilde J-n+\varepsilon+\eta$
as required by \eqref{T1Amol1} for any $\widetilde s\geq0$
and with sufficiently small $\varepsilon,\eta\in(0,1)$, we have
$\lfloor\!\lfloor E\rfloor\!\rfloor=\lfloor\widetilde s\rfloor,$
$E^{**}=(\widetilde s)^*+\varepsilon,$
$\lfloor\!\lfloor F-E\rfloor\!\rfloor=\lfloor\widetilde J-n-\widetilde s\rfloor,$
and
$(F-E)^{**}=(\widetilde J-\widetilde s)^*+\eta,$
where $\lfloor\!\lfloor E\rfloor\!\rfloor $ and $ E^{**}$
are the same as, respectively, in \eqref{ceil} and \eqref{r**},
and hence \eqref{CZKxy} takes the form that, for any $x,y,u,v\in\mathbb R^n$ with
$|u|+|v|<\frac12|x-y|$,
\begin{align}\label{CZKxyJs}
&\left|\partial_x^\alpha\partial_y^\beta\mathcal K(x,y)
-\partial_x^\alpha\partial_y^\beta\mathcal K(x+u,y)
-\partial_x^\alpha\partial_y^\beta\mathcal K(x,y+v)
+\partial_x^\alpha\partial_y^\beta\mathcal K(x+u,y+v)\right|\\
&\quad\leq C|u|^{(\widetilde s)^*+\varepsilon}
|v|^{(\widetilde J-\widetilde s)^*+\eta}|x-y|^{-\widetilde J-\varepsilon-\eta}\notag
\end{align}
if $\alpha,\beta\in\mathbb Z_+^n$ satisfy $|\alpha|=\lfloor\widetilde s\rfloor$
and $|\beta|=\lfloor\widetilde J-n-\widetilde s\rfloor$.

On the other hand, we have the following conclusion.

\begin{lemma}\label{CZKmix}
Under the assumptions of Theorem \ref{T1 ftw3.13}, the estimate \eqref{CZKxyJs} holds
with $J$ and $s$, respectively, in place of $\widetilde J$ and $\widetilde s$
and with some $\varepsilon,\eta\in(0,\infty)$.
\end{lemma}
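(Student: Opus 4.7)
The plan is to derive the mixed smoothness estimate \eqref{CZKxyJs} (with $J$ and $s$ in place of $\widetilde J$ and $\widetilde s$) by combining the one-variable Hölder bounds \eqref{ftw 3.19} and \eqref{ftw 3.20}, after a small case split dictated by the arithmetic identity
\begin{align*}
\lfloor J-n-s\rfloor = \lfloor J-n\rfloor-\lfloor s\rfloor-\mathbf{1}_{\{J^*<s^*\}}.
\end{align*}
Writing $|\beta_0|:=\lfloor J-n\rfloor-\lfloor s\rfloor$, I will distinguish Case~A ($J^*\ge s^*$, so that $|\beta|=|\beta_0|$) from Case~B ($J^*<s^*$, so that $|\beta|=|\beta_0|-1$). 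In both cases I will exploit the strict inequalities $\delta>\max(s^*,J^*)$ and $\rho>J^*$ from \eqref{ftw 3.15}, denote the mixed difference in \eqref{CZKxyJs} by $M$, and eventually choose $\varepsilon,\eta>0$ sufficiently small.

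In Case~A, the function $F:=\partial_x^\alpha\partial_y^\beta\mathcal K$ falls directly within the range of both \eqref{ftw 3.20} and \eqref{ftw 3.19}, so the triangle inequality across the two single-variable differences yields the pair of bounds
\begin{align*}
|M|\lesssim|u|^\delta|x-y|^{-n-|\alpha|-|\beta|-\delta}
\quad\text{and}\quad
|M|\lesssim|v|^\rho|x-y|^{-n-|\alpha|-|\beta|-\rho}.
\end{align*}
Combining these via $\min(a,b)\le a^\theta b^{1-\theta}$ with $\theta=(s^*+\varepsilon)/\delta\in[0,1]$ and absorbing the excess powers of $|u|,|v|$ into $|x-y|$ through $|u|,|v|\le\tfrac12|x-y|$ reduces the problem to the feasibility condition $(s^*+\varepsilon)/\delta+((J-n-s)^*+\eta)/\rho\le1$. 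Since $(J-n-s)^*=J^*-s^*$ in Case~A and $\delta,\rho>J^*$, one has
\begin{align*}
\frac{s^*}{\delta}+\frac{J^*-s^*}{\rho}<\frac{s^*}{J^*}+\frac{J^*-s^*}{J^*}=1
\end{align*}
strictly (with the trivial sub-case $J^*=s^*=0$ amounting to integer-order parameters where the same inequality $0<1$ holds vacuously), so sufficiently small $\varepsilon,\eta>0$ suffice, and the bookkeeping of the $|x-y|$-exponent reduces to $n+|\alpha|+|\beta|+s^*+(J-n-s)^*=J$.

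In Case~B, $F$ carries one fewer $y$-derivative than what \eqref{ftw 3.19}--\eqref{ftw 3.20} naturally address, so I first lift the $y$-index by the fundamental theorem of calculus,
\begin{align*}
F(x,y)-F(x,y+v)=-\sum_j v_j\int_0^1\partial_{y_j}F(x,y+tv)\,dt,
\end{align*}
and likewise at $x+u$. Each integrand is an $x$-difference of $\partial_x^\alpha\partial_y^{\beta+e_j}\mathcal K$ with $|\beta+e_j|=|\beta_0|$, exactly the setting of \eqref{ftw 3.20}, so after replacing $|x-(y+tv)|$ by $|x-y|$ via $|v|\le\tfrac12|x-y|$ we obtain $|M|\lesssim|v|\,|u|^\delta|x-y|^{-\lfloor J\rfloor-\delta}$. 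Splitting $|u|^\delta=|u|^{s^*+\varepsilon}|u|^{\delta-s^*-\varepsilon}$ and $|v|=|v|^{(J-n-s)^*+\eta}|v|^{1-(J-n-s)^*-\eta}$ and absorbing the non-negative residual exponents into $|x-y|$, the Case-B identity $(J-n-s)^*=1+J^*-s^*$ collapses the final $|x-y|$-exponent to precisely $-J-\varepsilon-\eta$. The main delicate step is the strict feasibility of the interpolation in Case~A, for which the full strength $\delta>\max(s^*,J^*)$ of \eqref{ftw 3.15}---not merely $\delta>s^*$---is essential; everything else is routine exponent bookkeeping.
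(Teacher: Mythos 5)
Your proposal is correct and follows essentially the same approach as the paper: the same two-case split on whether $\lfloor J-n-s\rfloor$ equals $\lfloor J-n\rfloor-\lfloor s\rfloor$ or that quantity minus $1$, the same reduction in Case B by a mean-value/fundamental-theorem step in the $y$-variable followed by \eqref{ftw 3.20}, and the same exponent bookkeeping at the end. The only cosmetic difference is in Case A: the paper first replaces $\delta$ and $\rho$ by a common $\kappa:=\min\{\delta,\rho\}>J^*$ and then uses $\min(|u|,|v|)^{a+b}\le|u|^a|v|^b$, whereas you keep $\delta$ and $\rho$ distinct and interpolate the two one-sided bounds via $\min(a,b)\le a^\theta b^{1-\theta}$, reducing matters to the feasibility condition $(s^*+\varepsilon)/\delta+((J-s)^*+\eta)/\rho\le1$; both routes rely in the same way on the strict inequalities $\delta>\max(s^*,J^*)$ and $\rho>J^*$ from \eqref{ftw 3.15} and lead to the same conclusion, with the other borderline sub-cases $s^*=0<J^*$ and $0<s^*=J^*$ handled automatically since in each case at least one of the two relevant terms is strictly below its benchmark.
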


\begin{proof}
Note that, for any $a,b\in\mathbb R$, $a-b=\lfloor a\rfloor-\lfloor b\rfloor+(a^*-b^*)$,
where $a^*-b^*\in(-1,1)$, and hence
$\lfloor a-b\rfloor\in\lfloor a\rfloor-\lfloor b\rfloor+\{0,-1\}.$
Let us first suppose that $\lfloor J-n-s\rfloor=\lfloor J-n\rfloor-\lfloor s\rfloor$,
so that the assumptions on $\beta$ in \eqref{CZKxyJs} agree with those in \eqref{ftw 3.20}
and in \eqref{ftw 3.19} when $|\alpha|=\lfloor s\rfloor$.
The assumption $\lfloor J-n-s\rfloor=\lfloor J-n\rfloor-\lfloor s\rfloor$
is equivalent to $J^*\geq s^*$, which, combined with \eqref{ftw 3.15}, further implies that
in this case $\kappa:=\min\{\delta,\rho\}$
satisfies $\kappa>J^*\geq s^*$ and \eqref{ftw 3.19} and \eqref{ftw 3.20}
hold with $\kappa$ in place of both $\delta$ and $\rho$.

By the triangle inequality and \eqref{ftw 3.19} with $|\alpha|=\lfloor s\rfloor$,
we find that, for any $x,y,u,v\in\mathbb{R}^n$ with $|u|,|v|<\frac12|x-y|$,
\begin{align*}
\operatorname{LHS}\text{ of }\eqref{CZKxyJs}
&\leq\left|\partial_x^\alpha\partial_y^\beta\mathcal K(x,y)
-\partial_x^\alpha\partial_y^\beta\mathcal K(x,y+v)\right|\\
&\quad+\left|\partial_x^\alpha\partial_y^\beta\mathcal K(x+u,y)
-\partial_x^\alpha\partial_y^\beta\mathcal K(x+u,y+v)\right|\\
&\lesssim|v|^{\kappa}|x-y|^{-n-\lfloor J-n\rfloor-\kappa}+|v|^{\kappa}|x+u-y|^{-n-\lfloor J-n\rfloor-\kappa}
\sim|v|^{\kappa}|x-y|^{-n-\lfloor J-n\rfloor-\kappa}
\end{align*}
and, similarly, by \eqref{ftw 3.20}, we then have,
for any $x,y,u\in\mathbb{R}^n$ with $|u|<\frac12|x-y|$,
\begin{equation*}
\operatorname{LHS}\text{ of }\eqref{CZKxyJs}
\lesssim|u|^\kappa|x-y|^{-n-\lfloor J-n\rfloor-\kappa}.
\end{equation*}
Writing $\kappa>J^*\geq s^*$ as $\kappa=J^*+\varepsilon+\eta$,
where $\varepsilon,\eta\in(0,\infty)$,
and $J^*=s^*+(J^*-s^*)=s^*+(J-s)^*,$
and taking the minimum of the two upper bounds, we conclude that
\begin{align*}
\operatorname{LHS}\text{ of }\eqref{CZKxyJs}
\lesssim\min(|u|,|v|)^{s^*+\varepsilon+(J-s)^*+\eta}|x-y|^{-n-\lfloor J-n\rfloor -J^*-\varepsilon-\eta}
\lesssim|u|^{s^*+\varepsilon}|v|^{(J-s)^*+\eta}|x-y|^{-J-\varepsilon-\eta},
\end{align*}
which confirms \eqref{CZKxyJs} in this case.

Suppose then that $\lfloor J-n-s\rfloor=\lfloor J-n\rfloor-\lfloor s\rfloor-1$;
thus, assumption \eqref{ftw 3.20} now involves derivatives of order $|\beta|=\lfloor J-n-s\rfloor+1$.
For any $ i\in\{1,\ldots,n\}$, let
$ e_i:=(0,\ldots,0,1,0,\ldots,0)\in\mathbb{Z}_+^n$
where only the $i$-th component is $1$.
For any $|\beta|=\lfloor J-n-s\rfloor$, it then follows from
the mean value theorem and \eqref{ftw 3.20} that,
for any $x,y,u,v\in\mathbb{R}^n$ with $|u|,|v|<\frac12|x-y|$,
\begin{align*}
\operatorname{LHS}\text{ of }\eqref{CZKxyJs}
&=\left|\int_0^1v\nabla_y\partial_x^\alpha\partial_y^\beta
(\mathcal K(x,y+tv)-\mathcal K(x+u,y+tv))\,dt \right|\\
&\lesssim|v|\sum_{i=1}^n\int_0^1\left|\partial_x^\alpha
\partial_y^{\beta+e_i}(\mathcal K(x,y+tv)-\mathcal K(x+u,y+tv))\right|\,dt\\
&\lesssim|v|\int_0^1|u|^\delta|x-y-tv|^{-n-\lfloor J-n\rfloor-\delta}\,dt\\
&\sim|v|\,|u|^\delta|x-y|^{-n-\lfloor J-n\rfloor-\delta}
=|u|^\delta|v|\,|x-y|^{-\lfloor J\rfloor-\delta}.
\end{align*}
Here, we write $\delta>s^*$ as $\delta=s^*+\varepsilon$.
We now have $J^*<s^*$, and hence $(J-s)^*=1+J^*-s^*$.
For any $\eta\in(0,s^*-J^*)$, by $1-(J-s)^*-\eta=s^*-J^*-\eta>0$, we obtain,
for any $x,y,v\in\mathbb{R}^n$ with $|v|<\frac12|x-y|$,
\begin{equation*}
|v|\leq|v|^{(J-s)^*+\eta}|x-y|^{1-(J-s)^*-\eta}
=|v|^{(J-s)^*+\eta}|x-y|^{s^*-J^*-\eta}.
\end{equation*}
Substituting all these, we conclude that, for any $x,y,u,v\in\mathbb R^n$ with
$|u|+|v|<\frac12|x-y|$,
\begin{align*}
\operatorname{LHS}\text{ of }\eqref{CZKxyJs}
&\lesssim|u|^\delta|v|\,|x-y|^{-\lfloor J\rfloor-\delta}\\
&\lesssim|u|^{s^*+\varepsilon}|v|^{(J-s)^*+\eta}|x-y|^{s^*-J^*-\eta}|x-y|^{-\lfloor J\rfloor-s^*-\varepsilon}\\
&\sim|u|^{s^*+\varepsilon}|v|^{(J-s)^*+\eta}|x-y|^{-J-\varepsilon-\eta},
\end{align*}
confirming \eqref{CZKxyJs} also in this case,
which completes the proof of Lemma \ref{CZKmix}.
\end{proof}

\begin{corollary}\label{229}
When $\tau=0$, Theorem \ref{T1 BF} in this case contains \cite[Theorems 3.1, 3.7, and 3.13]{ftw88}
or equivalently \cite[Theorems 3.2.1, 3.2,7, and 3.2.13]{tor}
about the boundedness of Calder\'on--Zygmund operators on $\dot A^s_{p,q}$.
Moreover, under the same assumptions, these operators
remain bounded on $\dot A^s_{p,q}(W)$ for any $W\in A_p$.
For any $\tau\in(0,\infty)$, sufficient conditions for the boundedness of
Calder\'on--Zygmund operators on $\dot A^{s,\tau}_{p,q}(W)$
are obtained by replacing $(J,s)$ with $(\widetilde J,\widetilde s)$ in \eqref{tauJ2}.
\end{corollary}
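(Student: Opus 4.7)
The plan is to deduce Corollary \ref{229} directly from Theorem \ref{T1 BF} by matching its hypotheses, case by case, with the classical formulations of \cite[Theorems 3.1, 3.7, 3.13]{ftw88} via Remark \ref{T1 rem1} and Lemma \ref{CZKmix}, and by noting that at $\tau=0$ the assumptions turn out to be independent of the $A_p$-dimension.

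The final sentence, concerning $\tau\in(0,\infty)$, is essentially a restatement: Theorem \ref{T1 BF} itself already expresses sufficient conditions in terms of $(\widetilde J,\widetilde s)$, so there is nothing further to prove. For the remaining two assertions, the key preliminary observation is that when $\tau=0$ we have $\widehat\tau=[(-\tfrac{1}{p})+\tfrac{d}{np}]_+=0$ (since $d<n$ forces $d/(np)<1/p$), so $\widetilde s=s$ and $\widetilde J=J_0=J$ (as $\tau=0<1/p$ falls in the subcritical case of \eqref{tildeJ}). Thus the hypotheses \eqref{T1Amol1} and \eqref{T1Amol2} of Theorem \ref{T1 BF} at $\tau=0$ depend only on $(J,s)$ and \emph{not} on $d$, which immediately yields the matrix-weighted extension: the very same conditions that give boundedness on the unweighted $\dot A^s_{p,q}$ give it on $\dot A^s_{p,q}(W)$ for arbitrary $W\in A_p$.

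Next, I would verify that the hypotheses of \cite[Theorem 3.1]{ftw88} (the case $s<0$) and \cite[Theorem 3.7]{ftw88} (the case $s\geq0$, $J=n$) reduce to \eqref{T1Amol1}. For the first, by Remark \ref{T1 rem1}\eqref{ftw 3.1}, one may choose $0<E<F\wedge 1$, and then the $\operatorname{CZK}^0(E;F)$ condition factorises, via Remark \ref{CZK cases}\eqref{factor2}, into the separate hypotheses $T\in\operatorname{CZO}(E)$ and $T^*\in\operatorname{CZO}(F)$ with $F>J-n-s$; combined with $T(1)=0$ and $T^*(x^\theta)=0$ for $|\theta|\leq\lfloor J-n-s\rfloor$, this is exactly \cite[Theorem 3.1]{ftw88}. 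For the second, Remark \ref{T1 rem1}\eqref{ftw 3.7} together with the factorisation in Remark \ref{CZK cases}\eqref{factor1} yields the hypotheses of \cite[Theorem 3.7]{ftw88}. In both cases, since $\sigma\geq\mathbf{1}_{[0,\infty)}(s)$ is trivially consistent and the relevant $H$-condition already satisfies $H\geq 0$, the distinction between \eqref{T1Amol1} and \eqref{T1Amol2} disappears and Theorem \ref{T1 BF} applies.

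The main obstacle is the comparison with \cite[Theorem 3.13]{ftw88}, where $s\geq 0$ and $J>n$. This splits into two subcases. When $J-n-s\leq 0$, Remark \ref{T1 rem1}\eqref{ftw 3.13a} shows that the mixed difference condition \eqref{CZKxy} is void (since $F-E\leq 0$), and our remaining conditions match those of the corresponding special case of \cite[Theorem 3.13]{ftw88}. When $J-n-s>0$, however, \eqref{CZKxy} is genuinely active and is not literally stated among the hypotheses \eqref{ftw 3.19}-\eqref{ftw 3.20} of Theorem \ref{T1 ftw3.13}; this is precisely where Lemma \ref{CZKmix} intervenes. By that lemma, the assumptions \eqref{ftw 3.19}-\eqref{ftw 3.20} imply the mixed estimate \eqref{CZKxyJs} with some $\varepsilon,\eta>0$, which is exactly what \eqref{CZKxy} becomes after the choices $E=s+\varepsilon$, $F=J-n+\varepsilon+\eta$. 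All other components of \eqref{T1Amol1} — the size bound \eqref{CZK0}, the one-variable difference bounds \eqref{CZKx} and \eqref{CZKy}, the weak boundedness property, and the two vanishing-moment conditions — follow directly from the corresponding hypotheses of Theorem \ref{T1 ftw3.13} (reading off \eqref{ftw 3.19} with $\alpha=\mathbf{0}$ or $|\alpha|=\lfloor s\rfloor$, and \eqref{ftw 3.20} with the designated $\beta$, together with the derivative-interpolation argument already carried out in Lemma \ref{CZKaux}). Invoking Theorem \ref{T1 BF}\eqref{T1test} in each case then delivers the required extension $\widetilde T\in\mathcal L(\dot A^s_{p,q}(W))$, completing the proof.
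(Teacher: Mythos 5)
Your proof is correct and follows essentially the same route as the paper's: you split into the four parameter regimes ($\widetilde s<0$; $\widetilde s\geq 0,\widetilde J=n$; $\widetilde s\geq \widetilde J-n>0$; and $\widetilde J-n>\widetilde s\geq 0$), use the factorisation of $\operatorname{CZK}^\sigma(E;F)$ into $\operatorname{CZO}$ conditions via Remark \ref{T1 rem1} in the first three, and invoke Lemma \ref{CZKmix} to bridge the mixed-difference gap in the last. You also make explicit the small but necessary check — which the paper's terse proof leaves implicit — that $\tau=0$ forces $\widehat\tau=0$ (since $d<n$) and hence $(\widetilde J,\widetilde s)=(J,s)$ independently of the $A_p$-dimension $d$, which is exactly why the weighted boundedness comes for free under the classical unweighted hypotheses.
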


\begin{proof}
We have already discussed the cases
$\widetilde{s}<0$ or $\widetilde{s}\geq\widetilde{J}-n\geq0$
in Remark \ref{T1 rem1}.
In the remaining case that $\widetilde{J}-n>\widetilde{s}\geq 0$,
the only difference is that the mixed difference assumption \eqref{CZKxy}
is assumed in Theorem \ref{T1 BF}, but apparently not in \cite[Theorem 3.13]{ftw88}
or \cite[Theorem 3.2.13]{tor} (restated as Theorem \ref{T1 ftw3.13}).
However, by Lemma \ref{CZKmix}, this assumption with appropriate parameters
follows from the assumptions of these theorems.
This finishes the proof of Corollary \ref{229}.
\end{proof}

\begin{remark}
In the case that $J-n>s\geq 0$, our Theorem \ref{T1 BF}
is an essential improvement of Theorem \ref{T1 ftw3.13} from \cite{ftw88,tor}.
We have already seen that the assumptions of Theorem \ref{T1 BF}
follow from those of Theorem \ref{T1 ftw3.13}.
The fact that they are strictly weaker is most easily seen
by comparing the smoothness assumption in the $x$ variable only.
Theorem \ref{T1 BF} assumes that $T\in\operatorname{CZO}(E)$ for $E>s$,
whereas Theorem \ref{T1 ftw3.13} requires that
$T\in\operatorname{CZO}(\lfloor s\rfloor+\delta)$ with $\delta>\max(s^*,J^*)$
which is a strictly stronger assumption when $J^*>s^*$.

For $m=1$ and $W\equiv 1$, the space $\dot A^{s,\tau}_{p,q}(W)$ reduces to $\dot A^{s,\tau}_{p,q}$,
and Theorem \ref{T1 BF} in this case is still new.
When $\tau=0$, we have $\dot A^{s,\tau}_{p,q}(W)=\dot A^{s}_{p,q}(W)$.
Theorem \ref{T1 BF} in this case is new for $\dot F^s_{p,q}(W)$
and improves \cite[Theorem 9.4]{ro03} and \cite[Theorem 4.2(i)]{fr04} for $\dot B^s_{p,q}(W)$
because the aforementioned theorems only consider special cases and need stronger assumptions.

Besides being a stronger (and more general) result,
a further advantage of Theorem \ref{T1 BF} is the fact that
the Calder\'on--Zygmund smoothness assumptions take a simpler version
without reference to the integer and the fractional part functions
(in contrast to the condition just discussed).
\end{remark}

\bigskip

\noindent Fan Bu

\medskip

\noindent Laboratory of Mathematics and Complex Systems (Ministry of Education of China),
School of Mathematical Sciences, Beijing Normal University, Beijing 100875, The People's Republic of China

\smallskip

\noindent{\it E-mail:} \texttt{fanbu@mail.bnu.edu.cn}

\bigskip

\noindent Tuomas Hyt\"onen

\medskip

\noindent Department of Mathematics and Statistics,
University of Helsinki, (Pietari Kalmin katu 5), P.O.
Box 68, 00014 Helsinki, Finland

\smallskip

\noindent{\it E-mail:} \texttt{tuomas.hytonen@helsinki.fi}

\smallskip

\noindent (Address as of 1 Jan 2024:) Department of Mathematics 
and Systems Analysis, Aalto University, P.O. Box 11100, FI-00076 Aalto, Finland

\noindent{\it E-mail:} \texttt{tuomas.p.hytonen@aalto.fi}

\bigskip

\noindent Dachun Yang (Corresponding author) and Wen Yuan

\medskip

\noindent Laboratory of Mathematics and Complex Systems (Ministry of Education of China),
School of Mathematical Sciences, Beijing Normal University, Beijing 100875, The People's Republic of China

\smallskip

\noindent{\it E-mails:} \texttt{dcyang@bnu.edu.cn} (D. Yang)

\noindent\phantom{{\it E-mails:}} \texttt{wenyuan@bnu.edu.cn} (W. Yuan)


\begin{thebibliography}{99}

\bibitem{b81}
J. Bony,
Calcul symbolique et propagation des singularit\'es pour les \'equations
aux d\'eriv\'ees partielles non lin\'eaires,
Ann. Sci. \'Ecole Norm. Sup. (4) 14 (1981), 209--246.

\vspace{-0.3cm}

\bibitem{b05}
M. Bownik, Atomic and molecular  decompositions of anisotropic Besov spaces,
Math. Z. 250 (2005),  539--571.

\vspace{-0.3cm}

\bibitem{b07}
M. Bownik, Anisotropic Triebel--Lizorkin spaces with doubling measures,
J. Geom. Anal. 17 (2007),  387--424.

\vspace{-0.3cm}

\bibitem{b08}
M. Bownik,
Duality and interpolation of anisotropic Triebel--Lizorkin spaces,
Math. Z. 259 (2008), 131--169.

\vspace{-0.3cm}

\bibitem{bh06}
M. Bownik and K.-P. Ho,
Atomic and molecular decompositions of anisotropic Triebel--Lizorkin spaces,
Trans. Amer. Math. Soc. 358 (2006), 1469--1510.

\vspace{-0.3cm}

\bibitem{bhyy1}
F. Bu, T. Hyt\"onen, D. Yang and W. Yuan,
Matrix-weighted Besov-type and Triebel--Lizorkin-type spaces I:
$A_p$-dimension of matrix weights and $\varphi$-transform characterizations,
Submitted or arXiv: 2304.00292v2.

\vspace{-0.3cm}

\bibitem{bhyy2}
F. Bu, T. Hyt\"onen, D. Yang and W. Yuan,
Matrix-weighted Besov-type and Triebel--Lizorkin-type spaces II:
sharp boundedness of almost diagonal operators,
Submitted or arXiv: 2312.13548.

\vspace{-0.3cm}

\bibitem{bbd20}
H.-Q. Bui, T. A. Bui and X. T. Duong,
Weighted Besov and Triebel--Lizorkin spaces associated with operators and applications,
Forum Math. Sigma 8 (2020), Paper No. e11, 95 pp.

\vspace{-0.3cm}

\bibitem{bd15}
T. A. Bui and X. T. Duong,
Besov and Triebel--Lizorkin spaces associated to Hermite operators,
J. Fourier Anal. Appl. 21 (2015), 405--448.

\vspace{-0.3cm}

\bibitem{bd17}
T. A. Bui and X. T. Duong,
Laguerre operator and its associated weighted Besov and Triebel--Lizorkin spaces,
Trans. Amer. Math. Soc. 369 (2017), 2109--2150.

\vspace{-0.3cm}

\bibitem{bd21c}
T. A. Bui and X. T. Duong,
Higher-order Riesz transforms of
Hermite operators on new Besov and Triebel--Lizorkin spaces,
Constr. Approx. 53 (2021), 85--120.

\vspace{-0.3cm}

\bibitem{bd23}
T. A. Bui and X. T. Duong,
New atomic decomposition for Besov type space  $\dot{B}^0_{1,1}$  associated with Schr\"odinger
type operators,  J. Fourier Anal. Appl. 29 (2023),  Paper No. 48, 47 pp.


\vspace{-0.3cm}

\bibitem{cgn17}
G. Cleanthous, A. G. Georgiadis and M. Nielsen,
Discrete decomposition of homogeneous mixed-norm Besov spaces, in:
Functional Analysis, Harmonic Analysis, and Image Processing:
A Collection of Papers in Honor of Bj\"orn Jawerth, pp. 167--184,
Contemp. Math. 693, Amer. Math. Soc., Providence, RI, 2017.

\vspace{-0.3cm}

\bibitem{cgn19}
G. Cleanthous, A. G. Georgiadis and M. Nielsen,
Molecular decomposition and Fourier multipliers for holomorphic Besov and Triebel--Lizorkin spaces,
Monatsh. Math. 188 (2019), 467--493.

\vspace{-0.3cm}

\bibitem{cgn19-2}
G. Cleanthous, A. G. Georgiadis and M.  Nielsen,
Molecular decomposition of anisotropic  homogeneous mixed-norm spaces with
applications to the boundedness of operators,  Appl. Comput. Harmon. Anal. 47 (2019), 447--480.


\vspace{-0.3cm}

\bibitem{d88}
I. Daubechies,
Orthonormal bases of compactly supported wavelets,
Comm. Pure Appl. Math. 41 (1988), 909--996.

\vspace{-0.3cm}

\bibitem{d92}
I. Daubechies,
Ten Lectures on Wavelets,
CBMS-NSF Regional Conference Series in Applied Mathematics 61,
Society for Industrial and Applied Mathematics (SIAM), Philadelphia, PA, 1992.

\vspace{-0.3cm}

\bibitem{dj84}
G. David and J.-L. Journ\'e,
A boundedness criterion for generalized Calder\'on--Zygmund operators,
Ann. of Math. (2) 120 (1984), 371--397.

\vspace{-0.3cm}



\bibitem{fhjw}
M. Frazier, Y.-S. Han, B. Jawerth and G. Weiss,
The $T1$ theorem for Triebel--Lizorkin spaces, in:
Harmonic Analysis and Partial Differential Equations (El Escorial, 1987), pp. 168--181,
Lecture Notes in Math. 1384, Springer, Berlin, 1989.

\vspace{-0.3cm}

\bibitem{fj85}
M. Frazier and B. Jawerth,
Decomposition of Besov spaces,
Indiana Univ. Math. J. 34 (1985), 777--799.

\vspace{-0.3cm}

\bibitem{fj90}
M. Frazier and B. Jawerth,
A discrete transform and decompositions of distribution spaces,
J. Funct. Anal. 93 (1990), 34--170.

\vspace{-0.3cm}

\bibitem{fjw91}
M. Frazier, B. Jawerth and G. Weiss,
Littlewood--Paley Theory and The Study of Function Spaces,
CBMS Regional Conference Series in Mathematics 79,
Published for the Conference Board of the Mathematical Sciences,
Washington, DC, by the American Mathematical Society,
Providence, RI, 1991.

\vspace{-0.3cm}

\bibitem{fr04}
M. Frazier and S. Roudenko,
Matrix-weighted Besov spaces and conditions of $A_p$ type for $0<p\leq1$,
Indiana Univ. Math. J. 53 (2004), 1225--1254.

\vspace{-0.3cm}

\bibitem{fr08}
M. Frazier and S. Roudenko,
Traces and extensions of matrix-weighted Besov spaces,
Bull. Lond. Math. Soc. 40 (2008), 181--192.

\vspace{-0.3cm}

\bibitem{fr21}
M. Frazier and S. Roudenko,
Littlewood--Paley theory for matrix-weighted function spaces,
Math. Ann. 380 (2021), 487--537.

\vspace{-0.3cm}

\bibitem{ftw88}
M. Frazier, R. Torres and G. Weiss,
The boundedness of Calder\'on--Zygmund operators on the spaces $F^{\alpha,q}_p$,
Rev. Mat. Iberoamericana 4 (1988), 41--72.

\vspace{-0.3cm}

\bibitem{gjn17}
A. G. Georgiadis, J. Johnsen and M. Nielsen,
Wavelet transforms for homogeneous mixed-norm Triebel--Lizorkin spaces,
Monatsh. Math. 183 (2017), 587--624.

\vspace{-0.3cm}

\bibitem{gkkp19}
A. G. Georgiadis,  G. Kerkyacharian, G.  Kyriazis and P. Petrushev,
Atomic and molecular decomposition of  homogeneous spaces of distributions
associated to non-negative self-adjoint  operators, J. Fourier Anal. Appl. 25 (2019),   3259--3309.

\vspace{-0.3cm}

\bibitem{gkp21}
A. G. Georgiadis,  G. Kyriazis and P.  Petrushev,
Product Besov and Triebel--Lizorkin spaces  with application to nonlinear approximation,
Constr. Approx. 53 (2021),   39--83.

\vspace{-0.3cm}

\bibitem{gn16}
A. G. Georgiadis and M. Nielsen,
Pseudodifferential operators on mixed-norm Besov and Triebel--Lizorkin spaces,
Math. Nachr. 289 (2016), 2019--2036.


\vspace{-0.3cm}

\bibitem{g03}
M. Goldberg,
Matrix $A_p$ weights via maximal functions,
Pacific J. Math. 211 (2003), 201--220.

\vspace{-0.3cm}

\bibitem{g14c}
L. Grafakos,
Classical Fourier Analysis, Third edition,
Graduate Texts in Mathematics 249, Springer, New York, 2014.

\vspace{-0.3cm}

\bibitem{gt99}
L. Grafakos and R. H. Torres,
Pseudodifferential operators with homogeneous symbols,
Michigan Math. J. 46 (1999), 261--269.

\vspace{-0.3cm}

\bibitem{HL23}
D. D. Haroske and Z. Liu,
Generalized Besov-type and Triebel--Lizorkin-type spaces,
Studia Math. 273 (2023), 161--199.

\vspace{-0.3cm}

\bibitem{HLMS23}
D. D. Haroske, Z. Liu, S. D. Moura and L. Skrzypczak,
Embeddings of generalised Morrey smoothness spaces,
Submitted or arXiv: 2310.18282.

\vspace{-0.3cm}

\bibitem{HLMS23-2}
D. D. Haroske, S. D. Moura and L. Skrzypczak,
On a bridge connecting Lebesgue and Morrey spaces in view of their growth properties,
Submitted or arXiv: 2305.00055.

\vspace{-0.3cm}

\bibitem{HMS16}
D. D. Haroske, S. D. Moura and L. Skrzypczak,
Smoothness Morrey spaces of regular distributions, and some unboundedness property,
Nonlinear Anal. 139 (2016), 218--244.
\vspace{-0.3cm}

\bibitem{HMS20}
D. D. Haroske, S. D. Moura and L. Skrzypczak,
Some embeddings of Morrey spaces with critical
smoothness, J. Fourier Anal. Appl. 26 (2020), Paper No. 50, 31 pp.

\vspace{-0.3cm}

\bibitem{HMS22}
D. D. Haroske, S. D. Moura and L. Skrzypczak,
Wavelet decomposition and embeddings of generalised Besov--Morrey spaces,
Nonlinear Anal. 214 (2022), Paper No. 112590, 26 pp.

\vspace{-0.3cm}

\bibitem{HP08}
D. D. Haroske and I. Piotrowska, Atomic decompositions of function spaces with Muckenhoupt
weights, and some relation to fractal analysis, Math. Nachr. 281  (2008), 1476--1494.


\vspace{-0.3cm}

\bibitem{HT23}
D. D. Haroske and H. Triebel,
Morrey smoothness spaces: a new approach,
Sci. China Math. 66 (2023), 1301--1358.

\vspace{-0.3cm}

\bibitem{is09}
M. Izuki and Y. Sawano,
Wavelet bases in the weighted Besov and Triebel--Lizorkin spaces
with $A_p^{\mathrm{loc}}$-weights,
J. Approx. Theory 161 (2009), 656--673.


\vspace{-0.3cm}

\bibitem{is12}
M. Izuki and Y. Sawano,
Atomic decomposition for weighted Besov and Triebel--Lizorkin spaces,
Math. Nachr. 285 (2012), 103--126.

\vspace{-0.3cm}

\bibitem{ja77}
B. Jawerth,
Some observations on Besov and Lizorkin--Triebel spaces,
Math. Scand. 40 (1977), 94--104.

\vspace{-0.3cm}

\bibitem{ja78}
B. Jawerth,
The trace of Sobolev and Besov spaces if $0<p<1$,
Studia Math. 62 (1978), 65--71.

\vspace{-0.3cm}

\bibitem{lm86}
P. G. Lemari\'e and Y. Meyer,
Ondelettes et bases hilbertiennes,
Rev. Mat. Iberoamericana 2 (1986), 1--18.

\vspace{-0.3cm}

\bibitem{lbyy12}
B. Li, M. Bownik, D. Yang and W. Yuan,
Duality of weighted anisotropic Besov and Triebel--Lizorkin spaces,
Positivity 16 (2012), 213--244.

\vspace{-0.3cm}

\bibitem{lsuyy}
Y. Liang, Y. Sawano, T. Ullrich, D. Yang and W. Yuan,
New characterizations of Besov--Triebel--Lizorkin--Hausdorff spaces
including coorbits and wavelets,
J. Fourier Anal. Appl. 18 (2012), 1067--1111.

\vspace{-0.3cm}

\bibitem{lyysu}
Y. Liang, D. Yang, W. Yuan, Y. Sawano and T. Ullrich,
A new framework for generalized Besov-type and Triebel--Lizorkin-type spaces,
Dissertationes Math. 489 (2013), 1--114.

\vspace{-0.3cm}

\bibitem{nt96}
F. L. Nazarov and S. R. Treil,
The hunt for a Bellman function: applications to estimates for singular
integral operators and to other classical problems of harmonic analysis,
(Russian), translated from Algebra i Analiz 8 (1996), 32--162,
St. Petersburg Math. J. 8 (1997), 721--824.

\vspace{-0.3cm}

\bibitem{r13}
M. Rosenthal,
Local means, wavelet bases and wavelet isomorphisms in
Besov--Morrey and Triebel--Lizorkin--Morrey spaces,
Math. Nachr. 286 (2013), 59--87.

\vspace{-0.3cm}

\bibitem{ro03}
S. Roudenko,
Matrix-weighted Besov spaces,
Trans. Amer. Math. Soc. 355 (2003), 273--314.


\vspace{-0.3cm}

\bibitem{S08}
Y. Sawano, Wavelet characterizations of
Besov--Morrey and Triebel--Lizorkin--Morrey spaces, Funct. Approx.
Comment. Math. 38 (2008), 93--107.



\vspace{-0.3cm}

\bibitem{S10b}
Y. Sawano,
Besov--Morrey spaces and Triebel--Lizorkin--Morrey spaces on domains,
Math. Nachr. 283 (2010), 1456--1487.

\vspace{-0.3cm}

\bibitem{Sa17}
Y. Sawano,
An observation of the subspaces of $\mathcal{S}'$, in:
Generalized Functions and Fourier Analysis, pp. 185--192,
Oper. Theory Adv. Appl. 260, Adv. Partial Differ. Equ. (Basel),
Birkh\"auser/Springer, Cham, 2017.

\vspace{-0.3cm}

\bibitem{Sa18}
Y. Sawano,
Theory of Besov Spaces,
Developments in Mathematics 56, Springer, Singapore, 2018.


\vspace{-0.3cm}

\bibitem{ST07}
Y. Sawano and H. Tanaka,
Decompositions of Besov--Morrey spaces and Triebel--Lizorkin--Morrey spaces,
Math. Z. 257 (2007), 871--905.

\vspace{-0.3cm}

\bibitem{syy10}
Y. Sawano, D. Yang and W. Yuan,
New applications of Besov-type and Triebel--Lizorkin-type spaces,
J. Math. Anal. Appl. 363 (2010), 73--85.

\vspace{-0.3cm}

\bibitem{tor}
R. H. Torres,
Boundedness results for operators with singular kernels on distribution spaces,
Mem. Amer. Math. Soc. 90 (1991), no. 442, {\rm viii}+172 pp.

\vspace{-0.3cm}

\bibitem{tv97}
S. Treil and A. Volberg,
Wavelets and the angle between past and future,
J. Funct. Anal. 143 (1997), 269--308.

\vspace{-0.3cm}

\bibitem{t83}
H. Triebel,
Theory of Function Spaces,
Monographs in Mathematics 78, Birkh\"{a}user Verlag, Basel, 1983.

\vspace{-0.3cm}

\bibitem{t10}
H. Triebel,
Bases in Function Spaces, Sampling, Discrepancy, Numerical Integration, EMS Tracts Math. 11,
European Mathematical Society (EMS), Z\"urich, 2010, x+296 pp.

\vspace{-0.3cm}

\bibitem{v97}
A. Volberg,
Matrix $A_p$ weights via $S$-functions,
J. Amer. Math. Soc. 10 (1997), 445--466.

\vspace{-0.3cm}

\bibitem{yy08}
D. Yang and W. Yuan,
A new class of function spaces connecting Triebel--Lizorkin spaces and $Q$ spaces,
J. Funct. Anal. 255 (2008), 2760--2809.

\vspace{-0.3cm}

\bibitem{yy10}
D. Yang and W. Yuan,
New Besov-type spaces and Triebel--Lizorkin-type spaces including $Q$ spaces,
Math. Z. 265 (2010), 451--480.

\vspace{-0.3cm}

\bibitem{ysy10-2}
W. Yuan, Y. Sawano and D. Yang,
Decompositions of Besov--Hausdorff and Triebel--Lizorkin--Hausdorff spaces
and their applications,
J. Math. Anal. Appl. 369 (2010), 736--757.

\vspace{-0.3cm}

\bibitem{ysy10}
W. Yuan, W. Sickel and D. Yang,
Morrey and Campanato Meet Besov, Lizorkin and Triebel,
Lecture Notes in Mathematics 2005, Springer-Verlag, Berlin, 2010.

\end{thebibliography}
\end{document}